\documentclass[aop,preprint]{imsart}

\RequirePackage{amsthm,amsmath,amsfonts,amssymb}
\RequirePackage{mathtools}

\RequirePackage[authoryear]{natbib}

\RequirePackage{graphicx}
\graphicspath{{figures/}}
\RequirePackage{subcaption}
\RequirePackage{tikz}
\usetikzlibrary{positioning,arrows.meta}

\RequirePackage{booktabs}
\RequirePackage{array}
\RequirePackage{xltabular}
\RequirePackage{ragged2e}

\RequirePackage{placeins}

\RequirePackage[colorlinks,citecolor=blue,urlcolor=blue,linkcolor=blue]{hyperref}

\startlocaldefs

\DeclareMathOperator{\Var}{Var}

\newcolumntype{Y}{>{\RaggedRight\hspace{0pt}\arraybackslash}X}
\newcolumntype{P}[1]{>{\RaggedRight\hspace{0pt}\arraybackslash}p{#1}}
\newcolumntype{L}[1]{>{\RaggedRight\hspace{0pt}\arraybackslash}p{#1}}

\theoremstyle{plain}
\newtheorem{theorem}{Theorem}[section]
\newtheorem{proposition}[theorem]{Proposition}
\newtheorem{lemma}[theorem]{Lemma}
\newtheorem{corollary}[theorem]{Corollary}

\theoremstyle{definition}
\newtheorem{remark}[theorem]{Remark}
\newtheorem{definition}[theorem]{Definition}

\newcounter{assumption}
\renewcommand{\theassumption}{A\arabic{assumption}}

\endlocaldefs
\begin{document}

\begin{frontmatter}

\title{Law of the Iterated Logarithm for Weighted Sums of Functionals of
Long-Memory Gaussian Sequences}

\runtitle{LIL for Weighted Long-Memory Gaussian Functionals}

\begin{aug}

\author[A]{\fnms{Elina}~\snm{Moldavskaya}
\ead[label=e1]{elina.mol@technion.ac.il}}

\address[A]{Technion--Israel Institute of Technology
\printead[presep={,\ }]{e1}}

\end{aug}

\begin{abstract}
We establish laws of the iterated logarithm for weighted sums of linear and
nonlinear functionals of long-memory stationary Gaussian sequences. The
leading Wiener chaos determines the scale of the almost-sure
fluctuations; all higher chaoses are negligible on it and the exponent
of the iterated-logarithm factor is half the Hermite rank. In the linear
case, we obtain the exact LIL constant, and it is the classical one. In
the nonlinear case, we identify a functional cluster set and give a
variational formula for the LIL constant. We also present explicit
upper and lower bounds for this constant. The lower bound is expressed
through beta functions and depends on the memory parameter, the weight
exponent, and the Hermite rank; numerically it matches the constant to
within a fraction of one percent, and can therefore be used in its
place.  Thus, the weights affect the
normalization, the geometry of the limiting cluster set, and the
nonlinear LIL constant itself.
\end{abstract}

\begin{keyword}[class=MSC]
\kwdgroup[type=primary]{\kwd{60F15}}
\kwdgroup[type=secondary]{\kwd{60G10}\kwd{60G18}}
\end{keyword}

\begin{keyword}
\kwd{law of the iterated logarithm}
\kwd{long-range dependence}
\kwd{Hermite rank}
\kwd{Wiener chaos}
\kwd{weighted sums}
\kwd{functional LIL}
\kwd{regularly varying weights}
\kwd{sharp LIL constants}
\end{keyword}

\end{frontmatter}

\tableofcontents

\section{Introduction}

The \emph{law of the iterated logarithm} (LIL) is a fundamental
result in probability theory that describes the almost-sure size of
the largest fluctuations of partial sums of random variables.
It has been developed extensively in many directions,
including stationary Gaussian sequences, weighted sums of independent
or weakly dependent variables, and unweighted functionals of long-memory Gaussian
sequences. An LIL combining these features---that is, an LIL for weighted sums of functionals of long-memory Gaussian sequences---has not yet been studied.

Such models arise naturally in both theoretical and applied problems. On the theoretical side, weighted sums of strongly dependent random variables (or their transformations) appear in tapered spectral estimation, where deterministic weights reduce boundary effects, and in kernel-weighted constructions, where the weights localize the sum around a selected time point. 
In many applications, weighted rather than straight sums arise when different observations may have different levels of importance. For example, older observations may be discounted in forecasting, while more recent ones are assigned greater weight.

The need for a law of the iterated logarithm for weighted sums of this
kind---with strong dependence, nonlinearity, and deterministic
weighting---goes back to the author's study of constrained estimation
in long-memory regression models. In the approximate representation of the estimators
stated in \cite{Moldavskaya2009Lviv}, equation~(2), an LIL of this
type was needed to establish the iterated-logarithm order of a
weighted sum whose deterministic weights were generated by the second
derivatives of the regression function.

This raises the natural question: \emph{does a weighted LIL hold in the long-memory setting?} If so, \emph{what is its exact almost-sure scale, and what form should its functional cluster set take?}

This paper studies the LIL for weighted sums
\[
S_n=\sum_{t=1}^n a_tG(X_t),
\]
where $\{X_t\}_{t\ge1}$ is a stationary centered Gaussian sequence whose covariance function is regularly varying at infinity with index $-\alpha$, $\alpha\in(0,1)$, $G$ has Hermite rank $m\ge1$, and the deterministic weights are regularly varying,
\(
a_t=t^{-d}\ell_a(t),
\)
with $\ell_a$ slowly varying. 
The exponent $d$ is allowed to be of either sign: $d>0$ corresponds to downweighting later observations, while $d<0$ corresponds to upweighting them, as in the recency-weighted examples above. 
We work in the regime
\(
\alpha m<1,
2d+\alpha m<1,
\)
in which the leading Hermite component of \(G(X_t)\) retains long memory and the variance of the weighted partial sums \(S_n\) regularly varies with index \(2-2d-\alpha m>1\).

Among the sharpest statements in asymptotic probability theory, the LIL is highly sensitive to the mechanism that produces dependence. When the dependence is weak, the classical variance-normalized picture remains essentially stable. Under long-range dependence, however, this picture breaks down at a structural level: correlations decay so slowly that summability is lost, decorrelation is dramatically slower, and the leading asymptotic behavior is governed by the interaction between memory and nonlinearity.

For nonlinear functionals, the Hermite rank \(m\) determines the
first nonzero Wiener chaos and therefore the effective strength of dependence: the rate at which
the covariance of the leading Hermite component decays.
For \(m\ge2\), this component lies in a fixed higher Wiener
chaos and is non-Gaussian. As a result, the Gaussian
comparison and decorrelation techniques usually used to
control dependence between sparse exceedance events in LIL lower-bound arguments
no longer suffice: in a higher Wiener
chaos, covariance alone does not determine the joint behavior
of such events.

The deterministic weights introduce a further obstruction:
even though the Gaussian input \(\{X_t\}_{t\ge1}\) \emph{is stationary},
the weighted sequence \(\{a_tG(X_t)\}_{t\ge1}\) \emph{is
not} (unless the weights are constant). This loss of stationarity is structural: shifted blocks no longer have the same probabilistic
structure, and the usual stationarity-based LIL arguments
cannot be applied directly.

None of these three features---long memory, nonlinearity,
and deterministic weighting---is new in isolation in LIL theory. This trio is.

In the unweighted long-memory setting, the leading Wiener chaos is
governed by self-similar Hermite-type limits. With regularly varying
deterministic weights, the limiting spectral kernels take the form
\[
Q_t(x_1,\ldots,x_m)
=
C_{m,\alpha,d}
\left(
\int_0^t
y^{-d}
e^{iy(x_1+\cdots+x_m)}
\,dy
\right)
\prod_{j=1}^m |x_j|^{(\alpha-1)/2},
\qquad t\ge0,
\]
where \(C_{m,\alpha,d}\) is the signed normalization constant defined
below.

In the unweighted case \(a_t\equiv1\), one has \(d=0\), and the factor
\(y^{-d}\) disappears. When \(d\ne0\) (weighted settings), this factor produces a 
different \emph{new limiting object}. 
The associated multiple Wiener--It\^o integral process satisfies the
self-similarity scaling relation with index
\(H=1-d-\alpha m/2\), but for \(d\ne0\) it does not have stationary
increments; indeed, the variance of an increment over \([t,t+h]\)
depends on \(t\).  Consequently, the factor $y^{-d}$ deforms the multilinear limiting form, and hence both the functional cluster
set and the nonlinear LIL constant. 
Thus, the role of the weights is not merely to modify the
normalization; they enter the \emph{geometry} of the limit theorem
itself.

The laws of the iterated logarithm were established by \cite{LaiStout1978} for stationary
Gaussian sequences, and by \cite{Arcones1999} for functions of
stationary Gaussian vectors. For nonlinear functionals of
long-memory Gaussian sequences, the pioneering work of \cite{Taqqu1977} established the
functional LIL for \(m=1\), leaving the cases \(m\ge2\) open. The
higher-rank cases were subsequently treated by \cite{MoriOodaira1986,MoriOodaira1987} for self-similar
processes with stationary increments represented by multiple Wiener
integrals. Related functional LILs were obtained for long-memory
empirical processes by
\cite{DehlingTaqqu1988}, and exact LILs for subordinated Gaussian
sequences by
\cite{AzmoodehPeccatiPoly2016}. None of these results treats
deterministic weights. Deterministic weighted LILs for independent random variables
were obtained by \cite{LiTomkins1996}.

Our contributions are as follows. First, we identify the correct
weighted almost-sure scale
$
A_n(2\log\log n)^{m/2},
A_n^2=\operatorname{Var}(S_n),
$
where $A_n$ is regularly varying with index
$
H=1-d-\frac{\alpha m}{2}>\frac12 .
$
We prove that all Wiener-chaos components above order $m$ are
negligible on this scale (Theorem~\ref{thm:reduction}) and establish a
general upper bound at the same normalization
(Theorem~\ref{thm:upper}). Second, in the linear case $m=1$, we obtain
the sharp variance-normalized weighted LIL with the exact constant one
(Theorem~\ref{thm:linear-sharp-lil}).

Third, and mainly, for $m\ge2$ we prove a weighted functional LIL:
the normalized interpolated processes are almost surely relatively
compact in $C[0,T]$, and their cluster set is
$$
\mathcal K_T
=
\Bigl\{
t\mapsto
\bigl\langle Q_t,\xi^{\otimes m}
\bigr\rangle_{\mathfrak H_{\mathbb R}^{\otimes m}}
:
\|\xi\|_{\mathfrak H_{\mathbb R}}\le1
\Bigr\}
$$
(Theorem~\ref{thm:functional-nonlinear-lil}). 
As a consequence, evaluating the cluster
functions at \(t=1\) yields the law of the iterated logarithm itself,
\[
    \limsup_{n\to\infty}
    \frac{|S_n|}{A_n(2\log\log n)^{m/2}}
    =\Lambda_m
    \quad\text{a.s.},
    \quad
    \Lambda_m=\Lambda_m(\alpha,d)
:=
\sup_{\|\xi\|_{\mathfrak H_{\mathbb R}}\le1}
\bigl|
\bigl\langle Q_1,\xi^{\otimes m}
\bigr\rangle_{\mathfrak H_{\mathbb R}^{\otimes m}}
\bigr|
\]
with the
sharp nonlinear LIL constant $\Lambda_m,$ given by the explicit variational formula
(Corollary~\ref{cor:sharp-nonlinear-constant}). At
\(m=1\) the same formula returns \(\Lambda_1=1\), in agreement with
Theorem~\ref{thm:linear-sharp-lil}.

Moreover, for $m\ge2$,
$$
0<L_{m,\alpha,d}\le\Lambda_m<(m!)^{-1/2},
$$
where the lower bound is given in closed form through beta functions
(Lemma~\ref{lem:Lambda-bounds}). The
dependence on the model is real: $\Lambda_m$ varies with both the
memory parameter $\alpha$ and the weight exponent $d$, whereas the
ceiling $(m!)^{-1/2}$ is fixed by the normalization alone and retains
no information about either.
Remark~\ref{rem:rarity-of-constants} discusses how unusual such an
explicit description is.

The method is pathwise and functional, not distributional. A direct
second Borel--Cantelli argument for the nonlinear lower bound would
require dependence estimates for sparse exceedance events in a fixed
higher Wiener chaos---precisely where covariance alone is
insufficient. We bypass this by returning to the underlying Gaussian
spectral coordinates and exploiting their recurrence under dilation.
The rescaled weighted discrete kernels converge to the limiting
kernels \(Q_t\), uniformly in time and with separate control of their
global \(L^2\)-tails. Recurrence of finitely many dilated Gaussian
coordinates, established through a sharp Kochen--Stone argument,
yields the lower cluster functions by finite-rank approximation;
Borell--Tsirelson bounds, fixed-chaos maximal estimates, and a uniform
small-kernel bound yield the upper inclusion and relative
compactness. In the linear case the constant one is obtained
independently by a purely probabilistic argument, which provides a
nontrivial consistency check of the variational formula at \(m=1\).

\begin{remark}[On explicit LIL constants]
\label{rem:rarity-of-constants}
Closed-form expressions for nonlinear LIL constants are rarely
available: in most settings the theory determines the almost-sure
scale while the constant remains represented abstractly by a norm or a
variational problem. After variance normalization the linear case is
exceptional---classically for independent summands, and here for
\(m=1\): one has \(\Lambda_1=\|Q_1\|_{\mathfrak H_{\mathbb R}}=1\),
the supremum being attained at
\(\xi=Q_1/\|Q_1\|_{\mathfrak H_{\mathbb R}}\). For \(m\ge2\) no such
simplification is available: \(\Lambda_m\) is the spectral norm of the
symmetric \(m\)-linear form induced by \(Q_1\), whereas
\((m!)^{-1/2}\) is its Hilbert--Schmidt norm, and the inequality
\(\Lambda_m<(m!)^{-1/2}\) is strict because \(Q_1\) has rank greater
than one. We are not aware of a setting in which such a constant has
been evaluated in closed form. 

The present paper gives two results that are seldom available together
for \(m\ge2\): an exact variational characterization, together with the
explicit estimate 
\[0<L_{m,\alpha,d}\le\Lambda_m<(m!)^{-1/2}\]
whose
lower bound reproduces the computed values of \(\Lambda_m\) to within a
fraction of one percent over the parameter range examined in
Remark~\ref{rem:Lambda-numerics}, and is therefore usable as a
\emph{closed-form working value for the constant}.
The strict gap is a nonlinear
phenomenon. Its magnitude reflects the geometry of \(Q_1\) and hence
the memory and the weights.
\end{remark}

To clarify the position of the present paper within the existing LIL
literature, Table~\ref{tab:comparison-lil} summarizes and compares the present
weighted long-memory setting and several closely related earlier LIL works.
It explains why existing theories do not cover our settings and the weighted functional cluster theorem proved in this
paper. It also shows the novelty of the model,
assumptions, methods, results, and limiting objects.

\begingroup
\scriptsize
\setlength{\tabcolsep}{3pt}
\renewcommand{\arraystretch}{1.06}
\setlength{\emergencystretch}{3em}

\begin{xltabular}{\textwidth}{@{}P{2.55cm}Y Y@{}}
\caption{Comparison with related LIL results.}
\label{tab:comparison-lil}\\

\toprule
Reference
&
Existing setting and result
&
Relation to the present paper
\\
\midrule
\endfirsthead

\caption[]{Comparison with related LIL results continued.}\\
\toprule
Reference
&
Existing setting and result
&
Relation to the present paper
\\
\midrule
\endhead

\midrule
\multicolumn{3}{r}{\emph{Continued on the next page}}\\
\endfoot

\bottomrule
\endlastfoot

\cite{LaiStout1978}
&
Stationary Gaussian sequences, including strongly dependent ones; LIL
and upper--lower class results under general covariance conditions.
&
For \(G(x)=x\) and \(a_t\equiv1\), the present model belongs to this
framework. Nonconstant weights destroy stationarity, while for
\(m\ge2\) the leading chaos is non-Gaussian. Their theory therefore
does not yield the weighted higher-chaos cluster set.
\\[0.45em]
\cite{Arcones1999}
&
Compact LILs for unweighted Gaussian functionals on the classical
\((2n\log\log n)^{1/2}\) scale.
&
The Gaussian input may have nonsummable covariances, but the
conditions there place the transformed sums in the classical-scale regime.
Here \(\alpha m<1\) keeps the leading Hermite component long-range
dependent, while nonconstant weights destroy stationarity and produce
the \(Q_t\)-based cluster set.
\\[0.45em]
\cite{Taqqu1977}
&
Unweighted nonlinear functionals of long-memory Gaussian sequences;
a strong reduction to the leading Hermite chaos and a functional LIL
for \(m=1\), with the cases \(m\ge2\) left open.
&
For \(a_t\equiv1\) the present model reduces to this setting, and the
constant 1 at \(m=1\) agrees for both papers. Theorem~\ref{thm:reduction} extends
the reduction to regularly varying weights. For \(m\ge2\) a
weighted functional LIL is proved. For \(d\ne0\) the factor
\(y^{-d}\) changes the limiting kernel, and hence the cluster set and
the constant.
\\[0.45em]

\cite{MoriOodaira1986,MoriOodaira1987}
&
Functional LILs for self-similar processes with stationary increments
represented by multiple Wiener integrals, and for certain processes
converging to such limits.
&
For \(a_t\equiv1\), the present functional LIL for \(m\ge2\) recovers
their result. For \(d\ne0\), the limiting process has no stationary
increments and falls outside their class, so their theory does not
determine the weighted cluster set or the nonlinear LIL constant.
\\[0.45em]
\cite{DehlingTaqqu1988}
&
A functional LIL for the two-parameter empirical process of a
long-memory subordinated Gaussian sequence. Its limit factorizes into
a deterministic function of the level and a Hermite process in time.
&
The result concerns an empirical process rather than deterministically
weighted partial sums, and its limiting time component is generated
by the unweighted Hermite kernel. The weighted kernel \(Q_t\), and
hence the cluster set and LIL constant it determines, do not arise
there.
\\[0.45em]
\cite{AzmoodehPeccatiPoly2016}
&
Exact LILs obtained through uniform multidimensional Wasserstein
approximation by Gaussian vectors.
&
For \(m\ge2\) in the present long-memory regime, the leading term has
a non-Gaussian fixed-chaos limit, so this approximation method is
unavailable. Their results do not treat deterministic weights or yield
a weighted functional cluster set.
\\[0.45em]
\cite{LiTomkins1996}
&
Weighted sums of independent identically distributed random
variables. In the scalar case, the LIL constant is \(\|f\|_2\);
compact LILs are also obtained in a Banach-space setting.
&
After variance normalization, their scalar LIL has constant 1, as
does the present linear result for \(m=1\). For \(m\ge2\) and
\(d\ne0\), however, the weights enter the limiting kernel \(Q_t\) and
the nonlinear LIL constant, a phenomenon absent from the independent
setting.
\\[0.45em]
\textbf{Present paper}
&
Regularly varying weighted sums
\(S_n=\sum_{t\le n}a_tG(X_t)\) of functionals of a long-memory
Gaussian sequence; scale \(A_n(2\log\log n)^{m/2}\), reduction to the
leading chaos, a sharp linear LIL for \(m=1\), and a functional LIL
for \(m\ge2\).
&
Combines deterministic weighting, long memory, and a fixed higher
Wiener chaos. For \(d=0\) and \(\ell_a\equiv1\), \(Q_t\) reduces to
the unweighted Hermite kernel; for \(d\ne0\), the factor \(y^{-d}\)
changes the kernel and hence the cluster set and nonlinear LIL
constant.
\end{xltabular}

\endgroup

The paper is organized as follows. Section~\ref{sec:assumptions}
introduces the assumptions and notation, and
Section~\ref{sec:main-results} states the main results.
Section~\ref{sec:reduction} proves the reduction to the leading Hermite
chaos, and Section~\ref{sec:upper-bound} establishes the general upper
bound. Section~\ref{sec:linear} proves the sharp weighted LIL in the
linear case. Section~\ref{sec:spectral-realization} proves the nonlinear weighted
functional LIL and derives the variational formula for the nonlinear
LIL constant. Section~\ref{sec:numerical-illustrations}  presents numerical
illustrations and Section~\ref{sec:conclusion} concluding remarks. The
appendix collects the auxiliary variance, maximal, and spectral
estimates.

The three main theorems are logically independent: the general upper
bound is proved under the weakest assumptions and is not used in the
sharp results, and the linear case is treated by a route that does not
involve the cluster-set construction.
The proof has three independent components; see
Figure~\ref{fig:architecture}. First, the weighted variance and 
higher-chaos estimates reduce the problem to the leading Wiener chaos. Second, 
the spectral dilation identifies the limiting deterministic kernels. Third, the 
functional cluster theorem is proved by finite-dimensional recurrence, finite-rank 
approximation, and a small-kernel fixed-chaos estimate.
\section{Assumptions and notation}\label{sec:assumptions}

Throughout the paper, \(\{X_t\}_{t\ge1}\) denotes a centered stationary Gaussian sequence with
\(\mathbb E X_t=0\) and \(\mathbb E X_t^2=1\). Its covariance function is
\(
    \rho(h):=\mathbb E[X_tX_{t+h}], \quad h\ge0.
\)
When negative lags occur, we use the even extension
\(\rho(-h):=\rho(h)\).

\medskip
\refstepcounter{assumption}\label{A1}
\noindent\textbf{(\theassumption)}\; (Time-domain long-memory condition).
We assume that \(\{X_t\}\) has \emph{long memory}, also called  \emph{long-range dependence} or \emph{strong dependence} in the sense that
\(
    \rho(h)\sim c\,h^{-\alpha}L_0(h),
    h\to\infty,
\)
where \(c>0\), \(0<\alpha<1\), and \(L_0\) is a measurable eventually positive slowly varying function at infinity, that is,
 \(L_0(rx)/L_0(x)\to1\) as \(x\to\infty\) for every \(r>0\). In particular, the covariances are not summable.

\medskip
\refstepcounter{assumption}\label{A2}
\noindent\textbf{(\theassumption)}\; (Spectral-domain long-memory condition). 
We also assume that \(\{X_t\}\) admits a spectral density \(f\) on \((-\pi,\pi]\) such that
\(
    \rho(h)=\int_{-\pi}^{\pi}e^{ih\lambda}f(\lambda)\,d\lambda,
\)
and
\(
    f(\lambda)\sim c_f|\lambda|^{\alpha-1}L_0(1/|\lambda|),
     \lambda\to0,
\)
with \(c_f>0\), while \(f\) is bounded on every compact subset of \((-\pi,\pi]\setminus\{0\}\).

\medskip
\refstepcounter{assumption}\label{A3}
\noindent\textbf{(\theassumption)}\; (Hermite rank). Let \(G\in L^2(\mathbb R,\phi)\), where \(\phi\) is the standard Gaussian measure.
We assume that
\(G\) has \emph{Hermite rank \(m\ge1\)}, that is,
\(
    G(x)=\sum_{q=m}^\infty \frac{J_q}{q!}H_q(x),
    \quad J_m\neq0, 
\)
    where \(
J_q:=\int_{\mathbb R}G(x)H_q(x)\phi(dx)\)
 and \(H_q\) denotes the \(q\)-th Hermite polynomial,
\(
H_q(x)=(-1)^q e^{x^2/2}\frac{d^q}{dx^q}e^{-x^2/2}.
\)

\medskip
\refstepcounter{assumption}\label{A4}
\noindent\textbf{(\theassumption)}\; (Regular variation of the weights).The weights are assumed to be \emph{regularly varying}: for \(n\ge1\),
\(
    a_n=n^{-d}\ell_a(n),
\)
where \(d\in\mathbb R\) and \(\ell_a\) is \emph{slowly varying} at infinity and eventually positive. Equivalently,
\(a_n\) is regularly varying with index \(-d\).

\medskip
For the spectral kernel estimates we impose one additional regularity condition on the weights:

\refstepcounter{assumption}\label{A5}
\noindent\textbf{(\theassumption)}\; (Smoothness of the weight factor).
The function \(\ell_a\) is eventually continuously differentiable and satisfies
\(
\frac{x\ell_a'(x)}{\ell_a(x)}\longrightarrow0,\quad x\to\infty.
\)

\medskip
\refstepcounter{assumption}\label{A6}
\noindent\textbf{(\theassumption)}\; (High-frequency spectral regularity). For the spectral proof of the nonlinear functional theorem we assume that
\(
f\in W^{1,1}\bigl([-\pi,-\delta]\cup[\delta,\pi]\bigr)
\quad\text{for every }\delta\in(0,\pi).
\)

\begin{remark}[Relation between the time- and spectral-domain formulations of long memory]
Assumptions \((\ref{A1})\) and \((\ref{A2})\) are not equivalent. If both
hold with the same slowly varying function \(L_0\), then their constants
satisfy
\(
c=c_\alpha c_f,
\quad
c_\alpha:=2\Gamma(\alpha)\cos\left(\frac{\pi\alpha}{2}\right).
\)
This is the standard relation between the covariance and spectral-density
asymptotics; see
\cite{BFGK2013}.
Under additional Tauberian regularity conditions, the covariance and
spectral formulations are equivalent.
\end{remark}

\begin{remark}[Role of the additional regularity assumptions]
Assumptions \((\ref{A5})\) and \((\ref{A6})\) are used only in the spectral part
of the proof of the nonlinear functional theorem.

Assumption \((\ref{A5})\) is a smoothness condition on the slowly varying factor
in the weights. Its role is to exclude strongly oscillating slowly varying
factors and to ensure that the weights vary regularly enough for the Abel
summation estimates for weighted exponential sums. It is not used in the
variance asymptotics, in the reduction to the leading Wiener chaos, or in the
general upper bound.

Assumption \((\ref{A6})\) is a regularity condition away from the spectral
singularity at the origin. The long-memory behavior is determined by the
low-frequency singularity of \(f\) near zero; \((\ref{A6})\) is needed only to
control the high-frequency part of the prelimit spectral kernels. Thus it is a
technical high-frequency regularity assumption, not an additional long-memory
condition.

Assumption (\ref{A6}) is satisfied by the standard long-memory models.
For FARIMA$(p,\widetilde d,q)$ processes whose autoregressive and
moving-average polynomials have no zeros on the unit circle, and for
fractional Gaussian noise, the spectral density is infinitely
differentiable on $[-\pi,\pi]\setminus\{0\}$, hence belongs to
$W^{1,1}$ away from the origin.
\end{remark}

\medskip
We define the weighted partial sums by
\(
    S_n:=\sum_{u=1}^n a_uG(X_u),  n\ge1,
\)
their \emph{leading chaos} component by
\[
    S_{n,m}:=\frac{J_m}{m!}\sum_{u=1}^n a_uH_m(X_u),
\]
and the remainder by
\(
    R_n:=S_n-S_{n,m}.
\)
Further, let
\(
    A_n^2:=\operatorname{Var}(S_n)
\)
and
\(
    v_n:=n^H\ell_a(n)L_0(n)^{m/2}.
\)
Thus \(v_n\) is the deterministic regularly varying scale of the leading chaos, while \(A_n\) is the exact variance normalization.

\medskip
Throughout the paper we work in \emph{the long-memory} regime
\(
    \alpha m<1 \quad\text{and}\quad 2d+\alpha m<1.
\)
Denote:
\(
    H:=1-d-\frac{\alpha m}{2}.
\)

\begin{remark}\label{rem:boundary}
The restrictions $\alpha m<1$ and $2d+\alpha m<1$ delimit the parameter
range treated in this paper. The two boundary relations are of a
different nature and, except at $d=0$, lie in disjoint parts of the
boundary of this range: imposing $\alpha m=1$ while retaining
$2d+\alpha m<1$ forces $d<0$, whereas imposing $2d+\alpha m=1$ while
retaining $\alpha m<1$ forces $d>0$. The two relations coincide at
$d=0$.

At $\alpha m=1$, which since $0<\alpha<1$ can occur only for $m\ge2$,
the covariance sequence of the Hermite term $H_m(X_t)$ is critical:
\(
    \operatorname{Cov}\bigl(H_m(X_t),H_m(X_{t+h})\bigr)
    \sim
    m!\,c^m h^{-1}L_0(h)^m .
\)
The behavior of its critical tail is therefore measured by
\(
    B_m(n):=\sum_{h=1}^{n}h^{-1}L_0(h)^m,
\)
which is not determined by \((\ref{A1})\). For example, if
$L_0\equiv1$, then $B_m(n)\sim\log n$, whereas if
$L_0(h)=(\log(e+h))^{-2/m}$, then
$B_m(n)\to B_m(\infty)<\infty$, so that
$\sum_{h\ge1}|\rho(h)|^m<\infty$ even though
$\sum_{h\ge1}|\rho(h)|=\infty$. Accordingly, neither a universal
logarithmic correction at the variance level nor a particular limiting
regime for the leading chaos follows from the present assumptions.
This boundary requires a finer condition on $L_0$ and a separate
analysis.

At $2d+\alpha m=1$ with $\alpha m<1$, the situation is different.
Here $H=1/2$, but the covariance sequence of $H_m(X_t)$ remains
nonsummable: the equality $H=1/2$ is produced by the decay of the
weights, not by a transition to weak dependence. The kernel formula
itself remains nondegenerate at this boundary. Under $\alpha m<1$, the
integral $I_{m,\alpha,d}$ is finite whenever $2d+\alpha m<2$, and the
identity in Lemma~\ref{lem:limit-kernel-L2} gives
\(
    \|Q_t\|_{L^2(\mathbb R^m)}^2=\frac{t}{m!}
\)
when $2d+\alpha m=1$. For $m\ge2$, these facts provide no basis for
inferring a Gaussian limiting regime or for replacing the
iterated-logarithm exponent $m/2$ by $1/2$. They do not, however,
establish the functional LIL at the boundary.

We state our results away from both boundaries and make no claim about
the corresponding LILs there.
\end{remark}


\begin{proposition}[Exact variance asymptotics]\label{prop:exact-variance}
Assume \((\ref{A1})\), \((\ref{A3})\), and \((\ref{A4})\), \(\alpha m<1\), and \(2d+\alpha m<1\). Then
\[
    \operatorname{Var}(S_{n,m})\sim\sigma_m^2v_n^2.
\]
where
\[
\sigma_m^2
=
\frac{J_m^2c^m}{m!}
\int_0^1\int_0^1
x^{-d}y^{-d}|x-y|^{-\alpha m}\,dx\,dy.
\]
The integral is finite. Moreover,
\(
A_n^2=\operatorname{Var}(S_n)\sim \operatorname{Var}(S_{n,m}).
\)

Consequently, \(A_n\sim\sigma_m v_n\), and \(A_n\) is regularly varying with index \(H\).
\end{proposition}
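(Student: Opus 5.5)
The plan is to compute \(\operatorname{Var}(S_{n,m})\) directly via the diagram formula and then compare it with the variance of the remainder. By the orthogonality relation \(\mathbb E[H_p(X_u)H_q(X_v)]=\delta_{pq}\,q!\,\rho(u-v)^q\), we have
\[
\operatorname{Var}(S_{n,m})=\frac{J_m^2}{m!}\sum_{u,v=1}^n a_ua_v\rho(u-v)^m,
\qquad
\operatorname{Var}(R_n)=\sum_{q>m}\frac{J_q^2}{q!}\sum_{u,v=1}^n a_ua_v\rho(u-v)^q,
\]
and \(\operatorname{Cov}(S_{n,m},R_n)=0\). So it suffices to show that the first double sum is asymptotic to \(c^m n^{2-2d-\alpha m}\ell_a(n)^2L_0(n)^m\int_0^1\!\int_0^1x^{-d}y^{-d}|x-y|^{-\alpha m}\,dx\,dy\), and that the \(q>m\) sums are \(o\) of this. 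Finiteness of the integral is elementary. Near the diagonal, \(|x-y|^{-\alpha m}\) is integrable because \(\alpha m<1\). Near the axes, \(x^{-d}\) is integrable when \(d<1\); this follows from \(2d<1-\alpha m<1\) when \(d>0\) and is trivial when \(d\le0\). At the corner \(x,y\to0\), homogeneity of degree \(-2d-\alpha m>-2\) gives integrability.

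For the main sum I would write it as a Riemann sum. With \(x=u/n\) and \(y=v/n\), each term equals
\[
n^{-2d-\alpha m}\ell_a(n)^2L_0(n)^m\,c^m\,x^{-d}y^{-d}|x-y|^{-\alpha m}\cdot\frac{\ell_a(nx)\ell_a(ny)L_0(n|x-y|)^m}{\ell_a(n)^2L_0(n)^m}\,(1+o(1)),
\]
and \(n^2\) terms carry mesh \(n^{-2}\). Fix \(\varepsilon>0\) and split the index set into three regions: the good region \(\{u,v\ge\varepsilon n,\ |u-v|\ge\varepsilon n\}\); the near-diagonal strip \(|u-v|<\varepsilon n\); and the boundary strips \(\min(u,v)<\varepsilon n\). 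On the good region, uniform convergence of slowly varying functions on compacts (the uniform convergence theorem) and of \(\rho(h)/(ch^{-\alpha}L_0(h))\) give convergence to the integral over the corresponding region of \([0,1]^2\).

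The main obstacle is the two bad regions, where the bound must be uniform in \(n\) and small as \(\varepsilon\to0\). For these I would use Potter's bounds: for any \(\eta>0\) there is a constant \(C_\eta\) such that \(\ell_a(k)/\ell_a(n)\le C_\eta\max\{(k/n)^{\eta},(k/n)^{-\eta}\}\) for \(k,n\ge k_0\), and similarly for \(L_0\). Together with \(|\rho(h)|\le C h^{-\alpha}L_0(h)\) for \(h\ge1\) and \(|\rho|\le1\), this dominates the normalized summand by
\[
C\,x^{-d\pm\eta}y^{-d\pm\eta}|x-y|^{-\alpha m-m\eta}.
\]
For small \(\eta\) this is still integrable, since all the integrability conditions above are strict inequalities. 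The contribution of the bad regions is then bounded by the integral of this dominating function over \(\varepsilon\)-neighbourhoods of the diagonal and the axes, which tends to \(0\) as \(\varepsilon\to0\). Some care is needed with the finitely many indices \(u<k_0\) where Potter's bounds do not apply, and with the diagonal terms \(u=v\). Their total contribution is \(O(n\max_{u\le n}a_u^2)\) plus lower-order cross terms. Because \(2-2d-\alpha m>\max(1-2d,1)\), which uses \(\alpha m<1\), this is \(o(v_n^2)\). Hence \(\operatorname{Var}(S_{n,m})\sim\sigma_m^2v_n^2\), and \(\sigma_m^2>0\) since the integrand is positive.

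For the remainder, use \(|\rho(h)|^q\le|\rho(h)|^{m+1}\) and \(\sum_{q>m} J_q^2/q!\le\|G\|^2_{L^2(\phi)}<\infty\). This gives
\[
\operatorname{Var}(R_n)\le\|G\|^2\sum_{u,v\le n}|a_u||a_v||\rho(u-v)|^{m+1}.
\]
If \(\alpha(m+1)<1\), the same Potter argument gives order \(n^{2-2d-\alpha(m+1)}\) times slowly varying factors, which is \(o(v_n^2)\). If \(\alpha(m+1)\ge1\), then \(\sum_{|h|\le n}|\rho(h)|^{m+1}\) is \(O(n^{\delta})\) for every \(\delta>0\). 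Handling the weights via Potter bounds (or Cauchy–Schwarz on \(a_ua_v\le(a_u^2+a_v^2)/2\)) then gives a bound of \(n^{1-2d+\delta}\) or \(n^{1+\delta}\) times slowly varying factors. This is \(o(n^{2-2d-\alpha m})\) by the strict inequalities \(\alpha m<1\) and \(2d+\alpha m<1\). One must check the case \(d<0\) separately, where \(\max_{u\le n}a_u^2\approx n^{-2d}\). Hence \(A_n^2=\operatorname{Var}(S_{n,m})+\operatorname{Var}(R_n)\sim\operatorname{Var}(S_{n,m})\), so \(A_n\sim\sigma_mv_n\). Finally, \(v_n=n^H\ell_a(n)L_0(n)^{m/2}\) is regularly varying with index \(H\), and therefore so is \(A_n\).
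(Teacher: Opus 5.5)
Your proposal is correct. For the leading chaos it is essentially the paper's argument. The paper packages your three-region Riemann-sum computation as Lemma~\ref{lem:two-dimensional-rv-riemann}: the uniform convergence theorem on $D_\varepsilon=\{x,y\ge\varepsilon,\ |x-y|\ge\varepsilon\}$, and a Potter-dominated integrable majorant off it. It absorbs the discrepancy between $\rho(h)^m$ and $c^mh^{-\alpha m}L_0(h)^m$ into the slowly varying function $L_\rho(h):=c^{-m}h^{\alpha m}\rho(h)^m$, rather than tracking $\rho(h)/(ch^{-\alpha}L_0(h))\to1$ by hand. One detail you leave implicit is comparing the majorant's Riemann sums with its integral near the axes. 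For $d<0$ the factor $x^{-d-\eta}$ is not decreasing, so the paper uses the exponent $(d+\eta)_+$ and compares each term with the integral over its cell $((s-1)/n,s/n]\times((t-1)/n,t/n]$.

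The real difference is in the remainder. You bound the full double sum at once, using $|\rho|^q\le|\rho|^{m+1}$, $\sum_{q>m}J_q^2/q!\le\|G\|^2$, and the Schur-type estimate $\sum_{u,v\le n}|a_ua_v|\,b(u-v)\le\sum_{u\le n}a_u^2\sum_{|h|<n}b(h)$. This estimate alone already covers both of your cases, since $\sum_{|h|\le n}|\rho(h)|^{m+1}=O(n^{(1-\alpha(m+1))_+ +\delta})$. The paper instead proves Lemma~\ref{lem:higher-rank-variance-gain}: a bound $Cv_N^2N^{-2\delta}(|I|/N)^\gamma$ for every interval $I$ in a dyadic block $\{N+1,\dots,2N\}$, after which it sums the blocks with the triangle inequality in $L^2$.

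For this proposition your route is shorter and fully sufficient. The paper takes the detour because the same localized estimate with a polynomial saving is exactly the hypothesis of Lemma~\ref{lem:dyadic-maximal}. That lemma is what fills the dyadic gaps in the proof of Theorem~\ref{thm:reduction}.
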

The restriction \(2d+\alpha m<1\) is the regime needed for the subsequent results; the variance asymptotic itself can be extended to a wider range, but this extension is not used here.

Under \(\alpha m<1\) and \(2d+\alpha m<1\), the natural normalization is
\(
    A_n(2\log\log n)^{m/2}.
\)

The proof is given in the Appendix.

We use Potter's bound for slowly varying functions in the standard form
\cite[Theorem 1.5.6]{BGT1987}. 
\begin{remark}[Use of Potter bounds]
Throughout the paper, whenever Potter's bound is applied to a regularly varying
sequence, we use the following convention. The bound is first applied for all
arguments larger than a fixed threshold depending on the Potter exponent. The
finitely many remaining bounded arguments are then absorbed into the constant.
In particular, after increasing the constant \(C_\eta\), we may use estimates of
the form
\(
    |\rho(h)|^m
    \le
    C_\eta(1+h)^{-\alpha m+\eta},
     h\ge0,
\)
and analogous Potter-type estimates for the weight sequence \(a_n=n^{-d}\ell_a(n)\),
uniformly over the ranges considered in the sequel.
\end{remark}

We also use the standard Mills
bounds for the normal tail.

Finally, for nonnegative functions \(r\) and \(s\), \(r(x)\asymp s(x)\) means that there exist constants \(0<C_1<C_2<\infty\) such that \(C_1s(x)\le r(x)\le C_2s(x)\) for all sufficiently large \(x\).

\section{Main results}\label{sec:main-results}

We now state the main results. The first two results reduce the problem to the leading
Wiener chaos and give the general upper bound. The last two results distinguish the linear
and nonlinear regimes. Throughout this section we assume regime
\(\alpha m<1\), \(2d+\alpha m<1\). The reduction Theorem \ref{thm:reduction}, the upper- bound Theorem \ref{thm:upper}, and the linear LIL Theorem \ref{thm:linear-sharp-lil} use only
\((\ref{A1})\), \((\ref{A3})\), and \((\ref{A4})\). The spectral assumption
\((\ref{A2})\), the smoothness assumption \((\ref{A5})\), and the high-frequency
regularity assumption \((\ref{A6})\) are imposed for the nonlinear functional
Theorem \ref{thm:functional-nonlinear-lil}.

We first reduce the weighted sums to the leading Wiener chaos.

\begin{theorem}[Reduction to the leading chaos]\label{thm:reduction}
We have
\[
\limsup_{n\to\infty}
\frac{|R_n|}{A_n(2\log\log n)^{m/2}}
=0
\qquad\text{\rm a.s.}
\]
Consequently, the LIL on this scale is determined by the leading chaos term \(S_{n,m}\).
\end{theorem}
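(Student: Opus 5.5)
The plan is to show that the remainder $R_n=\sum_{q>m}\frac{J_q}{q!}\sum_{u\le n}a_uH_q(X_u)$ has variance of strictly smaller order than $A_n^2$, by a polynomial factor $n^{-\delta}$. Combined with a dyadic maximal inequality and Borel--Cantelli, this gives $R_n=o(A_n)$ almost surely, which is much stronger than needed: the $(2\log\log n)^{m/2}$ factor is not required at all.

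\emph{Step 1: variance of the remainder.} By orthogonality of Hermite polynomials, $\mathbb E[H_q(X_u)H_{q'}(X_v)]=\delta_{qq'}q!\rho(u-v)^q$. Hence for any block $(k,l]$,
\[
\operatorname{Var}\Bigl(\sum_{u=k+1}^{l}a_u\sum_{q>m}\tfrac{J_q}{q!}H_q(X_u)\Bigr)
=\sum_{q>m}\frac{J_q^2}{q!}\sum_{u,v=k+1}^{l}a_ua_v\rho(u-v)^q .
\]
Since $|\rho|\le1$, we have $|\rho|^q\le|\rho|^{m+1}$, and $\sum_q J_q^2/q!=\|G\|^2<\infty$. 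So this is at most $\|G\|^2\sum_{u,v}|a_u||a_v||\rho(u-v)|^{m+1}$. Applying Potter's bounds to $a$ and $\rho$ gives $|\rho(h)|^{m+1}\le C(1+h)^{-\alpha(m+1)+\eta}$ and $|a_u|\le Cu^{-d\pm\eta}$.

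There are two cases. If $\alpha(m+1)<1$, the double sum over $[1,n]^2$ is $O(n^{2-2d-\alpha(m+1)+3\eta})$. If $\alpha(m+1)\ge1$, it is $O(n^{1-2d+3\eta}\log n)$ when $d<1/2$, and $O(n^{\max(0,1-2d)+\eta})$ otherwise. In every case the exponent is strictly less than $2H=2-2d-\alpha m$. The reason is that $2H>1$ and $2H>1-2d$ (because $\alpha m<1$), while $\alpha>0$. Choosing $\eta$ small, and using Proposition~\ref{prop:exact-variance} ($A_n$ regularly varying with index $H$), we get $\operatorname{Var}(R_n)\le Cn^{-\delta}A_n^2$ for some $\delta>0$.

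\emph{Step 2: block increments and a maximal inequality.} The same computation bounds $\operatorname{Var}(R_l-R_k)$ for $2^{j}\le k<l\le 2^{j+1}$ by $C\,2^{-\delta j}A_{2^{j}}^2\,((l-k)/2^j)^{\gamma}$ for some $\gamma>1$. This is the step I expect to need the most care. In the long-memory subcase the block sum is bounded by $(l-k)^{2-\alpha(m+1)+\eta}$ times a weight factor of size $2^{-2dj}$, so $\gamma=2-\alpha(m+1)+\eta>1$. In the short-memory subcase one gets only $\gamma=1$, and then I would interpolate with the total bound or accept an extra $\log$ factor. Given such an increment bound, the Móricz (or Serfling) maximal inequality for second moments yields
\[
\mathbb E\max_{2^j<n\le2^{j+1}}R_n^2\le C\,j^2\,2^{-\delta j}A_{2^{j}}^2 .
\]
With $\gamma=1$ this is the Rademacher--Menshov version, which costs only the factor $j^2$.

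\emph{Step 3: Borel--Cantelli.} Chebyshev's inequality gives
\[
P\Bigl(\max_{2^j<n\le2^{j+1}}|R_n|>\varepsilon A_{2^j}\Bigr)\le C\varepsilon^{-2}j^22^{-\delta j},
\]
which is summable in $j$. Since $A_n\asymp A_{2^j}$ on the dyadic block by regular variation, $R_n/A_n\to0$ almost surely. Therefore $R_n/(A_n(2\log\log n)^{m/2})\to0$ as well, and the consequence follows because $S_n=S_{n,m}+R_n$.

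The main obstacle is the case $d<0$, where the weights grow. There the Potter bounds must be applied uniformly over the block, and the exponent bookkeeping (using $-\alpha(m+1)$ against $-\alpha m$) must be checked to leave a genuine polynomial gap $\delta>0$ in every subcase.
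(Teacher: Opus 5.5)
Your proposal is correct and follows essentially the same route as the paper: a polynomial variance gain for the rank-$\ge m+1$ remainder (the paper's Lemma~\ref{lem:higher-rank-variance-gain}, with the same two cases $\alpha(m+1)<1$ and $\alpha(m+1)\ge1$ and the same exponents $\gamma$), the dyadic second-moment maximal bound with its $(\log N)^2$ loss (Lemma~\ref{lem:dyadic-maximal}), and Chebyshev plus Borel--Cantelli on dyadic blocks. As you note, the argument yields the stronger statement $R_n=o(A_n)$ a.s.; the same is implicit in the paper's proof, since its Borel--Cantelli bounds are summable in $k$ with or without the factor $(2\log\log n_k)^{m/2}$.
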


Our next result gives the upper bound on the natural scale.

\begin{theorem}[Upper bound]\label{thm:upper} 
There exists \(C_{\mathrm{up}}<\infty\) such that
\[
\limsup_{n\to\infty}
\frac{|S_n|}{A_n(2\log\log n)^{m/2}}
\le C_{\mathrm{up}}
\qquad\text{\rm a.s.}
\]
\end{theorem}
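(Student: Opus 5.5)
By Theorem~\ref{thm:reduction} it suffices to prove the bound for the leading chaos term $S_{n,m}$. The remainder $R_n$ is negligible on the scale $A_n(2\log\log n)^{m/2}$ almost surely. The plan for $S_{n,m}$ is a standard blocking argument along a geometric subsequence, combined with hypercontractivity in the fixed $m$-th Wiener chaos.

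\emph{Step 1 (tails along a geometric subsequence).} Fix $\theta>1$ and set $n_k=\lfloor\theta^k\rfloor$. Since $S_{n,m}$ lies in the $m$-th Wiener chaos, hypercontractivity gives $\|S_{n,m}\|_p\le (p-1)^{m/2}\|S_{n,m}\|_2$ for $p\ge2$. The Markov inequality then yields the chaos tail bound
\[
\mathbb P\bigl(|S_{n,m}|>x\,\|S_{n,m}\|_2\bigr)\le C\exp\bigl(-c_m x^{2/m}\bigr),\qquad c_m>0,
\]
with $c_m=m/(2e)$ obtainable by optimizing in $p$. Proposition~\ref{prop:exact-variance} gives $\|S_{n,m}\|_2\sim A_n$. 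Taking $x=K(2\log\log n_k)^{m/2}$ gives a probability of order $(k\log\theta)^{-c_m 2^{1/m}K^{2/m}}$, which is summable once $K$ is large. By Borel--Cantelli, $|S_{n_k,m}|\le K A_{n_k}(2\log\log n_k)^{m/2}$ eventually.

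\emph{Step 2 (interpolation within blocks).} For $n_{k}<n\le n_{k+1}$ we must control $M_k:=\max_{n_k<n\le n_{k+1}}|S_{n,m}-S_{n_k,m}|$. The weights are nonstationary, so we do not use increment stationarity. We use the variance bound for arbitrary block sums instead:
\[
\operatorname{Var}\Bigl(\sum_{u=i+1}^{j}a_uH_m(X_u)\Bigr)\le C\,(j-i)^{2H'}\,\max(\text{weight scale})^2 ,
\]
which follows from Potter bounds on $a_u$ and on $|\rho|^m$ (for $d<0$ the weight is at most $a_{n_{k+1}}$ up to Potter factors). Together with $\alpha m<1$, this gives a bound $\operatorname{Var}(\cdot)\le C\,A_{n_{k+1}}^2\bigl((j-i)/n_{k+1}\bigr)^{\gamma}$ for some $\gamma>1$, uniformly on the block. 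Apply a chaining maximal inequality to the $m$-th chaos process indexed by $n$ in the block. This can be dyadic (Móricz-type) chaining with Orlicz norm $\psi_{2/m}$, the hypercontractive norm. Since $\gamma>1$, the dyadic sum converges and gives $\|M_k\|_{\psi_{2/m}}\le C A_{n_{k+1}}$. The same Borel--Cantelli computation as in Step 1 then gives $M_k\le K' A_{n_{k+1}}(2\log\log n_{k+1})^{m/2}$ eventually.

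\emph{Step 3 (conclusion).} $A_n$ is regularly varying with index $H$ (Proposition~\ref{prop:exact-variance}). Hence $A_{n_{k+1}}/A_n\le C\theta^{H}$ for $n\in(n_k,n_{k+1}]$, and $\log\log n_{k+1}\sim\log\log n$. Combining Steps 1 and 2 with Theorem~\ref{thm:reduction} yields the claim with $C_{\mathrm{up}}=C(K+K')\theta^H$. No sharpness is needed.

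The main obstacle is Step 2: obtaining a uniform block variance bound with exponent $\gamma>1$ despite the non-stationarity of $a_uH_m(X_u)$. The delicate cases are $d>0$, where the weights blow up near the left end — harmless here since $n_k\to\infty$ — and the slowly varying corrections. I would handle these with Potter bounds with small $\eta$, chosen so that $2H-2\eta>1$, using $H>1/2$.
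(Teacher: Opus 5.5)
Your proposal is correct and follows the paper's proof essentially step for step. The paper also reduces to $S_{n,m}$, uses hypercontractive tails with Borel--Cantelli along $n_k=\lfloor\theta^k\rfloor$, and controls the gaps by the block variance bound $\operatorname{Var}\le C A_{n_k}^2(|I|/n_k)^{2-\alpha m-\eta}$ (Lemma~\ref{lem:leading-chaos-local-variance}) fed into a dyadic fixed-chaos chaining estimate (Lemma~\ref{lem:fixed-chaos-maximal-tail}); the only difference is that this chaining is done with $L^p$ norms and Markov's inequality rather than a $\psi_{2/m}$ Orlicz norm. Two small corrections: first, the exponent $\gamma>1$ comes from $\alpha m<1$ alone, since the weights are comparable across a geometric block, so $H>1/2$ plays no role there; second, with $x=K(2\log\log n_k)^{m/2}$ the tail is $(\log n_k)^{-2c_mK^{2/m}}$, not an expression with the factor $2^{1/m}$.
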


\begin{theorem}[Sharp weighted LIL in the linear case]\label{thm:linear-sharp-lil}
Let \(m=1\). Then
\[
\limsup_{n\to\infty}
\frac{S_n}{A_n(2\log\log n)^{1/2}}
=1,
\qquad
\liminf_{n\to\infty}
\frac{S_n}{A_n(2\log\log n)^{1/2}}
=-1
\quad\text{a.s.}
\]
Consequently,
\[
\limsup_{n\to\infty}
\frac{|S_n|}{A_n(2\log\log n)^{1/2}}
=1
\quad\text{a.s.}
\]
\end{theorem}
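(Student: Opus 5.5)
By Theorem~\ref{thm:reduction}, it suffices to prove the statement for the leading chaos $S_{n,1}=J_1\sum_{u\le n}a_uX_u$. This is a centered Gaussian sequence with variance $\sigma_n^2:=\operatorname{Var}(S_{n,1})\sim A_n^2$ by Proposition~\ref{prop:exact-variance}. The symmetry $S_{n,1}\stackrel{d}{=}-S_{n,1}$ at the process level gives the liminf statement from the limsup statement. So the task reduces to two estimates for a Gaussian, nonstationary, long-memory sequence: an upper bound with constant $\le1+\varepsilon$, and a lower bound with constant $\ge1-\varepsilon$.

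\emph{Upper bound.} Fix $\theta>1$ and set $n_k=\lfloor\theta^k\rfloor$.
\begin{itemize}
\item On the block $(n_{k-1},n_k]$, control $\max_{n\le n_k}S_{n,1}$ by $\sigma_{n_k}$ times a Gaussian tail. Use Borell--TIS: the expected maximum is $O(\sigma_{n_k})$ by a chaining (Kolmogorov/Dudley) bound derived from the increment variance bound $\operatorname{Var}(S_{n,1}-S_{n',1})\le C\,\sigma_{n_k}^2(|n-n'|/n_k)^{2H-\eta}$, which follows from Potter bounds. Since the maximal variance is $\max_{n\le n_k}\sigma_n^2\le(1+o(1))\sigma_{n_k}^2$ (regular variation with $H>1/2$ makes $\sigma_n$ eventually increasing up to $1+o(1)$), this gives $\mathbb P(\max_{n\le n_k}S_{n,1}>(1+\varepsilon)\sigma_{n_k}\sqrt{2\log\log n_k})\le (k\log\theta)^{-(1+\varepsilon)^2+o(1)}$, which is summable.
\item Since $\sigma_{n_k}/\sigma_{n}\le\theta^{H}(1+o(1))$ for $n\in(n_{k-1},n_k]$, Borel--Cantelli gives a limsup $\le(1+\varepsilon)\theta^H$.
\item The crude $\theta^H$ loss is removed as follows. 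Rather than comparing to $\sigma_{n_k}$, apply Borell--TIS to the normalized process $S_{n,1}/\sigma_n$, $n\in(n_{k-1},n_k]$, whose variance is exactly $1$. Its expected supremum is $o(\sqrt{\log\log n_k})$ (indeed $O(1)$ by chaining with the same increment bound, rescaled). Hence the limsup is $\le1+\varepsilon$ for every $\theta$.
\end{itemize}

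\emph{Lower bound.} This is the main obstacle, because long memory makes the block sums strongly correlated and the naive independence-of-blocks argument fails. Take a very sparse subsequence $n_k=\lfloor e^{k^{\beta}}\rfloor$ with $\beta>1$, so that $\log\log n_k\sim\beta\log k$ while $n_{k-1}/n_k\to0$ superpolynomially.
\begin{itemize}
\item Write $S_{n_k,1}=\bigl(S_{n_k,1}-\mathbb E[S_{n_k,1}\mid\mathcal F_{k-1}]\bigr)+\mathbb E[S_{n_k,1}\mid\mathcal F_{k-1}]$, where $\mathcal F_{k-1}=\sigma(X_u:u\le n_{k-1})$. Equivalently, work with the Gaussian vector $Y_k=S_{n_k,1}/\sigma_{n_k}$ and compute its correlations $r_{jk}=\operatorname{Cov}(Y_j,Y_k)$ for $j<k$.
\item From the double-integral representation in Proposition~\ref{prop:exact-variance} and Potter bounds, $r_{jk}\le C(n_j/n_k)^{\gamma}$ for some $\gamma>0$. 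The candidate is $\gamma=\min(H,1-d-\alpha m/2\ldots)$; the precise exponent is $\frac12$-ish, namely $(1-2d-\alpha m)/2+\ldots$, and only positivity matters.
\item Hence $\sum_{j<k}|r_{jk}|$ is bounded uniformly and $r_{jk}\log k\to0$ rapidly.
\end{itemize}

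Now apply a Gaussian-sequence lower-class argument. Either use the Lai--Stout/Berman-type criterion for Gaussian sequences with $r_{jk}\log(k)\to0$ fast, or, more self-containedly, apply a Kochen--Stone (generalized Borel--Cantelli) argument to the events $E_k=\{Y_k>(1-\varepsilon)\sqrt{2\log\log n_k}\}$. Normal comparison (Slepian/Berman inequality) bounds $\mathbb P(E_jE_k)-\mathbb P(E_j)\mathbb P(E_k)$ by
\[
C\,|r_{jk}|\exp\!\Bigl(-\frac{(1-\varepsilon)^2\log\log n_k}{1+|r_{jk}|}\Bigr),
\]
whose sum is $o\bigl((\sum\mathbb P(E_k))^2\bigr)$. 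Meanwhile $\sum_k\mathbb P(E_k)\asymp\sum_k k^{-\beta(1-\varepsilon)^2}/\sqrt{\log k}$, which diverges once $\beta(1-\varepsilon)^2\le1$. Choosing $\beta=1+\varepsilon$ makes this hold for small $\varepsilon$.

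Kochen--Stone then gives $\mathbb P(E_k\text{ i.o.})=1$; the Hewitt--Savage/zero-one law needed here is replaced by the Kochen--Stone bound with probability tending to $1$ along tails. This yields $\limsup\ge1-\varepsilon$. Combining with the upper bound, and using $A_n\sim\sigma_n/|J_1|\cdot|J_1|$ together with Theorem~\ref{thm:reduction}, concludes the proof.
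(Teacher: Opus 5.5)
Your proposal is correct in outline and follows the paper's architecture. You reduce to $S_{n,1}$ via Theorem~\ref{thm:reduction} and $A_n\sim\sigma_n$, prove the upper bound on geometric blocks, and obtain the lower bound by Kochen--Stone along $n_k=\lfloor\exp(k^\gamma)\rfloor$, driven by polynomial decay of $\operatorname{Corr}(S_{n,1},S_{N,1})$.

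The tools differ in two places.

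\emph{Upper bound.} The paper uses the Gaussian tail only at the grid points $\lfloor\theta^k\rfloor$. It controls the gaps with the hypercontractive dyadic maximal bound of Lemma~\ref{lem:fixed-chaos-maximal-tail}, fed by Lemma~\ref{lem:leading-chaos-local-variance}, and then lets $\theta\downarrow1$. Your Borell--TIS/Dudley bound for the self-normalized process $S_{n,1}/\sigma_n$ reaches $1+\varepsilon$ at fixed $\theta$ and is cleaner for $m=1$. You should state that its increments are controlled through $|\sigma_n-\sigma_{n'}|\le\|S_{n,1}-S_{n',1}\|_2$. Your route is purely Gaussian, whereas the paper's gap control is the one reused for $m\ge2$.

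\emph{Lower bound.} You use half-line events and Berman's inequality. The paper uses shifted balls and the density-ratio estimate of Lemma~\ref{lem:gaussian-shifted-ball-comparison}, packaged as Lemma~\ref{lem:finite-dimensional-sparse-recurrence} with $N=1$ so that it can be reused for the multidimensional recurrence in the nonlinear case. For $N=1$ the two are interchangeable. Even your weakened Berman bound, which keeps only $u_k^2$ in the exponent, suffices: $\sum_{j<k}|r_{jk}|\le Ce^{-ck^{\beta-1}}$, so the total covariance error is $O(1)$.

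Two statements need correcting. Only positivity of the exponents is used, so the proof survives both.

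\emph{Correlation exponent.} The correlation decay is the one model-specific input, and you assert it with a wrong exponent; $(1-2d-\alpha)/2=H-\tfrac12$ is not correct. It also does not follow from the variance asymptotics alone: Cauchy--Schwarz on $\operatorname{Cov}(S_{n,1},S_{N,1}-S_{n,1})$ gives only a correlation bound of order $1$. As in Lemma~\ref{lem:linear-var-corr}, split the cross sum.
\begin{itemize}
\item $\sigma_n^2$ and the near block $n<t\le2n$ contribute $(n/N)^{H}$.
\item The far block $s\le n$, $2n<t\le N$, where $\rho(t-s)\asymp t^{-\alpha}L_0(t)$, is of order $n^{1-d}\ell_a(n)\,N^{1-d-\alpha}\ell_a(N)L_0(N)$.
\end{itemize}
So the correlation is of order $(n/N)^{\alpha/2}$ up to slowly varying factors. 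For example, with $\alpha=0.2$ and $d=0.1$ your claimed bound $(n/N)^{0.3}$ is false.

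\emph{Increment exponent.} The bound with exponent $2H-\eta$ fails for $d<0$ and small $\eta$. An interval of length $L$ near $n_k$ has variance of order $n_k^{-2d}\ell_a(n_k)^2L^{2-\alpha}L_0(L)$. This exceeds $\sigma_{n_k}^2(L/n_k)^{2H}$ by roughly $(n_k/L)^{2|d|}$. The correct exponent is any $\beta<\min\{2H,2-\alpha\}$, as in Lemma~\ref{lem:full-leading-chaos-increment}, and that is all Dudley's bound needs.
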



\begin{remark}[The unweighted fractional-Gaussian-noise benchmark]
\label{rem:unweighted-fgn-constant}
In the special unweighted linear case \(a_t\equiv1\), \(G(x)=x\),
\(X_t=B_{H_0}(t)-B_{H_0}(t-1)\), where \(B_{H_0}\) is fractional Brownian
motion with \(\operatorname{Var}(B_{H_0}(1))=1\), we have
\(S_n=B_{H_0}(n)\), so \(A_n=n^{H_0}\). Here
\(
    \alpha=2-2H_0,\ c=H_0(2H_0-1),\ L_0\equiv1,
\)
so the general exponent \(H=1-d-\alpha m/2\) reduces to \(H_0\), and since
\(
\int_0^1\int_0^1|x-y|^{-\alpha}\,dx\,dy=\tfrac1{H_0(2H_0-1)}=1/c,
\)
Proposition~\ref{prop:exact-variance} gives \(\sigma_1^2=1\) exactly. This is
a consistency check: the general normalization correctly reduces to the
\emph{classical Hurst} exponent and unit constant in this benchmark case.

The classical LIL for fractional Brownian motion, equivalently the linear case of
Taqqu's long-memory LIL \citep{Taqqu1977}, gives
\[
    \limsup_{n\to\infty}
    \frac{|B_{H_0}(n)|}
    {n^{H_0}(2\log\log n)^{1/2}}
    =
    1
    \qquad\text{\rm a.s.}
\]
Therefore, in this benchmark case, the deterministic constant in
Theorem~\ref{thm:linear-sharp-lil} is \(1\).
\end{remark}

\begin{definition}[Cluster set in \(C\lbrack 0,T\rbrack\)]
For a sequence \((x_n)_{n\ge3}\) in \(C[0,T]\), we denote by
\(
    \operatorname{Cl}_{C[0,T]}(x_n)
\)
the set of all subsequential limits of \((x_n)\) in the uniform norm on
\(C[0,T]\).
\end{definition}

For the nonlinear theorem we use the real spectral Hilbert space
\(
\mathfrak H_{\mathbb R}
:=
\{h\in L^2(\mathbb R;\mathbb C): h(-x)=\overline{h(x)}\},
\)
endowed with the real inner product
\[
\langle h,g\rangle_{\mathfrak H_{\mathbb R}}
:=
\operatorname{Re}\int_{\mathbb R} h(x)\overline{g(x)}\,dx.
\]
The corresponding symmetric tensor products are taken over this real Hilbert
space.

Let
\[
I_{m,\alpha,d}
:=
\int_0^1\int_0^1
y^{-d}z^{-d}|y-z|^{-\alpha m}\,dy\,dz= \frac{
        2\,\mathrm B(1-d,\,1-\alpha m)
    }
    {
        2-2d-\alpha m
    }.
\]
Here \(\mathrm B(\cdot,\cdot)\) denotes the beta function. .

By Proposition \ref{prop:exact-variance},
\(
\sigma_m^2
=
\frac{J_m^2c^m}{m!}I_{m,\alpha,d},
\quad
\sigma_m>0,
\)
where \(\sigma_m\) denotes the positive square root. We define the signed
normalization constant
\(
C_{m,\alpha,d}
:=
\frac{J_m c_f^{m/2}}{m!\sigma_m}.
\)
Equivalently, using \(c=c_\alpha c_f\), where
\(
c_\alpha=2\Gamma(\alpha)\cos\left(\frac{\pi\alpha}{2}\right),
\)
one may write
\[
C_{m,\alpha,d}
=
\frac{\operatorname{sgn}(J_m)}
{\sqrt{m!}\,c_\alpha^{m/2}\,I_{m,\alpha,d}^{1/2}}.
\]
In particular, the sign of \(J_m\) is included in \(C_{m,\alpha,d}\).

The limiting kernels are
\[
Q_t(x_1,\ldots,x_m)
=
C_{m,\alpha,d}
\left(
\int_0^t y^{-d}e^{iy(x_1+\cdots+x_m)}\,dy
\right)
\prod_{j=1}^m |x_j|^{(\alpha-1)/2},
\qquad t\ge0.
\]
The kernels are understood as elements of the real symmetric tensor space
\(\mathfrak H_{\mathbb R}^{\odot m}\).

For \(T>0\), define
\[
    \mathcal K_T
    :=
    \left\{
        t\mapsto
        \big\langle Q_t,\xi^{\otimes m}\big\rangle_{\mathfrak H_{\mathbb R}^{\otimes m}}
        \,:\,
        \xi\in\mathfrak H_{\mathbb R},\ \|\xi\|_{\mathfrak H_{\mathbb R}}\le1
    \right\}
    \subset C[0,T].
\]

\begin{remark}[Compactness of the limiting cluster set]
\label{rem:KT-compact}
The set \(\mathcal K_T\) is compact in \(C[0,T]\); see
Lemma~\ref{lem:KT-compact} in the Appendix.
\end{remark}

We also set \(S_0:=0\) and use the linearly interpolated process
\[
    \mathcal S_n(t)
    :=
    \bigl(1-\{nt\}\bigr)S_{\lfloor nt\rfloor}
    +
    \{nt\}S_{\lfloor nt\rfloor+1},
    \qquad 0\le t\le T.
\]

\begin{theorem}[Weighted functional LIL in the nonlinear case]
\label{thm:functional-nonlinear-lil}
Let \(m\ge2\). Assume \((\ref{A1})\)--\((\ref{A6})\), \(\alpha m<1\), and
\(2d+\alpha m<1\). Then, for every fixed \(T>0\), the sequence
\[
    \left\{
        \frac{\mathcal S_n(\cdot)}
        {A_n(2\log\log n)^{m/2}}
        : n\ge3
    \right\}
\]
is almost surely relatively compact in \(C[0,T]\), and
\[
    \operatorname{Cl}_{C[0,T]}
    \left(
        \frac{\mathcal S_n(\cdot)}
        {A_n(2\log\log n)^{m/2}}
    \right)
    =
    \mathcal K_T
    \qquad\text{\rm a.s.}
\]
\end{theorem}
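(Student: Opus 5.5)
By Theorem~\ref{thm:reduction}, applied together with a maximal version of its proof over blocks $n\le k\le Tn$, the remainder $R_{\lfloor nt\rfloor}$ is negligible uniformly in $t\in[0,T]$ on the scale $A_n(2\log\log n)^{m/2}$. It therefore suffices to prove the statement for the interpolated leading-chaos process $\mathcal S_{n,m}(t)$. By Proposition~\ref{prop:exact-variance} we may also replace $A_n$ by $\sigma_m v_n$.

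\textbf{Spectral realization.} By (\ref{A2}) we write $X_u=\int_{-\pi}^{\pi}e^{iu\lambda}f(\lambda)^{1/2}W(d\lambda)$ for a complex Gaussian white noise $W$ with Hermitian symmetry. By the It\^o formula for Hermite polynomials,
\[
H_m(X_u)=\int e^{iu(\lambda_1+\cdots+\lambda_m)}\prod_j f(\lambda_j)^{1/2}\,W(d\lambda_1)\cdots W(d\lambda_m).
\]
Dilating $\lambda_j=x_j/n$ and using the scaling invariance of $W$, we obtain $\mathcal S_{n,m}(t)/(\sigma_m v_n)=I_m(Q^{(n)}_t)$. Here $I_m$ is the multiple Wiener--It\^o integral with respect to a white noise $W_n$ that has the same law as $W$, and $Q^{(n)}_t$ is the discrete weighted kernel.

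The first analytic step is the convergence
\[
\sup_{t\le T}\|Q^{(n)}_t-Q_t\|_{L^2(\mathbb R^m)}\to0 .
\]
On compacts this comes from Riemann sums $n^{-1}\sum_{u\le nt}(u/n)^{-d}\ell_a(u)/\ell_a(n)\,e^{iux/n}\to\int_0^t y^{-d}e^{iyx}dy$, using Abel summation and (\ref{A5}), together with $f(x/n)\approx c_f|x/n|^{\alpha-1}L_0(n)$. The tails at large $|x|$ are controlled by (\ref{A6}) and by Potter bounds. The key point is that the dilated noises $W_{n_k}$ along geometric subsequences $n_k=\lfloor\theta^k\rfloor$ are all built from one fixed Gaussian measure.

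\textbf{Upper inclusion and relative compactness.} Fix $\varepsilon>0$ and $\theta>1$ close to $1$.
\begin{enumerate}
\item Borell--Tsirelson large deviations for the Gaussian measure, together with the homogeneity $I_m(Q)(\sqrt{2\log\log n}\,\cdot)$, give
\[
\mathbb P\Bigl(I_m(Q_\cdot)/(2\log\log n_k)^{m/2}\notin \mathcal K_T^{\varepsilon}\Bigr)\le k^{-(1+\delta)} .
\]
Here $\mathcal K_T^{\varepsilon}$ is the $\varepsilon$-neighbourhood of $\mathcal K_T$ in $C[0,T]$, which is compact by Lemma~\ref{lem:KT-compact}.
\item Replacing $Q_t$ by $Q^{(n_k)}_t$ costs a fixed-chaos term with small kernel. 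A uniform small-kernel bound, namely hypercontractivity giving $\mathbb P(|I_m(h)|>x)\le \exp(-c(x/\|h\|)^{2/m})$ combined with a chaining/maximal estimate in $t$, makes this term $o((\log\log n_k)^{m/2})$ uniformly in $t$.
\item Interpolation between $n_k$ and $n_{k+1}$ is handled by the fixed-chaos maximal estimates. The process at time $n\in[n_k,n_{k+1}]$ is the process at $n_k$ evaluated at rescaled time $tn/n_k$, and $v_n/v_{n_k}\to1$ as $\theta\downarrow1$.
\end{enumerate}
Borel--Cantelli then yields both relative compactness and $\mathrm{Cl}\subset\mathcal K_T$.

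\textbf{Lower inclusion (main obstacle).} Fix $\xi$ with $\|\xi\|\le1$. We may take $\xi=\sum_{i\le N}c_ie_i$ in a fixed orthonormal system, and we approximate $Q_1$ and all $Q_t$ by finite-rank symmetric kernels in $\mathrm{span}\{e_i\}^{\odot m}$, with the remainder controlled uniformly in $t$. For the finite-rank part, $I_m$ becomes a polynomial in the coordinates $g_i^{(k)}=W_{n_k}(e_i)$. It suffices to show that, almost surely, infinitely often
\[
\bigl|(g^{(k)}_1,\ldots,g^{(k)}_N)/\sqrt{2\log\log n_k}-(c_1,\ldots,c_N)\bigr|<\varepsilon .
\]
On this event the Wick-product structure makes $I_m(P Q_t)\approx\langle Q_t,\xi^{\otimes m}\rangle (2\log\log n_k)^{m/2}$, the lower-order contraction terms being smaller by powers of $\log\log$. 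The single-$k$ probability is $\ge(\log n_k)^{-(1+\varepsilon')\|c\|^2}\approx k^{-(1-\delta)}$ when $\|c\|<1$, and $\|c\|=1$ follows by closure. The difficulty is that the vectors $g^{(k)}$ are dependent across $k$: their cross-covariances are $\langle e_i(\cdot\,\theta^{k-l}),e_j\rangle$-type dilation correlations. I would choose $e_i$ smooth and compactly supported away from $0$ in the $x$-variable, and take sparse subsequences $n_k=\lfloor k^{k}\rfloor$ or $\theta^{k}$ with $\theta$ large. Then the dilation correlations decay geometrically in $|k-l|$, and a Gaussian comparison for these finite-dimensional Gaussian vectors gives
\[
\mathbb P(E_k\cap E_l)\le(1+o(1))\,\mathbb P(E_k)\,\mathbb P(E_l)
\]
for $|k-l|$ large. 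The Kochen--Stone lemma then gives $\mathbb P(E_k\ \text{i.o.})=1$. Finally, the higher-order chaos remainders of the approximation, and the discrepancy $Q^{(n_k)}-Q$, are removed by the small-kernel bound of the upper part, and a separability argument over a countable dense set of $\xi$ completes $\mathcal K_T\subset\mathrm{Cl}$.
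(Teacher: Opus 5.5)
Your plan is correct. Your lower inclusion is in substance the paper's argument: dilated coordinates $W_n=W\circ U_n$ on one Gaussian space, Gaussian comparison and Kochen--Stone along a sparse sequence, Wick expansion of finite-rank kernels, a small-kernel chaining bound for the remainder, and closure over a countable dense set of $\xi$. Your $\lfloor k^k\rfloor$ works as well as the paper's $\lfloor\exp(k^\gamma)\rfloor$. With $\lfloor\theta^k\rfloor$, however, the comparison bound is $o(1)$ only when $\theta^{-|k-l|/2}\log k\to0$, not merely for large fixed $|k-l|$. The closer pairs must then be bounded crudely by $\min(p_k,p_l)$; they contribute $O(\Sigma_M\log\log M)=o(\Sigma_M^2)$, where $\Sigma_M:=\sum_{k\le M}p_k$ grows polynomially. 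Simpler still: your $e_i$ are supported in an annulus $a\le|x|\le b$, so taking $\theta>b/a$ makes the vectors $(W_{n_k}(e_i))_{i\le N}$ exactly independent across $k$. The second Borel--Cantelli lemma then suffices.

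The genuine difference is the upper inclusion. You get the grid bound at once from a large-deviation estimate for the $C[0,T]$-valued chaos $t\mapsto I_m(Q_t;W)$, whose law does not depend on $k$; this is the Borell/Mori--Oodaira route. What it needs is Borell's large-deviation theorem for Wiener chaos, not the Borell--TIS inequality. That theorem rests on Gaussian isoperimetry plus the Cameron--Martin expansion of $I_m(Q_t)(w+\lambda h)$ in powers of $\lambda$, with the lower-order chaos coefficients controlled uniformly in $t$ and $\|h\|\le1$.

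The paper avoids infinite-dimensional large deviations. It projects $Q_t$ onto finite-dimensional $E_j\subset D_{\mathbb R}$ and proves a finite-dimensional LIL for the dilated coordinates, using Borell--TIS over the dilation parameter and $\|U_rh-h\|_2\le|\log r|\,\|Ah\|_2$. It then absorbs the projection error with Lemma~\ref{lem:uniform-small-kernel-chaos}. This is an elementary, self-contained version of the approximate contraction principle behind your black box. Your route is shorter, but checking the hypotheses of the Banach-space LDP brings back the same chaining estimates.

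Your gap step improves on the paper's. Since $\mathcal S_{n,m}(t)=\mathcal S_{n_k,m}(tn/n_k)$, the oscillation between grid points is just the modulus of continuity of $X_{n_k}$ on $[0,\theta T]$. The grid result on $[0,2T]$ together with the equicontinuity of the compact set $\mathcal K_{2T}$ already controls this, so the separate maximal estimate of Lemma~\ref{lem:functional-grid-gap} is unnecessary. Likewise, no maximal rework of the reduction is needed: Proposition~\ref{prop:functional-remainder} uses only the a.s.\ statement of Theorem~\ref{thm:reduction} and regular variation of $A_n$.

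One misallocation in your kernel step: (A5) and Abel summation are not needed on compacts, where plain Riemann sums suffice. They are needed for the intermediate tail $R\le|x|\le\delta n$, where the decay $\min\{1,|x_1+\cdots+x_m|^{-1}\}$ produces the integrable majorant. (A6) handles only $|x|>\delta n$.
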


\begin{remark}[Unweighted higher-chaos benchmark]
When \(d=0\) and \(\ell_a\equiv1\), we have \(a_t\equiv1\),
and \(Q_t\) reduces to the variance-normalized spectral kernel
of the order-\(m\) Hermite process. Moreover,
\[
H=1-\frac{\alpha m}{2},
\qquad
I_{m,\alpha,0}
=
\frac{2}{(1-\alpha m)(2-\alpha m)}
=
\frac{1}{H(2H-1)},
\]
and hence
\[
A_n
\sim
\sigma_m n^H L_0(n)^{m/2}.
\]
Under the Fourier identification of the time- and spectral-domain
representations, the cluster set \(\mathcal K_T\) agrees, after
variance normalization, with the cluster set obtained in
\cite{MoriOodaira1986,MoriOodaira1987}. Thus
Theorem~\ref{thm:functional-nonlinear-lil} recovers their
unweighted functional LIL for \(m\ge2\) on the common range
of assumptions.
\end{remark}

\begin{corollary}[Sharp nonlinear weighted LIL constant]
\label{cor:sharp-nonlinear-constant}
Under the assumptions of Theorem~\ref{thm:functional-nonlinear-lil},
\[
    \limsup_{n\to\infty}
    \frac{|S_n|}
    {A_n(2\log\log n)^{m/2}}
    =
    \Lambda_m
    \qquad\text{\rm a.s.},
\]
where the deterministic constant is given by the variational formula
\[
    \Lambda_m
    =
    \sup_{\|\xi\|_{\mathfrak H_{\mathbb R}}\le1}
    \left|
        \big\langle Q_1,\xi^{\otimes m}
        \big\rangle_{\mathfrak H_{\mathbb R}^{\otimes m}}
    \right|.
\text{      Moreover, }
    0<L_{m,\alpha,d} 
    \le
    \Lambda_m
    \le
    \frac{1}{\sqrt{m!}},
\]
where
\[
    L_{m,\alpha,d}
    :=
    \left(\frac{2-\alpha}{2(1-\alpha)}\right)^{m/2}
    \frac{
        \displaystyle
        \sum_{k=0}^{m}
        \binom{m}{k}
        \mathrm B\!\left(
            1-d+(1-\alpha)k,\,
            1+(1-\alpha)(m-k)
        \right)
    }
    {
        \sqrt{m!\,I_{m,\alpha,d}}
    },
\]
and
\[
    I_{m,\alpha,d}
    =
    \frac{
        2\,\mathrm B(1-d,\,1-\alpha m)
    }
    {
        2-2d-\alpha m
    }.
\]
Here \(\mathrm B(\cdot,\cdot)\) denotes the beta function.
\end{corollary}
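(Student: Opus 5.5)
The plan is to obtain the identity for the limsup directly from Theorem~\ref{thm:functional-nonlinear-lil} by evaluating at \(t=1\). Both bounds on \(\Lambda_m\) then follow from elementary Hilbert-space estimates and one explicit test function.

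\emph{Step 1: the limsup.} Fix \(T=1\) in Theorem~\ref{thm:functional-nonlinear-lil}. Since \(\mathcal S_n(1)=S_n\), we have
\[
|S_n|/\bigl(A_n(2\log\log n)^{m/2}\bigr)=|\pi(x_n)|,
\qquad
x_n:=\mathcal S_n(\cdot)/\bigl(A_n(2\log\log n)^{m/2}\bigr),
\]
where \(\pi(f)=f(1)\) is continuous on \(C[0,1]\). On the almost-sure event of the theorem, \((x_n)\) is relatively compact and its cluster set is \(\mathcal K_1\).

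\begin{itemize}
\item Upper bound: pick a subsequence along which \(|\pi(x_n)|\) tends to the limsup, then a further convergent subsequence with limit \(f\in\mathcal K_1\). This gives \(\limsup|\pi(x_n)|=|f(1)|\le\sup_{\mathcal K_1}|\pi|\).
\item Lower bound: every \(f\in\mathcal K_1\) is a subsequential limit, so \(\limsup|\pi(x_n)|\ge|f(1)|\) for each such \(f\).
\end{itemize}

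Hence the limsup equals \(\sup_{f\in\mathcal K_1}|f(1)|\), which is \(\Lambda_m\) by the definition of \(\mathcal K_1\). The set \(\mathcal K_1\) is compact by Lemma~\ref{lem:KT-compact}, so this supremum is attained, although attainment is not needed for the argument.

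\emph{Step 2: the upper bound \(\Lambda_m\le(m!)^{-1/2}\).} By Cauchy--Schwarz in \(\mathfrak H_{\mathbb R}^{\otimes m}\),
\[
|\langle Q_1,\xi^{\otimes m}\rangle|\le\|Q_1\|\,\|\xi\|^m\le\|Q_1\|.
\]
I would then invoke Lemma~\ref{lem:limit-kernel-L2}, which gives \(\|Q_t\|^2=t^{2H}/m!\); at \(t=1\) this is \(1/m!\). As a check of that identity: expand \(|\int_0^1y^{-d}e^{iy\Sigma x}dy|^2\) as a double integral in \((y,z)\). For each coordinate, \(\int e^{i(y-z)x_j}|x_j|^{\alpha-1}dx_j=c_\alpha|y-z|^{-\alpha}\). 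The total is therefore \(C_{m,\alpha,d}^2c_\alpha^mI_{m,\alpha,d}=1/m!\).

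\emph{Step 3: the lower bound \(L_{m,\alpha,d}\le\Lambda_m\).} Take the test function \(\xi=\eta/\|\eta\|\), with
\[
\eta(x)=|x|^{(\alpha-1)/2}\,\frac{e^{ix}-1}{ix},
\]
which is the unweighted rank-one kernel. It satisfies \(\eta(-x)=\overline{\eta(x)}\), so \(\xi\in\mathfrak H_{\mathbb R}\).

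\begin{itemize}
\item Norm of \(\eta\): writing \((e^{ix}-1)/(ix)=\int_0^1e^{isx}ds\) and using the same covariance identity,
\[
\|\eta\|^2=c_\alpha\int_0^1\!\!\int_0^1|s-u|^{-\alpha}\,ds\,du=\frac{2c_\alpha}{(1-\alpha)(2-\alpha)}.
\]
\item Inner product: by Fubini, \(\langle Q_1,\eta^{\otimes m}\rangle\) equals \(C_{m,\alpha,d}\int_0^1y^{-d}\prod_{j=1}^m\Phi(y)\,dy\), where
\[
\Phi(y)=\int_0^1\!\!\int_{\mathbb R}e^{i(y-s)x}|x|^{\alpha-1}\,dx\,ds
=\frac{c_\alpha\bigl(y^{1-\alpha}+(1-y)^{1-\alpha}\bigr)}{1-\alpha}.
\]
The real part in the inner product is harmless because everything here is real.
\item Binomial expansion: expanding \(\bigl(y^{1-\alpha}+(1-y)^{1-\alpha}\bigr)^m\) binomially and integrating against \(y^{-d}\) gives exactly
\[
\sum_{k=0}^m\binom mk\mathrm B\bigl(1-d+(1-\alpha)k,\,1+(1-\alpha)(m-k)\bigr).
\]
\item Assembling the constants: dividing by \(\|\eta\|^m\) and multiplying by \(|C_{m,\alpha,d}|=\bigl(\sqrt{m!}\,c_\alpha^{m/2}I_{m,\alpha,d}^{1/2}\bigr)^{-1}\), the powers of \(c_\alpha\) cancel. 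What remains is \(\bigl((2-\alpha)/(2(1-\alpha))\bigr)^{m/2}\) times the beta sum over \(\sqrt{m!\,I_{m,\alpha,d}}\), which is \(L_{m,\alpha,d}\).
\end{itemize}

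Positivity of \(L_{m,\alpha,d}\) is clear, since every beta term is positive (the arguments are positive because \(d<1\)).

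The main obstacle is justifying the Fubini interchanges in Step 3 and the covariance identity \(\int e^{iux}|x|^{\alpha-1}dx=c_\alpha|u|^{-\alpha}\), which is only conditionally convergent. I would handle this by inserting a Gaussian damping factor \(e^{-\varepsilon x^2}\), computing with it in place, and letting \(\varepsilon\to0\). Dominated convergence applies because \(Q_1,\eta\in L^2\), and the \(y,s\)-integrands \(|y-s|^{-\alpha}y^{-d}\) are integrable under \(\alpha<1\) and \(d<1\).
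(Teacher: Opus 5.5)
Your proposal is correct and follows the paper's own argument. The limsup identity is read off from Theorem~\ref{thm:functional-nonlinear-lil} with $T=1$ by evaluating cluster functions at $t=1$, the ceiling comes from Cauchy--Schwarz together with $\|Q_1\|_2^2=1/m!$ from Lemma~\ref{lem:limit-kernel-L2}, and the floor uses the same trial function $|x|^{(\alpha-1)/2}(e^{ix}-1)/(ix)$ with the identical Riesz-kernel, binomial and beta-function computation as in Lemma~\ref{lem:Lambda-bounds}; the only cosmetic difference is that you regularize with $e^{-\varepsilon x^2}$ where the paper uses an exponential factor $e^{-\eta|x|}$.
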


\begin{remark}\label{rem:Lambda}
For fixed $m\ge1$, $\Lambda_m=\sup_{\|\xi\|_{\mathfrak H_{\mathbb R}}\le1}
|\langle Q_1,\xi^{\otimes m}\rangle_{\mathfrak H_{\mathbb R}^{\otimes m}}|$
is the spectral norm of the symmetric $m$-linear form induced by $Q_1$.
In particular, $\mathbb E[I_m(Q_1;W)^2]=m!\|Q_1\|_2^2=1$, so the
limiting chaos has unit variance, consistently with
Proposition~\ref{prop:exact-variance}. For $m=1$, $\xi^{\otimes1}=\xi$,
so the variational formula reduces exactly to the Hilbert-space norm,
$\Lambda_1=\|Q_1\|_{L^2(\mathbb R)}$, which equals $1$ after the
normalization built into $C_{1,\alpha,d}$.

 For $m\ge2$, $\Lambda_m$ is the spectral norm, whereas
$\|Q_1\|_{L^2}$ is the Hilbert--Schmidt norm of the same symmetric
tensor. By Lemma~\ref{lem:KT-compact}, applied with \(T=1\), the
supremum defining $\Lambda_m$ is attained. Hence equality in the
Cauchy--Schwarz inequality shows that
$\Lambda_m=(m!)^{-1/2}$ would hold if and only if $Q_1$ were a
rank-one symmetric tensor,
$Q_1=\lambda\,\xi^{\otimes m}$ for some unit vector $\xi$.

Set \(\Psi(u):=\int_0^1 y^{-d}e^{iyu}\,dy\).
If \(Q_1\) were rank one, then, after dividing by
\(\prod_j |x_j|^{(\alpha-1)/2}\) and fixing \(m-2\) variables, one
would obtain \(\Psi(u+v)=a(u)b(v)\) a.e. Hence
\(
\Psi(u+v)\Psi(u'+v')
=
\Psi(u+v')\Psi(u'+v)
\)
a.e. Since \(d<1\), the function \(\Psi\) is continuous, so this
identity holds everywhere. Taking \(u'=v'=0\) and using
\(\Psi(0)=\int_0^1 y^{-d}\,dy=(1-d)^{-1}\ne0\), we obtain the
continuous exponential Cauchy equation, and therefore
\(\Psi(u)=\Psi(0)e^{\lambda u}\) for some \(\lambda\in\mathbb C\).
This is impossible, since the Riemann--Lebesgue lemma gives
\(\Psi(u)\to0\) as \(u\to\pm\infty\), whereas a nonzero exponential
cannot tend to zero in both directions.
\end{remark}

\begin{remark}[Where the memory and the weights enter the constants]
\label{rem:upper-bound-universal}
It is instructive to record precisely where the memory parameter
$\alpha$, the weight exponent $d$, and the slowly varying factor
$\ell_a$ enter the almost-sure asymptotics, and where they do not.

At first order, the entire dependence is carried by the scale
$A_n(2\log\log n)^{m/2}$: both the regular-variation index
$H=1-d-\alpha m/2$ and the constant $\sigma_m$ of
Proposition~\ref{prop:exact-variance} depend on all model ingredients.
The constant $\Lambda_m$ describes the second-order, purely geometric
effect that remains after this exact normalization.

For $m=1$ this residual constant is universal, $\Lambda_1=1$, as in
the classical Gaussian theory. The nonlinear case is different:
for $m\ge2$ the constant $\Lambda_m$ is no longer universal, but varies
with the model.
Numerically, on the parameter grid examined, with the effective memory
ranging up to \(\alpha m=0.9\): \(\Lambda_2\) decreases from \(0.702\) at
\((\alpha,d)=(0.10,0)\) to \(0.391\) at \((\alpha,d)=(0.45,0.02)\), and
\(\Lambda_3\) from \(0.399\) at \((\alpha,d)=(0.10,0)\) to \(0.210\) at
\((\alpha,d)=(0.30,0.02)\);
and the explicit lower bound of
Lemma~\ref{lem:Lambda-bounds} reproduces these values
to within a fraction of one percent. Analytically, the lemma shows only that,
for fixed $d\le0$,
\(
    L_{m,\alpha,d}\asymp(1-\alpha m)^{1/2}
    \qquad\text{as }\alpha m\uparrow1;
\)
it does not establish the corresponding asymptotic rate for
$\Lambda_m$. Thus the weighted long-memory
structure enters the LIL twice: through the scale, and -- for
$m\ge2$ -- through the value of the constant itself.

Only the ceiling $(m!)^{-1/2}$ is model-free, and necessarily so: by
Lemma~\ref{lem:Lambda-bounds} the normalization fixes
$\|Q_1\|_{L^2(\mathbb R^m)}=(m!)^{-1/2}$ for every admissible
$(\alpha,d,\ell_a)$, and the bound
$\Lambda_m\le\|Q_1\|_{L^2}$ is the general domination of the spectral
norm of a symmetric tensor by its Hilbert--Schmidt norm. The
distance of $\Lambda_m$ to this ceiling measures how far $Q_1$ is
from a rank-one tensor, i.e., how strongly the factor $y^{-d}$ and the
low-frequency singularity mix the extremal directions; by
Remark~\ref{rem:Lambda}, for $m\ge2$ this distance is strictly
positive. In this sense the pair
$\bigl(L_{m,\alpha,d},(m!)^{-1/2}\bigr)$ separates the two roles: the
lower bound carries the model, the upper bound carries the
normalization.
\end{remark}

\begin{remark}[Two independent routes to the linear constant]\label{2 routes}
The value $\Lambda_1=1$ predicted here is reproduced independently, by an
entirely different route, in the proof of Theorem~\ref{thm:linear-sharp-lil}:
hypercontractive tail bounds, Borel--Cantelli on a geometric subsequence,
and a Kochen--Stone argument, with no reference to the Hilbert space
$\mathfrak H_{\mathbb R}$ or to the cluster-set construction
built for \(m\ge2\). It is therefore a nontrivial check that the same value
is predicted by the general variational formula of
Corollary~\ref{cor:sharp-nonlinear-constant}.
The agreement of two structurally unrelated
arguments -- one purely probabilistic, the other variational --
at \(m=1\) is a consistency check on the
normalization $A_n$, complementing the fractional-Gaussian-noise benchmark
of Remark~\ref{rem:unweighted-fgn-constant}.
\end{remark}

\FloatBarrier
\begin{figure}[!htbp]
\centering
\begin{tikzpicture}[
  x=1mm, y=1mm,
  every node/.style={font=\scriptsize},
  box/.style={draw, rounded corners=2pt, align=center,
              inner sep=2.6pt, text width=#1mm, anchor=center},
  sidebox/.style={draw, dashed, rounded corners=2pt, align=center,
                  inner sep=2.6pt, text width=40mm, anchor=center},
  main/.style={draw, thick, rounded corners=2pt, align=center,
               inner sep=3pt, text width=112mm, anchor=center},
  flow/.style={-{Stealth[length=1.8mm]}, thick},
  weak/.style={-{Stealth[length=1.8mm]}, dashed}
]

\node[box=92] (A) at (-10,0)
  {Weighted sums \(S_n=\sum_{t\le n}a_t G(X_t)\)\\[1pt]
   long memory (A1), Hermite rank \(m\) (A3), regularly varying weights (A4)};

\node[box=68] (B) at (-10,-19)
  {Reduction to the leading chaos [Thm~\ref{thm:reduction}]\\[1pt]
   \(R_n=o\bigl(A_n(2\log\log n)^{m/2}\bigr)\) a.s.\\[1pt]
   Lemmas~\ref{lem:two-dimensional-rv-riemann}--\ref{lem:dyadic-maximal}};

\node[sidebox] (U) at (52,-19)
  {\emph{Side result, not used below:}\\[1pt]
   universal upper bound [Thm~\ref{thm:upper}]\\[1pt]
   Lemmas~\ref{lem:leading-chaos-local-variance}, \ref{lem:fixed-chaos-maximal-tail}};

\node[box=68] (C) at (-10,-37)
  {Leading chaos \(S_{n,m}\) on the scale \(A_n(2\log\log n)^{m/2}\)};

\node[box=42] (L) at (-50,-58)
  {\(m=1\): sharp linear LIL, constant~\(1\) [Thm~\ref{thm:linear-sharp-lil}]\\[1pt]
   Lem~\ref{lem:linear-var-corr}; Lemmas~\ref{lem:fixed-chaos-maximal-tail}---
   \ref{lem:finite-dimensional-sparse-recurrence} (Kochen--Stone)};

\node[box=62] (R) at (14,-56)
  {\(m\ge2\): spectral realization \(X_t=W(h_t)\),\\
   dilations \(W_n=W\circ U_n\)};

\node[box=62] (K) at (14,-77)
  {Uniform kernel convergence [Prop~\ref{prop:full-kernel-convergence}]\\[1pt]
   \(\sup_t\|\widetilde F^{\mathrm{full}}_{n,t}-Q_t\|_2\to0\)\\[1pt]
   Lem./Cor.~\ref{lem:spectral-factor-rescaled}--\ref{lem:uniform-abel-weighted-sum},
   Prop~\ref{prop:full-prelimit-tail};\\
   uses (A2), (A5), (A6)};

\node[box=54] (LI) at (-18,-102)
  {Lower inclusion \(\mathcal K_T\subset\mathrm{Cl}\)
   [Prop~\ref{prop:functional-lower-inclusion}]\\[1pt]
   sparse recurrence (Prop~\ref{cor:dilation-coordinate-recurrence},
   Cor~\ref{cor:finite-rank-chaos-recurrence})\\[1pt]
   finite rank (Lem~\ref{lem:finite-rank-lower-cluster},
   \ref{lem:uniform-kernel-recurrence})\\[1pt]
   small kernels (Lem~\ref{lem:uniform-small-kernel-chaos})};

\node[box=54] (UI) at (42,-102)
  {Upper inclusion and compactness
   [Prop~\ref{prop:functional-upper-inclusion}]\\[1pt]
  Borell–Tsirelson (Lem~\ref{lem:finite-dimensional-upper-cluster},
   \ref{lem:finite-rank-chaos-upper})\\[1pt]
   gap control (Lem~\ref{lem:functional-grid-gap},
   \ref{lem:fixed-chaos-maximal-tail}, \ref{lem:full-leading-chaos-increment})};

\node[main] (T) at (12,-127)
  {Functional cluster theorem [Thm~\ref{thm:functional-nonlinear-lil}]:\ \
   \(\mathrm{Cl}_{C[0,T]}=\mathcal K_T\)
   \ (with Prop~\ref{prop:functional-remainder})\\[2pt]
   \(\Rightarrow\) sharp constant
   \(\Lambda_m=\sup_{\|\xi\|\le1}\bigl|\langle Q_1,\xi^{\otimes m}\rangle\bigr|\)
   [Cor~\ref{cor:sharp-nonlinear-constant}],\ \ bounds
   \(0<L_{m,\alpha,d}\le\Lambda_m\le(m!)^{-1/2}\) [Lem~\ref{lem:Lambda-bounds}]};

\draw[flow] (A) -- (B);
\draw[flow] (B) -- (C);
\draw[flow] (B.east) -- (U.west);
\draw[flow] (C.south) -- ++(0,-6) -| (L.north)
      node[pos=0.92,left=0.5mm,font=\tiny] {\(m=1\)};
\draw[flow] (C.south) -- ++(0,-6) -| (R.north)
      node[pos=0.92,right=0.5mm,font=\tiny] {\(m\ge2\)};
\draw[flow] (R) -- (K);
\draw[flow] (K.south) -- ++(0,-7) -| (LI.north);
\draw[flow] (K.south) -- ++(0,-7) -| (UI.north);
\draw[flow] (LI.south) -- ++(0,-7) -| (T.north);
\draw[flow] (UI.south) -- ++(0,-7) -| (T.north);
\draw[weak] (L.south) |- (T.west)
      node[pos=0.35,left=0.5mm,font=\tiny,align=center]
      {consistency\\ check\\ \(\Lambda_1=1\)};
\end{tikzpicture}
\caption{Architecture of the proof. Solid arrows indicate logical
dependence. The universal upper bound
(Theorem~\ref{thm:upper}, dashed frame) is a side result, proved under
the weaker assumptions (A1), (A3), (A4) alone, and is not used in the
proofs of Theorems~\ref{thm:linear-sharp-lil}
and~\ref{thm:functional-nonlinear-lil}. The dashed path on the left
records the consistency check \(\Lambda_1=1\)
(Remarks~\ref{rem:Lambda} and~\ref{2 routes}). The
bottom block records the two-sided bounds of
Lemma~\ref{lem:Lambda-bounds}, evaluated numerically in
Remark~\ref{rem:Lambda-numerics}.}
\label{fig:architecture}
\end{figure}

\section{Reduction to the leading chaos}\label{sec:reduction}

In this section we prove Theorem~\ref{thm:reduction}.

\begin{proof}[Proof of Theorem~\ref{thm:reduction}]
Let \(G_m(x):=\frac{J_m}{m!}H_m(x)\), \(\widetilde G(x):=G(x)-G_m(x)\), and \(n_k:=2^k\).
Then \(\widetilde G\in L^2(\mathbb R,\phi)\) has Hermite rank at least \(m+1\), and
\(
    R_n=\sum_{t=1}^n a_t\widetilde G(X_t).
\)

By Lemma~\ref{lem:higher-rank-variance-gain} and
Proposition~\ref{prop:exact-variance}, \(\Var(R_n)=o(A_n^2)\).
 More precisely, there
exist constants \(C<\infty\) , \(\delta>0\) and $\gamma\ge1$ such that
\(
    \Var(R_{n_k})\le C A_{n_k}^2 2^{-2k\delta}.
\)
\newline Hence, for every \(\varepsilon>0\),
\(
    \sum_{k=2}^\infty
    \mathbb P\!\left(
        |R_{n_k}|>\varepsilon A_{n_k}(2\log\log n_k)^{m/2}
    \right)
    <\infty,
\)
and therefore, by Borel--Cantelli,
\[
    \frac{R_{n_k}}{A_{n_k}(2\log\log n_k)^{m/2}}\longrightarrow0
    \qquad\text{\rm a.s.}
\]

It remains to fill the dyadic gaps. For \(n_k\le n\le n_{k+1}\),
\(
    R_n-R_{n_k}=\sum_{t=n_k+1}^n a_t\widetilde G(X_t).
\)
Again by Lemma~\ref{lem:higher-rank-variance-gain}, the interval variance bound required in
Lemma~\ref{lem:dyadic-maximal} holds: for every interval
\(I\subset\{n_k+1,\ldots,n_{k+1}\}\),
\[
    \Var\!\left(\sum_{t\in I} a_t\widetilde G(X_t)\right)
    \le
    C A_{n_k}^2 2^{-2k\delta}\left(\frac{|I|}{n_k}\right)^\gamma .
\]
Applying Lemma~\ref{lem:dyadic-maximal} with \(N=n_k\),
\(Z_t:=a_t\widetilde G(X_t)\), \(n_k+1\le t\le n_{k+1}\),
and \(B_N^2=C A_{n_k}^2 2^{-2k\delta}\), we obtain
\[
    \mathbb E\max_{n_k\le n\le n_{k+1}} |R_n-R_{n_k}|^2
    \le
    C k^2 A_{n_k}^2 2^{-2k\delta}.
\]
Therefore, for every \(\varepsilon>0\),
\[
    \sum_{k=2}^\infty
    \mathbb P\!\left(
        \max_{n_k\le n\le n_{k+1}} |R_n-R_{n_k}|
        >
        \varepsilon A_{n_k}(2\log\log n_k)^{m/2}
    \right)
    <\infty.
\]
Another application of Borel--Cantelli gives
\[
    \max_{n_k\le n\le n_{k+1}}
    \frac{|R_n-R_{n_k}|}{A_{n_k}(2\log\log n_k)^{m/2}}
    \longrightarrow0
    \qquad\text{\rm a.s.}
\]

Since \(A_n\) is regularly varying, \(A_n\asymp A_{n_k}\) uniformly for
\(n_k\le n\le n_{k+1}\), and \(\log\log n\sim\log\log n_k\) uniformly on the same blocks. The
dyadic convergence and the gap estimate therefore imply
\[
    \limsup_{n\to\infty}
    \frac{|R_n|}{A_n(2\log\log n)^{m/2}}
    =0
    \qquad\text{\rm a.s.}
\]
This proves the theorem.
\end{proof}

\section{Upper bound}\label{sec:upper-bound}

In this section we prove Theorem~\ref{thm:upper}.

Before turning to the sharp linear theorem and to the nonlinear functional
cluster theorem, we first record a general upper bound on the natural weighted
LIL scale. This result is not used later in the proof of Theorem~\ref{thm:functional-nonlinear-lil}, but it
shows that the normalization $A_n(2\log\log n)^{m/2}$ is the correct universal
upper almost-sure scale in the whole weighted long-memory regime.

We use two auxiliary estimates: a local
variance bound for leading-chaos increments and a maximal tail estimate in a fixed Wiener
chaos. Their proofs are given in the appendix.

\begin{proof}[Proof of Theorem~\ref{thm:upper}]
By Theorem~\ref{thm:reduction}, it suffices to prove the estimate for the leading chaos term
\(S_{n,m}\).

Let \(n_k:=\lfloor\theta^k\rfloor\), where \(1<\theta<2\). Since \(S_{n_k,m}\) belongs to the fixed \(m\)-th Wiener chaos and
\(\|S_{n_k,m}\|_2\le C A_{n_k}\), the hypercontractive fixed-chaos tail bound
\[
\mathbb P(|F|>x)\le C_m
\exp\left\{-c_m\left(\frac{x}{\|F\|_2}\right)^{2/m}\right\},
\qquad F\in\mathcal H_m,
\]
see \cite{Janson1997},
gives constants \(c_1,c_2>0\) such that
\[
    \mathbb P\!\left(
        |S_{n_k,m}|>\lambda A_{n_k}(2\log\log n_k)^{m/2}
    \right)
    \le
    c_1(\log n_k)^{-2c_2\lambda^{2/m}} .
\]
Since \(\log n_k\asymp k\), fix \(\lambda_0>0\) so large that
\(
    2c_2\lambda_0^{2/m}>1 .
\)
Then the above bound with \(\lambda=\lambda_0\) is summable in \(k\). Hence, by
Borel--Cantelli,
\[
    \limsup_{k\to\infty}
    \frac{|S_{n_k,m}|}
    {A_{n_k}(2\log\log n_k)^{m/2}}
    <\infty
    \qquad\text{\rm a.s.}
\]

We now fill the gaps between \(n_k\) and \(n_{k+1}\). Since \(1<\theta<2\), for all large \(k\)
we have \(n_{k+1}\le2n_k\), and hence all intervals in the gap lie inside
\(\{n_k+1,\ldots,2n_k\}\). 
Since \(\alpha m<1\), choose \(\eta\in(0,1-\alpha m)\) and set
\(q:=2(1-\alpha m/2)-\eta>1\). By Lemma~\ref{lem:leading-chaos-local-variance}, for every
interval \(I\subset\{n_k+1,\ldots,n_{k+1}\}\),
\[
    \operatorname{Var}\left(
        \frac{J_m}{m!}\sum_{t\in I}a_tH_m(X_t)
    \right)
    \le
    C A_{n_k}^2\left(\frac{|I|}{n_k}\right)^q .
\]
Let \(L_k:=n_{k+1}-n_k\).  Since \(L_k\le C(\theta-1)n_k\), we have
\(
    \frac{|I|}{n_k}
    =
    \frac{L_k}{n_k}\frac{|I|}{L_k}
    \le
    C(\theta-1)\frac{|I|}{L_k}.
\)
Hence
\[
    \operatorname{Var}\left(
        \frac{J_m}{m!}\sum_{t\in I}a_tH_m(X_t)
    \right)
    \le
    C A_{n_k}^2(\theta-1)^q
    \left(\frac{|I|}{L_k}\right)^q .
\]

Apply Lemma~\ref{lem:fixed-chaos-maximal-tail} on the block
\(\{n_k+1,\ldots,n_{k+1}\}\), with
\(
    B_k:=C_0 A_{n_k}(\theta-1)^{q/2},
\)
where \(C_0\) is chosen so that the variance assumption of the maximal lemma is satisfied.
Then there exist constants \(C_1,C_2>0\), independent of \(k\), such that, for every
\(\lambda>0\),
\[
    \mathbb P\left(
        \max_{n_k\le n\le n_{k+1}}
        |S_{n,m}-S_{n_k,m}|
        >
        \lambda A_{n_k}(2\log\log n_k)^{m/2}
    \right) 
 \]
 \[                                      
\le
    C_1\exp\left\{
        -C_2\lambda^{2/m}
        (\theta-1)^{-q/m}
        2\log\log n_k
    \right\}.
\]
Equivalently, the right-hand side is bounded by
\(
    C_1(\log n_k)^{-2C_2\lambda^{2/m}(\theta-1)^{-q/m}} .
\)
Fix \(\lambda_1>0\) so large that
\(
    2C_2\lambda_1^{2/m}(\theta-1)^{-q/m}>1 .
\)
Then the last bound with \(\lambda=\lambda_1\) is summable in \(k\). Therefore,
by Borel--Cantelli,
\[
    \limsup_{k\to\infty}
    \max_{n_k\le n\le n_{k+1}}
    \frac{|S_{n,m}-S_{n_k,m}|}
    {A_{n_k}(2\log\log n_k)^{m/2}}
    <\infty
    \qquad\text{\rm a.s.}
\]

Finally, regular variation gives \(A_n\asymp A_{n_k}\) uniformly for
\(n_k\le n\le n_{k+1}\), while \(\log\log n\sim\log\log n_k\) uniformly on the same blocks.
Combining the subsequence estimate with the gap estimate yields
\[
    \limsup_{n\to\infty}
    \frac{|S_{n,m}|}
    {A_n(2\log\log n)^{m/2}}
    \le C_{\rm up}
    \qquad\text{\rm a.s.}
\]
The same upper bound holds for \(S_n\) by Theorem~\ref{thm:reduction}.
\end{proof}

\section{The linear case \(m=1\)}\label{sec:linear} 
Throughout this section \(m=1\). Write
\(
G(x)=J_1x+\widetilde G(x),\qquad \operatorname{rank}(\widetilde G)\ge2,
\)
and hence \(S_n=S_{n,1}+R_n\), where \(S_{n,1}=J_1\sum_{t\le n}a_tX_t\). Put
\(\sigma_n^2:=\operatorname{Var}(S_{n,1})\) and
\(
H_1:=1-d-\frac{\alpha}{2}>0.
\)

\begin{lemma}\label{lem:linear-var-corr}
Assume \ref{A1} and \ref{A4}. Let \(m=1\), \(J_1\ne0\), and suppose that
\(
    2d+\alpha<1 .
\)
As \(n\to\infty\),
\[
\sigma_n^2\sim J_1^2c\,I_{\alpha,d}\,
n^{2-2d-\alpha}\ell_a(n)^2L_0(n),
\qquad
I_{\alpha,d}:=\int_0^1\int_0^1x^{-d}y^{-d}|x-y|^{-\alpha}\,dx\,dy.
\]
Moreover, \(A_n^2\sim\sigma_n^2\). Finally, for every \(\eta\in(0,\alpha/2)\) there is
\(C_\eta<\infty\) such that, whenever \(3\le n\le N/4\),
\[
\left|\operatorname{Corr}(S_{n,1},S_{N,1})\right|
\le C_\eta\left(\frac nN\right)^{\alpha/2-\eta}.
\]
\end{lemma}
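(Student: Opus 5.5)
The variance asymptotics and the identity \(A_n^2\sim\sigma_n^2\) are the case \(m=1\) of Proposition~\ref{prop:exact-variance}, since \(H_1(x)=x\), \(S_{n,1}\) is the leading chaos, \(v_n^2=n^{2-2d-\alpha}\ell_a(n)^2L_0(n)\), and \(\sigma_1^2=J_1^2cI_{\alpha,d}\). We may therefore quote that proposition directly. If a self-contained argument is wanted, we write \(\sigma_n^2=J_1^2\sum_{s,t\le n}a_sa_t\rho(s-t)\), split off the near-diagonal \(|s-t|\le\varepsilon n\) and the boundary strips \(s\le\varepsilon n\) or \(t\le\varepsilon n\), and show that these contribute \(O(\varepsilon^{\kappa})v_n^2\) by Potter bounds. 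On the bulk we apply uniform convergence of slowly varying functions to get the Riemann sum of \(x^{-d}y^{-d}|x-y|^{-\alpha}\), and then let \(\varepsilon\downarrow0\). Integrability holds because \(\alpha<1\) and \(d<1\), the latter from \(2d+\alpha<1\).

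For the correlation bound, write
\[
\operatorname{Cov}(S_{n,1},S_{N,1})=\sigma_n^2+J_1^2\sum_{s\le n}\sum_{n<t\le N}a_sa_t\rho(t-s).
\]
We bound the cross term using Potter estimates \(|\rho(h)|\le C_\eta h^{-\alpha+\eta}\) and the analogous bounds for \(a_t\). For \(n<t\le 2n\) the block contributes \(O(\sigma_n^2)\) up to Potter factors, by the same computation as the variance. For \(2n<t\le N\) we have \(t-s\ge t/2\), so the block is at most
\[
C\Bigl(\sum_{s\le n}|a_s|\Bigr)\sum_{2n<t\le N}|a_t|\,t^{-\alpha+\eta}.
\]
Up to slowly varying factors, handled by Potter bounds relative to \(N\) and \(n\), this is \(n^{1-d}\cdot N^{1-d-\alpha}\). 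The exponent \(1-d-\alpha\) may be negative, and in that case the sum is dominated by its lower end, giving \(n^{1-d}\cdot n^{1-d-\alpha}\sim\sigma_n^2\). So in all cases the total covariance is at most
\[
C\bigl(\sigma_n^2+n^{1-d}N^{1-d-\alpha}\bigr),
\]
with \(\eta\)-losses in the exponents.

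Dividing by \(\sigma_n\sigma_N\asymp n^{H_1}N^{H_1}\) (times slowly varying factors), and using \(H_1=1-d-\alpha/2\), gives two terms:
\[
\frac{\sigma_n}{\sigma_N}\approx\Bigl(\frac nN\Bigr)^{H_1},
\qquad
\frac{n^{1-d}N^{1-d-\alpha}}{n^{H_1}N^{H_1}}=\Bigl(\frac nN\Bigr)^{\alpha/2}.
\]
Since \(H_1>1/2>\alpha/2\), the first term is also at most \((n/N)^{\alpha/2}\). The ratios of slowly varying factors \(\ell_a(n)/\ell_a(N)\) and \(L_0(n)/L_0(N)\) cost at most \((N/n)^{\eta}\) by Potter's bound. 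This gives \(C_\eta(n/N)^{\alpha/2-\eta}\) uniformly for \(3\le n\le N/4\); small \(n\) are absorbed into the constant, following the Potter convention of the paper.

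The main obstacle is bookkeeping the far block uniformly when \(d\) has either sign. The sums \(\sum_{s\le n}|a_s|\) and \(\sum_{t\le N}|a_t|t^{-\alpha+\eta}\) need Karamata-type evaluation. The cases \(1-d-\alpha>0\) and \(1-d-\alpha<0\) must be treated separately, and in the second case the tail sum is \(O(n^{1-d-\alpha+\eta})\), which gives the stronger rate \((n/N)^{H_1}\). In every case the final exponent is at least \(\alpha/2-\eta\).
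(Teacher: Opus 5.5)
This is essentially the paper's proof: Proposition~\ref{prop:exact-variance} with $m=1$ gives the variance, and the cross covariance is split at $t=2n$. The near block is $O(\sigma_n^2)$; the paper obtains this from Cauchy--Schwarz and Lemma~\ref{lem:leading-chaos-local-variance}, and your direct summation, via $\sum_{h\le 2n}|\rho(h)|\le Cn^{1-\alpha}L_0(n)$, $\sum_{s\le n}|a_s|\le Cn^{1-d}\ell_a(n)$ and $\max_{n<t\le 2n}|a_t|\le Cn^{-d}\ell_a(n)$, works just as well. The far block is at most $Cn^{1-d}\ell_a(n)N^{1-d-\alpha}\ell_a(N)L_0(N)$ by $t-s\asymp t$ and Karamata, and Potter's bound is applied only to the final ratio $(L_0(N)/L_0(n))^{1/2}$.

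Two corrections. First, the case $1-d-\alpha<0$ that you call the main obstacle cannot occur, since $2d+\alpha<1$ forces $d+\alpha<1$ for either sign of $d$. Second, the absolute bound $|\rho(h)|\le C_\eta h^{-\alpha+\eta}$ in your far-block display cannot be used as written. After division by $\sigma_n\sigma_N$ it leaves $(n/N)^{\alpha/2}N^{\eta}(L_0(n)L_0(N))^{-1/2}$, which exceeds $C(n/N)^{\alpha/2-\eta}$ by a factor of order $n^{\eta}/L_0(n)$ along $N=4n$. You must keep $L_0(t)$ inside the Karamata sum, as your next sentence in fact intends. For the same reason, the near block must be $O(\sigma_n^2)$ with no $n^{\eta}$-type Potter loss.
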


\begin{proof}
The variance asymptotic and the relation \(A_n^2\sim\sigma_n^2\) follow from
Proposition~\ref{prop:exact-variance} applied with \(m=1\). It remains to prove the correlation bound. Write
\[
    \operatorname{Cov}(S_{n,1},S_{N,1})=\sigma_n^2+B_{n,N},
    \qquad
    B_{n,N}:=J_1^2\sum_{s\le n<t\le N}a_sa_t\rho(t-s).
\]
Since \(2d+\alpha<1\), we have \(d+\alpha<1\), and hence
\(
    H_1=1-d-\frac{\alpha}{2}>\frac{\alpha}{2}.
\)
Therefore Potter's bound and the variance asymptotic give:
\(
    \frac{\sigma_n}{\sigma_N}
    \le C_\eta\left(\frac nN\right)^{\alpha/2-\eta},
    \qquad 3\le n\le N/4.
\)

Split \(B_{n,N}=B_{n,N}^{\rm near}+B_{n,N}^{\rm far}\), where the near part corresponds to
\(n<t\le2n\), and the far part to \(2n<t\le N\). By Cauchy's inequality,
\[
    |B_{n,N}^{\rm near}|
    \le
    \|S_{n,1}\|_2
    \left\|
        J_1\sum_{n<t\le2n}a_tX_t
    \right\|_2
    \le C\sigma_n^2.
\]
The last bound follows from Lemma~\ref{lem:leading-chaos-local-variance} with
\(m=1\), applied to \(I=\{n+1,\ldots,2n\}\), together with
\(A_n\sim\sigma_n\).
Therefore
\[
    \frac{|B_{n,N}^{\rm near}|}{\sigma_n\sigma_N}
    \le
    C\frac{\sigma_n}{\sigma_N}
    \le
    C_\eta\left(\frac nN\right)^{\alpha/2-\eta}.
\]

For the far part, \(t-s\asymp t\) whenever \(s\le n\) and \(t>2n\). Hence, using regular
variation of \(\rho\), Karamata's theorem and \(d<1\), \(d+\alpha<1\),
\[
    |B_{n,N}^{\rm far}|
    \le
    C
    \sum_{s\le n}|a_s|
    \sum_{2n<t\le N}|a_t|\,|\rho(t-s)|
\]
\[
   \le
    C
    \sum_{s\le n}s^{-d}\ell_a(s)
    \sum_{2n<t\le N}t^{-d-\alpha}\ell_a(t)L_0(t)  
    \le
    C n^{1-d}\ell_a(n)\,
      N^{1-d-\alpha}\ell_a(N)L_0(N).
\]
Dividing by
\[
    \sigma_n\sigma_N
    \asymp
    n^{1-d-\alpha/2}N^{1-d-\alpha/2}
    \ell_a(n)\ell_a(N)L_0(n)^{1/2}L_0(N)^{1/2},
\]
we obtain
\[
    \frac{|B_{n,N}^{\rm far}|}{\sigma_n\sigma_N}
    \le
    C
    \left(\frac nN\right)^{\alpha/2}
    \left(\frac{L_0(N)}{L_0(n)}\right)^{1/2}.
\]
By Potter's bound, uniformly for \(3\le n\le N/4\),
\[
    \left(\frac{L_0(N)}{L_0(n)}\right)^{1/2}
    \le C_\eta\left(\frac Nn\right)^\eta.
\]
Thus
\[
    \frac{|B_{n,N}^{\rm far}|}{\sigma_n\sigma_N}
    \le
    C_\eta\left(\frac nN\right)^{\alpha/2-\eta}.
\]
Combining the estimates for \(\sigma_n^2\), \(B_{n,N}^{\rm near}\), and \(B_{n,N}^{\rm far}\)
proves
\[
    |\operatorname{Corr}(S_{n,1},S_{N,1})|
    \le
    C_\eta\left(\frac nN\right)^{\alpha/2-\eta},
    \qquad 3\le n\le N/4.
\]
\end{proof}

\begin{proof}[Proof of Theorem~\ref{thm:linear-sharp-lil}]
It is enough to prove the result for \(S_{n,1}\), since Theorem~\ref{thm:reduction}
gives \(R_n=o(A_n(2\log\log n)^{1/2})\) a.s. and Lemma~\ref{lem:linear-var-corr} gives
\(A_n\sim\sigma_n\).

For the upper bound, let \(n_k=\lfloor\theta^k\rfloor\), \(1<\theta<2\). 
Since \(S_{n_k,1}/\sigma_{n_k}\) is standard normal, for every
\(\varepsilon>0\), Gaussian tails and Borel--Cantelli give
\[
\limsup_k\frac{|S_{n_k,1}|}{\sigma_{n_k}(2\log\log n_k)^{1/2}}\le1+\varepsilon
\quad\text{a.s.}
\]
The gaps are controlled by Lemma~\ref{lem:fixed-chaos-maximal-tail} with \(m=1\), using
the local variance estimate of Lemma~\ref{lem:leading-chaos-local-variance}. Since
\(\operatorname{Var}\sum_{t\in I}a_tX_t
\le C\sigma_{n_k}^2(|I|/n_k)^q\) for some \(q>1\), we get, for every \(\delta>0\),
\[
\sum_k
\mathbb P\left(
\max_{n_k\le n\le n_{k+1}}
|S_{n,1}-S_{n_k,1}|
>\delta\sigma_{n_k}(2\log\log n_k)^{1/2}
\right)<\infty
\]
by choosing \(\theta>1\) close enough to \(1\). Regular variation of \(\sigma_n\), followed by
\(\theta\downarrow1\), \(\delta\downarrow0\), and \(\varepsilon\downarrow0\), yields
\[
\limsup_n\frac{|S_{n,1}|}{\sigma_n(2\log\log n)^{1/2}}\le1
\quad\text{a.s.}
\]

For the lower bound we apply
Lemma~\ref{lem:finite-dimensional-sparse-recurrence} with $N=1$ and
$Z_n:=S_{n,1}/\sigma_n$, so that $\operatorname{Var}(Z_n)=1$. By
Lemma~\ref{lem:linear-var-corr}, for every $\eta\in(0,\alpha/2)$ there
is $C_\eta<\infty$ with
$|\operatorname{Cov}(Z_n,Z_m)|\le C_\eta(n/m)^{\alpha/2-\eta}$ for
$3\le n\le m/4$; for the remaining pairs $m/4<n\le m$ the trivial bound
$|\operatorname{Cov}(Z_n,Z_m)|\le1
\le4^{\alpha/2-\eta}(n/m)^{\alpha/2-\eta}$ applies after enlarging
$C_\eta$. Thus the covariance hypothesis of
Lemma~\ref{lem:finite-dimensional-sparse-recurrence} holds with
$\theta=\alpha/2-\eta$. Fix $\varepsilon\in(0,1)$ and apply the lemma
with $a:=1-\varepsilon$: there is a sparse sequence
$n_k=\lfloor\exp(k^\gamma)\rfloor$ such that, almost surely,
$|Z_{n_k}-(1-\varepsilon)(2\log\log n_k)^{1/2}|<\varepsilon$ for
infinitely many $k$. In particular,
$Z_{n_k}\ge(1-\varepsilon)(2\log\log n_k)^{1/2}-\varepsilon$ infinitely
often, whence
\[
    \limsup_{n\to\infty}
    \frac{S_{n,1}}{\sigma_n(2\log\log n)^{1/2}}\ge1-\varepsilon
    \qquad\text{a.s.}
\]
Letting \(\varepsilon\downarrow0\) gives the lower bound \(1\);
applying the same argument with \(a=-(1-\varepsilon)\) gives
\[
\liminf_{n\to\infty}
\frac{S_{n,1}}{\sigma_n(2\log\log n)^{1/2}}
\le -1
\quad\text{a.s.}
\]
 Combining with the upper bound and using
\(S_n=S_{n,1}+R_n\), \(A_n\sim\sigma_n\), proves the theorem.
\end{proof}
\begin{remark}
This value $\Lambda_1=1$ matches the prediction of the general variational
formula of Corollary~\ref{cor:sharp-nonlinear-constant}; see
Remark~\ref{rem:Lambda}.
\end{remark}
\section{The nonlinear case \(m\ge2\)}
\label{sec:spectral-realization}

Throughout this section \(m\ge2\). We first fix the spectral realization and the
normalization convention for multiple Wiener integrals.

The Gaussian comparison estimate and the finite-dimensional sparse
recurrence used below
(Lemmas~\ref{lem:gaussian-shifted-ball-comparison}
and~\ref{lem:finite-dimensional-sparse-recurrence}) are stated and
proved in the Appendix; they are shared with the proof of the linear
Theorem~\ref{thm:linear-sharp-lil}.

We use the real Hilbert space \(\mathfrak H_{\mathbb R}\) introduced in
Section~3. The corresponding tensor products and symmetric tensor products are always
taken over this real Hilbert space. We write
\(I_m(\cdot;W)\) for the \(m\)-th multiple Wiener integral with respect to an
isonormal Gaussian process \(W\) over \(\mathfrak H_{\mathbb R}\), normalized by
\[
    \mathbb E\bigl[I_m(F;W)I_m(G;W)\bigr]
    =
    m!\,
    \langle F,G\rangle_{\mathfrak H_{\mathbb R}^{\otimes m}} .
\]

Since \(\rho\) is real and even, we choose the even version of the spectral density \(f\);
equivalently, one may replace \(f\) by \((f(\lambda)+f(-\lambda))/2\). This preserves the
assumptions on \(f\). We extend \(f\) by zero outside \([-\pi,\pi]\), and put \(q:=f^{1/2}\).
Then \(q\) is real and even.

For \(t\ge1\), define \(h_t(\lambda):=e^{it\lambda}q(\lambda)\). Since
\(q\) is real and even, \(h_t(-\lambda)=\overline{h_t(\lambda)}\), hence
\(h_t\in\mathfrak H_{\mathbb R}\). We realize the stationary Gaussian sequence by
\(X_t=W(h_t)\). Moreover,
\[
    \langle h_s,h_t\rangle_{\mathfrak H_{\mathbb R}}
    =
    \operatorname{Re}\int_{\mathbb R}e^{i(s-t)\lambda}f(\lambda)\,d\lambda
    =
    \rho(s-t).
\]
Consequently, \(H_m(X_t)=I_m(h_t^{\otimes m};W)\).

Then
\(
    S_{n,m}
    =
    I_m(F_n;W),
\)
where the symmetric kernel \(F_n\in\mathfrak H_{\mathbb R}^{\odot m}\) is given by
\[
    F_n(\lambda_1,\ldots,\lambda_m)
=
\frac{J_m}{m!}
\sum_{u=1}^n
a_u e^{iu(\lambda_1+\cdots+\lambda_m)}
\prod_{j=1}^m q(\lambda_j).
\]
The Hermitian symmetry of \(F_n\) ensures that \(I_m(F_n;W)\) is real. This
realization is fixed once and for all; hence all random variables
\((S_{n,m})_{n\ge1}\) are constructed on the same isonormal Gaussian space.

For \(r>0\), define
\(
    (U_rh)(x):=r^{1/2}h(rx),
    \qquad h\in L^2(\mathbb R;\mathbb C).
\)
Then \(U_rU_s=U_{rs}\), and each \(U_r\) is an isometry of
\(L^2(\mathbb R;\mathbb C)\) preserving Hermitian symmetry, hence acting
isometrically on \(\mathfrak H_{\mathbb R}\). For \(n\ge3\), write
\(
    W_n(h):=W(U_nh),
\)
so that \(W_n\) is again an isonormal Gaussian process over
\(\mathfrak H_{\mathbb R}\), and
\(
    I_m(K;W_n)=I_m\bigl(U_n^{\otimes m}K;W\bigr)
\)
for every \(K\in\mathfrak H_{\mathbb R}^{\odot m}\), as one checks on
\(K=h^{\otimes m}\) and extends by linearity and density.

Throughout this section, we use the test class
\newline
\(
    D_{\mathbb R}
    :=
    \left\{
        h\in C_c^\infty(\mathbb R;\mathbb C):
        h(-x)=\overline{h(x)}
        \ \text{for all }x\in\mathbb R
    \right\}.
\)
This is a real linear subspace of \(\mathfrak H_{\mathbb R}\), contained in
\(L^1(\mathbb R)\cap L^\infty(\mathbb R)\), and dense in
\(\mathfrak H_{\mathbb R}\). Indeed, if
\(g_j\in C_c^\infty(\mathbb R;\mathbb C)\) converges to
\(h\in\mathfrak H_{\mathbb R}\) in \(L^2\), then
\(
    \widetilde g_j(x)
    :=
    \frac12\bigl(g_j(x)+\overline{g_j(-x)}\bigr)
\)
belongs to \(D_{\mathbb R}\) and converges to \(h\) in \(L^2\).

For \(h\in D_{\mathbb R}\), put
\(
    (Ah)(x):=\frac12h(x)+xh'(x).
\)
Then \(Ah\in D_{\mathbb R}\), and
\begin{equation}\label{eq:dilation-lipschitz}
    \|U_rh-h\|_{L^2(\mathbb R)}
    \le
    |\log r|\,\|Ah\|_{L^2(\mathbb R)},
    \qquad r>0.
\end{equation}
Indeed,
\(
    \frac{d}{ds}(U_sh)
    =
    s^{-1}U_s(Ah)
    \qquad\text{in }L^2(\mathbb R),
\)
and hence
\(
    U_rh-h=\int_1^r s^{-1}U_s(Ah)\,ds.
\)
Minkowski's inequality and the isometry of \(U_s\) now give
\eqref{eq:dilation-lipschitz}.

For \(t\ge0\), define the linearly interpolated leading-chaos process
\[
    \mathcal S_{n,m}(t)
    :=
    \bigl(1-\{nt\}\bigr)S_{\lfloor nt\rfloor,m}
    +
    \{nt\}S_{\lfloor nt\rfloor+1,m}.
\]
Let \(F_0:=0\), and define the corresponding interpolated spectral kernel
\[
    F_{n,t}^{\mathrm{lin}}
    :=
    \bigl(1-\{nt\}\bigr)F_{\lfloor nt\rfloor}
    +
    \{nt\}F_{\lfloor nt\rfloor+1}.
\]
The full rescaled kernel is
\[
    \widetilde F^{\mathrm{full}}_{n,t}(x)
    :=
    \frac{n^{-m/2}}{A_n}
    F_{n,t}^{\mathrm{lin}}
    \left(
        \frac{x_1}{n},\ldots,\frac{x_m}{n}
    \right)
    \mathbf 1_{[-\pi n,\pi n]^m}(x),
    \qquad x=(x_1,\ldots,x_m).
\]

Since \(q\) vanishes outside \([-\pi,\pi]\), the kernel
\(F^{\mathrm{lin}}_{n,t}\) is supported in \([-\pi,\pi]^m\), whence
\(U_n^{\otimes m}\widetilde F^{\mathrm{full}}_{n,t}
=A_n^{-1}F^{\mathrm{lin}}_{n,t}\) and therefore
\[
    I_m\bigl(\widetilde F^{\mathrm{full}}_{n,t};W_n\bigr)
    =
    I_m\bigl(U_n^{\otimes m}\widetilde F^{\mathrm{full}}_{n,t};W\bigr)
    =
    \frac{I_m\bigl(F^{\mathrm{lin}}_{n,t};W\bigr)}{A_n}
    =
    \frac{\mathcal S_{n,m}(t)}{A_n}.
\]

The deterministic limiting kernel is
\[
    Q_t(x)
    =
    C_{m,\alpha,d}
    \left(
        \int_0^t y^{-d}e^{iy(x_1+\cdots+x_m)}\,dy
    \right)
    \prod_{j=1}^m |x_j|^{(\alpha-1)/2},
    \qquad t\ge0.
\]

\begin{proposition}[Uniform full-kernel convergence]
\label{prop:full-kernel-convergence}
Assume \((\ref{A1})\)--\((\ref{A6})\), \(m\ge2\),
\(\alpha m<1\), and \(2d+\alpha m<1\). For every fixed \(T<\infty\),
\[
    \sup_{0\le t\le T}
    \left\|
        \widetilde F^{\mathrm{full}}_{n,t}
        -
        Q_t
    \right\|_{L^2(\mathbb R^m)}
    \longrightarrow0 .
\]
\end{proposition}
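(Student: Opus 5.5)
We split $\mathbb R^m$ into a low-frequency box $\{\max_j|x_j|\le \delta n\}$ and its complement inside $[-\pi n,\pi n]^m$. On each piece we compare the rescaled discrete kernel with $Q_t$ pointwise, and then pass to $L^2$. Uniformity in $t\in[0,T]$ comes from working with the non-interpolated kernels at integer times $k=\lfloor nt\rfloor\le nT+1$. The interpolation is harmless: $F^{\mathrm{lin}}_{n,t}$ differs from $F_{\lfloor nt\rfloor}$ by at most one summand, of $L^2$-norm $\lesssim |a_{\lfloor nt\rfloor+1}|$. After division by $A_n n^{0}$ this is $O(n^{-d}\ell_a(n)/A_n)\to0$ uniformly in $t\le T$, since $H>-d$. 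Near $t=0$ we also need $\sup_{t\le\varepsilon}\|Q_t\|_2\to0$ and the matching prelimit bound. Both follow from the variance identity $\|Q_t\|_2^2=t^{2H}/m!$ (Lemma~\ref{lem:limit-kernel-L2}) and from Proposition~\ref{prop:exact-variance} with Potter bounds. This lets us restrict to $t\in[\varepsilon,T]$.

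\emph{Step 1 (pointwise convergence on compacts).} Write
\[
\widetilde F^{\mathrm{full}}_{n,t}(x)=\frac{J_m}{m!\,A_n n^{m/2}}\Bigl(\sum_{u\le nt}a_u e^{iu s/n}\Bigr)\prod_j q(x_j/n),\qquad s=x_1+\cdots+x_m .
\]
By (\ref{A2}), $n^{-1/2}q(x_j/n)\sim c_f^{1/2}|x_j|^{(\alpha-1)/2}n^{(1-\alpha)/2}L_0(n/|x_j|)^{1/2}n^{-1/2}$. By (\ref{A4})--(\ref{A5}) and Abel summation,
\[
\sum_{u\le nt}a_ue^{ius/n}= n^{1-d}\ell_a(n)\Bigl(\int_0^t y^{-d}e^{iys}\,dy+o(1)\Bigr),
\]
uniformly for $t\in[\varepsilon,T]$ and $s$ in compacts. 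This is the uniform Abel lemma on weighted exponential sums (Lemma~\ref{lem:uniform-abel-weighted-sum}); (\ref{A5}) controls $a_{u+1}-a_u$. Combining with $A_n\sim\sigma_m n^H\ell_a(n)L_0(n)^{m/2}$, the powers of $n$ cancel exactly, since $1-d+m(1-\alpha)/2-m/2=H$. The slowly varying ratios $L_0(n/|x_j|)/L_0(n)\to1$, and the constant becomes $J_mc_f^{m/2}/(m!\sigma_m)=C_{m,\alpha,d}$. Hence $\widetilde F^{\mathrm{full}}_{n,t}\to Q_t$ uniformly in $t\in[\varepsilon,T]$ and locally uniformly in $x$ away from the coordinate hyperplanes.

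\emph{Step 2 (domination / tails).} To upgrade this to $L^2$ we need uniform control of two regions. The first is a neighbourhood of the hyperplanes $\{x_j=0\}$ together with $\{|x|>M\}$ inside the low-frequency box. The second is the high-frequency region $\{\max|x_j|>\delta n\}$. In the low-frequency region we use Potter bounds: $q(x/n)^2\le C n^{1-\alpha\pm\eta}|x|^{\alpha-1\mp\eta}L_0(n)$. We also use the bound
\[
\Bigl|\sum_{u\le k}a_ue^{ius/n}\Bigr|\le C n^{1-d}\ell_a(n)\min\bigl(1,|s|^{-1}\bigr)^{\kappa}
\]
with some $\kappa>0$, obtained by Abel summation; for $d<0$ this gives $\kappa=1$ at the price of $t^{-d}$ factors. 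Together these yield an integrable majorant of the form $\min(1,|s|^{-2\kappa})\prod_j|x_j|^{\alpha-1\pm\eta}$, uniformly in $n$ and $t\le T$. Its integrability is exactly the computation behind $\|Q_t\|_2<\infty$ under $\alpha m<1$, $2d+\alpha m<1$, with $\eta$ small. Dominated convergence, made uniform in $t$ by Step 1, then handles the low-frequency box.

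For the high-frequency region we invoke Proposition~\ref{prop:full-prelimit-tail}. There $q$ is bounded and, by (\ref{A6}), of bounded variation away from zero. The mass is estimated by Parseval in the variable $s$: the sum $\sum_u a_ue^{iu\lambda}$ has $L^2([-\pi,\pi])$-norm squared $\sum_{u\le nT}a_u^2\asymp n^{1-2d}\ell_a(n)^2$. After normalization, the high-frequency contribution is $O(\delta^{-\alpha m+\eta}\,n^{-(1-\alpha m)+\eta})$ relative to $A_n^2$, which tends to $0$ for each fixed $\delta$ because $\alpha m<1$.

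The main obstacle is the uniform-in-$t$ majorant near the diagonal set $\{s\approx0\}$ combined with the singularities $|x_j|^{\alpha-1}$. This is exactly where the exponent conditions are sharp. I would first try to prove the bound $|\sum_{u\le k}a_ue^{iu\lambda}|\le C\min(k^{1-d}\ell_a(k),|\lambda|^{d-1}\ell_a(1/|\lambda|))$, uniformly in $k\le nT$. For $d\ge0$ this follows from monotone-type Abel summation via (\ref{A5}). For $d<0$ one uses the endpoint term $a_k/|\lambda|$ together with $k\le nT$. Squaring this bound recovers the two-dimensional kernel $y^{-d}z^{-d}|y-z|^{-\alpha m}$ integrability.
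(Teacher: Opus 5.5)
Your argument is sound in outline, but it reaches the key domination step by a different route from the paper.

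\emph{The paper's route.} The paper first proves $\sup_t\|\widehat F_{n,t}-Q_t\|_{L^2([-R,R]^m)}\to0$. The convergence of the weighted sums used there is the Riemann-sum Lemma~\ref{lem:weighted-riemann-sum-full}, which needs only (A4) and is uniform on all of $[0,T]$. The Abel Lemma~\ref{lem:uniform-abel-weighted-sum} that you cite in Step~1 gives only a bound, not convergence. The paper then imports uniform tails from Lemma~\ref{lem:limit-kernel-L2}(c) and Proposition~\ref{prop:full-prelimit-tail}. Inside the latter, the weighted sum is split at $u=\lfloor\varepsilon n\rfloor$. The initial block is controlled only in $L^2$, through Proposition~\ref{prop:exact-variance} (squared norm $\lesssim\varepsilon^{2H-\eta}$). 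Abel summation is therefore needed only on $(\varepsilon n,Tn]$, where the weights are flat on scale $n$ and give the clean bound $\min\{1,|\theta|^{-1}\}$. The high-frequency part uses (A6) to get $|r_\delta(h)|\lesssim(1+|h|)^{-1}$, followed by Young's inequality.

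\emph{Your route and two corrections.} You do not split the sum, so you need one majorant for $\sum_{u=1}^{k}a_ue^{ius/n}$ that is uniform in $k\le Tn$. This is what forces your two-scale Abel bound, cut at $u\approx n/|s|$. Two points there need fixing.
\begin{itemize}
\item ``Some $\kappa>0$'' is not enough. The majorant $\min(1,|s|^{-2\kappa})\prod_j(|x_j|^{\alpha-1-\eta}+|x_j|^{\alpha-1+\eta})$ is integrable only if $2\kappa>m(\alpha+\eta)$; the constraint is at $|s|\to\infty$, not at $s\approx0$. After Potter, your bound gives $\kappa=1-d-\eta$ for $d\ge0$. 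This $|s|^{\eta}$ loss is also all that (A5) yields at $d=0$ for non-monotone $\ell_a$. So the requirement is essentially $2d+\alpha m<2$, which holds with room to spare.
\item For $d<0$, the displayed bound $\min\bigl(k^{1-d}\ell_a(k),|\lambda|^{d-1}\ell_a(1/|\lambda|)\bigr)$ is false when $k|\lambda|\gg1$, because the endpoint term $a_k/|\lambda|$ dominates. Your fallback $\lesssim n^{1-d}\ell_a(n)|s|^{-1}$, i.e.\ $\kappa=1$, is what is actually used, and it suffices.
\end{itemize}

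\emph{High frequencies.} Here your Parseval argument is more economical than the paper's. Take $g_1=f\mathbf 1_{\{\delta<|\lambda|\le\pi\}}$ and $g_j=f\mathbf 1_{[-\pi,\pi]}$ for $j\ge2$. The integral equals $\int|P(s)|^2(g_1*\cdots*g_m)(s)\,ds\le C_m\|g_1\|_\infty\sum_{u\le Tn}a_u^2$. This uses only the boundedness of $f$ away from the origin contained in (A2), not (A6).

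\emph{Interpolation step.} For $d>0$ one has $\sup_{u\le Tn+1}|a_u|\asymp1$, not $n^{-d}\ell_a(n)$. The error still vanishes because $H>0$.

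\emph{Comparison.} The paper's split buys a trivial Abel estimate, at the price of an extra $\varepsilon$-layer resting on the variance asymptotics. Your route buys a single dominated-convergence argument and dispenses with (A6), at the price of a sharper exponential-sum estimate for the full weighted sum.
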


\begin{proof}
Put \(\theta(x):=x_1+\cdots+x_m\). Let \(\widehat F_{n,t}\) denote the
non-interpolated rescaled kernel
\[
    \widehat F_{n,t}(x)
    :=
    \frac{n^{-m/2}}{A_n}
    F_{\lfloor nt\rfloor}
    \left(\frac{x_1}{n},\ldots,\frac{x_m}{n}\right)
    \mathbf 1_{[-\pi n,\pi n]^m}(x).
\]
Then
\[
    \widehat F_{n,t}(x)=\gamma_n R_n^{(0)}(t,\theta(x))\Psi_n(x),
\]
where
\[
    R_n^{(0)}(t,\theta)
    :=
    \frac{1}{n^{1-d}\ell_a(n)}
    \sum_{u=1}^{\lfloor nt\rfloor}a_u e^{iu\theta/n},
\]
\[
    \Psi_n(x)
    :=
    \prod_{j=1}^m
    \left(
        n^{-(1-\alpha)/2}L_0(n)^{-1/2}
        q(x_j/n)\mathbf 1_{\{|x_j|\le\pi n\}}
    \right),
\]
and
\[
    \gamma_n
    :=
    \frac{J_m}{m!}
    \frac{n^{1-d-\alpha m/2}\ell_a(n)L_0(n)^{m/2}}{A_n}.
\]
By Proposition~\ref{prop:exact-variance}, \(\gamma_n\to\gamma_m:=J_m/(m!\sigma_m)\).

Fix \(R>0\). By Lemma~\ref{lem:spectral-factor-rescaled} and
Corollary~\ref{cor:product-compact-box},
\[
    \Psi_n\to c_f^{m/2}\prod_{j=1}^m |x_j|^{(\alpha-1)/2}
    \quad\text{in }L^2([-R,R]^m).
\]
Moreover, \(\gamma_m c_f^{m/2}=C_{m,\alpha,d}\). By
Corollary~\ref{cor:compact-frequency-uniformity-full},
\[
    \sup_{0\le t\le T}\sup_{x\in[-R,R]^m}
    \left|
        R_n^{(0)}(t,\theta(x))
        -
        \int_0^t y^{-d}e^{iy\theta(x)}\,dy
    \right|\to0.
\]
Therefore
\[
    \sup_{0\le t\le T}
    \|\widehat F_{n,t}-Q_t\|_{L^2([-R,R]^m)}\to0.
\]

We now pass from compact boxes to \(\mathbb R^m\). By Lemma~\ref{lem:limit-kernel-L2},
\[
    \lim_{R\to\infty}\sup_{0\le t\le T}
    \int_{([-R,R]^m)^c}|Q_t(x)|^2\,dx=0.
\]
The estimate of Proposition~\ref{prop:full-prelimit-tail}, applied at the grid times
\(t=\ell/n\), gives
\[
    \lim_{R\to\infty}\limsup_{n\to\infty}\sup_{0\le t\le T}
    \int_{([-R,R]^m)^c}|\widehat F_{n,t}(x)|^2\,dx=0.
\]
Combining the compact-box convergence with these two tail estimates yields
\[
    \sup_{0\le t\le T}
    \|\widehat F_{n,t}-Q_t\|_{L^2(\mathbb R^m)}\to0.
\]

It remains to control the interpolation error. Since
\(F_{\lfloor nt\rfloor+1}-F_{\lfloor nt\rfloor}\) is one summand,
\[
    \widetilde F^{\mathrm{full}}_{n,t}(x)-\widehat F_{n,t}(x)
    =
    \{nt\}\frac{J_m}{m!}\frac{n^{-m/2}}{A_n}
    a_{\lfloor nt\rfloor+1}
    e^{i(\lfloor nt\rfloor+1)\theta(x)/n}        
   \times
    \prod_{j=1}^m q(x_j/n)\mathbf 1_{[-\pi n,\pi n]^m}(x).
\]
Since \(\int_{-\pi}^{\pi}f(\lambda)\,d\lambda=\rho(0)=1\), this gives
\[
    \sup_{0\le t\le T}
    \|\widetilde F^{\mathrm{full}}_{n,t}-\widehat F_{n,t}\|_2
    \le
    C_m\frac{\sup_{1\le u\le Tn+1}|a_u|}{A_n}.
\]
We claim that the last ratio tends to zero. By Potter's bound, for every \(\eta>0\),
\[
    \sup_{1\le u\le Tn+1}|a_u|
    \le
    C_{\eta,T} n^{\max\{-d,0\}+\eta}\ell_a(n).
\]
Also \(A_n\sim\sigma_m n^{1-d-\alpha m/2}\ell_a(n)L_0(n)^{m/2}\). Applying Potter's bound
to \(L_0^{-m/2}\), we get
\[
    \frac{\sup_{1\le u\le Tn+1}|a_u|}{A_n}
    \le
    C_{\eta,T}
    n^{\max\{-d,0\}-1+d+\alpha m/2+2\eta}.
\]
The exponent is negative for \(\eta>0\) small enough: if \(d\ge0\), this follows from
\(2d+\alpha m<1\); if \(d<0\), it follows from \(\alpha m<1\). Hence
\(
    \sup_{0\le t\le T}
    \|\widetilde F^{\mathrm{full}}_{n,t}-\widehat F_{n,t}\|_2\to0.
\)
Together with \(\sup_{0\le t\le T}\|\widehat F_{n,t}-Q_t\|_2\to0\), this proves the proposition.

\end{proof}

\begin{lemma}[Two-sided bounds for the variational constant]
\label{lem:Lambda-bounds}
Assume $\alpha m<1$ and $2d+\alpha m<1$. Then
\[
    0
    \;<\;
    L_{m,\alpha,d}
    \;\le\;
    \Lambda_m
    \;\le\;
    (m!)^{-1/2},
\]
where the upper bound is the Hilbert--Schmidt norm of the limiting
kernel $Q_1$, and the lower bound is given in closed form by
\[
     L_{m,\alpha,d}
    :=
    \Bigl(\frac{2-\alpha}{2(1-\alpha)}\Bigr)^{m/2}
    \frac{\displaystyle\sum_{k=0}^{m}\binom{m}{k}
      B\bigl(1-d+(1-\alpha)k,\;1+(1-\alpha)(m-k)\bigr)}
    {\bigl(m!\,I_{m,\alpha,d}\bigr)^{1/2}},
\]
where $B(\cdot,\cdot)$ is the Beta function and
$I_{m,\alpha,d}=2B(1-d,1-\alpha m)/(2-2d-\alpha m)$.
\end{lemma}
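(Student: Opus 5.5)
The plan is to prove the two inequalities separately. The upper bound will come from Cauchy--Schwarz together with an exact computation of the Hilbert--Schmidt norm of $Q_1$. The lower bound will come from plugging a single explicit test vector $\xi$ into the variational formula; we take $\xi$ to be the normalized $m=1$, $d=0$ kernel.

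\emph{Upper bound.} For $\|\xi\|_{\mathfrak H_{\mathbb R}}\le1$, Cauchy--Schwarz in $\mathfrak H_{\mathbb R}^{\otimes m}$ gives
$|\langle Q_1,\xi^{\otimes m}\rangle|\le\|Q_1\|_{L^2(\mathbb R^m)}$, so it suffices to show $\|Q_1\|_{L^2}^2=1/m!$. I would expand $|\int_0^1 y^{-d}e^{iy\theta}dy|^2$ as a double integral $\int_0^1\int_0^1 y^{-d}z^{-d}e^{i(y-z)\theta}\,dy\,dz$, with $\theta=x_1+\cdots+x_m$, and apply Fubini. Each coordinate then produces the Fourier transform $\int_{\mathbb R}|x|^{\alpha-1}e^{iux}\,dx=c_\alpha|u|^{-\alpha}$, with $c_\alpha=2\Gamma(\alpha)\cos(\pi\alpha/2)$. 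This yields $C_{m,\alpha,d}^2c_\alpha^mI_{m,\alpha,d}=1/m!$ by the definition of $C_{m,\alpha,d}$; this is presumably the content of Lemma~\ref{lem:limit-kernel-L2} at $t=1$. Fubini is not directly legitimate, since the one-dimensional Fourier integrals are only conditionally convergent. I would justify it by inserting a damping factor $e^{-\varepsilon|x_j|}$ and letting $\varepsilon\downarrow0$ with monotone convergence. Finiteness of $I_{m,\alpha,d}$ uses $\alpha m<1$ and $d<1$. The Beta-function expression for $I_{m,\alpha,d}$ follows by splitting $y<z$ and $y>z$ and substituting $y=zs$.

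\emph{Lower bound.} I would choose
$\xi(x)=\kappa\,|x|^{(\alpha-1)/2}\,\overline{(e^{ix}-1)/(ix)}$. This function satisfies $\xi(-x)=\overline{\xi(x)}$ and lies in $L^2$ because $\alpha<1$. By the same Fourier computation,
$\|\xi\|^2=\kappa^2c_\alpha\int_0^1\int_0^1|s-s'|^{-\alpha}ds\,ds'=\kappa^2c_\alpha\,\frac{2}{(1-\alpha)(2-\alpha)}$. I would fix $\kappa>0$ so that $\|\xi\|=1$.

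Next, $\langle Q_1,\xi^{\otimes m}\rangle$ is the real part of the $m$-fold integral of $Q_1\cdot\prod_j\overline{\xi(x_j)}$. After again writing $\int_0^1y^{-d}e^{iy\theta}dy$ as an integral in $y$ and using Fubini (with the same damping justification), it factorizes as
\[
C_{m,\alpha,d}\,\kappa^m\int_0^1 y^{-d}\prod_{j=1}^m\Bigl(\int_{\mathbb R}|x|^{\alpha-1}\tfrac{e^{ix}-1}{ix}e^{iyx}\,dx\Bigr)dy .
\]
Writing $(e^{ix}-1)/(ix)=\int_0^1e^{isx}ds$, each inner factor equals $c_\alpha\int_0^1|y+s|^{-\alpha}ds$. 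Here the sign convention of the conjugation in $\xi$ has to be chosen so that this becomes $c_\alpha\int_0^1|y-s|^{-\alpha}ds=c_\alpha\,\frac{y^{1-\alpha}+(1-y)^{1-\alpha}}{1-\alpha}$. This is the only delicate bookkeeping point, and I would settle it by replacing $\xi$ with $\overline{\xi}$ if necessary; both are admissible. The resulting quantity is real, and its sign is that of $J_m$, which the absolute value in $\Lambda_m$ removes.

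Expanding $\bigl(y^{1-\alpha}+(1-y)^{1-\alpha}\bigr)^m$ binomially and integrating against $y^{-d}$ gives $\sum_k\binom mk B\bigl(1-d+(1-\alpha)k,\,1+(1-\alpha)(m-k)\bigr)$. Collecting constants, with $|C_{m,\alpha,d}|=(m!)^{-1/2}c_\alpha^{-m/2}I_{m,\alpha,d}^{-1/2}$, $\kappa^m=c_\alpha^{-m/2}\bigl(\tfrac{(1-\alpha)(2-\alpha)}2\bigr)^{m/2}$, and $(1-\alpha)^{-m}$ from the inner integrals, all powers of $c_\alpha$ cancel. What remains is exactly $L_{m,\alpha,d}$, so $\Lambda_m\ge L_{m,\alpha,d}$.

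Positivity of $L_{m,\alpha,d}$ is immediate, since every Beta term is positive: $1-d>0$ in the regime $2d+\alpha m<1$.

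\emph{Main obstacle.} I expect the main difficulty to be the rigorous interchange of integrals against the non-integrable kernels $|x|^{\alpha-1}$. The plan is to carry out every computation with the damping factor $e^{-\varepsilon|x|}$, where the resulting Fourier transforms are explicit and converge locally uniformly to $c_\alpha|u|^{-\alpha}$. One then passes to the limit using dominated convergence on the $(y,s)$ side, which is legitimate because $\alpha m<1$ makes $|y-s|^{-\alpha}$ products integrable.
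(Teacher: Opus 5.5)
Your proposal is correct and follows the paper's proof essentially step by step. The ceiling comes from Cauchy--Schwarz together with $\|Q_1\|_{L^2}^2=\widetilde C_{m,\alpha,d}I_{m,\alpha,d}=1/m!$. The floor uses the trial function $|x|^{(\alpha-1)/2}(e^{ix}-1)/(ix)$, which is your $\overline{\xi}$ up to normalization, i.e.\ exactly the conjugation choice that produces $|y-s|^{-\alpha}$; it is then evaluated with the damped Riesz-kernel Fourier identity, expanded binomially into Beta functions, and finished by the cancellation of the powers of $c_\alpha$.
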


\begin{proof}
\emph{Upper bound.} By the Cauchy--Schwarz inequality, for every
$\xi\in\mathfrak H_{\mathbb R}$ with
$\|\xi\|_{\mathfrak H_{\mathbb R}}\le1$,
\[
    \bigl|\langle Q_1,\xi^{\otimes m}\rangle\bigr|
    \le\|Q_1\|_{L^2(\mathbb R^m)}\,\|\xi\|_{L^2(\mathbb R)}^{m}
    \le\|Q_1\|_{L^2(\mathbb R^m)} .
\]
By Lemma~\ref{lem:limit-kernel-L2} 
with $t=1$ and the normalization
$\widetilde C_{m,\alpha,d}=1/(m!\,I_{m,\alpha,d})$,
\[
    \|Q_1\|_{L^2(\mathbb R^m)}^2
    =\widetilde C_{m,\alpha,d}\,I_{m,\alpha,d}
    =\frac{1}{m!},
\]
whence $\Lambda_m\le(m!)^{-1/2}$.

\emph{Lower bound.} Consider the trial function
\[
    \xi_1(x):=|x|^{(\alpha-1)/2}\int_0^1 e^{iyx}\,dy
    =|x|^{(\alpha-1)/2}\,\frac{e^{ix}-1}{ix},
    \qquad x\in\mathbb R .
\]
Then $\xi_1(-x)=\overline{\xi_1(x)}$, and $\xi_1\in L^2(\mathbb R)$:
near the origin $|\xi_1(x)|^2\asymp|x|^{\alpha-1}$, integrable since
$\alpha>0$, while at infinity $|\xi_1(x)|^2\le4|x|^{\alpha-3}$,
integrable since $\alpha<1$; hence $\xi_1\in\mathfrak H_{\mathbb R}$.
We use the identity
\begin{equation}
\label{eq:riesz-pair}
    \int_{\mathbb R}|x|^{\alpha-1}e^{iux}\,dx=c_\alpha|u|^{-\alpha},
    \qquad c_\alpha=2\Gamma(\alpha)\cos(\pi\alpha/2),
\end{equation}
understood, as in the proof of Lemma~\ref{lem:limit-kernel-L2}, as
the limit of the regularized integrals with the factor $e^{-\eta|x|}$,
$\eta\downarrow0$; all interchanges of integrals below are justified by
Fubini's theorem for the regularized expressions and dominated
convergence, exactly as there, using $\alpha m<1$ and $d<1$ for the
integrability of the limiting expressions.

By \eqref{eq:riesz-pair},
\[
    \|\xi_1\|_{\mathfrak H_{\mathbb R}}^2
    =\int_0^1\!\!\int_0^1
      \Bigl(\int_{\mathbb R}|x|^{\alpha-1}e^{i(y-z)x}\,dx\Bigr)dy\,dz
    =c_\alpha\int_0^1\!\!\int_0^1|y-z|^{-\alpha}\,dy\,dz
    =\frac{2c_\alpha}{(1-\alpha)(2-\alpha)} .
\]

Next, put
\[
    \varphi(y):=c_\alpha\int_0^1|y-z|^{-\alpha}\,dz
    =\frac{c_\alpha}{1-\alpha}\bigl(y^{1-\alpha}+(1-y)^{1-\alpha}\bigr)>0,
    \qquad 0<y<1,
\]
the second equality being an elementary computation. Applying
\eqref{eq:riesz-pair} in each of the $m$ coordinates $x_1,\ldots,x_m$,
\[
    \bigl\langle Q_1,\xi_1^{\otimes m}\bigr\rangle
    =C_{m,\alpha,d}\int_0^1 y^{-d}\,\varphi(y)^m\,dy .
\]

Expanding the $m$-th power by the binomial formula and
integrating term by term,
\[
    \int_0^1 y^{-d}\bigl(y^{1-\alpha}+(1-y)^{1-\alpha}\bigr)^m dy
    =\sum_{k=0}^{m}\binom{m}{k}
     B\bigl(1-d+(1-\alpha)k,\;1+(1-\alpha)(m-k)\bigr),
\]
all Beta arguments being positive because $d<1$. Therefore
$$
\bigl\langle Q_{1},\xi_{1}^{\otimes m}\bigr\rangle
=C_{m,\alpha,d}\Bigl(\frac{c_{\alpha}}{1-\alpha}\Bigr)^{m}
\sum_{k=0}^{m}\binom{m}{k}
\mathrm{B}\bigl(1-d+(1-\alpha)k,\;1+(1-\alpha)(m-k)\bigr).
$$
 
     Since $\xi_1/\|\xi_1\|_{\mathfrak H_{\mathbb R}}$ is admissible in the
variational problem, and since $\varphi>0$ on $(0,1)$, so that
$\int_0^1y^{-d}\varphi(y)^m\,dy>0$,
\begin{align*}
    \Lambda_m
    &\ge
    \frac{\bigl|\bigl\langle Q_1,\xi_1^{\otimes m}\bigr\rangle\bigr|}
         {\|\xi_1\|_{\mathfrak H_{\mathbb R}}^{m}}\\
    &=
    \bigl|C_{m,\alpha,d}\bigr|
    \Bigl(\frac{c_\alpha}{1-\alpha}\Bigr)^{m}
    \Bigl(\frac{(1-\alpha)(2-\alpha)}{2c_\alpha}\Bigr)^{m/2}
    \sum_{k=0}^{m}\binom{m}{k}
    B\bigl(1-d+(1-\alpha)k,\;1+(1-\alpha)(m-k)\bigr).
\end{align*}
Substituting
$\bigl|C_{m,\alpha,d}\bigr|
=\bigl(m!\,c_\alpha^{m}I_{m,\alpha,d}\bigr)^{-1/2}$, all
powers of $c_\alpha$ cancel and the right-hand side equals
$L_{m,\alpha,d}$.

 Since every factor is positive, $L_{m,\alpha,d}>0$,
which completes the two-sided bound.
\end{proof}


\begin{remark}[Explicit lower bound and degeneration at the boundary]
\label{rem:Lambda-explicit-lower}
Two features of Lemma~\ref{lem:Lambda-bounds} deserve
mention. First, the bound is numerically sharp: for
$(\alpha,d)=(0.20,0.10)$ it gives $L_{2,\alpha,d}=0.6799$ and
$L_{3,\alpha,d}=0.3535$ against the values
$\Lambda_2\approx0.6809$, $\Lambda_3\approx0.3543$ computed in
Remark~\ref{rem:Lambda-numerics}, and comparable accuracy is observed over the parameter range examined there.
Second, for fixed $d\le0$, as $\alpha m\uparrow1$ through the
admissible region,
\(
    B(1-d,1-\alpha m)
    \sim
    \Gamma(1-\alpha m)
    \longrightarrow\infty,
\)
and hence
\(
    L_{m,\alpha,d}\asymp(1-\alpha m)^{1/2}.
\)
Thus it is the explicit lower bound, rather than $\Lambda_m$ itself,
that is shown here to degenerate at this rate. In particular, the bound
is not uniform as the critical boundary is approached.
\end{remark}

\begin{remark}[Geometry versus scale]
\label{rem:geometry-vs-scale}
The two bounds of Lemma~\ref{lem:Lambda-bounds} play different roles.
The upper bound $(m!)^{-1/2}$ is a statement about scale alone: it
follows from the construction of $A_n$, which fixes
$\|Q_1\|_{L^2(\mathbb R^m)}^2=1/m!$ for every admissible
$(\alpha,d,\ell_a)$, together with the elementary domination of the
spectral norm of a symmetric tensor by its Hilbert--Schmidt norm;
consequently it depends on the rank $m$ alone and carries no trace of
the memory or the weights. The lower bound $L_{m,\alpha,d}$ carries
exactly what the upper bound cannot see: the shape of the limiting
kernel
$Q_1(x)=C_{m,\alpha,d}\,\Psi(x_1+\cdots+x_m)\prod_j|x_j|^{(\alpha-1)/2}$,
and in particular the function $\Psi(u)=\int_0^1y^{-d}e^{iyu}\,dy$,
through which the memory exponent $\alpha$ and the weight exponent $d$
enter explicitly; by Remark~\ref{rem:Lambda-explicit-lower}, this
dependence is strong enough to force $L_{m,\alpha,d}\to0$ at the rate
$(1-\alpha m)^{1/2}$ as the critical boundary $\alpha m\uparrow1$ is
approached. In short: the ceiling $(m!)^{-1/2}$ measures the
normalization; the floor $L_{m,\alpha,d}$ measures the geometry.
\end{remark}

 
\begin{lemma}[Finite-rank lower cluster step]
\label{lem:finite-rank-lower-cluster}
Let \(\mathfrak H\) be a real separable Hilbert space, and let \(W_n\), \(n\ge3\), be
isonormal Gaussian processes over \(\mathfrak H\). Let \(E\subset\mathfrak H\) be a
finite-dimensional subspace with orthonormal basis \(e_1,\ldots,e_N\).

Assume that for some \(\xi\in E\) with \(\|\xi\|_{\mathfrak H}\le1\), there exists a subsequence
\(n_k\to\infty\) such that
\[
    \frac{W_{n_k}(e_i)}{(2\log\log n_k)^{1/2}}
    \longrightarrow
    \langle e_i,\xi\rangle_{\mathfrak H},
    \qquad i=1,\ldots,N .
\]
Fix \(T<\infty\). Let \(Q_t\in E^{\odot m}\), \(0\le t\le T\), be a continuous finite-rank kernel family. Then
\[
    \frac{I_m(Q_t;W_{n_k})}{(2\log\log n_k)^{m/2}}
    \longrightarrow
    \langle Q_t,\xi^{\otimes m}\rangle_{\mathfrak H^{\otimes m}}
\]
uniformly in \(t\in[0,T]\).
\end{lemma}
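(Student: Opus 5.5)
The plan is to expand everything in the finite orthonormal basis of $E$ and reduce the statement to a polynomial identity in finitely many Gaussian coordinates.

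\emph{Step 1 (expansion of the kernels).} Since $Q_t\in E^{\odot m}$, we can write
\[
    Q_t=\sum_{i_1,\ldots,i_m=1}^N c_{i_1\cdots i_m}(t)\,e_{i_1}\otimes\cdots\otimes e_{i_m},
\]
with coefficients $c_{\mathbf i}(t)=\langle Q_t,e_{i_1}\otimes\cdots\otimes e_{i_m}\rangle$ that are symmetric in $\mathbf i$. Continuity of $t\mapsto Q_t$ in $\mathfrak H^{\otimes m}$ makes each $c_{\mathbf i}$ continuous on $[0,T]$, hence bounded: $M:=\sup_{t\le T}\sum_{\mathbf i}|c_{\mathbf i}(t)|<\infty$.

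\emph{Step 2 (the chaos as a polynomial).} Put $Y^{(n)}_i:=W_n(e_i)$; for each $n$ these are i.i.d. standard normal. For an orthonormal family, $I_m(e_{i_1}\otimes\cdots\otimes e_{i_m};W_n)$ (after symmetrization, which does not change $I_m$) equals $\prod_{j=1}^N H_{k_j}(Y^{(n)}_j)$, where $k_j$ is the multiplicity of $j$ in $\mathbf i$. This is the standard product formula under the normalization $\mathbb E[I_mI_m]=m!\langle\cdot,\cdot\rangle$. Each such product equals $\prod_j (Y_j^{(n)})^{k_j}+P_{\mathbf i}(Y^{(n)})$, where $P_{\mathbf i}$ is a polynomial of total degree at most $m-2$ whose coefficients depend only on $m$.

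\emph{Step 3 (passage to the limit).} Divide by $\lambda_k^{m}$, where $\lambda_k:=(2\log\log n_k)^{1/2}$. By hypothesis $Y^{(n_k)}_i/\lambda_k\to\langle e_i,\xi\rangle=:\xi_i$. The monomial terms then satisfy
\[
    \lambda_k^{-m}\prod_j (Y_j^{(n_k)})^{k_j}\to\prod_{l=1}^m\xi_{i_l}.
\]
The remainder satisfies $\lambda_k^{-m}|P_{\mathbf i}(Y^{(n_k)})|\le C\lambda_k^{-2}(1+\max_i|Y_i^{(n_k)}|/\lambda_k)^{m-2}\to0$, because the ratios $Y_i^{(n_k)}/\lambda_k$ are bounded along the subsequence and $\lambda_k\to\infty$. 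Hence
\[
    \sup_{t\le T}\Bigl|\lambda_k^{-m}I_m(Q_t;W_{n_k})-\sum_{\mathbf i}c_{\mathbf i}(t)\prod_l\xi_{i_l}\Bigr|
    \le M\max_{\mathbf i}\bigl|\lambda_k^{-m}I_m(e_{\mathbf i};W_{n_k})-\xi_{\mathbf i}\bigr|\to0,
\]
and this is uniform in $t$ because only finitely many indices appear. Finally, $\sum_{\mathbf i}c_{\mathbf i}(t)\prod_l\xi_{i_l}=\langle Q_t,\xi^{\otimes m}\rangle$ since $\xi=\sum_i\xi_ie_i\in E$.

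\emph{Main obstacle.} The only delicate point is the bookkeeping in Step 2: checking that under the paper's normalization of $I_m$ on a real Hilbert space, the symmetrized basis tensor gives exactly the Hermite product with leading coefficient $1$, with no $k_j!$ factors. A stray factor there would spoil the identification of the limit. I would verify this first on $h^{\otimes m}$ with $\|h\|=1$, where $I_m(h^{\otimes m})=H_m(W(h))$ as used earlier in the paper, and then extend by polarization.
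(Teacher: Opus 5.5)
Your proposal is correct and follows essentially the same route as the paper: you write $I_m(Q_t;W_{n_k})$ as a polynomial in the finitely many Gaussian coordinates $W_{n_k}(e_i)$, whose top-degree part is $\langle Q_t,(\sum_i z_ie_i)^{\otimes m}\rangle$ and whose remaining terms have degree at most $m-2$ with coefficients bounded uniformly in $t$, and then you normalize and pass to the limit. Your explicit Hermite-product bookkeeping fills in what the paper leaves implicit, and it checks out: the $H_{k}$ are monic, and there are no stray $k_j!$ factors under the normalization $\mathbb E[I_mI_m]=m!\langle\cdot,\cdot\rangle$.
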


\begin{proof}
Put
\(
    L_k:=2\log\log n_k,
    \quad
    Z_{k,i}:=W_{n_k}(e_i),
    \quad
    a_i:=\langle e_i,\xi\rangle_{\mathfrak H}.
\)
\newline
By assumption,
\( \frac{Z_{k,i}}{L_k^{1/2}}\longrightarrow a_i,
     i=1,\ldots,N .
\)

Since \(Q_t\in E^{\odot m}\), the random variable \(I_m(Q_t;W_{n_k})\) is a Wick polynomial in
the finitely many Gaussian coordinates
\(
    Z_{k,1},\ldots,Z_{k,N}.
\)
More precisely, there exists a polynomial \(P_t\) of total degree \(m\) such that
\[
    I_m(Q_t;W_{n_k})=P_t(Z_{k,1},\ldots,Z_{k,N}),
\]
and its homogeneous part of degree \(m\) is
\[
    P_t^{(m)}(z_1,\ldots,z_N)
    =
    \left\langle
        Q_t,
        \left(\sum_{i=1}^N z_i e_i\right)^{\otimes m}
    \right\rangle_{\mathfrak H^{\otimes m}}.
\]
All remaining terms have total degree at most \(m-2\), because Wick polynomials contain only
contractions of order at least one.

Therefore,
\[
    \frac{P_t(Z_{k,1},\ldots,Z_{k,N})}{L_k^{m/2}}
    =
    P_t^{(m)}
    \left(
        \frac{Z_{k,1}}{L_k^{1/2}},\ldots,
        \frac{Z_{k,N}}{L_k^{1/2}}
    \right)
    +o(1),
\]
where the \(o(1)\) is uniform in \(t\in[0,T]\). Indeed, the lower-degree coefficients are
uniformly bounded in \(t\), since \(t\mapsto Q_t\) is continuous and \([0,T]\) is compact.

Passing to the limit gives
\[
    \frac{I_m(Q_t; W_{n_k})}{L_k^{m/2}}
    \longrightarrow
    \left\langle
        Q_t,
        \left(\sum_{i=1}^N a_i e_i\right)^{\otimes m}
    \right\rangle_{\mathfrak H^{\otimes m}}
\]
uniformly in \(t\in[0,T]\). Since
\(
    \xi=\sum_{i=1}^N a_i e_i,
\)
the right-hand side is
\(
    \langle Q_t,\xi^{\otimes m}\rangle_{\mathfrak H^{\otimes m}}.
\)
This proves the lemma.
\end{proof}


\begin{proposition}[Dilation-coordinate recurrence on finite-dimensional subspaces]
\label{cor:dilation-coordinate-recurrence}

Let \(W\), \(U_n\), and \(W_n\) be as in the beginning of this section; thus
\((U_nh)(x)=n^{1/2}h(nx)\) for \(h\in\mathfrak H_{\mathbb R}\), and
\(W_n(h)=W(U_nh)\), \(n\ge3\).
Let \(E\subset D_{\mathbb R}\) be finite-dimensional, and let
\(e_1,\ldots,e_N\) be an orthonormal basis of \(E\) in \(\mathfrak H_{\mathbb R}\). Fix
\(\xi\in E\) with \(\|\xi\|_{\mathfrak H_{\mathbb R}}<1\). 

Then there exists
\(\gamma>1\) and a sparse deterministic sequence \(n_k:=\lfloor\exp(k^\gamma)\rfloor\)
such that, with probability one, there is a subsequence \(k_\ell\to\infty\) satisfying
\[
    \frac{W_{n_{k_\ell}}(h)}
    {(2\log\log n_{k_\ell})^{1/2}}
    \longrightarrow
    \langle h,\xi\rangle_{\mathfrak H_{\mathbb R}}
    \qquad
    \text{for every }h\in E .
\]
\end{proposition}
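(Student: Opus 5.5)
Recurrence of finitely many dilated Gaussian coordinates can be reduced to the finite-dimensional sparse recurrence lemma already used in the linear case, Lemma~\ref{lem:finite-dimensional-sparse-recurrence}. For each $n$, put $Z_n:=(W_n(e_1),\ldots,W_n(e_N))\in\mathbb R^N$. Since $U_n$ is an isometry of $\mathfrak H_{\mathbb R}$, $Z_n$ is a standard Gaussian vector in $\mathbb R^N$. The cross-covariances are
\[
\operatorname{Cov}\bigl(W_n(e_i),W_{n'}(e_j)\bigr)=\langle U_ne_i,U_{n'}e_j\rangle_{\mathfrak H_{\mathbb R}}=\langle U_{n/n'}e_i,e_j\rangle_{\mathfrak H_{\mathbb R}} ,
\]
so they depend only on the ratio $r=n/n'$. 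The target is $a:=(\langle e_i,\xi\rangle)_{i\le N}$, which satisfies $|a|=\|\xi\|<1$. The conclusion for all $h\in E$ follows from the conclusion for the basis vectors by linearity.

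The key analytic step is a decorrelation bound of the form
\[
\bigl|\langle U_re_i,e_j\rangle\bigr|\le C\,r^{1/2}\qquad\text{for } r\le1 .
\]
This is available because $E\subset D_{\mathbb R}\subset L^1\cap L^\infty$. Indeed,
\[
|\langle U_rh,g\rangle|\le r^{1/2}\int|h(rx)||g(x)|\,dx\le r^{1/2}\|h\|_\infty\|g\|_1 .
\]
For $r\ge1$ apply the same bound to the adjoint: $U_r^*=U_{1/r}$ by symmetry. Hence
\[
\bigl|\operatorname{Cov}(Z_n,Z_{n'})\bigr|\le C(n/n')^{1/2}\qquad\text{for } n\le n' .
\]
This is exactly the polynomial-in-ratio covariance hypothesis of Lemma~\ref{lem:finite-dimensional-sparse-recurrence}, with exponent $\theta=1/2$. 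It replaces the role played by Lemma~\ref{lem:linear-var-corr} in the linear case.

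Next apply Lemma~\ref{lem:finite-dimensional-sparse-recurrence} with this $N$, the vectors $Z_n$, and target $a$ (admissible since $|a|<1$). It provides $\gamma>1$ and $n_k=\lfloor\exp(k^\gamma)\rfloor$ such that, for each $\varepsilon>0$, almost surely
\[
\bigl|Z_{n_k}-a\,(2\log\log n_k)^{1/2}\bigr|<\varepsilon\quad\text{for infinitely many }k .
\]
Intersect these events over $\varepsilon=1/\ell$, a countable family, and choose $k_\ell$ increasing along which the $1/\ell$-event holds. Dividing by $(2\log\log n_{k_\ell})^{1/2}\to\infty$ gives
\[
\frac{Z_{n_{k_\ell}}}{(2\log\log n_{k_\ell})^{1/2}}\longrightarrow a .
\]
This is the claimed convergence for each $e_i$, and hence for every $h\in E$ by linearity.

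The main obstacle is checking that the hypotheses of Lemma~\ref{lem:finite-dimensional-sparse-recurrence} are met exactly as stated. It is a Kochen--Stone argument whose second-moment estimate uses the Gaussian shifted-ball comparison Lemma~\ref{lem:gaussian-shifted-ball-comparison}, and it needs the covariance decay to be summable along $n_k$ uniformly in the vector dimension. With $n_k=\exp(k^\gamma)$, the ratio $n_k/n_{k'}$ decays like $\exp(-c\,k^{\gamma-1}(k'-k))$, so the $r^{1/2}$ bound is more than sufficient. The remaining points are that the dependence on $N$ enters only through the constant $C$, and that near-diagonal pairs are handled by the trivial bound $|\operatorname{Cov}|\le1$, as was done in the linear case. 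Strictness $\|\xi\|<1$ is what makes the recurrence probabilities $(\log n_k)^{-|a|^2}\approx k^{-\gamma|a|^2}$ non-summable once $\gamma$ is chosen with $\gamma|a|^2\le1$.
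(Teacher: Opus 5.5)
Your proposal is correct and follows essentially the same route as the paper: the bound $|\langle U_ne_i,U_{n'}e_j\rangle_{\mathfrak H_{\mathbb R}}|\le (n/n')^{1/2}\|e_i\|_{L^\infty}\|e_j\|_{L^1}$ for $n\le n'$ verifies the covariance hypothesis of Lemma~\ref{lem:finite-dimensional-sparse-recurrence} with exponent $1/2$. After that, both arguments intersect over $\varepsilon=1/\ell$, extract $k_\ell$, and extend to all $h\in E$ by linearity. Like the paper, you tacitly use that the $\gamma$ produced by that lemma depends only on $a$ (its proof works for any $\gamma>1$ with $\gamma|a|^2<1$, strict), not on $\varepsilon$, so that a single sequence $n_k$ serves for every $\varepsilon=1/\ell$.
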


\begin{proof}
Write \(\xi=\sum_{i=1}^N a_ie_i\), where
\(a_i=\langle e_i,\xi\rangle_{\mathfrak H_{\mathbb R}}\). Then
\(|a|^2=\sum_i a_i^2=\|\xi\|_{\mathfrak H_{\mathbb R}}^2<1\). For \(n\ge3\), define
\(Z_n=(W_n(e_1),\ldots,W_n(e_N))\). Since \(U_n\) is an isometry on
\(\mathfrak H_{\mathbb R}\), \(\operatorname{Cov}(Z_n)=I_N\).

It remains to verify the covariance decay required in
Lemma~\ref{lem:finite-dimensional-sparse-recurrence}. If \(n\le m\), then
$$
\left|
    \operatorname{Cov}\bigl(W_n(e_i),W_m(e_j)\bigr)
\right|
=
\left|
    \langle U_ne_i,U_me_j\rangle_{\mathfrak H_{\mathbb R}}
\right|                                                        
\le
\left(\frac nm\right)^{1/2}
\int_{\mathbb R}
\left|e_i\!\left(\frac nm y\right)\right|\,
|e_j(y)|\,dy
$$
$$               
\le
\left(\frac nm\right)^{1/2}\|e_i\|_{L^\infty}\|e_j\|_{L^1}.
$$
Since \(E\) is finite-dimensional and \(e_i\in L^1\cap L^\infty\), this is bounded by
\(C_E(n/m)^{1/2}\), uniformly in \(i,j\). Lemma~\ref{lem:finite-dimensional-sparse-recurrence}
therefore applies with exponent \(1/2\).

Choose \(\gamma>1\) so close to \(1\) that \(\gamma|a|^2<1\), and set
\(n_k=\lfloor\exp(k^\gamma)\rfloor\). For every \(q\ge1\),
Lemma~\ref{lem:finite-dimensional-sparse-recurrence} gives
\[
    \mathbb P\left(
        |Z_{n_k}-(2\log\log n_k)^{1/2}a|<\frac1q
        \ \text{\rm for infinitely many }k
    \right)=1.
\]
Intersecting these probability-one events over \(q\), choose \(k_q\to\infty\) such that
\[
    \left|
        \frac{Z_{n_{k_q}}}{(2\log\log n_{k_q})^{1/2}}-a
    \right|<\frac1q .
\]
Then \(Z_{n_{k_q}}/(2\log\log n_{k_q})^{1/2}\to a\). By linearity, for
\(h=\sum_i b_ie_i\in E\),
\[
    \frac{W_{n_{k_q}}(h)}
    {(2\log\log n_{k_q})^{1/2}}
    \to
    \sum_i b_ia_i
    =
    \langle h,\xi\rangle_{\mathfrak H_{\mathbb R}} .
\]
This proves the proposition.
\end{proof}

\begin{corollary}[Finite-rank chaos recurrence]
\label{cor:finite-rank-chaos-recurrence}
Let \(T<\infty\). Let
\(E\subset D_{\mathbb R}\) be finite-dimensional, and let
\(Q_t\in E^{\odot m}\), \(0\le t\le T\), be a continuous
finite-rank kernel family. Fix \(\xi\in E\) with
\(\|\xi\|_{\mathfrak H_{\mathbb R}}<1\). Then there exists a sparse deterministic sequence
\(n_k=\lfloor\exp(k^\gamma)\rfloor\) and, with probability one, a subsequence
\(k_\ell\to\infty\) such that
\[
    \frac{I_m(Q_t;W_{n_{k_\ell}})}
    {(2\log\log n_{k_\ell})^{m/2}}
    \longrightarrow
    \langle Q_t,\xi^{\otimes m}\rangle_{\mathfrak H_{\mathbb R}^{\otimes m}}
\]
uniformly for \(t\in[0,T]\).
\end{corollary}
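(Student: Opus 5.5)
The corollary should follow by composing Proposition~\ref{cor:dilation-coordinate-recurrence} with Lemma~\ref{lem:finite-rank-lower-cluster}. The former supplies recurrence of the finitely many Gaussian coordinates; the latter turns that recurrence into convergence of the fixed-chaos functionals.

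\emph{Step 1.} Fix an orthonormal basis \(e_1,\ldots,e_N\) of \(E\) in \(\mathfrak H_{\mathbb R}\). Since \(E\subset D_{\mathbb R}\) is finite-dimensional, the basis vectors lie in \(L^1\cap L^\infty\), and \(\|\xi\|_{\mathfrak H_{\mathbb R}}<1\). So Proposition~\ref{cor:dilation-coordinate-recurrence} applies. It yields \(\gamma>1\), the deterministic sequence \(n_k=\lfloor\exp(k^\gamma)\rfloor\), and, on an event of probability one, a random subsequence \(k_\ell\to\infty\) such that
\[
\frac{W_{n_{k_\ell}}(h)}{(2\log\log n_{k_\ell})^{1/2}}\to\langle h,\xi\rangle_{\mathfrak H_{\mathbb R}}
\qquad\text{for all } h\in E .
\]
In particular this holds for \(h=e_i\), \(i=1,\ldots,N\).

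\emph{Step 2.} Fix an outcome \(\omega\) in this event. Apply Lemma~\ref{lem:finite-rank-lower-cluster} pathwise along the deterministic-at-\(\omega\) subsequence \(n'_\ell:=n_{k_\ell}(\omega)\). The ingredients are:
\begin{itemize}
\item \(\mathfrak H=\mathfrak H_{\mathbb R}\), a real separable Hilbert space;
\item \(W_{n'_\ell}\), each of which is isonormal because \(U_n\) is an isometry;
\item the subspace \(E\) with the chosen basis, and the continuous finite-rank family \(Q_t\in E^{\odot m}\).
\end{itemize}
The lemma's hypothesis is exactly the coordinate convergence from Step~1. Its conclusion is
\[
I_m(Q_t;W_{n'_\ell})/(2\log\log n'_\ell)^{m/2}\to\langle Q_t,\xi^{\otimes m}\rangle
\quad\text{uniformly in } t\in[0,T].
\]

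\emph{The one point to check.} Lemma~\ref{lem:finite-rank-lower-cluster} is really a deterministic statement. Its proof writes \(I_m(Q_t;W_n)=P_t(W_n(e_1),\ldots,W_n(e_N))\) with a fixed Wick polynomial \(P_t\). The top-degree part of \(P_t\) is \(\langle Q_t,(\sum z_ie_i)^{\otimes m}\rangle\), and its lower-degree coefficients are bounded uniformly in \(t\) by continuity of \(t\mapsto Q_t\) on the compact interval \([0,T]\). Hence the lemma may be applied to a subsequence chosen depending on \(\omega\).

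There is also a measurability subtlety. Each \(I_m(Q_t;W_n)\) is defined only almost surely, but the polynomial representation fixes a version simultaneously for all \(t\) and \(n\), since there are countably many \(n\) and finitely many coordinates. Using this version, the almost-sure statement follows. I expect no genuine obstacle here, since the probabilistic content lies entirely in Proposition~\ref{cor:dilation-coordinate-recurrence}.
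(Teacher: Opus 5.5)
Your proposal is correct and is the paper's own proof: Proposition~\ref{cor:dilation-coordinate-recurrence} supplies the sparse sequence \(n_k=\lfloor\exp(k^\gamma)\rfloor\) and, almost surely, a subsequence along which the normalized coordinates converge to \(\langle h,\xi\rangle_{\mathfrak H_{\mathbb R}}\), and Lemma~\ref{lem:finite-rank-lower-cluster} is then applied pathwise along that subsequence. The paper leaves two points implicit that you check: that the lemma is a deterministic polynomial argument, so it may be applied along an \(\omega\)-dependent subsequence, and that the polynomial representation fixes a version of \(I_m(Q_t;W_n)\) for all \(t\) simultaneously. Both checks are sound.
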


\begin{proof}
By Proposition~\ref{cor:dilation-coordinate-recurrence}, there exists a sparse deterministic
sequence \(n_k=\lfloor\exp(k^\gamma)\rfloor\) such that, on an event of probability one, one
can choose a subsequence \(k_\ell\to\infty\) for which
\[
    \frac{W_{n_{k_\ell}}(h)}
    {(2\log\log n_{k_\ell})^{1/2}}
    \longrightarrow
    \langle h,\xi\rangle_{\mathfrak H_{\mathbb R}}
    \qquad\text{for every }h\in E.
\]
Applying Lemma~\ref{lem:finite-rank-lower-cluster} pathwise along this subsequence gives
\[
    \frac{I_m(Q_t;W_{n_{k_\ell}})}
    {(2\log\log n_{k_\ell})^{m/2}}
    \longrightarrow
    \langle Q_t,\xi^{\otimes m}\rangle_{\mathfrak H_{\mathbb R}^{\otimes m}}
\]
uniformly in \(t\in[0,T]\). This proves the corollary.
\end{proof}

\subsection{Lower inclusion for the functional cluster set}
\begin{lemma}[Uniform recurrence under \(L^2\)-kernel approximation]
\label{lem:uniform-kernel-recurrence}

Let \(K_{n,t},K_t\in\mathfrak H_{\mathbb R}^{\odot m}\), \(0\le t\le T\), satisfy
\(\sup_{0\le t\le T}\|K_{n,t}-K_t\|_2\to0\). Assume that \(t\mapsto K_t\) is continuous in
\(L^2\), and that the families \(K_{n,t}\) and \(K_t\) satisfy the uniform increment bound
\[
    \|K_{n,t}-K_{n,s}\|_2^2+\|K_t-K_s\|_2^2\le C_T |t-s|^\beta .
\]
for some \(\beta>0\), all \(s,t\in[0,T]\), and all large \(n\). If
\(\xi\in D_{\mathbb R}\) and \(\|\xi\|_{\mathfrak H_{\mathbb R}}<1\), 
then, on an event of probability one, there exists an increasing sequence
\(N_j=N_j(\omega)\to\infty\) such that
\[
    \sup_{0\le t\le T}
    \left|
        \frac{I_m(K_{N_j,t};W_{N_j})}
        {(2\log\log N_j)^{m/2}}
        -
        \langle K_t,\xi^{\otimes m}\rangle
    \right|
    \longrightarrow0 .
\]
\end{lemma}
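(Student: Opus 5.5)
The plan is a three-step approximation: reduce to a finite-rank kernel family built on a subspace of \(D_{\mathbb R}\), apply Corollary~\ref{cor:finite-rank-chaos-recurrence} to that family, and control the error with a small-kernel estimate that is uniform over a time grid. Write \(L_N:=2\log\log N\).

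\emph{Finite-rank approximation.} Fix \(\varepsilon>0\).

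Since \(D_{\mathbb R}\) is dense in \(\mathfrak H_{\mathbb R}\), finite sums of symmetrized tensors of elements of \(D_{\mathbb R}\) are dense in \(\mathfrak H_{\mathbb R}^{\odot m}\). The family \(t\mapsto K_t\) is continuous on the compact set \([0,T]\), so \(\{K_t\}\) is \(L^2\)-compact. Choose a grid \(0=t_0<\dots<t_p=T\) of mesh \(\delta\), and approximate each \(K_{t_i}\) by some \(\widetilde K_{t_i}\in E^{\odot m}\) within \(\varepsilon\). Here \(E\subset D_{\mathbb R}\) is a finite-dimensional subspace containing \(\xi\).

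Define \(\widetilde K_t\) by piecewise linear interpolation in \(t\). This gives a continuous finite-rank family with \(\sup_t\|K_t-\widetilde K_t\|_2\le\varepsilon+\omega_K(\delta)\), where \(\omega_K\) is the modulus of continuity of \(t\mapsto K_t\).

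By Corollary~\ref{cor:finite-rank-chaos-recurrence}, almost surely there is a subsequence \(N\) of \(n_k=\lfloor\exp(k^\gamma)\rfloor\) along which
\[
I_m(\widetilde K_t;W_N)/L_N^{m/2}\to\langle\widetilde K_t,\xi^{\otimes m}\rangle
\]
uniformly in \(t\). The deterministic error satisfies \(|\langle K_t-\widetilde K_t,\xi^{\otimes m}\rangle|\le\|K_t-\widetilde K_t\|_2\).

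\emph{Small-kernel bound along the full sparse sequence.} It remains to show that, almost surely,
\[
\limsup_k \sup_t |I_m(K_{n_k,t}-\widetilde K_t;W_{n_k})|/L_{n_k}^{m/2}\le C\eta(\varepsilon,\delta),
\]
with \(\eta(\varepsilon,\delta)\to0\). The bound must hold for all large \(k\), not only along the random subsequence; then it holds along that subsequence automatically.

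Set \(D_{k,t}:=K_{n_k,t}-\widetilde K_t\). For large \(k\), \(\sup_t\|D_{k,t}\|_2\le 2\varepsilon+\omega_K(\delta)=:\eta\). At grid points, the hypercontractive fixed-chaos tail used in the proof of Theorem~\ref{thm:upper} gives
\[
\mathbb P\bigl(|I_m(D_{k,t_i};W_{n_k})|>\lambda\eta L_{n_k}^{m/2}\bigr)\le C(\log n_k)^{-2c_m\lambda^{2/m}}=Ck^{-2\gamma c_m\lambda^{2/m}}.
\]
A union bound over the \(p+1\) grid points and Borel--Cantelli in \(k\), with \(\lambda\) a fixed large constant, controls the grid values.

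Between grid points, the increment bound gives \(\|D_{k,t}-D_{k,s}\|_2^2\le C|t-s|^\beta\) for both \(K_{n,\cdot}\) and \(K_{\cdot}\). The interpolated \(\widetilde K\) is Lipschitz on each cell, and \(\widetilde K\) is close to \(K\) at the grid points. On each cell of length \(\delta\) one therefore has a process in the \(m\)-th chaos with Hölder-type \(L^2\) increments of size \(\lesssim\delta^{\beta/2}+\eta\). A fixed-chaos chaining maximal inequality, the continuous-parameter analogue of Lemma~\ref{lem:fixed-chaos-maximal-tail} (Kolmogorov/Garsia chaining combined with the hypercontractive tails), bounds \(\sup\) over the cell with tail \(\exp\{-c(x/(\delta^{\beta/2}+\eta))^{2/m}\}\). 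Another Borel--Cantelli argument in \(k\) gives the bound above.

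Letting \(\varepsilon,\delta\downarrow0\) along a countable sequence and taking a diagonal subsequence \(N_j\) concludes the proof. \textbf{The main obstacle} is this uniform-in-\(t\) small-kernel step. It requires a chaining maximal inequality in a fixed chaos that is uniform over the whole sparse sequence, with tail exponent summable over \(k\). This works because \(\log n_k= k^\gamma\), so every polynomial tail in \(\log n_k\) with a large exponent is summable.
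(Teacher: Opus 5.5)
Your proposal is correct and has the same architecture as the paper's proof. Both arguments use a finite-rank approximation over a finite-dimensional $E\subset D_{\mathbb R}$ containing $\xi$, then Corollary~\ref{cor:finite-rank-chaos-recurrence} along the sparse sequence $n_k=\lfloor\exp(k^\gamma)\rfloor$, then a small-kernel bound valid along the entire sparse sequence, and finally a diagonal selection over countably many tolerances. The differences are only technical: the paper takes the approximation to be $P_{E_j}^{\odot m}K_t$, which inherits the H\"older increment bound by contractivity, so Lemma~\ref{lem:uniform-small-kernel-chaos} applies verbatim. You instead interpolate grid-point approximations and prove the small-kernel estimate inline, using a union bound at the grid points plus per-cell fixed-chaos chaining, which is equally valid.
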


\begin{proof}
Let
\(
    L_n:=2\log\log n.
\)
Fix \(1<\gamma<\|\xi\|_{\mathfrak H_{\mathbb R}}^{-2}\) if \(\xi\neq0\), and fix any
\(\gamma>1\) if \(\xi=0\). Put
\(
    n_k:=\lfloor \exp(k^\gamma)\rfloor .
\)
Then \(L_{n_k}\ge c_0\log k\) for all large \(k\), for some \(c_0>0\). By the proof of
Corollary~\ref{cor:finite-rank-chaos-recurrence}, this same sparse sequence may be used for
every finite-dimensional subspace \(E\subset D_{\mathbb R}\) containing \(\xi\).

Let \(\delta_j\downarrow0\). For each \(j\), let
 \(\eta_j^{\mathrm{sk}}>0\) be the smallness threshold from
Lemma~\ref{lem:uniform-small-kernel-chaos} corresponding to \(\delta_j\). Choose
\(
    \varepsilon_j
    :=
    \min\left\{
        \delta_j,\frac{\eta_j^{\mathrm{sk}}}{4}
    \right\}.
\)

Since \(D_{\mathbb R}\) is dense in \(\mathfrak H_{\mathbb R}\), finite-rank symmetric tensors
generated by \(D_{\mathbb R}\) are dense in
\(\mathfrak H_{\mathbb R}^{\odot m}\). Since \(\{K_t:0\le t\le T\}\) is compact in \(L^2\),
for each \(j\) we may choose a finite-dimensional subspace
\(E_j\subset D_{\mathbb R}\), enlarged if necessary so that \(\xi\in E_j\), such that
\(
    K_t^{(j)}:=P_{E_j}^{\odot m}K_t
\)
satisfies
\(
    \sup_{0\le t\le T}\|K_t-K_t^{(j)}\|_2<\varepsilon_j .
\)
Moreover \(t\mapsto K_t^{(j)}\) is continuous and
\(
    \|K_t^{(j)}-K_s^{(j)}\|_2^2
    \le C_T|t-s|^\beta .
\)
because \(P_{E_j}^{\odot m}\) is contractive.

By the assumed uniform convergence \(K_{n,t}\to K_t\), choose a deterministic integer
\(N_j^0\) such that, for all \(n\ge N_j^0\),
\[
    \sup_{0\le t\le T}\|K_{n,t}-K_t\|_2<\varepsilon_j .
\]
For \(n\ge N_j^0\), set
\(
    R_{n,t}^{(j)}:=K_{n,t}-K_t^{(j)} .
\)
Then
\(
    \sup_{0\le t\le T}\|R_{n,t}^{(j)}\|_2
    \le
    2\varepsilon_j
    \le
    \frac{\eta_j^{\mathrm{sk}}}{2}.
\)
Also,
\[
    \|R_{n,t}^{(j)}-R_{n,s}^{(j)}\|_2^2
    \le
    2C_T|t-s|^\beta .
\]
Apply Corollary~\ref{cor:finite-rank-chaos-recurrence} to the finite-rank family
\(K_t^{(j)}\). On an event \(\Omega_j^{\mathrm{fr}}\) of probability one, there are infinitely
many \(k\) such that
\[
    \sup_{0\le t\le T}
    \left|
        \frac{I_m(K_t^{(j)};W_{n_k})}{L_{n_k}^{m/2}}
        -
        \langle K_t^{(j)},\xi^{\otimes m}\rangle
    \right|
    \le
    \delta_j .
\]
Next, applying Lemma~\ref{lem:uniform-small-kernel-chaos} to
\(R_{n,t}^{(j)}\), after discarding finitely many \(k\) for which \(n_k<N_j^0\), gives an event
\(\Omega_j^{\mathrm{sk}}\) of probability one such that
\[
    \limsup_{k\to\infty}
    \sup_{0\le t\le T}
    \frac{|I_m(R_{n_k,t}^{(j)};W_{n_k})|}{L_{n_k}^{m/2}}
    \le
    \delta_j .
\]

Set
\(
    \Omega_0:=\bigcap_{j=1}^\infty
    \left(\Omega_j^{\mathrm{fr}}\cap\Omega_j^{\mathrm{sk}}\right).
\)
Then \(\mathbb P(\Omega_0)=1\). On \(\Omega_0\), choose inductively \(k_j=k_j(\omega)\) so
large that
\(
    n_{k_j}\ge \max\{N_j^0,n_{k_{j-1}}+1\},
\)
and both estimates
\[
    \sup_{0\le t\le T}
    \left|
        \frac{I_m(K_t^{(j)};W_{n_{k_j}})}{L_{n_{k_j}}^{m/2}}
        -
        \langle K_t^{(j)},\xi^{\otimes m}\rangle
    \right|
    \le
    \delta_j
\]
and
\[
    \sup_{0\le t\le T}
    \frac{|I_m(R_{n_{k_j},t}^{(j)};W_{n_{k_j}})|}{L_{n_{k_j}}^{m/2}}
    \le
    2\delta_j
\]
hold. Put
\(
    N_j:=n_{k_j}.
\)

Then
\[
\sup_{0\le t\le T}
\left|
    \frac{I_m(K_{N_j,t};W_{N_j})}{L_{N_j}^{m/2}}
    -
    \langle K_t,\xi^{\otimes m}\rangle
\right|                                                        
\quad\le
\sup_{0\le t\le T}
\left|
    \frac{I_m(K_t^{(j)};W_{N_j})}{L_{N_j}^{m/2}}
    -
    \langle K_t^{(j)},\xi^{\otimes m}\rangle
\right|                                                        
\quad+
\]
\[+
\sup_{0\le t\le T}
\frac{|I_m(R_{N_j,t}^{(j)};W_{N_j})|}{L_{N_j}^{m/2}}             
\quad+
\sup_{0\le t\le T}
\left|
    \langle K_t^{(j)}-K_t,\xi^{\otimes m}\rangle
\right|                                                        
\quad\le
\delta_j+2\delta_j+\varepsilon_j\|\xi\|_{\mathfrak H_{\mathbb R}}^m
\le
4\delta_j .
\]
Since \(\delta_j\downarrow0\), the desired convergence follows.
\end{proof}

\begin{proposition}[Lower inclusion for the functional cluster set]
\label{prop:functional-lower-inclusion}
Let the assumption of
Theorem~\ref{thm:functional-nonlinear-lil} hold.
Then, almost surely,
\[
    \mathcal K_T
    \subset
    \operatorname{Cl}_{C[0,T]}
    \left(
        \frac{\mathcal S_{n,m}(\cdot)}
        {A_n(2\log\log n)^{m/2}}
    \right).
\]
\end{proposition}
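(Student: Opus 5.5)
The proof will apply Lemma~\ref{lem:uniform-kernel-recurrence} with $K_{n,t}:=\widetilde F^{\mathrm{full}}_{n,t}$ and $K_t:=Q_t$. By the identity established before Proposition~\ref{prop:full-kernel-convergence}, $I_m(\widetilde F^{\mathrm{full}}_{n,t};W_n)=\mathcal S_{n,m}(t)/A_n$. So the conclusion of that lemma says that, almost surely, the function $t\mapsto\langle Q_t,\xi^{\otimes m}\rangle$ is a uniform limit along some $N_j(\omega)\to\infty$. This gives the lower inclusion for every $\xi\in D_{\mathbb R}$ with $\|\xi\|<1$. The remaining work is to check the hypotheses and then pass from this dense class to all of $\mathcal K_T$.

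\textbf{Step 1: hypotheses of the lemma.} Uniform convergence $\sup_{t\le T}\|\widetilde F^{\mathrm{full}}_{n,t}-Q_t\|_2\to0$ is Proposition~\ref{prop:full-kernel-convergence}. For the increment bound I will use the isometry $\|K\|_2^2=(m!)^{-1}\mathbb E I_m(K)^2$:
\[
\|\widetilde F^{\mathrm{full}}_{n,t}-\widetilde F^{\mathrm{full}}_{n,s}\|_2^2=(m!)^{-1}A_n^{-2}\operatorname{Var}\bigl(\mathcal S_{n,m}(t)-\mathcal S_{n,m}(s)\bigr).
\]
On grid points I will bound the right side by Lemma~\ref{lem:leading-chaos-local-variance}, together with Potter bounds comparing $A_{nT}$ with $A_n$. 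This gives $C_T|t-s|^{\beta}$ whenever $|t-s|\ge1/n$, for some $\beta>0$. For $|t-s|<1/n$, linear interpolation gives at most one summand. Its size is controlled by $\sup_{u\le Tn+1}|a_u|/A_n\le Cn^{-\kappa}$, exactly as in the interpolation step of Proposition~\ref{prop:full-kernel-convergence}. This yields $C(n|t-s|)^2n^{-2\kappa}\le C|t-s|^{\min(2,2\kappa)}$. For the limit kernel, the variance identity of Lemma~\ref{lem:limit-kernel-L2} gives $\|Q_t-Q_s\|_2^2$ as a double integral of $y^{-d}z^{-d}|y-z|^{-\alpha m}$ over $[s,t]^2$. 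Hölder continuity, and hence $L^2$-continuity, follows from local integrability. I expect this step to be the main technical point, especially near $t=0$ when $d>0$ and in the uniformity of constants across $n$.

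\textbf{Step 2: density.} Let $\Omega^*$ be the intersection of the probability-one events from Step~1 over a countable set $\{\xi_r\}\subset D_{\mathbb R}$ with $\|\xi_r\|<1$, dense in the closed unit ball of $\mathfrak H_{\mathbb R}$. Such a set exists because $D_{\mathbb R}$ is dense and $\mathfrak H_{\mathbb R}$ is separable. On $\Omega^*$, each $\kappa_r(t):=\langle Q_t,\xi_r^{\otimes m}\rangle$ lies in the cluster set. Now take a general $\xi$ with $\|\xi\|\le1$ and pick $\xi_r\to\xi$. Then
\[
\sup_{t\le T}|\langle Q_t,\xi_r^{\otimes m}-\xi^{\otimes m}\rangle|\le\sup_{t\le T}\|Q_t\|_2\, m\,\|\xi_r-\xi\|\to0 .
\]
Since the cluster set of a sequence in a metric space is closed, the limit function also belongs to it. Hence $\mathcal K_T\subset\operatorname{Cl}$ on $\Omega^*$, which is the claimed inclusion.
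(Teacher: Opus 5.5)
Your proposal is correct and follows the paper's proof essentially step for step. It applies Lemma~\ref{lem:uniform-kernel-recurrence} to $K_{n,t}=\widetilde F^{\mathrm{full}}_{n,t}$ and $K_t=Q_t$ through the identity $I_m(\widetilde F^{\mathrm{full}}_{n,t};W_n)=\mathcal S_{n,m}(t)/A_n$. It then uses a countable dense subset of $\{\xi\in D_{\mathbb R}:\|\xi\|<1\}$, the norm-to-uniform continuity of $\xi\mapsto\langle Q_\cdot,\xi^{\otimes m}\rangle$, and closedness of the cluster set.

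The only difference is that you re-derive the H\"older increment bounds, which the paper simply cites. For the prelimit kernels this is Lemma~\ref{lem:full-leading-chaos-increment}; its dyadic-annulus decomposition is exactly what handles the near-origin intervals you flag, since Lemma~\ref{lem:leading-chaos-local-variance} alone only covers intervals inside a single block $\{N+1,\dots,2N\}$. For $Q_t$ it is Lemma~\ref{lem:limit-kernel-L2}(d); mere local integrability of the double integral gives continuity there, but not the H\"older rate.
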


\begin{proof}

We first prove the assertion for \(\xi\in D_{\mathbb R}\) with \(\|\xi\|<1\). Set
\(f_\xi(t):=\langle Q_t,\xi^{\otimes m}\rangle\). By
Proposition~\ref{prop:full-kernel-convergence},
\(\sup_{0\le t\le T}\|\widetilde F^{\mathrm{full}}_{n,t}-Q_t\|_2\to0\). Moreover the
families \(\widetilde F^{\mathrm{full}}_{n,t}\) and \(Q_t\) satisfy the Hölder \(L^2\)-increment
bound required in Lemma~\ref{lem:uniform-kernel-recurrence}; for \(Q_t\) this follows from Lemma~\ref{lem:limit-kernel-L2} (d), and for
\(\widetilde F^{\mathrm{full}}_{n,t}\) from
Lemma~\ref{lem:full-leading-chaos-increment}. 
Applying Lemma~\ref{lem:uniform-kernel-recurrence} with
\(K_{n,t}=\widetilde F^{\mathrm{full}}_{n,t}\) and \(K_t=Q_t\), we obtain, on an event of
probability one, an increasing sequence \(N_j=N_j(\omega)\to\infty\) such that
\[
    \sup_{0\le t\le T}
    \left|
        \frac{I_m(\widetilde F^{\mathrm{full}}_{N_j,t};W_{N_j})}
        {(2\log\log N_j)^{m/2}}
        -
        f_\xi(t)
    \right|\to0 .
\]
Since
\[
    I_m(\widetilde F^{\mathrm{full}}_{n,t};W_n)
    =
    \frac{\mathcal S_{n,m}(t)}{A_n},
\]
the preceding argument shows that, for this fixed \(\xi\), there is an event
\(\Omega_\xi\) with \(\mathbb P(\Omega_\xi)=1\) such that \(f_\xi\) belongs to the
cluster set on \(\Omega_\xi\).

We now make the exceptional event independent of \(\xi\). Since
\(D_{\mathbb R}\) is dense in \(\mathfrak H_{\mathbb R}\) and
\(\mathfrak H_{\mathbb R}\) is separable, 
choose a countable set
\(
    \mathcal D_0\subset
    \{\xi\in D_{\mathbb R}:\|\xi\|_{\mathfrak H_{\mathbb R}}<1\}
\)
which is dense in the closed unit ball of \(\mathfrak H_{\mathbb R}\).
Such a set exists: the open unit ball is dense in the closed unit ball
(consider $r\xi$, $r\uparrow1$), and $D_{\mathbb R}$ is dense in
$\mathfrak H_{\mathbb R}$, so
$\{\xi\in D_{\mathbb R}:\|\xi\|<1\}$ is dense in the closed unit ball;
a countable dense subset exists by separability.
For every
\(\xi\in\mathcal D_0\), let \(\Omega_\xi\) be the probability-one event obtained above, and set
\(
    \Omega_0:=\bigcap_{\xi\in\mathcal D_0}\Omega_\xi .
\)
Then \(\mathbb P(\Omega_0)=1\). On \(\Omega_0\), the function
\(
    f_\xi(t):=\langle Q_t,\xi^{\otimes m}\rangle
\)
belongs to the cluster set for every \(\xi\in\mathcal D_0\).


Let now \(\xi\in\mathfrak H_{\mathbb R}\) with
\(\|\xi\|_{\mathfrak H_{\mathbb R}}\le1\). Choose
\(\xi_j\in\mathcal D_0\) such that
\(
    \xi_j\to\xi
    \quad\text{in }\mathfrak H_{\mathbb R}.
\)
Recall the map
\(
    \Phi(\xi)(t)
    :=
    \langle Q_t,\xi^{\otimes m}\rangle_{\mathfrak H_{\mathbb R}^{\otimes m}},
    0\le t\le T,
\)
from Remark~\ref{rem:KT-compact}. We use here only its norm-to-uniform
continuity on the closed unit ball. Indeed, since
\(\sup_{0\le t\le T}\|Q_t\|_2<\infty\), we have
\[
\|\Phi(\xi_j)-\Phi(\xi)\|_\infty
=
\sup_{0\le t\le T}
\left|
    \langle Q_t,\xi_j^{\otimes m}-\xi^{\otimes m}\rangle
\right|                                                
\le
\sup_{0\le t\le T}\|Q_t\|_2\,
\|\xi_j^{\otimes m}-\xi^{\otimes m}\|_2                 
\le
\]
\[
C_m
\sup_{0\le t\le T}\|Q_t\|_2\,
\|\xi_j-\xi\|_{\mathfrak H_{\mathbb R}}
\longrightarrow0 .
\]

Thus \(f_{\xi_j}=\Phi(\xi_j)\to \Phi(\xi)=f_\xi\) in \(C[0,T]\). On the event \(\Omega_0\), each \(f_{\xi_j}\) belongs to the cluster set. Since
the cluster set of a sequence in the metric space \(C[0,T]\) is closed, it
follows that \(f_\xi\) also belongs to the cluster set on \(\Omega_0\). Hence
\[
    \mathcal K_T
    \subset
    \operatorname{Cl}_{C[0,T]}
    \left(
        \frac{\mathcal S_{n,m}(\cdot)}
        {A_n(2\log\log n)^{m/2}}
    \right)
\]
on the single probability-one event \(\Omega_0\).
\end{proof}

\begin{lemma}[Finite-dimensional dilation upper cluster bound]
\label{lem:finite-dimensional-upper-cluster}
Let \(E\subset D_{\mathbb R}\) be finite-dimensional with orthonormal basis
\(e_1,\ldots,e_N\), and put
\(
    Z_n:=(W_n(e_1),\ldots,W_n(e_N)),
    \ n\ge3.
\)
Then, almost surely,
\[
    \limsup_{n\to\infty}
    \frac{|Z_n|}{(2\log\log n)^{1/2}}
    \le1.
\]
In particular, every cluster point of
\(Z_n/(2\log\log n)^{1/2}\) belongs to the closed unit ball of
\(\mathbb R^N\).
\end{lemma}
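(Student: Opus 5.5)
Each $Z_n$ is a standard Gaussian vector in $\mathbb R^N$, because $U_n$ is an isometry and the $e_i$ are orthonormal. The only issue is that $n$ runs over all integers, not over a sparse sequence. The plan is the classical blocking argument: control a geometric subsequence by Gaussian tails and Borel--Cantelli, then fill the gaps using the dilation-Lipschitz bound \eqref{eq:dilation-lipschitz}. This is where the hypothesis $E\subset D_{\mathbb R}$ is needed.

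\emph{Step 1: subsequence.} Fix $\varepsilon>0$ and $\theta>1$, and put $n_k:=\lfloor\theta^k\rfloor$. Cover the unit sphere $S^{N-1}$ by a finite $\varepsilon$-net $u_1,\dots,u_M$. Then
\[
|z|\le(1-\varepsilon)^{-1}\max_j\langle u_j,z\rangle .
\]
For each $j$, the variable $\langle u_j,Z_{n_k}\rangle=W_{n_k}\bigl(\sum_i u_{j,i}e_i\bigr)$ is standard normal. Mills' bound gives
\[
\mathbb P\bigl(\langle u_j,Z_{n_k}\rangle>(1+\varepsilon)(2\log\log n_k)^{1/2}\bigr)\le (k\log\theta)^{-(1+\varepsilon)^2},
\]
which is summable in $k$. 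Borel--Cantelli then yields
\[
\limsup_k |Z_{n_k}|/(2\log\log n_k)^{1/2}\le (1+\varepsilon)/(1-\varepsilon)\quad\text{a.s.}
\]

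\emph{Step 2: gaps.} For $n_k\le n\le n_{k+1}$ write $n=r n_k$ with $1\le r\le\theta+o(1)$. Since $U_n=U_{n_k}U_r$, we have
\[
W_n(e_i)-W_{n_k}(e_i)=W\bigl(U_{n_k}(U_re_i-e_i)\bigr).
\]
By \eqref{eq:dilation-lipschitz}, $\|U_re_i-e_i\|\le \log\theta\,\|Ae_i\|$. The difficulty is that the supremum over all $n$ in the block is a supremum of a Gaussian process. I would handle it continuously: write
\[
U_re_i-e_i=\int_1^r s^{-1}U_s(Ae_i)\,ds ,
\]
so that
\[
\sup_{1\le r\le 2\theta}|W_{n_k}(U_re_i-e_i)|\le \int_1^{2\theta} s^{-1}\,|W(U_{n_ks}Ae_i)|\,ds .
\]
Here $W(U_{n_ks}Ae_i)$ is a standard-normal-scaled variable with variance $\|Ae_i\|^2$, and it is continuous in $s$ in $L^2$ because $Ae_i\in D_{\mathbb R}$; fix a separable version.

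An alternative, which I find cleaner, is a Dudley/Borell--TIS bound for the process $r\mapsto W_{n_k}(U_re_i-e_i)$ on $[1,2\theta]$. Its canonical metric satisfies
\[
d(r,r')\le|\log(r/r')|\,\|Ae_i\|
\]
by \eqref{eq:dilation-lipschitz} and the group property, and its variance is at most $(\log 2\theta)^2\|Ae_i\|^2$. Hence the expected supremum is bounded by a constant $C_E\log\theta$ independent of $k$, and Borell--TIS gives
\[
\mathbb P\Bigl(\sup_r|\cdot|>C_E\log\theta+x\Bigr)\le e^{-x^2/(2(\log 2\theta)^2\|Ae_i\|^2)} .
\]
Take $x=\delta(2\log\log n_k)^{1/2}$ with $\theta$ so close to $1$ that $\delta^2/(\log 2\theta)^2\|Ae_i\|^2>1$. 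The resulting bound $k^{-(1+)}$ is summable, so almost surely
\[
\max_{n_k\le n\le n_{k+1}}|Z_n-Z_{n_k}|\le N\delta(2\log\log n_k)^{1/2}
\]
eventually.

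\emph{Step 3: conclusion.} Since $\log\log n\sim\log\log n_k$ on each block, combining the two steps gives
\[
\limsup_n |Z_n|/(2\log\log n)^{1/2}\le (1+\varepsilon)/(1-\varepsilon)+N\delta .
\]
Letting $\theta\downarrow1$ (which allows $\delta\downarrow0$) and then $\varepsilon\downarrow0$, along countably many values, proves the bound $1$. The statement about cluster points is then immediate.

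The main obstacle is Step 2, namely getting a supremum bound over the continuum of dilations that is uniform in $k$. The key input is \eqref{eq:dilation-lipschitz}, which makes the increment process Lipschitz in $\log r$ with a $k$-independent constant. This is exactly why $E$ is taken inside $D_{\mathbb R}$.
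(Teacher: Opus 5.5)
Your overall plan is the paper's. You control the geometric subsequence $n_k=\lfloor\theta^k\rfloor$ by Gaussian tails and Borel--Cantelli. You then fill the gaps by writing $Z_{n,i}-Z_{n_k,i}=W\bigl(U_{n_k}(U_re_i-e_i)\bigr)$, whose law does not depend on $k$, and combining \eqref{eq:dilation-lipschitz} with Dudley's entropy bound and Borell--Tsirelson. Two differences are harmless variants: your $\varepsilon$-net on $S^{N-1}$ in Step~1, and your coordinatewise gap bound at the cost of a factor $N$. The paper instead uses a direct tail bound for $|Z_{n_k}|$ and a joint process indexed by $(r,a)\in[1,\bar\theta]\times\{|a|\le1\}$.

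\textbf{The gap is in Step~2:} as written, it does not give the constant $1$.
\begin{itemize}
\item You index the increment process by $r\in[1,2\theta]$, so your variance bound is $(\log 2\theta)^2\|Ae_i\|^2\ge(\log 2)^2\|Ae_i\|^2$. This does not shrink as $\theta\downarrow1$.
\item Your summability condition $\delta^2>(\log2\theta)^2\|Ae_i\|^2$ therefore forces $\delta>\log2\,\|Ae_i\|$. So the claim in Step~3 that letting $\theta\downarrow1$ ``allows $\delta\downarrow0$'' is false.
\item The failure is not vacuous. For nonzero $e_i\in D_{\mathbb R}$ one has $\|Ae_i\|>0$, since $\tfrac12h+xh'=0$ forces $h\propto|x|^{-1/2}$.
\item Consequently, your argument only yields $\limsup_n|Z_n|/(2\log\log n)^{1/2}\le 1+N\log2\,\max_i\|Ae_i\|$. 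That is a finite constant, not $1$.
\end{itemize}

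\textbf{The fix is the paper's choice of interval.}
\begin{itemize}
\item The only ratios that occur are $r=n/n_k\in[1,n_{k+1}/n_k]$. Since $n_{k+1}/n_k\to\theta$, for large $k$ they lie in $[1,\bar\theta]$ for any fixed $\bar\theta>\theta$, e.g.\ $\bar\theta=\theta^2$.
\item On $[1,\bar\theta]$ the variance is at most $(\log\bar\theta)^2\|Ae_i\|^2$, which tends to $0$ as $\theta\downarrow1$. The Dudley bound there is a $k$-independent constant, so it is absorbed by $(2\log\log n_k)^{1/2}\to\infty$.
\item Choosing $\bar\theta$ with $\delta^2>(\log\bar\theta)^2\max_i\|Ae_i\|^2$ gives a summable tail $(\log n_k)^{-p}$ with $p>1$. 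The rest of your argument then goes through.
\end{itemize}

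Two minor points:
\begin{itemize}
\item Since $\log n_k\le k\log\theta$, your Mills bound needs a constant, e.g.\ $\le C(k\log\theta)^{-(1+\varepsilon)^2}$ for large $k$.
\item No separable version is needed. For each $k$ only finitely many rational $r=n/n_k$ occur, so Dudley and Borell--Tsirelson can be applied to finite index sets, whose entropy is controlled by that of $[1,\bar\theta]$. The paper does essentially this, via the countable set $\mathcal T_0$ and monotone convergence.
\end{itemize}
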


\begin{proof}
Put \(L_n:=2\log\log n\) and
\(
    \Lambda_E
    :=
    \left(\sum_{i=1}^N\|Ae_i\|_2^2\right)^{1/2}.
\)
If \(\Lambda_E=0\), then \eqref{eq:dilation-lipschitz} gives
\(U_re_i=e_i\) for every \(r>0\) and every \(i\), so the conclusion is
immediate. Hence assume that \(\Lambda_E>0\).

Fix \(\varepsilon\in(0,1)\). For \(1<\bar\theta<2\), let
\(
    \mathcal T
    :=
    [1,\bar\theta]\times\{a\in\mathbb R^N:|a|\le1\},
\)
\mbox{$
      g_{r,a}
    :=
    \sum_{i=1}^N a_i(U_re_i-e_i).
$}
The canonical metric of the centered Gaussian process
\(\{W(g_{r,a}):(r,a)\in\mathcal T\}\) is
\(
    d\bigl((r,a),(s,b)\bigr)
    :=
    \|g_{r,a}-g_{s,b}\|_2.
\)
By \eqref{eq:dilation-lipschitz}, for \(r,s\in[1,\bar\theta]\),
\[
    \|U_re_i-U_se_i\|_2
    =
    \|U_{r/s}e_i-e_i\|_2
    \le
    |\log r-\log s|\,\|Ae_i\|_2
    \le
    |r-s|\,\|Ae_i\|_2,
\]
and
\(
    \|U_se_i-e_i\|_2
    \le
    (\log\bar\theta)\|Ae_i\|_2.
\)
Consequently,
$$
    d\bigl((r,a),(s,b)\bigr)
    \le
    |a|\left(\sum_{i=1}^N\|U_re_i-U_se_i\|_2^2\right)^{1/2}
    +|a-b|\left(\sum_{i=1}^N\|U_se_i-e_i\|_2^2\right)^{1/2}
$$
 $$\le
    \Lambda_E\bigl(|r-s|+(\log\bar\theta)|a-b|\bigr)
  \le
    \sqrt2\,\Lambda_E
    \bigl|(r,a)-(s,b)\bigr|_{\mathbb R^{N+1}}.
$$
Moreover,
\(
    \sup_{(r,a)\in\mathcal T}\|g_{r,a}\|_2
    \le
    \Lambda_E\log\bar\theta.
\)
Thus
\(
    \sigma_E(\bar\theta)^2
    :=
    \sup_{(r,a)\in\mathcal T}\|g_{r,a}\|_2^2
    \le
    \Lambda_E^2(\log\bar\theta)^2,
\)
and
\(
    \operatorname{diam}(\mathcal T,d)
    \le
    2\Lambda_E\log\bar\theta
    =:
    D_E(\bar\theta).
\)
Since \(\mathcal T\) is contained in a Euclidean ball of radius \(3\) in
\(\mathbb R^{N+1}\), its covering numbers satisfy
\[
    N(\mathcal T,d,\eta)
    \le
    \left(1+\frac{C_N\Lambda_E}{\eta}\right)^{N+1},
    \qquad 0<\eta\le D_E(\bar\theta),
\]
for a constant \(C_N\) depending only on \(N\). Hence the entropy bound
\cite[Theorem~1.3.3]{AdlerTaylor2007} gives the finite deterministic bound
\[
    M_E(\bar\theta)
    :=
    C\sqrt{N+1}
    \int_0^{D_E(\bar\theta)}
    \sqrt{\log\left(1+\frac{C_N\Lambda_E}{\eta}\right)}\,d\eta
    <\infty,
\]
where \(C\) is universal.

Choose \(\bar\theta\in(1,2)\) so close to \(1\) that
\(
    p
    :=
    \frac{\varepsilon^2}
    {4\Lambda_E^2(\log\bar\theta)^2}
    >1,
\)
then choose \(\theta\in(1,\bar\theta)\), and put
\(
    n_k:=\lfloor\theta^k\rfloor.
\)
Since \(Z_{n_k}\sim N(0,I_N)\), the standard Gaussian tail bound gives
\[
    \mathbb P\left(
        |Z_{n_k}|>(1+\varepsilon)L_{n_k}^{1/2}
    \right)
    \le
    C_N(1+L_{n_k})^{N/2}
    (\log n_k)^{-(1+\varepsilon)^2}
\]
for all sufficiently large \(k\). Since \(\log n_k\asymp k\), the
right-hand side is summable, and Borel--Cantelli gives
\(
    \limsup_{k\to\infty}
    \frac{|Z_{n_k}|}{L_{n_k}^{1/2}}
    \le1+\varepsilon
    \qquad\text{a.s.}
\)

It remains to control the gaps. For all large \(k\),
\(n_{k+1}/n_k\le\bar\theta\). Let
\[
    \mathcal T_0
    :=
    \bigl([1,\bar\theta]\cap\mathbb Q\bigr)
    \times
    \bigl(\mathbb Q^N\cap\{a\in\mathbb R^N:|a|\le1\}\bigr),
\]
and, for \((r,a)\in\mathcal T_0\), put
\(
    X^{(k)}_{r,a}
    :=
    W\bigl(U_{n_k}g_{r,a}\bigr),
    \qquad
    S_k
    :=
    \sup_{(r,a)\in\mathcal T_0}X^{(k)}_{r,a}.
\)
Because \(\mathcal T_0\) is countable and is symmetric under
\(a\mapsto-a\),
\(
    S_k
    =
    \sup_{(r,a)\in\mathcal T_0}|X^{(k)}_{r,a}|.
\)
For \(n_k\le n\le n_{k+1}\), the number \(r:=n/n_k\) belongs to
\([1,\bar\theta]\cap\mathbb Q\), and
\(
    Z_{n,i}-Z_{n_k,i}
    =
    W\bigl(U_{n_k}(U_re_i-e_i)\bigr).
\)
Since rational points are dense in the Euclidean unit ball,
\(
    \max_{n_k\le n\le n_{k+1}}|Z_n-Z_{n_k}|
    \le
    S_k
    \qquad\text{a.s.}
\)

The law of \((X^{(k)}_{r,a})_{(r,a)\in\mathcal T_0}\) does not depend on
\(k\), and its canonical metric is \(d\), because \(U_{n_k}\) is an
isometry. Choose increasing finite symmetric subsets
\(\mathcal T_{0,j}\uparrow\mathcal T_0\), each containing \((1,0)\), and put
\(
    S_{k,j}
    :=
    \sup_{(r,a)\in\mathcal T_{0,j}}X^{(k)}_{r,a}.
\)
Applying Dudley's entropy bound to each finite set \(\mathcal T_{0,j}\),
using the preceding uniform covering-number estimate, and then applying
monotone convergence gives
\(
    \mathbb E S_k
    =
    \lim_{j\to\infty}\mathbb E S_{k,j}
    \le
    M_E(\bar\theta)
    <\infty.
\)

Choose \(k_0\) so large that, for every \(k\ge k_0\),
\(
    M_E(\bar\theta)
    \le
    \frac{\varepsilon}{2}L_{n_k}^{1/2}
\)
and \(n_{k+1}/n_k\le\bar\theta\). For each finite
\(\mathcal T_{0,j}\), the Borell–Tsirelson inequality
\cite[Theorem~2.1.1]{AdlerTaylor2007} gives
\[
    \mathbb P\left(
        S_{k,j}>M_E(\bar\theta)+u
    \right)
    \le
    \mathbb P\left(
        S_{k,j}>\mathbb E S_{k,j}+u
    \right)
    \le
    \exp\left\{-\frac{u^2}{2\sigma_E(\bar\theta)^2}\right\},
    \qquad u>0.
\]
Letting \(j\to\infty\), using continuity from below, and taking
\(u=\frac{\varepsilon}{2}L_{n_k}^{1/2}\), we obtain
$$
    \mathbb P\left(
        \max_{n_k\le n\le n_{k+1}}|Z_n-Z_{n_k}|
        >\varepsilon L_{n_k}^{1/2}
    \right)
    \le
    \exp\left\{
        -\frac{\varepsilon^2L_{n_k}}
        {8\Lambda_E^2(\log\bar\theta)^2}
    \right\}
    =
    (\log n_k)^{-p}.
$$
Since \(\log n_k\asymp k\) and \(p>1\), the last bound is summable in
\(k\). Therefore, Borel--Cantelli gives
\(
    \limsup_{k\to\infty}
    \max_{n_k\le n\le n_{k+1}}
    \frac{|Z_n-Z_{n_k}|}{L_{n_k}^{1/2}}
    \le\varepsilon
    \qquad\text{a.s.}
\)
Combining this with the grid estimate and using \(L_n\ge L_{n_k}\) for
\(n_k\le n\le n_{k+1}\), we obtain
\(
    \limsup_{n\to\infty}
    \frac{|Z_n|}{L_n^{1/2}}
    \le1+2\varepsilon
    \qquad\text{a.s.}
\)
Applying the preceding argument to \(\varepsilon=1/q\), \(q\ge2\), and
intersecting the resulting probability-one events proves the claim.
\end{proof}

\begin{lemma}[Finite-rank chaos upper bound]
\label{lem:finite-rank-chaos-upper}
Let \(E\subset D_{\mathbb R}\) be finite-dimensional, and let
\(
    Q_t\in E^{\odot m},
    \qquad
    0\le t\le T,
\)
be continuous in \(t\). Then, almost surely, the sequence
\[
    \left\{
        t\mapsto
        \frac{I_m(Q_t;W_n)}{(2\log\log n)^{m/2}}:
        n\ge3
    \right\}
\]
is relatively compact in \(C[0,T]\), and every cluster point belongs to
\(
    \left\{
        t\mapsto
        \langle Q_t,\xi^{\otimes m}\rangle:
        \|\xi\|_{\mathfrak H_{\mathbb R}}\le1
    \right\}.
\)
\end{lemma}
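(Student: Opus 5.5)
The plan is to reduce everything to the finitely many Gaussian coordinates and then transfer the finite-dimensional upper cluster bound of Lemma~\ref{lem:finite-dimensional-upper-cluster} through a polynomial map, exactly as the lower-cluster step does in Lemma~\ref{lem:finite-rank-lower-cluster}.

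Fix an orthonormal basis \(e_1,\ldots,e_N\) of \(E\), put \(Z_n:=(W_n(e_1),\ldots,W_n(e_N))\) and \(L_n:=2\log\log n\). As in the proof of Lemma~\ref{lem:finite-rank-lower-cluster}, write \(I_m(Q_t;W_n)=P_t(Z_n)\), where \(P_t\) is a Wick polynomial of degree \(m\). Its top homogeneous part is
\[
P_t^{(m)}(z)=\Bigl\langle Q_t,\Bigl(\textstyle\sum_i z_ie_i\Bigr)^{\otimes m}\Bigr\rangle,
\]
and all remaining terms have degree at most \(m-2\). The coefficients of \(P_t\) are linear in the coordinates of \(Q_t\) in the finite basis \(\{e_{i_1}\otimes\cdots\otimes e_{i_m}\}\). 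Since \(t\mapsto Q_t\) is continuous on the compact interval \([0,T]\), these coefficients are continuous, and hence uniformly bounded and uniformly continuous in \(t\).

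Now work on the probability-one event of Lemma~\ref{lem:finite-dimensional-upper-cluster}, on which \(\limsup_n|Z_n|/L_n^{1/2}\le1\). Put \(Y_n:=Z_n/L_n^{1/2}\), so that \(\sup_n|Y_n|<\infty\) on this event. Then
\[
\frac{I_m(Q_t;W_n)}{L_n^{m/2}}=P_t^{(m)}(Y_n)+r_n(t),
\qquad
\sup_{t\le T}|r_n(t)|\le C\sum_{j\le m-2}L_n^{(j-m)/2}(1+|Y_n|)^j\to0 .
\]
Consider the map \(\Phi:\mathbb R^N\to C[0,T]\), \(\Phi(y)(t):=P_t^{(m)}(y)\). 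It is continuous, indeed locally Lipschitz into the sup norm, because of the uniform bounds on the coefficients. The image under \(\Phi\) of a bounded set is therefore relatively compact: it is the continuous image of a compact set. Combined with \(r_n\to0\) uniformly, this gives relative compactness of the normalized sequence in \(C[0,T]\).

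For the cluster points, let \(n_j\) be any subsequence along which the normalized processes converge uniformly. Pass to a further subsequence along which \(Y_{n_j}\to y\). By Lemma~\ref{lem:finite-dimensional-upper-cluster}, \(|y|\le1\). Continuity of \(\Phi\) gives the limit
\[
\Phi(y)(t)=\langle Q_t,\xi^{\otimes m}\rangle,
\qquad \xi:=\sum_i y_ie_i .
\]
Since \(\|\xi\|_{\mathfrak H_{\mathbb R}}=|y|\le1\), this limit lies in the asserted set.

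The only delicate step is the uniform-in-\(t\) control of the lower-order Wiener contraction terms. The key point is that the finite-dimensional upper bound makes \(Y_n\) bounded, and the degrees of these terms are at most \(m-2\), so they are \(O(L_n^{-1})\) uniformly in \(t\). Everything else is continuity of a finite-dimensional polynomial map.
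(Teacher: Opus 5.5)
Your proposal is correct and follows essentially the same route as the paper's proof. Both arguments write \(I_m(Q_t;W_n)\) as a Wick polynomial in the finitely many coordinates and use Lemma~\ref{lem:finite-dimensional-upper-cluster} to make \(Y_n=Z_n/(2\log\log n)^{1/2}\) bounded. Both then discard the degree-\(\le m-2\) terms uniformly in \(t\) and pass to the limit through the continuous map \(y\mapsto\bigl(t\mapsto P_t^{(m)}(y)\bigr)\), extracting a further subsequence with \(Y_{n_j}\to y\), \(|y|\le1\), to identify the cluster points.
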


\begin{proof}
Let \(e_1,\dots,e_N\) be an orthonormal basis of \(E\), and put
\(
    L_n:=2\log\log n,
    \qquad
    Z_n:=\bigl(W_n(e_1),\dots,W_n(e_N)\bigr),
    \qquad
    Y_n:=\frac{Z_n}{L_n^{1/2}}.
\)
By Lemma~\ref{lem:finite-dimensional-upper-cluster}, almost surely,
\(
    \limsup_{n\to\infty}|Y_n|\le1.
\)
We work on this event. In particular, \((Y_n)\) is bounded.

Since \(Q_t\in E^{\odot m}\), there exists a polynomial \(P_t\) of total degree
\(m\) such that
\(
    I_m(Q_t;W_n)=P_t(Z_n).
\)
Its homogeneous degree-\(m\) part is
\(
    P_t^{(m)}(z)
    =
    \left\langle
        Q_t,
        \left(\sum_{i=1}^N z_i e_i\right)^{\otimes m}
    \right\rangle .
\)
All lower-order terms have degree at most \(m-2\). Their coefficients are
uniformly bounded in \(t\), since \(t\mapsto Q_t\) is continuous and
\([0,T]\) is compact. Since \((Y_n)\) is bounded,
\(
    \frac{I_m(Q_t;W_n)}{L_n^{m/2}}
    =
    P_t^{(m)}(Y_n)+o(1),
\)
uniformly in \(t\in[0,T]\).

Define \(\Psi:\mathbb R^N\to C[0,T]\) by
\(
    \Psi(a)(t):=P_t^{(m)}(a).
\)
The map \(\Psi\) is continuous. Hence the boundedness of \((Y_n)\) and the
preceding uniform expansion imply relative compactness of the normalized
finite-rank sequence in \(C[0,T]\).

Let \(g\) be one of its cluster points, and choose a subsequence \(n_\ell\)
along which the normalized finite-rank processes converge to \(g\) in
\(C[0,T]\). After passing to a further subsequence, the bounded sequence
\((Y_{n_\ell})\) converges to some \(a=(a_1,\dots,a_N)\). The
finite-dimensional bound gives \(|a|\le1\), and the uniform expansion yields
\(
    g(t)=P_t^{(m)}(a)
    =
    \left\langle
        Q_t,
        \left(\sum_{i=1}^N a_i e_i\right)^{\otimes m}
    \right\rangle .
\)
Setting \(\xi:=\sum_{i=1}^N a_i e_i\), we have
\(\|\xi\|_{\mathfrak H_{\mathbb R}}\le1\), which proves the claim.
\end{proof}

\begin{lemma}[Functional gap control for the leading chaos]
\label{lem:functional-grid-gap}
Let \(L_n:=2\log\log n\), and set
\[
    X_n(t):=\frac{\mathcal S_{n,m}(t)}{A_nL_n^{m/2}},
    \qquad 0\le t\le T.
\]
For \(n_k:=\lfloor\theta^k\rfloor\), \(1<\theta<2\),
\[
    \lim_{\theta\downarrow1}
    \limsup_{k\to\infty}
    \max_{n_k\le n\le n_{k+1}}
    \|X_n-X_{n_k}\|_\infty
    =0
    \qquad\text{\rm a.s.}
\]
\end{lemma}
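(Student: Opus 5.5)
We need \(\|X_n-X_{n_k}\|_\infty\) small for \(n_k\le n\le n_{k+1}\), uniformly in \(t\in[0,T]\), once \(\theta\) is close to \(1\). The plan is to split the difference into a time-dilation part and a normalization part, and to control the dilation part by a chaining argument in the two-parameter index \((n,t)\).

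\textbf{Step 1 (splitting).} For \(n=rn_k\) with \(r\in[1,\theta']\), where \(\theta'\in(\theta,2)\), write \(\mathcal S_{n,m}(t)=\mathcal S_{n_k,m}(rt)\) up to interpolation conventions; on grid points this is exact, and off grid the interpolation error is controlled below. Then
\[
X_n(t)-X_{n_k}(t)=\frac{\mathcal S_{n_k,m}(rt)-\mathcal S_{n_k,m}(t)}{A_nL_n^{m/2}}
+\mathcal S_{n_k,m}(t)\Bigl(\frac{1}{A_nL_n^{m/2}}-\frac{1}{A_{n_k}L_{n_k}^{m/2}}\Bigr).
\]
For the second term, regular variation gives \(A_n/A_{n_k}\to r^H\) uniformly, and \(L_n/L_{n_k}\to1\). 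So the factor is at most \((\theta^{H}-1+o(1))/(A_{n_k}L_{n_k}^{m/2})\). Its contribution is therefore bounded by \(C(\theta^H-1)\sup_{s\le T}|X_{n_k}(s)|\). The supremum is a.s. bounded: it suffices to run the argument of Theorem~\ref{thm:upper} on \([0,T]\), i.e. a tail bound for \(\max_{j\le Tn_k}|S_{j,m}|\) via Lemma~\ref{lem:fixed-chaos-maximal-tail} together with the variance bound of Lemma~\ref{lem:leading-chaos-local-variance}, followed by Borel--Cantelli. Hence this term vanishes as \(\theta\downarrow1\).

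\textbf{Step 2 (main term).} Bound \(\sup_{t\le T}\sup_{r\in[1,\theta']}|\mathcal S_{n_k,m}(rt)-\mathcal S_{n_k,m}(t)|\). The displacement \(|rt-t|\le(\theta'-1)T\), so it suffices to control the modulus of continuity
\[
\omega_k(\delta):=\sup_{|s-t|\le\delta,\ s,t\le \theta' T}\bigl|\mathcal S_{n_k,m}(s)-\mathcal S_{n_k,m}(t)\bigr|
\]
at \(\delta=(\theta'-1)T\). By Lemma~\ref{lem:full-leading-chaos-increment}, or equivalently Lemma~\ref{lem:leading-chaos-local-variance} applied to intervals of the form \(\{\lfloor n_kt\rfloor+1,\ldots,\lfloor n_ks\rfloor\}\), the increment variance is at most \(CA_{n_k}^2|s-t|^{q}\) with \(q>1\); for sub-grid increments the linear interpolation and the one-summand bound from the proof of Proposition~\ref{prop:full-kernel-convergence} give the same estimate.

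I would then cover \([0,\theta'T]\) by \(O(1/\delta)\) overlapping blocks of length \(2\delta\). On each block, apply Lemma~\ref{lem:fixed-chaos-maximal-tail} with \(B=CA_{n_k}\delta^{q/2}\). This gives
\[
\mathbb P\bigl(\omega_k(\delta)>\varepsilon A_{n_k}L_{n_k}^{m/2}\bigr)\le C\delta^{-1}\exp\{-c\,\varepsilon^{2/m}\delta^{-q/m}\,2\log\log n_k\}.
\]
For fixed \(\varepsilon\), choosing \(\theta\) close enough to \(1\) makes the exponent \(2c\,\varepsilon^{2/m}\delta^{-q/m}\) exceed \(1\), so the bound is summable in \(k\) because \(\log n_k\asymp k\). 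Borel--Cantelli then gives \(\limsup_k\omega_k(\delta)/(A_{n_k}L_{n_k}^{m/2})\le\varepsilon\).

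\textbf{Step 3 (conclusion).} Combine the two terms, and intersect over \(\varepsilon=1/j\) with corresponding \(\theta_j\downarrow1\); these countably many probability-one events give the claimed double limit. The limit in \(\theta\) is monotone enough for this, since a smaller \(\theta\) only shrinks the gaps.

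The main obstacle is the exactness of \(\mathcal S_{n,m}(t)=\mathcal S_{n_k,m}(nt/n_k)\). The grid \(\{j/n\}\) differs from \(\{j/n_k\}\), so this identity fails. I would instead write \(\mathcal S_{n,m}(t)-\mathcal S_{n_k,m}(t)\) directly as the difference of the two interpolations \(\mathcal S_{n,m}(t)\) and \(\mathcal S_{n_k,m}(t)\), each near partial sums at indices \(\lfloor nt\rfloor\) and \(\lfloor n_kt\rfloor\). These indices differ by at most \((\theta'-1)Tn_k+2\), so the same modulus \(\omega_k\), evaluated in the discrete index, controls the difference. This uses only block increments of \(S_{j,m}\), to which Lemma~\ref{lem:fixed-chaos-maximal-tail} applies.
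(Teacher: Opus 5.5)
Your argument is correct and is essentially the paper's proof: both reduce $\|X_n-X_{n_k}\|_\infty$ to increments $S_{r+\ell,m}-S_{r,m}$ with $\ell\le C_T(\theta-1)n_k+2$, cover the index range up to about $Tn_{k+1}$ by $O((\theta-1)^{-1})$ overlapping blocks, and then apply Lemma~\ref{lem:fixed-chaos-maximal-tail} with the variance bound of Lemma~\ref{lem:full-leading-chaos-increment} and Borel--Cantelli; in addition, you make explicit the a.s.\ boundedness of $\sup_{s\le T}|X_{n_k}(s)|$, which the paper's reduction to the common normalization $A_{n_k}L_{n_k}^{m/2}$ uses tacitly. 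The ``obstacle'' in your last paragraph is not real: $\mathcal S_{n,m}(t)=\mathcal S_{n_k,m}(nt/n_k)$ holds exactly, because both sides are the linear interpolation of $j\mapsto S_{j,m}$ at the same real point $nt$, so your Step~1 decomposition is valid as written.
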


\begin{proof}
Fix \(1<\theta<2\). Uniformly for \(n_k\le n\le n_{k+1}\), regular variation gives
\(A_n/A_{n_k}=1+o_\theta(1)\) and \(L_n/L_{n_k}=1+o_\theta(1)\) as \(k\to\infty\), followed
by \(\theta\downarrow1\). Hence it is enough to control
\[
    \max_{n_k\le n\le n_{k+1}}
    \sup_{0\le t\le T}
    \frac{|\mathcal S_{n,m}(t)-\mathcal S_{n_k,m}(t)|}
    {A_{n_k}L_{n_k}^{m/2}} .
\]

For \(n_k\le n\le n_{k+1}\) and \(0\le t\le T\), put
\(
p:=\lfloor nt\rfloor,\quad q:=\lfloor n_kt\rfloor,
\quad \lambda:=\{nt\},\qquad \mu:=\{n_kt\}.
\)
Then
\[
\mathcal S_{n,m}(t)-\mathcal S_{n_k,m}(t)
=
S_{p,m}-S_{q,m}
+\lambda\bigl(S_{p+1,m}-S_{p,m}\bigr)
-\mu\bigl(S_{q+1,m}-S_{q,m}\bigr).
\]
Moreover, \(p\ge q\) and
\(p-q\le M_k:=C_T(\theta-1)n_k+2\), uniformly in \(t\le T\).
Thus all three terms on the right are increments of length at most
\(M_k\), and the preceding display is bounded by
\[
3\max_{0\le r\le Tn_{k+1}+2}
\max_{0\le \ell\le M_k}
\frac{|S_{r+\ell,m}-S_{r,m}|}
     {A_{n_k}L_{n_k}^{m/2}} .
\]

Cover \(\{0,\ldots,\lfloor Tn_{k+1}\rfloor+M_k\}\) by overlapping blocks
\(B_{j,k}:=[jM_k,(j+3)M_k]\cap\mathbb N\). The number of such blocks is
\(O_T((\theta-1)^{-1})\), and every interval \([r,r+\ell]\) with \(\ell\le M_k\) is contained
in one of them. By Lemma~\ref{lem:full-leading-chaos-increment}, for every interval
\(I\subset B_{j,k}\),
\[
    \operatorname{Var}\left(
        \frac{J_m}{m!}\sum_{u\in I}a_uH_m(X_u)
    \right)
    \le
    C_T A_{n_k}^2
    \left(\frac{|I|}{n_k}\right)^\beta
    \le
    C_T A_{n_k}^2(\theta-1)^\beta
    \left(\frac{|I|}{|B_{j,k}|}\right)^\beta ,
\]
where \(\beta>1\) is the exponent from Lemma~\ref{lem:full-leading-chaos-increment}. Applying
Lemma~\ref{lem:fixed-chaos-maximal-tail} on each block, with
\(B=C_TA_{n_k}(\theta-1)^{\beta/2}\), gives
\[
\begin{aligned}
&\mathbb P\left(
    \max_{0\le r\le Tn_{k+1}+2}
    \max_{0\le \ell\le M_k}
    |S_{r+\ell,m}-S_{r,m}|
    >
    \eta A_{n_k}L_{n_k}^{m/2}
\right)                                                        \\
&\qquad\le
C_T(\theta-1)^{-1}
\exp\left\{
    -c_T\eta^{2/m}(\theta-1)^{-\beta/m}L_{n_k}
\right\}.
\end{aligned}
\]
Since \(L_{n_k}\asymp\log k\), the right-hand side is summable in \(k\) whenever
\(\theta-1\) is sufficiently small. Borel--Cantelli yields
\[
    \limsup_{k\to\infty}
    \max_{n_k\le n\le n_{k+1}}
    \|X_n-X_{n_k}\|_\infty
    \le \eta
    \qquad\text{\rm a.s.}
\]
Taking \(\theta\downarrow1\) and then \(\eta\downarrow0\) proves the claim.
\end{proof}

\begin{proposition}[Relative compactness and upper inclusion]
\label{prop:functional-upper-inclusion}
Let the assumptions of Theorem~\ref{thm:functional-nonlinear-lil} hold. Then the sequence
\[
    \left\{
        \frac{\mathcal S_{n,m}(\cdot)}
        {A_n(2\log\log n)^{m/2}}
        : n\ge3
    \right\}
\]
is almost surely relatively compact in \(C[0,T]\), and every cluster point belongs to
\(\mathcal K_T\).
\end{proposition}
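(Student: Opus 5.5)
The plan is to combine the finite-rank upper cluster bound (Lemma~\ref{lem:finite-rank-chaos-upper}) with a uniform small-kernel estimate for the $L^2$-remainder, and then to pass from the geometric grid to all $n$ by Lemma~\ref{lem:functional-grid-gap}. Write $L_n:=2\log\log n$ and $X_n(t):=\mathcal S_{n,m}(t)/(A_nL_n^{m/2})=I_m(\widetilde F^{\mathrm{full}}_{n,t};W_n)/L_n^{m/2}$.

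\emph{Step 1 (finite-rank approximation).} Fix $\delta>0$ and let $\eta^{\mathrm{sk}}$ be the smallness threshold of Lemma~\ref{lem:uniform-small-kernel-chaos} for $\delta$. By Lemma~\ref{lem:limit-kernel-L2} and the continuity of $t\mapsto Q_t$, the set $\{Q_t:0\le t\le T\}$ is compact in $L^2$. Hence, exactly as in the proof of Lemma~\ref{lem:uniform-kernel-recurrence}, there is a finite-dimensional $E\subset D_{\mathbb R}$ such that $Q_t^{E}:=P_E^{\odot m}Q_t$ satisfies $\sup_t\|Q_t-Q_t^E\|_2<\eta^{\mathrm{sk}}/4$. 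By Proposition~\ref{prop:full-kernel-convergence}, $R_{n,t}:=\widetilde F^{\mathrm{full}}_{n,t}-Q^E_t$ then satisfies $\sup_t\|R_{n,t}\|_2\le\eta^{\mathrm{sk}}/2$ for all large $n$. Moreover $R_{n,t}$ inherits the Hölder increment bound from Lemma~\ref{lem:full-leading-chaos-increment} and Lemma~\ref{lem:limit-kernel-L2}(d), using that $P_E^{\odot m}$ is a contraction.

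\emph{Step 2 (remainder).} I will apply Lemma~\ref{lem:uniform-small-kernel-chaos} along $n_k=\lfloor\theta^k\rfloor$. This gives, almost surely,
\[
\limsup_k\sup_{t\le T}\frac{|I_m(R_{n_k,t};W_{n_k})|}{L_{n_k}^{m/2}}\le\delta .
\]
If that lemma is stated only along sparse sequences, I would still expect it to hold along geometric ones, since its proof should rest on the hypercontractive tail bound $(\log n_k)^{-c\lambda^{2/m}/\eta^{2/m}}$, which is summable for any $n_k$ with $\log n_k\asymp k$ once $\eta$ is small. \emph{This uniformity along a geometric grid is the point I expect to be the main obstacle.} If the lemma does not directly cover it, I would reprove it by chaining the maximal estimate of Lemma~\ref{lem:fixed-chaos-maximal-tail} in $t$ via the Hölder increment bound.

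\emph{Step 3 (combining).} By Lemma~\ref{lem:finite-rank-chaos-upper}, almost surely the sequence $I_m(Q^E_t;W_n)/L_n^{m/2}$ is relatively compact in $C[0,T]$, and its cluster points lie in $\{\langle Q^E_t,\xi^{\otimes m}\rangle:\|\xi\|\le1\}$. For $\|\xi\|\le1$ this set lies within uniform distance $\sup_t\|Q_t-Q_t^E\|_2\le\delta$ of $\mathcal K_T$. Consequently, along $n_k$, every limit point of $X_{n_k}$ is within $2\delta$ of $\mathcal K_T$ (using $\eta^{\mathrm{sk}}/4\le\delta$ after shrinking), and $\{X_{n_k}\}$ is covered by a compact set plus a $2\delta$-ball eventually. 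Lemma~\ref{lem:functional-grid-gap} then shows that $\max_{n_k\le n\le n_{k+1}}\|X_n-X_{n_k}\|_\infty\le\delta$ eventually once $\theta$ is close to $1$. Therefore the whole sequence $X_n$ is eventually within $3\delta$ of a compact set, and all its cluster points lie within $3\delta$ of $\mathcal K_T$.

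\emph{Step 4 (conclusion).} Intersect the probability-one events over $\delta=1/j$ and a matching sequence $\theta_j\downarrow1$. A sequence that, for every $\delta$, is eventually in the $\delta$-neighbourhood of a compact set is totally bounded, hence relatively compact in the complete space $C[0,T]$. Since $\mathcal K_T$ is closed (Lemma~\ref{lem:KT-compact}), every cluster point of $X_n$ belongs to $\mathcal K_T$.
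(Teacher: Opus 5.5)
Your proposal is correct and follows the paper's proof essentially step by step: a finite-rank projection $P_{E}^{\odot m}Q_t$ with $E\subset D_{\mathbb R}$, Lemma~\ref{lem:uniform-small-kernel-chaos} for the remainder $\widetilde F^{\mathrm{full}}_{n,t}-P_E^{\odot m}Q_t$ along geometric grids $\lfloor\theta_j^k\rfloor$, Lemma~\ref{lem:finite-rank-chaos-upper}, the grid-gap Lemma~\ref{lem:functional-grid-gap} with $\theta_j\downarrow1$, and total boundedness around the compact set $\mathcal K_T$. The step you flag as the main obstacle is not one: Lemma~\ref{lem:uniform-small-kernel-chaos} is stated for any increasing sequence with $L_{n_k}\ge c_0\log k$, geometric grids satisfy this, and the paper applies it there directly.
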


\begin{proof}

Let
\[
    L_n:=2\log\log n,
    \qquad
    X_n(t):=\frac{\mathcal S_{n,m}(t)}{A_nL_n^{m/2}},
    \qquad 0\le t\le T .
\]
Choose deterministic sequences
\(
    \delta_j:=2^{-j},
    \theta_j:=1+2^{-j},
     j\ge1 .
\)
For each \(j\), put
\(
    n_k^{(j)}:=\lfloor \theta_j^k\rfloor .
\)
For every fixed \(j\), \(L_{n_k^{(j)}}\ge c_0\log k\) for all sufficiently large \(k\), with a
constant \(c_0>0\). Let
\[
    \eta_j^{\mathrm{sk}}
    :=
    \eta_0(\delta_j,T,m,\beta,2C_T,c_0)
\]
be the smallness threshold from Lemma~\ref{lem:uniform-small-kernel-chaos}. Choose
\(
    \varepsilon_j
    :=
    \min\left\{
        2^{-j},
        \frac{\eta_j^{\mathrm{sk}}}{4}
    \right\}.
\)

Since \(D_{\mathbb R}\) is dense in \(\mathfrak H_{\mathbb R}\) and
\(\{Q_t:0\le t\le T\}\) is compact in \(L^2(\mathbb R^m)\), for each \(j\) choose a
finite-dimensional subspace
\(
    E_j\subset D_{\mathbb R}
\)
such that, with
\(
    Q_t^{(j)}:=P_{E_j}^{\odot m}Q_t,
\)
one has
\mbox{\(
    \sup_{0\le t\le T}\|Q_t-Q_t^{(j)}\|_2<\varepsilon_j .
\)}
Define
\[
    \mathcal K_T^{(j)}
    :=
    \left\{
        t\mapsto \langle Q_t^{(j)},\xi^{\otimes m}\rangle:
        \|\xi\|_{\mathfrak H_{\mathbb R}}\le1
    \right\}.
\]
Then
\(
    \sup_{g\in\mathcal K_T^{(j)}}
    \operatorname{dist}_{C[0,T]}(g,\mathcal K_T)
    \le
    \varepsilon_j .
\)

We next compare \(X_{n_k^{(j)}}\) with the finite-rank process generated by \(Q^{(j)}\). By
Proposition~\ref{prop:full-kernel-convergence},
\(
    \sup_{0\le t\le T}
    \left\|
        \widetilde F^{\mathrm{full}}_{n,t}-Q_t
    \right\|_2
    \to0 .
\)
Hence, for
\(
    R_{n,t}^{(j)}
    :=
    \widetilde F^{\mathrm{full}}_{n,t}-Q_t^{(j)},
\)
we have\\
\(
    \limsup_{n\to\infty}
    \sup_{0\le t\le T}
    \|R_{n,t}^{(j)}\|_2
    \le
    \varepsilon_j
    <
    \eta_j^{\mathrm{sk}} .
\)
Moreover, \(R_{n,t}^{(j)}\) satisfies the Hölder \(L^2\)-increment bound required in
Lemma~\ref{lem:uniform-small-kernel-chaos}; this follows from
Lemma~\ref{lem:full-leading-chaos-increment} for the prelimit kernels and from
Lemma~\ref{lem:limit-kernel-L2}(d) for \(Q_t^{(j)}\), since \(P_{E_j}^{\odot m}\) is contractive.

Therefore, by Lemma~\ref{lem:uniform-small-kernel-chaos}, on an event
\(\Omega_j^{\mathrm{sk}}\) of probability one,
\[
    \limsup_{k\to\infty}
    \sup_{0\le t\le T}
    \frac{
        |I_m(R_{n_k^{(j)},t}^{(j)};W_{n_k^{(j)}})|
    }
    {L_{n_k^{(j)}}^{m/2}}
    \le
    \delta_j .
\]

By Lemma~\ref{lem:finite-rank-chaos-upper}, on an event
\(\Omega_j^{\mathrm{fr}}\) of probability one, the sequence
\[
    \left(
        \frac{I_m(Q_t^{(j)};W_{n_k^{(j)}})}
        {L_{n_k^{(j)}}^{m/2}}
    \right)_{k\ge1}
\]
is relatively compact in \(C[0,T]\), and every cluster point belongs to
\(\mathcal K_T^{(j)}\). Consequently, on
\(\Omega_j^{\mathrm{fr}}\cap\Omega_j^{\mathrm{sk}}\),
\(
    \limsup_{k\to\infty}
    \operatorname{dist}_{C[0,T]}
    \left(
        X_{n_k^{(j)}},
        \mathcal K_T
    \right)
    \le
    \delta_j+\varepsilon_j .
\)
Indeed,
\[
    X_{n_k^{(j)}}(t)
    =
    \frac{
        I_m(\widetilde F^{\mathrm{full}}_{n_k^{(j)},t};W_{n_k^{(j)}})
    }
    {L_{n_k^{(j)}}^{m/2}} .
\]

It remains to pass from the geometric grid to the full sequence on a common event. By
Lemma~\ref{lem:functional-grid-gap}, there is an event \(\Omega^{\mathrm{gap}}\) of probability one
such that
\[
    g_j(\omega)
    :=
    \limsup_{k\to\infty}
    \max_{n_k^{(j)}\le n\le n_{k+1}^{(j)}}
    \|X_n-X_{n_k^{(j)}}\|_\infty
    \longrightarrow0
    \qquad (j\to\infty)
\]
on \(\Omega^{\mathrm{gap}}\). Set
\(
    \Omega
    :=
    \Omega^{\mathrm{gap}}
    \cap
    \bigcap_{j=1}^\infty
    \left(
        \Omega_j^{\mathrm{fr}}\cap\Omega_j^{\mathrm{sk}}
    \right).
\)
Then \(\mathbb P(\Omega)=1\). On \(\Omega\), for every \(j\),
\[
    \limsup_{n\to\infty}
    \operatorname{dist}_{C[0,T]}(X_n,\mathcal K_T)
    \le
    \delta_j+\varepsilon_j+g_j(\omega).
\]
Letting \(j\to\infty\), we obtain
\(
    \operatorname{dist}_{C[0,T]}(X_n,\mathcal K_T)\to0
\)
along the tail in the limsup sense.

Now let \(g\) be any cluster point of \((X_n)\) in \(C[0,T]\). Then there exists a subsequence
\(X_{n_\ell}\to g\) uniformly. Since \(\mathcal K_T\) is compact, hence closed,
\[
    \operatorname{dist}_{C[0,T]}(g,\mathcal K_T)
    \le
    \liminf_{\ell\to\infty}
    \operatorname{dist}_{C[0,T]}(X_{n_\ell},\mathcal K_T)
    =
    0.
\]
Thus \(g\in\mathcal K_T\).

Finally, the same estimate proves relative compactness. Indeed, for every \(\eta>0\), choose
\(j\) so large that
\(
    \delta_j+\varepsilon_j+g_j(\omega)<\eta .
\)
Then all sufficiently large \(X_n\) lie in the \(\eta\)-neighbourhood of the compact set
\(\mathcal K_T\). Hence the tail of \((X_n)\) is totally bounded in \(C[0,T]\), and the sequence
is relatively compact.

This proves both relative compactness and the upper inclusion on the single probability-one
event \(\Omega\).
\end{proof}

\begin{proposition}[Uniform negligibility of higher chaoses]
\label{prop:functional-remainder}
For every \(T<\infty\),
\[
    \sup_{0\le t\le T}
    \frac{
        |R_{\lfloor nt\rfloor}|
    }
    {
        A_n(2\log\log n)^{m/2}
    }
    \longrightarrow0
    \qquad\text{\rm a.s.}
\]
The same holds with linear interpolation:
\[
    \sup_{0\le t\le T}
    \frac{
        |\mathcal R_n(t)|
    }
    {
        A_n(2\log\log n)^{m/2}
    }
    \longrightarrow0
    \qquad\text{\rm a.s.},
\]
where
$\mathcal R_n(t):=(1-\{nt\})R_{\lfloor nt\rfloor}+\{nt\}R_{\lfloor nt\rfloor+1}.$
\end{proposition}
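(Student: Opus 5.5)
The plan is to reduce the statement to Theorem~\ref{thm:reduction} together with a scale comparison. For \(0\le t\le T\) we have \(\lfloor nt\rfloor\le Tn\), so
\[
    \sup_{0\le t\le T}|R_{\lfloor nt\rfloor}|
    \le
    \max_{0\le j\le \lfloor Tn\rfloor+1}|R_j|.
\]
It therefore suffices to show that \(\max_{j\le Tn+1}|R_j|=o\bigl(A_n(2\log\log n)^{m/2}\bigr)\) almost surely. The interpolated statement then follows immediately, since \(|\mathcal R_n(t)|\le\max\{|R_{\lfloor nt\rfloor}|,|R_{\lfloor nt\rfloor+1}|\}\) is again bounded by the same maximum.

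To control the maximum, fix \(\varepsilon>0\). By Theorem~\ref{thm:reduction} there is, almost surely, a random \(j_0(\omega)\) such that
\[
    |R_j|\le\varepsilon A_j(2\log\log j)^{m/2}
    \qquad\text{for all }j\ge j_0 .
\]
We split the maximum into two ranges. For \(j<j_0\), the quantity \(\max_{j<j_0}|R_j|\) is a fixed finite random variable. Since \(A_n\to\infty\), because \(H>1/2>0\), dividing it by \(A_n(2\log\log n)^{m/2}\) gives something tending to zero. For \(j_0\le j\le Tn+1\), we need the comparison
\[
    \max_{j\le Tn+1}A_j(2\log\log j)^{m/2}
    \le
    C_T A_n(2\log\log n)^{m/2}
\]
for all large \(n\). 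The \(\log\log\) factor is monotone and satisfies \(\log\log(Tn+1)\sim\log\log n\). For \(A_j\), we use that \(A_n\) is regularly varying with positive index \(H\) (Proposition~\ref{prop:exact-variance}). By the uniform convergence theorem and Potter bounds, \(\sup_{j\le xn}A_j/A_n\to\sup_{y\le x}y^H=x^H\), locally uniformly in \(x\). Here it matters that \(H>0\): the small-\(j\) part is dominated by the maximum over bounded \(j\) plus the Potter bound \(A_j/A_n\le C(j/n)^{H-\eta}\) with \(\eta<H\). This gives \(\max_{j\le Tn+1}A_j\le C_T A_n\).

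Combining the two ranges yields
\[
    \limsup_n \sup_{t\le T}\frac{|R_{\lfloor nt\rfloor}|}{A_n(2\log\log n)^{m/2}}\le C_T\varepsilon
    \qquad\text{a.s.}
\]
Intersecting the probability-one events over \(\varepsilon=1/q\), \(q\ge1\), proves the claim. The only genuine point is the scale comparison in the second step, i.e.\ that the regularly varying normalization \(A_j\) with positive index is dominated uniformly on \(j\le Tn\) by \(A_n\). This is routine by Potter's bound, so I expect no real obstacle; all the probabilistic work is already contained in Theorem~\ref{thm:reduction}.
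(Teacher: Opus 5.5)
Your argument is correct and is essentially the paper's proof. Both reduce to Theorem~\ref{thm:reduction} and split the maximum at a random index (the finitely many early terms vanish after division because $A_n\to\infty$), control the remaining range by the Potter-bound comparison $\sup_{r\le Tn+2}A_r(2\log\log r)^{m/2}\le C_T A_n(2\log\log n)^{m/2}$, and handle the interpolated version by bounding $|\mathcal R_n(t)|$ through $|R_{\lfloor nt\rfloor}|$ and $|R_{\lfloor nt\rfloor+1}|$. The only differences are cosmetic: you treat the $A_j$ and $\log\log j$ factors separately, while the paper applies Potter's bound directly to the regularly varying product.
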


\begin{proof}
Put \(B_n:=A_n(2\log\log n)^{m/2}\). By Theorem~\ref{thm:reduction},
\[
    \frac{|R_n|}{B_n}\to0
    \qquad\text{a.s.}
\]
Moreover, \(B_n\) is regularly varying with positive index \(H=1-d-\alpha m/2\). Hence, for
every fixed \(T<\infty\), Potter's bound gives
\[
    \sup_{3\le r\le Tn+2}\frac{B_r}{B_n}\le C_T
\]
for all large \(n\). Therefore, on the probability-one event where \(|R_r|/B_r\to0\), for
every \(\varepsilon>0\) there is \(r_0\) such that \(|R_r|\le\varepsilon B_r\) for all
\(r\ge r_0\). Thus
\[
    \sup_{0\le t\le T}\frac{|R_{\lfloor nt\rfloor}|}{B_n}
    \le
    \frac{\max_{0\le r<r_0}|R_r|}{B_n}
    +
    \varepsilon
    \sup_{r_0\le r\le Tn}\frac{B_r}{B_n}
    \le
    o(1)+C_T\varepsilon .
\]
Letting \(\varepsilon\downarrow0\) proves the first claim.

For the interpolated process,
\[
    |\mathcal R_n(t)|
    \le
    |R_{\lfloor nt\rfloor}|+|R_{\lfloor nt\rfloor+1}|,
    \qquad 0\le t\le T.
\]
Hence
\[
    \sup_{0\le t\le T}\frac{|\mathcal R_n(t)|}{B_n}
    \le
    2\max_{0\le r\le Tn+1}\frac{|R_r|}{B_n}\to0
    \qquad\text{a.s.}
\]
This proves the proposition.
\end{proof}

\begin{proof}[Proof of Theorem~\ref{thm:functional-nonlinear-lil}]
By Proposition~\ref{prop:functional-upper-inclusion}, the leading-chaos sequence is almost
surely relatively compact in \(C[0,T]\), and every leading-chaos cluster point belongs to
\(\mathcal K_T\). By Proposition~\ref{prop:functional-lower-inclusion}, every element of
\(\mathcal K_T\) is attained as a leading-chaos cluster point. Hence
\[
    \operatorname{Cl}_{C[0,T]}
    \left(
        \frac{\mathcal S_{n,m}(\cdot)}
        {A_n(2\log\log n)^{m/2}}
    \right)
    =
    \mathcal K_T
    \qquad\text{\rm a.s.}
\]
Proposition~\ref{prop:functional-remainder} gives
\[
    \sup_{0\le t\le T}
    \frac{|\mathcal R_n(t)|}{A_n(2\log\log n)^{m/2}}\to0
    \qquad\text{\rm a.s.}
\]
Since \(\mathcal S_n=\mathcal S_{n,m}+\mathcal R_n\), the full sequence is also relatively
compact and has the same cluster set. This proves the theorem.
\end{proof}

\begin{proof}[Proof of Corollary~\ref{cor:sharp-nonlinear-constant}]
Applying Theorem~\ref{thm:functional-nonlinear-lil} with \(T=1\), we obtain
\[
    \operatorname{Cl}_{C[0,1]}
    \left(
        \frac{
            \mathcal S_n(\cdot)
        }
        {
            A_n(2\log\log n)^{m/2}
        }
    \right)
    =
    \mathcal K_1
    \qquad\text{a.s.}
\]
Evaluating the cluster functions at \(t=1\) gives
\[
    \operatorname{Cl}_{\mathbb R}
    \left(
        \frac{
            S_n
        }
        {
            A_n(2\log\log n)^{m/2}
        }
    \right)
    =
    \left\{
        \big\langle
            Q_1,\xi^{\otimes m}
        \big\rangle:
        \|\xi\|_{\mathfrak H_{\mathbb R}}\le1
    \right\}.
\]
Therefore
\[
    \limsup_{n\to\infty}
    \frac{|S_n|}
    {A_n(2\log\log n)^{m/2}}
    =
    \sup_{\|\xi\|\le1}
    \left|
        \big\langle
            Q_1,\xi^{\otimes m}
        \big\rangle
    \right|
    =
    \Lambda_m
    \qquad\text{a.s.}
\]
The bounds \(
    0<L_{m,\alpha,d}
    \le
    \Lambda_m
    \le
    \frac{1}{\sqrt{m!}},
\)
were proved in 
Lemma~\ref{lem:Lambda-bounds}.
\end{proof}

\FloatBarrier
\section{Numerical illustrations}\label{sec:numerical-illustrations}

In all weighted experiments the normalization uses the \emph{exact}
deterministic variance scale. Since $G=H_m$ in the experiments, the
statistic \\ $S_n=\sum_{t\le n}a_tH_m(X_t)$ lies in the $m$-th Wiener
chaos, and the covariance identity\\
$\operatorname{Cov}(H_m(X_s),H_m(X_t))=m!\bigl(\rho(|t-s|)\bigr)^m$ gives
\[
    A_n^2
    =\operatorname{Var}(S_n)
    =m!\Bigl[\sum_{t=1}^{n}a_t^2
      +2\sum_{t=2}^{n}a_t\sum_{s=1}^{t-1}a_s(\rho(t-s))^m\Bigr],
\]
where $\rho$ is the exact covariance of fractional Gaussian noise,\\
$\rho(h)=\tfrac12\{(h+1)^{2H_X}-2h^{2H_X}+|h-1|^{2H_X}\}$. The inner
sums are a discrete convolution, so $A_k$ is computed exactly for all
$k\le n$ in $O(n\log n)$ operations by the fast Fourier transform; in
the unweighted benchmark this reduces to $A_n=n^{H_X}$. 
The input sequence $(X_t)$ is fractional Gaussian noise, generated by
the Davies--Harte circulant-embedding algorithm from this exact
covariance.

For every displayed case we consider the absolute normalized statistic
\[
    \frac{|S_n|}
    {A_n(2\log\log n)^{m/2}},
\]
where \(A_n^2=\operatorname{Var}(S_n)\) is computed from the exact
covariance formula above. The Gaussian input has
\(\alpha=0.20\), hence \(H_X=0.90\), and the simulation horizon is
\(N=2^{20}\). Nine sample paths from an ensemble of \(M=2000\)
realizations are displayed on a logarithmic horizontal axis.

Two deterministic reference scales are superimposed. The black dashed
line is the asymptotic temporal LIL level \(y=\Lambda_m\). The gray
dotted curve is the exact finite-\(n\) pointwise RMS scale:
\[
    \left[
        \mathbb E
        \left(
            \frac{|S_n|}
            {A_n(2\log\log n)^{m/2}}
        \right)^2
    \right]^{1/2}
    =
    \frac{1}{(2\log\log n)^{m/2}}.
\]
Thus the dotted curve describes the ordinary finite-sample magnitude,
whereas the horizontal dashed line describes rare temporal
\(\limsup\) excursions.

\begin{figure}[!htbp]
\centering
\includegraphics[width=0.72\textwidth]{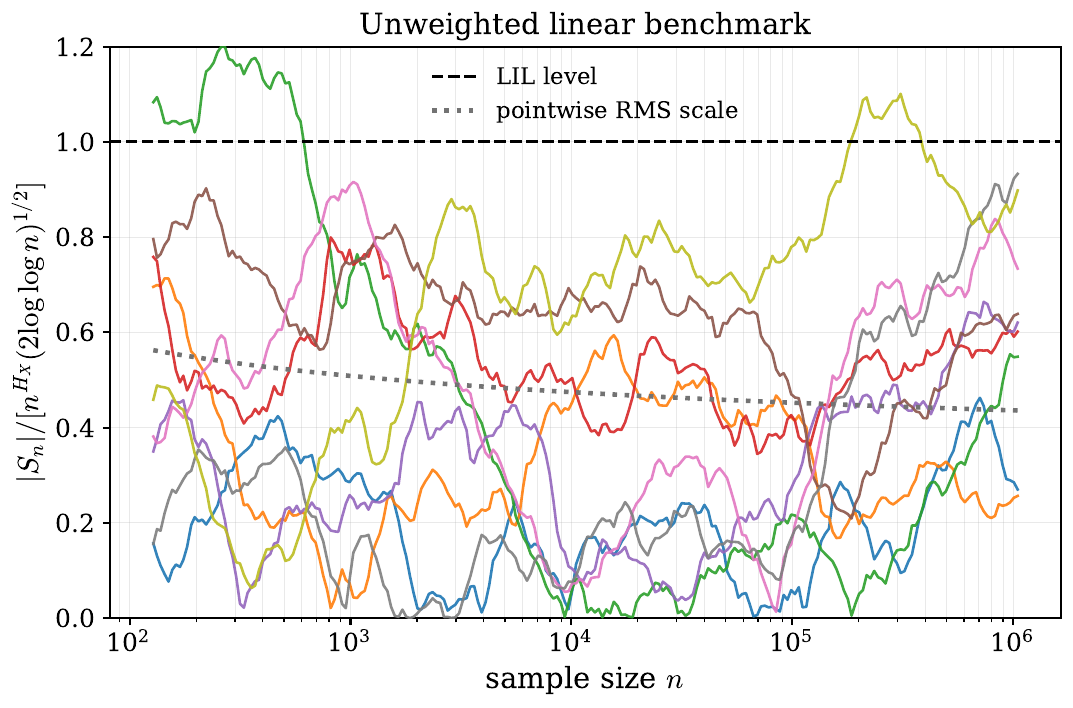}
\caption{
Unweighted linear benchmark with fractional Gaussian noise,
\(\alpha=0.20\), \(H_X=0.90\), \(G=H_1\), and \(a_t\equiv1\).
Here \(A_n=n^{H_X}\), and the displayed trajectories are
\(
    \frac{|S_n|}
    {n^{H_X}(2\log\log n)^{1/2}}.
\)
The black dashed line is the sharp asymptotic LIL level
\(\Lambda_1=1\); the gray dotted curve is the exact finite-\(n\)
pointwise RMS scale $(2\log\log n)^{-1/2}$.
}\label{fig:taqqu-benchmark}
\end{figure}

\begin{figure}[!htbp]
\centering

\begin{subfigure}{0.74\textwidth}
\centering
\includegraphics[
    width=\linewidth
]{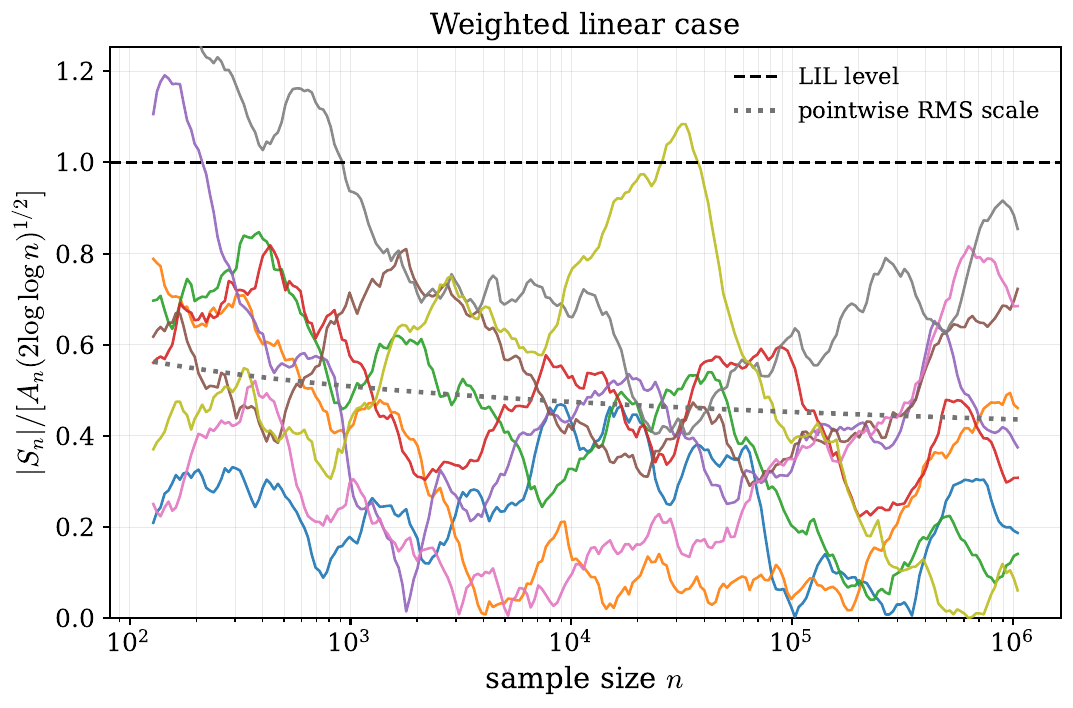}
\caption{\(a_t=t^{-0.10}\).}
\label{fig:weighted-linear-power}
\end{subfigure}

\vspace{0.8em}

\begin{subfigure}{0.74\textwidth}
\centering
\includegraphics[
    width=\linewidth
]{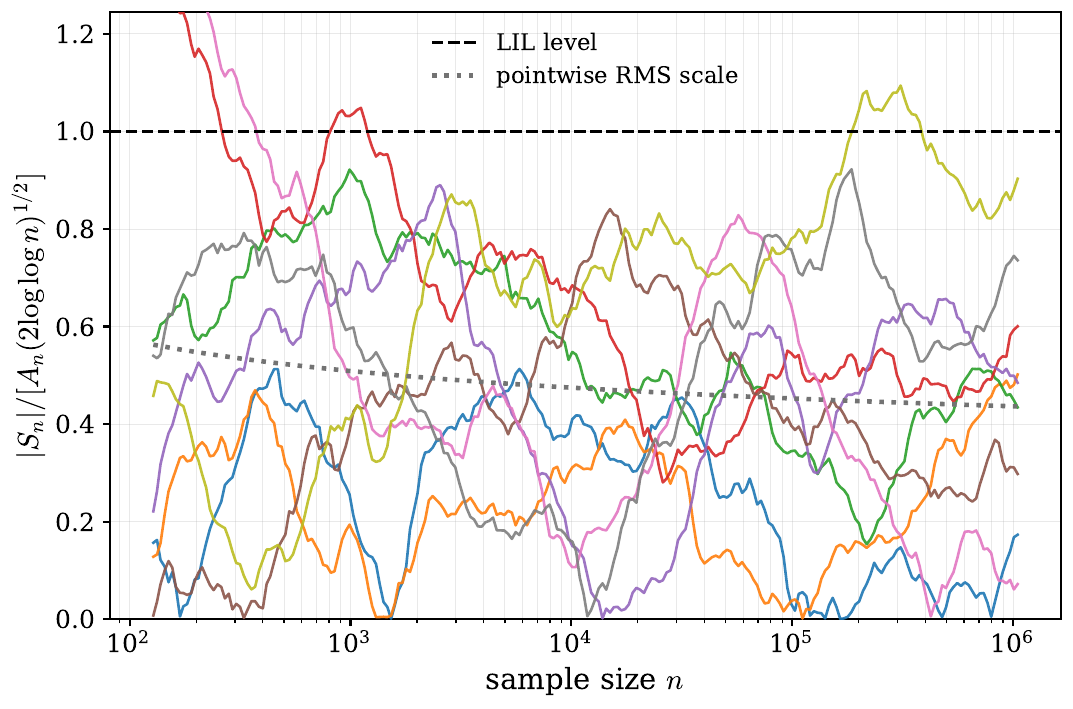}
\caption{\(a_t=t^{-0.10}\{\log(e+t)\}^{1/2}\).}
\label{fig:weighted-linear-log}
\end{subfigure}

\caption{
Weighted linear case with \(\alpha=0.20\), \(G=H_1\), and exact
variance scale \(A_n\). Both panels display
\(
    \frac{|S_n|}
    {A_n(2\log\log n)^{1/2}}.
\)
The black dashed line is the common sharp LIL level
\(\Lambda_1=1\), and the gray dotted curve is the exact finite-\(n\)
pointwise RMS scale. The similar normalized behavior in the two panels
illustrates that the regularly varying weights are absorbed into
\(A_n\), while the linear LIL constant remains unchanged.
}
\label{fig:weighted-linear}
\end{figure}

\begin{figure}[!htbp]
\centering

\begin{subfigure}{0.74\textwidth}
\centering
\includegraphics[
    width=\linewidth
]{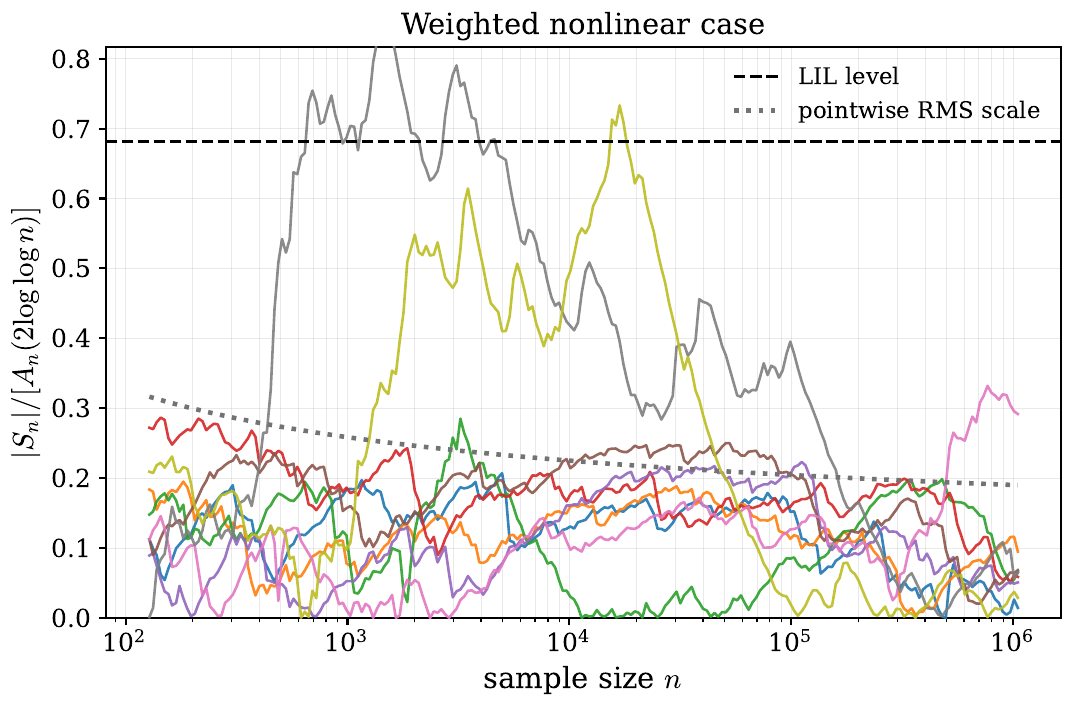}
\caption{\(G=H_2\), Hermite rank \(m=2\).}
\label{fig:weighted-rank2}
\end{subfigure}

\vspace{0.8em}

\begin{subfigure}{0.74\textwidth}
\centering
\includegraphics[
    width=\linewidth
]{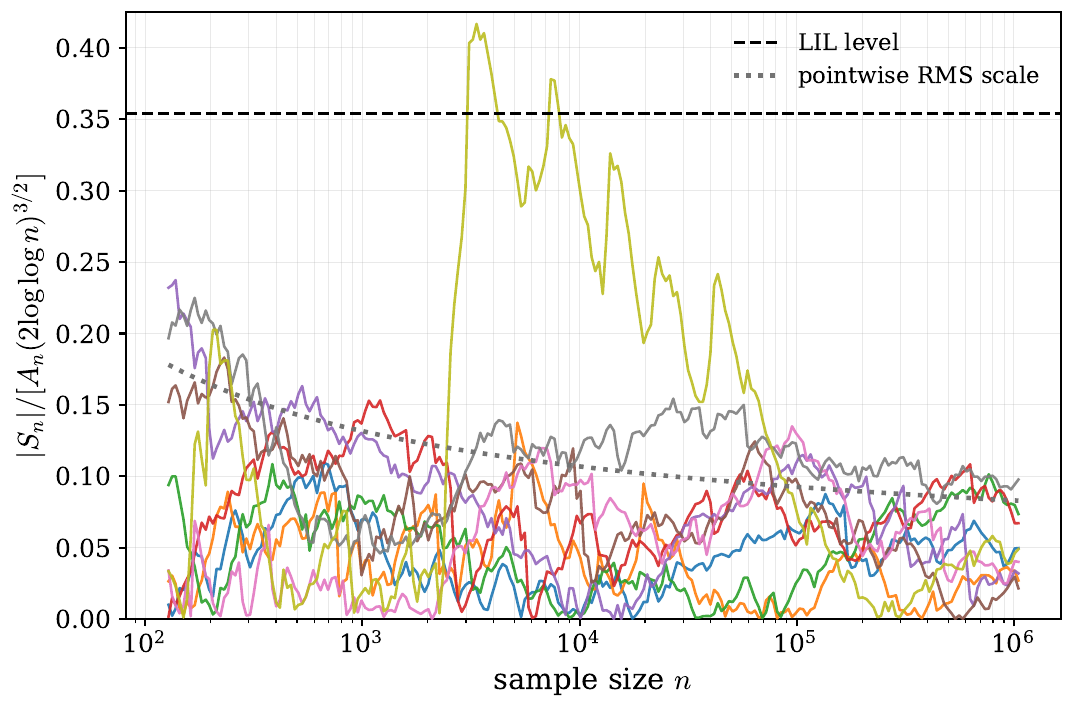}
\caption{\(G=H_3\), Hermite rank \(m=3\).}
\label{fig:weighted-rank3}
\end{subfigure}

\caption{
Weighted nonlinear cases with
\(\alpha=0.20\) and
\(a_t=t^{-0.10}\{\log(e+t)\}^{1/2}\).
Panel~\textup{(a)} displays
\(
    \frac{|S_n|}{A_n(2\log\log n)}
\)
for \(m=2\), and panel~\textup{(b)} displays
\(
    \frac{|S_n|}
    {A_n(2\log\log n)^{3/2}}
\)
for \(m=3\). The black dashed lines are the asymptotic LIL levels
\(\Lambda_2\approx0.6809\) and
\(\Lambda_3\approx0.3543\), respectively. The gray dotted curves are
the exact finite-\(n\) pointwise RMS scales
\((2\log\log n)^{-1}\) and
\((2\log\log n)^{-3/2}\). Their separation from the LIL levels
visualizes the distinction between the ordinary pointwise size of the
trajectories and their rare temporal \(\limsup\) excursions.
}
\label{fig:weighted-nonlinear-comparison}
\end{figure}

\begin{remark}[Finite-sample interpretation of the trajectory plots]
\label{rem:finite-sample-trajectories}
The horizontal LIL level is not a pointwise upper bound and is not the
typical finite-\(n\) location of the trajectory cloud. The sharp LIL
asserts that, for every \(\varepsilon>0\),
\(
    \frac{|S_n|}
    {A_n(2\log\log n)^{m/2}}
    >
    \Lambda_m-\varepsilon
\)
infinitely often almost surely, but it does not prescribe the
frequency of such excursions on a finite simulation horizon. Most
finite-\(n\) values may therefore lie substantially below
\(\Lambda_m\). The gray dotted curve shows their exact
mean-square scale; it is neither an upper envelope nor an additional
LIL boundary. Occasional finite-\(n\) crossings of the black dashed
line are also compatible with the asymptotic \(\limsup\) statement.
\end{remark}

\FloatBarrier

A common ensemble of \(M=2000\) independent fractional-Gaussian-noise
paths was generated and used for all cases. Nine trajectories are
displayed in each figure; they were selected at fixed quantiles of
their late-time normalized excursion size in order to represent a
range of finite-sample behaviours.

\FloatBarrier
\begin{remark}[Numerical evaluation of $\Lambda_m$ and quality of the bounds]
\label{rem:Lambda-numerics}
The variational constant of
Corollary~\ref{cor:sharp-nonlinear-constant} is computable. For real
$h\in L^2[0,1]$, consider the trial functions
\[
    \xi_h(x):=|x|^{(\alpha-1)/2}\int_0^1 h(y)e^{iyx}\,dy
    \in\mathfrak H_{\mathbb R},
\]
for which, with
$(\widetilde Kh)(y):=c_\alpha\int_0^1 h(z)|y-z|^{-\alpha}\,dz$,
\[
    \langle\xi_h,\xi_g\rangle_{\mathfrak H_{\mathbb R}}
    =\langle h,\widetilde Kg\rangle_{L^2[0,1]},
    \qquad
    \bigl\langle Q_1,\xi_h^{\otimes m}\bigr\rangle
    =C_{m,\alpha,d}\int_0^1 y^{-d}\,
     \bigl(\widetilde Kh\bigr)(y)^m\,dy .
\]

The restriction to these functions involves no loss of generality.
Let \(\mathcal V\) be the closed subspace of
\(\mathfrak H_{\mathbb R}\) generated by the functions \(\xi_h\).
If \(\eta\in\mathcal V^\perp\), then
\(\langle\eta,\xi_h\rangle_{\mathfrak H_{\mathbb R}}=0\) for every
\(h\in L^2[0,1]\). Hence the contraction of \(Q_1\) in any one
coordinate against \(\eta\) vanishes, and therefore
\(Q_1\in\mathcal V^{\odot m}\). Projecting \(\xi\) onto
\(\mathcal V\) thus leaves
\(\langle Q_1,\xi^{\otimes m}\rangle\) unchanged and cannot increase
\(\|\xi\|_{\mathfrak H_{\mathbb R}}\). Since the functions \(\xi_h\)
are dense in \(\mathcal V\), the reduced variational problem below has
the same supremum \(\Lambda_m\).


For numerical evaluation, we solve the resulting reduced problem on
\([0,1]\) by a shifted symmetric power iteration on a fine grid. The
Euler--Lagrange stationarity relation motivates the iteration
\(h(y)\propto y^{-d}(\widetilde Kh)(y)^{m-1}\).
The interval \([0,1]\) is divided into \(6000\) equal cells. The cell
integrals of the weakly singular kernel \(|y-z|^{-\alpha}\) and of the
weight \(y^{-d}\) are evaluated in closed form, while the nonlinear
objective is evaluated on the resulting discretization. The iteration
is run with a cap of \(30{,}000\) steps and the stopping rule
\(|\lambda^{(k+1)}-\lambda^{(k)}|<10^{-13}\).
The resulting values are numerical approximations to \(\Lambda_m\),
not certified lower bounds for the continuous variational problem.
Their stability under grid refinement provides a numerical convergence
check: grids of \(1500\), \(3000\), and \(6000\) cells give values of
\(\Lambda_2\) and \(\Lambda_3\) agreeing to six decimal digits.
As a validation of the entire discretization, the same code
run with $m=1$ returns $\Lambda_1=1.000000$, in agreement with
Theorem~\ref{thm:linear-sharp-lil}. For the running example of
this paper, $\alpha=0.20$ and $d=0.10$, it yields the values in
Table~\ref{tab:lambda-main}, together with the closed-form lower bound
$L_{m,\alpha,d}$ of Lemma~\ref{lem:Lambda-bounds} and the upper bound
$(m!)^{-1/2}$.

\begin{table}[!htb]
\centering
\caption{The variational constant $\Lambda_m$ (numerically computed),
the closed-form lower bound $L_{m,\alpha,d}$, and the upper bound
$(m!)^{-1/2}$, for $\alpha=0.20$, $d=0.10$.}
\label{tab:lambda-main}
\begin{tabular}{ccccc}
\hline
$m$ & $L_{m,\alpha,d}$ & $\Lambda_m$ & $(m!)^{-1/2}$
    & accuracy of $L_{m,\alpha,d}$\\
\hline
1 & 0.9993 & 1.0000 & 1.0000 & $99.93\%$\\
2 & 0.6799 & 0.6809 & 0.7071 & $99.85\%$\\
3 & 0.3535 & 0.3543 & 0.4082 & $99.77\%$\\
\hline
\end{tabular}
\end{table}
The strict gap to the Hilbert--Schmidt bound is expected: equality would require $Q_1$ to be a rank-one
symmetric tensor, which fails for $m\ge2$; see Remark~\ref{rem:Lambda}.

The same pattern persists, and sharpens, at higher rank. Table~\ref{tab:lambda-rank} below
reports $\Lambda_m$ for $m=2,\dots,5$ with $\alpha$ chosen so that the
effective memory $\alpha m=0.45$ is held fixed and $d=0.05$, together
with the relative errors
$e_{\mathrm{low}}=(\Lambda_m-L_{m,\alpha,d})/\Lambda_m$ and
$e_{\mathrm{up}}=((m!)^{-1/2}-\Lambda_m)/\Lambda_m$.

\begin{table}[!htb]
\centering
\caption{Rank dependence of the bounds at fixed effective memory
$\alpha m=0.45$, $d=0.05$.}
\label{tab:lambda-rank}
\begin{tabular}{cccccc}
\hline
$m$ & $\alpha$ & $L_{m,\alpha,d}$ & $\Lambda_m$ & $(m!)^{-1/2}$
    & $e_{\mathrm{low}}\ /\ e_{\mathrm{up}}$\\
\hline
2 & 0.2250 & 0.6717 & 0.6720  & 0.7071 & $0.04\%\ /\ 5.2\%$\\
3 & 0.1500 & 0.3828 & 0.3830 & 0.4082 & $0.05\%\ /\ 6.6\%$\\
4 & 0.1125 & 0.1903 & 0.1904 & 0.2041 & $0.06\%\ /\ 7.2\%$\\
5 & 0.0900 & 0.0848 & 0.0849 & 0.0913 & $0.06\%\ /\ 7.6\%$\\
\hline
\end{tabular}
\end{table}

The lower bound is accurate to a fraction of one percent across all
ranks ($e_{\mathrm{low}}<0.25\%$ in Table~\ref{tab:lambda-main} and
$e_{\mathrm{low}}<0.1\%$ in Table~\ref{tab:lambda-rank}), whereas
the upper bound errs by several percent already at $m=2$ and the error
grows with $m$; for larger $\alpha$ the disparity is far more
pronounced, the Hilbert--Schmidt upper bound overestimating $\Lambda_m$
by tens of percent as the boundary $\alpha m\uparrow1$ is approached.
This asymmetry is structural, not numerical: the lower bound is the
value of the objective at an explicit trial function and therefore
reflects the shape of the limiting kernel $Q_1$, while the upper bound
is the Hilbert--Schmidt norm of $Q_1$, which is blind to how far $Q_1$
is from a rank-one tensor and hence cannot improve as that distance
grows.

Consequently, the true constant lies very close to the lower end of
the interval \\
 $[L_{m,\alpha,d},(m!)^{-1/2}]$, not near its midpoint,
and we recommend using the closed-form value $L_{m,\alpha,d}$ itself
as the working approximation to $\Lambda_m$; the upper bound should be
read only as a guaranteed ceiling. The residual underestimate of
$L_{m,\alpha,d}$ is well understood: the trial function $\xi_1$ used
in Lemma~\ref{lem:Lambda-bounds} corresponds to the uniform choice
$h\equiv1$ in the reduced problem on $[0,1]$, whereas the true
maximiser is slightly concentrated toward the endpoints of $[0,1]$; a
single step of the power iteration
$h\mapsto y^{-d}(\widetilde Kh)^{m-1}$ started from $h\equiv1$ removes
almost all of this discrepancy and returns a value indistinguishable
from $\Lambda_m$ at the displayed precision.
\end{remark}

\begin{remark}[Finite-sample interpretation of the normalized trajectories]
\label{rem:normalized-trajectories}
For each displayed realization, define
\(
    Y_n
    :=
    \frac{|S_n|}
    {A_n(2\log\log n)^{m/2}},
    \quad
    T_n:=\frac{S_n}{A_n}.
\)
Theorem~\ref{thm:linear-sharp-lil} for $m=1$ and
Corollary~\ref{cor:sharp-nonlinear-constant} for $m\ge2$ state that
\(
    \limsup_{n\to\infty}Y_n=\Lambda_m
    \qquad\text{a.s.}
\)
Thus the horizontal dashed line at $\Lambda_m$ represents the
asymptotic temporal limsup of a single trajectory. It is neither a
pointwise upper bound at finite $n$ nor the typical location of the
trajectory cloud.

Indeed, since $A_n^2=\operatorname{Var}(S_n)$, one has
$\mathbb E T_n^2=1$, while
\(
    \mathbb E Y_n
    =
    \frac{\mathbb E|T_n|}
    {(2\log\log n)^{m/2}}.
\)
Consequently, at each fixed $n$ the bulk of the normalized trajectories
may lie substantially below $\Lambda_m$, especially for $m=2$ and
$m=3$, where the iterated-logarithm factor is stronger. Occasional
finite-sample crossings of the dashed line do not contradict the LIL:
the theorem concerns the subsequential temporal behavior as
$n\to\infty$, rather than a deterministic bound valid at every $n$.

If $M$ independent trajectories are displayed without selecting them
according to their excursions, a reproducible pointwise reference for
the upper edge of the cloud is obtained as follows. Let
\(
    F_n(x):=\mathbb P(|T_n|\le x),
    \quad
    q_n(p):=F_n^{-1}(p).
\)
Then the median of the pointwise maximum
$\max_{1\le b\le M}Y_n^{(b)}$ is
\[
    u_M(n)
    =
    \frac{q_n(2^{-1/M})}
    {(2\log\log n)^{m/2}},
\]
because
\[
    \mathbb P\!\left(
       \max_{1\le b\le M}|T_n^{(b)}|\le x
    \right)
    =
    F_n(x)^M.
\]
For $m=1$ this curve is explicit:
\[
    u_M(n)
    =
    \frac{
      \Phi^{-1}\!\left((1+2^{-1/M})/2\right)
    }
    {\sqrt{2\log\log n}}.
\]
For $m\ge2$, the finite-$n$ quantile $q_n(2^{-1/M})$ may be estimated
from an independent Monte Carlo ensemble. 
\end{remark}

The LIL level $\Lambda_m$ is an asymptotic temporal limsup. Thus every
trajectory approaches $\Lambda_m$ from below infinitely often, but such
excursions may be extremely sparse and need not be visible on a finite
simulation horizon or on the displayed logarithmic grid.
 \FloatBarrier

\section{Conclusion}\label{sec:conclusion}
The paper establishes the theory of laws of the iterated logarithm for weighted sums of linear and nonlinear
functionals of long-memory stationary Gaussian sequences with regularly varying
deterministic weights.
In the regime
\(\alpha m<1\), \(2d+\alpha m<1\), we identify the natural almost-sure
scale \(A_n(2\log\log n)^{m/2}\) and prove a reduction principle showing
that all Wiener-chaos components above the leading Hermite rank are
negligible on this scale. We also obtain a general almost-sure upper
bound under the time-domain assumptions alone. In the linear case
\(m=1\), the resulting weighted LIL is sharp after variance
normalization, with the classical constant one.

For \(m\ge2\), the main result is a weighted functional LIL. We identify
the almost-sure cluster set in \(C[0,T]\) as
\[
\mathcal K_T
=
\left\{
t\mapsto
\langle Q_t,\xi^{\otimes m}\rangle_{\mathfrak H_{\mathbb R}^{\otimes m}}
:
\|\xi\|_{\mathfrak H_{\mathbb R}}\le1
\right\},
\]
and consequently obtain the sharp nonlinear LIL
\[
\limsup_{n\to\infty}
\frac{|S_n|}
     {A_n(2\log\log n)^{m/2}}
=
\Lambda_m
\qquad\text{a.s.},
\]
with the exact variational characterization
\[
\Lambda_m
=
\sup_{\|\xi\|_{\mathfrak H_{\mathbb R}}\le1}
\left|
\langle Q_1,\xi^{\otimes m}\rangle_{\mathfrak H_{\mathbb R}^{\otimes m}}
\right|.
\]
Moreover, we prove the strict two-sided estimate
\[
0<L_{m,\alpha,d}\le \Lambda_m < (m!)^{-1/2},
\]
where \(L_{m,\alpha,d}\) is given explicitly in closed form through beta
functions. Numerical evaluation of the variational problem shows that
this closed-form lower bound reproduces the computed values of
\(\Lambda_m\) to within a fraction of one percent over the parameter
range examined. In contrast with the linear constant, the nonlinear
constant depends on both the memory parameter \(\alpha\) and the weight
exponent \(d\).

Thus the deterministic weights affect the LIL at two distinct levels.
They enter the normalization through the variance scale and the exponent
\(H=1-d-\alpha m/2\), but for \(m\ge2\) they also remain in the limiting
kernel and change the
geometry of the functional cluster set and the value of the sharp
nonlinear LIL constant itself. 

A further contribution is methodological. For \(m\ge2\) the leading term
belongs to a fixed higher Wiener chaos, where Gaussian comparison
arguments no longer apply. The nonlinear lower cluster
inclusion is obtained pathwise by recurrence of dilated Gaussian
spectral coordinates, avoiding direct decorrelation estimates for sparse
exceedance events in a fixed higher Wiener chaos. Combined with
finite-rank approximation, uniform kernel convergence, Borell--Tsirelson
bounds, and fixed-chaos maximal estimates, this yields the full
functional cluster theorem.

The results also suggest further applications to weighted statistics and
learning procedures under long-range dependence. Whenever an accumulated
quantity can be represented in the form
\(S_n=\sum_{t=1}^n a_tG(X_t)\) with a long-memory Gaussian input,
regularly varying deterministic weights, and a nonlinear functional
\(G\), the theory developed here gives its exact almost-sure fluctuation
scale and, in the nonlinear regime, its sharp pathwise LIL constant.
Possible directions include weighted and online procedures under
long-range dependence, calibration of nonlinear long-memory statistics,
and power-law weighted aggregation. These directions are left for future
work.

\section{Appendix}
\begin{lemma}[Two-dimensional regularly varying Riemann sum]
\label{lem:two-dimensional-rv-riemann}
Let \(\beta\in(0,1)\), \(2d+\beta<1\), and let \(\ell\) and \(L\) be eventually positive
slowly varying functions. Then
\[
\sum_{\substack{1\le s,t\le n\\s\ne t}}
s^{-d}\ell(s)t^{-d}\ell(t)|t-s|^{-\beta}L(|t-s|)
\sim
n^{2-2d-\beta}\ell(n)^2L(n)I_{\beta,d},
\]
where
\(
I_{\beta,d}:=
\int_0^1\int_0^1x^{-d}y^{-d}|x-y|^{-\beta}\,dx\,dy<\infty .
\)
\end{lemma}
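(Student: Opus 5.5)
The plan is to write the sum as a Riemann sum for the integral $I_{\beta,d}$ and handle the singular regions (near the diagonal, near the axes) separately with Potter bounds. First, finiteness of $I_{\beta,d}$: near the diagonal, $|x-y|^{-\beta}$ is integrable since $\beta<1$. Near the axes, $x^{-d}y^{-d}$ is integrable when $d<1$, and $2d+\beta<1$ gives $d<1/2$. At the corner, the combined homogeneity degree is $-2d-\beta>-2$. If $d<0$ the factor $x^{-d}$ is bounded, so nothing more is needed.

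Normalize by setting $x=s/n$, $y=t/n$ and dividing the sum by $n^{2-2d-\beta}\ell(n)^2L(n)$. The summand becomes
\[
\frac1{n^2}\,x^{-d}y^{-d}|x-y|^{-\beta}\,
\frac{\ell(nx)}{\ell(n)}\frac{\ell(ny)}{\ell(n)}\frac{L(n|x-y|)}{L(n)}.
\]
Fix $\delta\in(0,1/4)$ and let $D_\delta=\{(x,y)\in(0,1]^2: x\ge\delta,\ y\ge\delta,\ |x-y|\ge\delta\}$. On $D_\delta$ the uniform convergence theorem for slowly varying functions makes the three ratios tend to $1$ uniformly. The function $x^{-d}y^{-d}|x-y|^{-\beta}$ is continuous on the compact closure of $D_\delta$, so the restricted sum converges to $\int_{D_\delta}x^{-d}y^{-d}|x-y|^{-\beta}\,dx\,dy$.

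The remaining region $D_\delta^c$ is the main obstacle, and I will treat it with a uniform integrable majorant. Fix $\eta>0$ so small that $d+\eta<1$ and $2d+\beta+3\eta<1$. For $s,t,|t-s|\ge1$, Potter's bound, with the finitely many small arguments absorbed into the constant as in the Remark on Potter bounds, gives
\[
\frac{\ell(nx)}{\ell(n)}\le C\max(x^{\eta},x^{-\eta}),
\]
and likewise for $y$ and for $L(n|x-y|)/L(n)$. This bounds each summand by $\frac{C}{n^2}\,g(x,y)$ with
\[
g(x,y)=x^{-d-\eta\,\mathrm{sgn}}\,y^{-d-\eta\,\mathrm{sgn}}\,|x-y|^{-\beta-\eta},
\]
where the exponents are shifted by $\pm\eta$ in the unfavorable direction. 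Since $x,y,|x-y|\ge 1/n$ on the lattice, each lattice cell can be compared with $g$ integrated over a neighbouring cell, because $g$ is comparable on adjacent cells away from the singular sets. The case $d<0$ is handled the same way, since then $x^{-d}$ is bounded.

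By the choice of $\eta$ the majorant $g$ is integrable on $(0,1]^2$. Hence
\[
\limsup_{n\to\infty} n^{-2}\sum_{(s/n,t/n)\in D_\delta^c} g \le C\int_{D_\delta^c} g ,
\]
and the right-hand side tends to $0$ as $\delta\to0$. The same holds for $\int_{D_\delta^c}x^{-d}y^{-d}|x-y|^{-\beta}$. Letting $n\to\infty$ and then $\delta\downarrow0$ gives the claimed asymptotic equivalence.

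The delicate point is the lattice-to-integral comparison near the diagonal and near the axes. There I would use a monotonicity argument: along rows, $|t-s|^{-\beta-\eta}$ is dominated by its integral over the adjacent unit interval. An alternative is to bound the diagonal strip directly by
\[
\sum_{s\le n} s^{-2d\pm2\eta}\sum_{1\le h\le\delta n}h^{-\beta-\eta},
\]
which is $O(\delta^{1-\beta-\eta})\,n^{2-2d-\beta}$ after normalization. The axis strips are bounded by a similar one-dimensional Karamata estimate.
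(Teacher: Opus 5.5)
Your proposal is correct and follows essentially the same route as the paper. Both use uniform convergence of the slowly varying ratios on $D_\delta$, a Potter majorant $g$ integrable on $(0,1)^2$ for the complement, and a monotonicity-based comparison of lattice values with cell integrals. The paper makes your ``delicate point'' precise with the cell $C_{s,n}\times C_{t,n}$, $C_{s,n}=((s-1)/n,s/n]$, on which $x\le s/n$, $y\le t/n$ and $|x-y|\le 2|t-s|/n$; this gives $n^{-2}g(s/n,t/n)\le 2^{\beta+\eta}\int_{C_{s,n}\times C_{t,n}}g$, so the complement sum is dominated by $\int_{D_{2\delta}^c}g$, and the exponent $(d+\eta)_+$ covers $d<0$. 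You should also add $\beta+\eta<1$ to your choice of $\eta$, since $2d+\beta+3\eta<1$ does not imply it when $d<0$.
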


\begin{proof}
The finiteness of \(I_{\beta,d}\) follows from \(\beta<1\), \(d<1\), and
\(2d+\beta<1\). Write the normalized sum as
\[
\frac1{n^2}
\sum_{\substack{1\le s,t\le n\\s\ne t}}
\left(\frac sn\right)^{-d}
\left(\frac tn\right)^{-d}
\left|\frac{t-s}{n}\right|^{-\beta}
\frac{\ell(s)}{\ell(n)}
\frac{\ell(t)}{\ell(n)}
\frac{L(|t-s|)}{L(n)} .
\]
On every set
\(
D_\varepsilon:=\{(x,y)\in[0,1]^2:x,y\ge\varepsilon,\ |x-y|\ge\varepsilon\},
\)
the uniform convergence theorem for slowly varying functions gives uniform convergence of the
slowly varying ratios to \(1\). Hence the contribution from \(D_\varepsilon\) converges to
\(\int_{D_\varepsilon}x^{-d}y^{-d}|x-y|^{-\beta}\,dx\,dy\).

It remains to control the complement of \(D_\varepsilon\). Choose \(\eta>0\) so small that
\(\beta+\eta<1\) and \(2d+\beta+4\eta<1\). Potter's bound, with \(C_\eta\) enlarged to cover the cases where
\(s\), \(t\), or \(|t-s|\) is below the Potter threshold, gives, uniformly for
\(1\le s,t\le n\), \(s\ne t\),
\[
\frac{\ell(s)}{\ell(n)}\frac{\ell(t)}{\ell(n)}\frac{L(|t-s|)}{L(n)}
\le
C_\eta
\left(\frac sn\right)^{-\eta}
\left(\frac tn\right)^{-\eta}
\left|\frac{t-s}{n}\right|^{-\eta}.
\]
Put
\[
a:=(d+\eta)_+,\qquad b:=\beta+\eta,\qquad
g(x,y):=x^{-a}y^{-a}|x-y|^{-b}.
\]
Then \(b<1\) and \(2a+b<1\), so \(g\in L^1((0,1)^2)\). Moreover,
\(x^{-d-\eta}\le x^{-a}\) for \(0<x\le1\). Hence the preceding
Potter bound reduces the required estimate to the corresponding
Riemann sums of \(g\).

For \(1\le s\le n\), let
\(C_{s,n}:=((s-1)/n,s/n]\). If \(s\ne t\) and
\((x,y)\in C_{s,n}\times C_{t,n}\), then
\[
x^{-a}\ge(s/n)^{-a},\qquad
y^{-a}\ge(t/n)^{-a},\qquad
|x-y|\le\frac{|t-s|+1}{n}\le2\frac{|t-s|}{n}.
\]
Consequently,
\[
\frac1{n^2}g(s/n,t/n)
\le
2^b\int_{C_{s,n}\times C_{t,n}}g(x,y)\,dx\,dy.
\]
For \(n\ge\varepsilon^{-1}\), the union of the cells corresponding
to \((s/n,t/n)\in D_\varepsilon^c\) is contained in
\(D_{2\varepsilon}^c\). Therefore
\[
\limsup_{n\to\infty}
\frac1{n^2}
\sum_{\substack{s\ne t\\(s/n,t/n)\in D_\varepsilon^c}}
g(s/n,t/n)
\le
2^b\int_{D_{2\varepsilon}^c}g(x,y)\,dx\,dy
\longrightarrow0
\]
as \(\varepsilon\downarrow0\). Combining this estimate with the
convergence on \(D_\varepsilon\) proves the lemma.\end{proof}

\begin{lemma}[Variance gain for the higher-rank remainder]
\label{lem:higher-rank-variance-gain}
Assume \((\ref{A1})\), \((\ref{A3})\), and \((\ref{A4})\), \(\alpha m<1\), and \(2d+\alpha m<1\). Let
\(
    \widetilde G(x):=G(x)-\frac{J_m}{m!}H_m(x).
\)
Then \(\widetilde G\) has Hermite rank at least \(m+1\). Moreover, there exist
\(C<\infty\), \(\delta>0\), and \(\gamma\ge1\) such that, for every sufficiently
large \(N\) and every interval \(I\subset\{N+1,\ldots,2N\}\),
\[
    \operatorname{Var}\left(\sum_{t\in I}a_t\widetilde G(X_t)\right)
    \le
    C v_N^2N^{-2\delta}\left(\frac{|I|}{N}\right)^\gamma .
\]
In particular, after decreasing \(\delta\) if necessary,
\[
    \left\|\sum_{t=1}^n a_t\widetilde G(X_t)\right\|_2
    \le C v_n n^{-\delta},
    \qquad n\ge3,
\]
and hence
\(
    \operatorname{Var}\bigl(\sum_{t=1}^n a_t\widetilde G(X_t)\bigr)
    =o(v_n^2).
\)
\end{lemma}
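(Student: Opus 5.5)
The plan is to expand $\widetilde G=\sum_{q\ge m+1}\frac{J_q}{q!}H_q$ and use the diagram formula $\operatorname{Cov}(H_q(X_s),H_{q'}(X_t))=\delta_{qq'}q!\rho(t-s)^q$. This gives
\[
\operatorname{Var}\Bigl(\sum_{t\in I}a_t\widetilde G(X_t)\Bigr)
=\sum_{s,t\in I}a_sa_t\sum_{q\ge m+1}\frac{J_q^2}{q!}\rho(t-s)^q .
\]
Since $|\rho|\le1$, we have $|\rho(h)|^q\le|\rho(h)|^{m+1}$ for $q\ge m+1$, and $\sum_q J_q^2/q!\le\|G\|_2^2$. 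So the whole task reduces to bounding
\[
V(I):=\sum_{s,t\in I}|a_s||a_t|\,|\rho(t-s)|^{m+1}.
\]
The Hermite-rank claim itself is immediate from the expansion.

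First choose $\eta>0$ small. Potter's bound gives $|\rho(h)|^{m+1}\le C_\eta(1+h)^{-\beta'}$, where $\beta':=\min\{\alpha(m+1),1+\eta\}-\eta$. For $s,t\in I\subset\{N+1,\dots,2N\}$, we also have $|a_s|\le CN^{-d}\ell_a(N)$ uniformly. Hence
\[
V(I)\le CN^{-2d}\ell_a(N)^2\,|I|\sum_{|h|<|I|}(1+h)^{-\beta'} .
\]
I will split into cases.

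\emph{Case $\alpha(m+1)<1$.} The inner sum is $\le C|I|^{1-\beta'}$, where now $\beta'=\alpha(m+1)-\eta$. This gives $V(I)\le CN^{-2d}\ell_a(N)^2|I|^{2-\alpha(m+1)+\eta}$. I write $|I|^{2-\alpha(m+1)+\eta}=N^{2-\alpha(m+1)+\eta}(|I|/N)^{\gamma}$ with $\gamma=2-\alpha(m+1)+\eta>1$. Comparing with $v_N^2=N^{2-2d-\alpha m}\ell_a(N)^2L_0(N)^m$, the ratio is $N^{-\alpha+\eta}L_0(N)^{-m}\le N^{-\alpha+2\eta}$ by Potter. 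So we may take $2\delta=\alpha-2\eta>0$.

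\emph{Case $\alpha(m+1)\ge1$.} The inner sum is $\le C|I|^{\eta'}$ for an arbitrarily small $\eta'>0$; at the boundary this absorbs the log. This gives $V(I)\le CN^{-2d}\ell_a(N)^2 N^{1+\eta'}(|I|/N)$, with $\gamma=1$. The ratio to $v_N^2$ is $N^{-(1-\alpha m)+\eta'}L_0(N)^{-m}$. Its exponent is negative because $\alpha m<1$, giving $\delta>0$.

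The delicate point is that the bound must be uniform in the interval with the factor $(|I|/N)^\gamma$. This works because $|I|\le N$ and $\gamma\ge1$, and both cases already produce that form.

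For the global bound on $\|\sum_{t\le n}a_t\widetilde G(X_t)\|_2$, decompose $\{1,\dots,n\}$ into dyadic blocks $(2^{j},2^{j+1}]\cap[1,n]$ together with finitely many initial terms. By Minkowski, the norm is at most
\[
\sum_j\bigl(Cv_{2^j}^2 2^{-2j\delta}\bigr)^{1/2}\le C\sum_{j\le\log_2 n}v_{2^j}2^{-j\delta}.
\]
Since $v$ is regularly varying with index $H>0$, Potter gives $v_{2^j}\le Cv_n(2^j/n)^{H-\eta}$. The resulting geometric sum is therefore $\le Cv_n n^{-\delta}$; we shrink $\delta$ if $H-\eta$ is smaller than $\delta$, keeping $\delta<H$. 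This gives $o(v_n^2)$ for the variance.

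The main obstacle is the uniform treatment of negative $d$ and the boundary $\alpha(m+1)=1$, where Karamata-type sums must be handled with Potter slack rather than exact asymptotics. Allowing arbitrary small exponent losses $\eta,\eta'$, absorbed by the strict gaps $\alpha>0$ and $1-\alpha m>0$, handles both.
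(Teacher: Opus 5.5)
Your proposal is correct and follows essentially the same route as the paper. Both arguments expand in Hermite polynomials and bound the covariance by $|\rho(h)|^{m+1}$, apply Potter bounds in two cases according to whether $\alpha(m+1)$ lies below or above $1$ (giving $2\delta<\alpha$ or $2\delta<1-\alpha m$), and finish with the $L^2$ triangle inequality over dyadic blocks together with Potter's bound for $v_n$. The only differences are cosmetic: you bound all lags at once via $|\rho|\le1$, and you take $\gamma=1$ instead of $1+\eta$ in the second case, which is fine since the statement only requires $\gamma\ge1$.
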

\begin{proof}
Since
\(
    \widetilde G(x)=\sum_{q=m+1}^{\infty}\frac{J_q}{q!}H_q(x),
\)
we have, for \(h\ge0\),
\[
    \operatorname{Cov}\bigl(\widetilde G(X_0),\widetilde G(X_h)\bigr)
    =
    \sum_{q=m+1}^{\infty}\frac{J_q^2}{q!}\rho(h)^q .
\]
Because \(\rho(h)\to0\), for all sufficiently large \(h\),
\[
    \left|\operatorname{Cov}\bigl(\widetilde G(X_0),\widetilde G(X_h)\bigr)\right|
    \le C|\rho(h)|^{m+1}.
\]
Thus, by regular variation of \(\rho\) and Potter's bound, for every \(\eta>0\), after
increasing \(C_\eta\) to cover bounded lags,
\[
    \left|\operatorname{Cov}\bigl(\widetilde G(X_0),\widetilde G(X_h)\bigr)\right|
    \le C_\eta(1+h)^{-\alpha(m+1)+\eta},
    \qquad h\ge0.
\]

Let \(I\subset\{N+1,\ldots,2N\}\) and put \(\ell:=|I|\). Regular variation of the weights
gives \(|a_t|\le C N^{-d}\ell_a(N)\) for \(t\in I\). Hence
\[
    \operatorname{Var}\left(\sum_{t\in I}a_t\widetilde G(X_t)\right)
    \le C N^{-2d}\ell_a(N)^2
    \sum_{u,v\in I}(1+|u-v|)^{-\alpha(m+1)+\eta}.
\]
Choose \(\eta>0\) so small that \(2\eta<\min\{\alpha,1-\alpha m\}\). If
\(\alpha(m+1)-\eta<1\), the double sum is at most
\(C\ell^{2-\alpha(m+1)+\eta}\). If \(\alpha(m+1)-\eta\ge1\), it is at most
\(C\ell\log(2\ell)\le C_\eta\ell^{1+\eta}\).

By definition, \(v_N^2=N^{2-2d-\alpha m}\ell_a(N)^2L_0(N)^m\). In the first case set
\(\gamma:=2-\alpha(m+1)+\eta\) and choose
\(0<\delta<(\alpha-2\eta)/2\). Since \(L_0(N)^{-m}\le C_\eta N^\eta\),
\[
    N^{-2d}\ell_a(N)^2\ell^\gamma
    \le C v_N^2N^{-\alpha+2\eta}\left(\frac{\ell}{N}\right)^\gamma
    \le C v_N^2N^{-2\delta}\left(\frac{\ell}{N}\right)^\gamma.
\]
In the second case set \(\gamma:=1+\eta\) and choose
\(0<\delta<(1-\alpha m-2\eta)/2\). Then
\[
    N^{-2d}\ell_a(N)^2\ell^\gamma
    \le C v_N^2N^{-(1-\alpha m)+2\eta}\left(\frac{\ell}{N}\right)^\gamma
    \le C v_N^2N^{-2\delta}\left(\frac{\ell}{N}\right)^\gamma.
\]
This proves the interval estimate.

It remains to pass to the global estimate. We use the triangle inequality in \(L^2\), not a
sum of variances, because the dyadic blocks are dependent. Decrease \(\delta\), if necessary,
so that \(0<\delta<H\). Choose \(j_0\) such that the interval estimate holds for
\(N=2^j\), \(j\ge j_0\). If \(2^K\le n<2^{K+1}\), then
\[
    \|R_n\|_2
    \le \|R_{2^{j_0}}\|_2
      +\sum_{j=j_0}^{K-1}
       \left\|\sum_{t=2^j+1}^{2^{j+1}}a_t\widetilde G(X_t)\right\|_2
      +\left\|\sum_{t=2^K+1}^{n}a_t\widetilde G(X_t)\right\|_2 
    \le C_0+C\sum_{j=j_0}^{K}v_{2^j}2^{-j\delta}.
\]
Since \(v_n\) is regularly varying with index \(H>0\), Potter's bound gives, for every
\(\varepsilon>0\),
\[
    \frac{v_{2^j}}{v_{2^K}}
    \le C_\varepsilon 2^{-(K-j)(H-\varepsilon)},
    \qquad j_0\le j\le K.
\]
Choose \(\varepsilon>0\) with \(H-\varepsilon-\delta>0\). Then
\[
    \sum_{j=j_0}^{K}v_{2^j}2^{-j\delta}
    \le C v_{2^K}2^{-K\delta}.
\]
The fixed term \(C_0\) is absorbed because \(v_{2^K}2^{-K\delta}\to\infty\). Since
\(v_{2^K}\asymp v_n\) and \(2^{-K\delta}\asymp n^{-\delta}\),
\[
    \|R_n\|_2\le C v_n n^{-\delta}.
\]
Consequently, \(\operatorname{Var}(R_n)\le C v_n^2n^{-2\delta}=o(v_n^2)\).
\end{proof}

\begin{proof}[Proof of Proposition~\ref{prop:exact-variance}]
We first treat the leading chaos. Since \\
\(
    \operatorname{Cov}(H_m(X_s),H_m(X_t))
=
m!\bigl(\rho(|t-s|)\bigr)^m,
\)
we have
\(
    \operatorname{Var}(S_{n,m})
=
\frac{J_m^2}{m!}\sum_{s,t\le n}a_sa_t
\bigl(\rho(|t-s|)\bigr)^m.
\)
The diagonal and finitely many fixed-lag terms are
\(
    O(\sum_{t\le n}a_t^2)=O(n^{1-2d}\ell_a(n)^2),
\)
which is \(o(v_n^2)\) because \(\alpha m<1\). 
For all sufficiently large \(h\), put
\(
    L_\rho(h):=c^{-m}h^{\alpha m}\rho(h)^m.
\)
Then \(L_\rho\) is eventually positive and slowly varying, and
\(L_\rho(h)\sim L_0(h)^m\). The finitely many remaining lags are negligible.
Applying Lemma~\ref{lem:two-dimensional-rv-riemann} with
\(\beta=\alpha m\), \(\ell=\ell_a\), and \(L=L_\rho\), we obtain
\[
    \operatorname{Var}(S_{n,m})
    \sim
    \frac{J_m^2c^m}{m!}
    n^{2-2d-\alpha m}\ell_a(n)^2L_0(n)^m 
  \times
    \int_0^1\int_0^1x^{-d}y^{-d}|x-y|^{-\alpha m}\,dx\,dy
    =\sigma_m^2v_n^2.
\]

Write \(S_n=S_{n,m}+R_n\). Different Wiener chaoses are orthogonal, so
\(
    A_n^2=\operatorname{Var}(S_{n,m})+\operatorname{Var}(R_n).
\)
By Lemma~\ref{lem:higher-rank-variance-gain}, \(\operatorname{Var}(R_n)=o(v_n^2)\).
Therefore \(A_n^2\sim\operatorname{Var}(S_{n,m})\sim\sigma_m^2v_n^2\). Hence
\(A_n\sim\sigma_m v_n\), and \(A_n\) is regularly varying with index \(H\).
\end{proof}

\begin{lemma}[Maximal inequality on dyadic blocks]
\label{lem:dyadic-maximal}
Let $N\ge2$, and let $Z_{N+1},\ldots,Z_{2N}$ be centered square-integrable random variables.
For an interval $I\subset\{N+1,\ldots,2N\}$ write
\(
    Z(I):=\sum_{t\in I}Z_t .
\)
Assume that there exist constants $B_N>0$ and $\gamma\ge1$ such that, for every interval
$I\subset\{N+1,\ldots,2N\}$,
\[
    \mathbb E|Z(I)|^2
    \le
    B_N^2\left(\frac{|I|}{N}\right)^\gamma .
\]
Then there exists a universal constant $C<\infty$ such that
\[
    \mathbb E
    \max_{N\le n\le2N}
    \left|
        \sum_{t=N+1}^{n}Z_t
    \right|^2
    \le
    C(\log N)^2B_N^2 .
\]
\end{lemma}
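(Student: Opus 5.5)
This is a Rademacher--Menshov type chaining argument using dyadic subintervals. First, after relabeling, work on the index set $\{1,\ldots,N\}$, writing $Y_j:=Z_{N+j}$. Choose $K:=\lceil\log_2 N\rceil$ and set $M:=2^K$, padding with $Y_j:=0$ for $N<j\le M$. The padding is harmless: any interval in the padded range meets $\{1,\ldots,N\}$ in an interval of no larger length, so the variance hypothesis persists. For levels $0\le k\le K$, the level-$k$ dyadic intervals are
\[
D_{k,i}:=\{(i-1)2^{K-k}+1,\ldots,i2^{K-k}\},\qquad 1\le i\le 2^k,
\]
and each has length at most $2^{K-k}\le 2N2^{-k}$.

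The deterministic step is that every initial segment $\{1,\ldots,n\}$ with $n\le M$ is a disjoint union of at most one dyadic interval per level; these are read off the binary expansion of $n$. Hence
\[
\Bigl|\sum_{j\le n}Y_j\Bigr|\le\sum_{k=0}^{K}\max_{i}|Z(D_{k,i})|.
\]
By Cauchy--Schwarz, and bounding each maximum by the sum over $i$,
\[
\max_n\Bigl|\sum_{j\le n}Y_j\Bigr|^2\le (K+1)\sum_{k=0}^{K}\sum_{i=1}^{2^k}|Z(D_{k,i})|^2 .
\]

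Now take expectations and apply the hypothesis with $|D_{k,i}|/N\le 2\cdot2^{-k}$ to get
\[
\mathbb E\max_n\Bigl|\sum_{j\le n}Y_j\Bigr|^2\le (K+1)B_N^2\sum_{k=0}^{K}2^k\bigl(2\cdot 2^{-k}\bigr)^{\gamma}\le 2^{\gamma}(K+1)^2B_N^2,
\]
where $\gamma\ge1$ ensures $2^k2^{-k\gamma}\le1$. Finally, $K+1\le C\log N$ for $N\ge2$. The $n=N$ endpoint gives the empty sum, which is trivial.

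One point needs care: the stated constant is claimed universal, while the bound above carries the factor $2^\gamma$. To remove it I would instead use $|D_{k,i}|/N\le 1$ at the coarsest levels, where $2^{K-k}\ge N$; there are at most two such levels. At the remaining levels, $2^{K-k}<N$ gives $(2^{K-k}/N)^\gamma\le 2^{K-k}/N$, so $\sum_i(|D_{k,i}|/N)^\gamma\le M/N\le2$. This makes $C$ independent of $\gamma$. Apart from this point the argument is routine, and I expect no real obstacle.
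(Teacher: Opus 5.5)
Your argument is correct and is essentially the paper's proof. Both decompose each initial segment into at most one dyadic block per level via the binary expansion, apply Cauchy--Schwarz over the $O(\log N)$ levels, bound each level's maximum by the sum of squares, and use $\gamma\ge1$ so that each level contributes $O(B_N^2)$.

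The only difference is that the paper uses blocks of length $2^\ell\le N$, $\ell\le\lfloor\log_2N\rfloor$, inside $\{N+1,\ldots,2N\}$, so it needs no padding and never encounters the $2^\gamma$ factor. In your fix, the coarse-level bound should be stated as $|D_{k,i}\cap\{1,\ldots,N\}|/N\le1$ rather than $|D_{k,i}|/N\le1$; only $k=0$ is affected, and your padding remark already justifies the corrected statement.
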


\begin{proof}
This standard dyadic maximal estimate is included for completeness.

Let $L:=\lfloor\log_2 N\rfloor$.
For each $n\in[N,2N]$, the interval
$\{N+1,\ldots,n\}$ can be decomposed into at most $L+1$ dyadic subintervals. Hence
\[
    \max_{N\le n\le2N}
    \left|
        \sum_{t=N+1}^{n}Z_t
    \right|
    \le
    \sum_{\ell=0}^{L} M_\ell ,
\]
where $M_\ell$ denotes the maximum of $|Z(I)|$ over all dyadic intervals
$I\subset\{N+1,\ldots,2N\}$ of length $2^\ell$.

By Cauchy's inequality,
\(
    \left(\sum_{\ell=0}^{L}M_\ell\right)^2
    \le
    (L+1)\sum_{\ell=0}^{L}M_\ell^2 .
\)
Therefore
\[
    \mathbb E
    \max_{N\le n\le2N}
    \left|
        \sum_{t=N+1}^{n}Z_t
    \right|^2
    \le
    (L+1)\sum_{\ell=0}^{L}\mathbb E M_\ell^2 .
\]
For fixed $\ell$, there are at most $2N/2^\ell$ dyadic intervals of length $2^\ell$. Hence,
using the interval-variance assumption,
\[
    \mathbb E M_\ell^2
    \le
    \sum_{I:\ |I|=2^\ell}\mathbb E|Z(I)|^2
    \le
    \frac{2N}{2^\ell}
    B_N^2
    \left(\frac{2^\ell}{N}\right)^\gamma .
\]
Since $\gamma\ge1$ and $2^\ell\le N$,
\(
    \frac{2N}{2^\ell}
    \left(\frac{2^\ell}{N}\right)^\gamma
    =
    2\left(\frac{2^\ell}{N}\right)^{\gamma-1}
    \le 2 .
\)
Thus
\(
    \mathbb E M_\ell^2\le 2B_N^2
\)
for every $\ell$. Consequently,
\[
    \mathbb E
    \max_{N\le n\le2N}
    \left|
        \sum_{t=N+1}^{n}Z_t
    \right|^2
    \le
    2(L+1)^2B_N^2
    \le
    C(\log N)^2B_N^2 .
\]
This proves the lemma.
\end{proof}

\begin{lemma}[Local variance bound for leading-chaos increments]
\label{lem:leading-chaos-local-variance}
Let
\[
    S_{n,m}:=\frac{J_m}{m!}\sum_{t=1}^n a_tH_m(X_t).
\]
Assume \((\ref{A1})\), \((\ref{A3})\), and \((\ref{A4})\). Let \(m\ge1\) be fixed and suppose that
\(\alpha m<1\) and \(2d+\alpha m<1\).

Then for every sufficiently small \(\eta>0\) there exists \(C_\eta<\infty\) such that, 
for all sufficiently large \(N\) and all intervals \(I\subset\{N+1,\ldots,2N\}\),
\[
    \operatorname{Var}\left(
        \frac{J_m}{m!}\sum_{t\in I}a_tH_m(X_t)
    \right)
    \le
    C_\eta A_N^2
    \left(\frac{|I|}{N}\right)^{2-\alpha m-\eta}.
\]
In particular, \(\eta>0\) may be chosen so small that \(2-\alpha m-\eta>1\).
\end{lemma}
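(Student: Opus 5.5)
We plan to bound the variance directly through the covariance identity. By \(\operatorname{Cov}(H_m(X_s),H_m(X_t))=m!\,\rho(|t-s|)^m\),
\[
\operatorname{Var}\Bigl(\frac{J_m}{m!}\sum_{t\in I}a_tH_m(X_t)\Bigr)
=\frac{J_m^2}{m!}\sum_{s,t\in I}a_sa_t\rho(|t-s|)^m
\le C\sum_{s,t\in I}|a_s||a_t|\,|\rho(|t-s|)|^m .
\]
Since \(I\subset\{N+1,\ldots,2N\}\), the uniform convergence theorem for regularly varying sequences gives \(|a_t|\le C N^{-d}\ell_a(N)\) for all \(t\in I\) and all large \(N\). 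This holds for either sign of \(d\), because \(t/N\in[1,2]\).

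Set \(\ell:=|I|\). By the Potter convention stated after Proposition~\ref{prop:exact-variance}, \(|\rho(h)|^m\le C_\eta(1+h)^{-\alpha m}L_0\)-type bounds hold. The plan is to keep the slowly varying factor relative to \(N\) rather than absorb it crudely. For \(\eta\) small with \(\alpha m+\eta<1\), Potter's bound gives, for \(0\le h\le N\),
\[
|\rho(h)|^m\le C_\eta\,L_0(N)^m\,N^{-\alpha m}\Bigl(\frac{1+h}{N}\Bigr)^{-\alpha m-\eta}.
\]
Here \(L_0(h)^m/L_0(N)^m\le C_\eta (h/N)^{-\eta}\) for \(h\le N\), and bounded lags are absorbed into the constant. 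Summing over \(s,t\in I\), which has at most \(2\ell\) pairs per lag \(h<\ell\), yields
\[
\sum_{s,t\in I}|\rho(|t-s|)|^m\le C_\eta L_0(N)^mN^{-\alpha m+\alpha m+\eta}\sum_{h=0}^{\ell}\ell(1+h)^{-\alpha m-\eta}
\le C_\eta L_0(N)^m N^{\eta}\,\ell^{2-\alpha m-\eta}.
\]
This uses \(\alpha m+\eta<1\).

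Combining, the variance is at most \(C_\eta N^{-2d}\ell_a(N)^2L_0(N)^m N^{\eta}\ell^{2-\alpha m-\eta}=C_\eta v_N^2(\ell/N)^{2-\alpha m-\eta}\). The powers of \(N\) check out: \(N^{-2d+\eta}\ell^{2-\alpha m-\eta}=N^{2-2d-\alpha m}(\ell/N)^{2-\alpha m-\eta}\). Finally, Proposition~\ref{prop:exact-variance} gives \(A_N\sim\sigma_mv_N\) with \(\sigma_m>0\), so \(v_N^2\le CA_N^2\) for large \(N\), which proves the bound. Since \(\alpha m<1\), any \(\eta<1-\alpha m\) makes the exponent \(2-\alpha m-\eta>1\).

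The main obstacle is the bookkeeping of slowly varying factors. A naive Potter bound \((1+h)^{-\alpha m+\eta}\) costs a factor \(L_0(N)^{-m}\lesssim N^{\eta}\), and this cannot be absorbed into a fixed power of \(|I|/N\) uniformly over short intervals. The plan avoids this by writing Potter's bound relative to \(L_0(N)\). The \(\eta\) loss is then placed in the exponent of \(h/N\), and since \(h\le\ell\le N\) this only lowers the exponent to \(2-\alpha m-\eta\).
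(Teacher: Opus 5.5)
Your proof is correct and follows essentially the paper's route: bound the weights uniformly on $\{N+1,\ldots,2N\}$, reduce to $\sum_{s,t\in I}|\rho(t-s)|^m$, use Potter's bound to move the $\eta$-loss into the exponent of $|I|/N$, sum the lags using $\alpha m+\eta<1$, and conclude with $v_N^2\le CA_N^2$. The only difference is bookkeeping. The paper first obtains $L^{2-\alpha m}L_0(L)^m$ and then converts $L_0(L)$ to $L_0(N)$ with a second Potter step. You apply Potter once relative to $L_0(N)$, which also covers the diagonal and bounded lags through the lower bound $L_0(N)^mN^\eta\ge c$, so you do not need the paper's separate absorption of the diagonal term.
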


\begin{proof}
Only the fixed-chaos covariance identity
\(
    \operatorname{Cov}(H_m(X_s),H_m(X_t))=m!(\rho(t-s))^m
\)
is used; the Hermite-rank assumption on \(G\) is not needed in this lemma.

Let \(L:=|I|\). Since \(I\subset\{N+1,\ldots,2N\}\), regular variation of the weights gives
\[
    |a_t|\le C N^{-d}\ell_a(N),
    \qquad t\in I.
\]
Therefore
\[
\operatorname{Var}\left(
    \frac{J_m}{m!}\sum_{t\in I}a_tH_m(X_t)
\right)
\le
C N^{-2d}\ell_a(N)^2
\sum_{s,t\in I}|\rho(t-s)|^m.
\]

By the usual diagonal decomposition,
\(
    \sum_{s,t\in I}|\rho(|t-s|)|^m
    \le
    C\sum_{h=0}^{L-1}(L-h)|\rho(h)|^m.
\)
Fix \(\eta>0\) so small that \(\alpha m+\eta<1\). 

By \ref{A1} and Potter's bound, uniformly for \(1\le h\le L\),
\(
    |\rho(h)|^m
    \le
    C_\eta h^{-\alpha m-\eta}L^\eta L_0(L)^m.
\)

Hence
\(
\sum_{s,t\in I}|\rho(|t-s|)|^m
\le
C_\eta L^\eta L_0(L)^m
\sum_{h=1}^{L-1}(L-h)h^{-\alpha m-\eta}
+
CL.
\)
Since \(\alpha m+\eta<1\), Karamata's theorem yields
\[
    \sum_{h=1}^{L-1}(L-h)h^{-\alpha m-\eta}
    \le
    C_\eta L^{2-\alpha m-\eta}.
\]
The diagonal term \(CL\) is absorbed into
\(C_\eta L^{2-\alpha m}L_0(L)^m\), since
\(L^{1-\alpha m}L_0(L)^m\to\infty\) along powers and, by Potter bounds,
a slowly varying factor cannot compensate the positive power \(L^{1-\alpha m}\).
The finitely many small values of \(L\) are absorbed into the constant. Thus
\[
    \sum_{s,t\in I}|\rho(|t-s|)|^m
    \le
    C_\eta L^{2-\alpha m}L_0(L)^m.
\]

Since \(1\le L\le N\), Potter's bound gives
\(
    L_0(L)^m
    \le
    C_\eta\left(\frac{N}{L}\right)^\eta L_0(N)^m.
\)
Consequently,
\[
    \sum_{s,t\in I}|\rho(t-s)|^m
    \le
    C_\eta L^{2-\alpha m-\eta}N^\eta L_0(N)^m.
\]
Therefore
\[
\begin{aligned}
    \operatorname{Var}\left(
        \frac{J_m}{m!}\sum_{t\in I}a_tH_m(X_t)
    \right)
    &\le
    C_\eta N^{-2d}\ell_a(N)^2
    L^{2-\alpha m-\eta}N^\eta L_0(N)^m \\
    &=
    C_\eta
    N^{2-2d-\alpha m}\ell_a(N)^2L_0(N)^m
    \left(\frac{L}{N}\right)^{2-\alpha m-\eta}.
\end{aligned}
\]
By Proposition~\ref{prop:exact-variance},
\(A_N^2\asymp N^{2-2d-\alpha m}\ell_a(N)^2L_0(N)^m\), and the result follows.
\end{proof}

\begin{lemma}[Fixed-chaos maximal tail estimate]
\label{lem:fixed-chaos-maximal-tail}
Let \(m\ge1\)
and  \(Z_1,\ldots,Z_N\) be random variables such that every interval sum
\[
    Z(I):=\sum_{j\in I}Z_j,
    \qquad I\subset\{1,\ldots,N\},
\]
belongs to the fixed \(m\)-th Wiener chaos. Assume that for some \(B>0\) and some \(q>1\),
\[
    \operatorname{Var}(Z(I))
    \le
    B^2\left(\frac{|I|}{N}\right)^q
\]
for every interval \(I\subset\{1,\ldots,N\}\). Then there exist constants
\(C_1,C_2>0\), depending only on \(m\) and \(q\), such that for every \(x>0\),
\[
    \mathbb P\left(
        \max_{1\le r\le N}
        \left|\sum_{j=1}^r Z_j\right|
        >
        xB
    \right)
    \le
    C_1\exp\{-C_2x^{2/m}\}.
\]
\end{lemma}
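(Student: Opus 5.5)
The plan is a chaining (dyadic) argument combined with hypercontractivity in the fixed $m$-th chaos. By hypercontractivity (Janson), every element $F$ of the $m$-th Wiener chaos satisfies $\|F\|_p\le (p-1)^{m/2}\|F\|_2$ for $p\ge2$. Equivalently, there are constants $c_m,C_m$ with $\mathbb E\exp\{c_m(|F|/\|F\|_2)^{2/m}\}\le C_m$, i.e. a uniform Orlicz bound in $\psi_{2/m}(x)=\exp(x^{2/m})-1$. We apply this to the interval sums $Z(I)$, whose $L^2$ norms are controlled by the hypothesis $\|Z(I)\|_2\le B(|I|/N)^{q/2}$.

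\emph{Dyadic decomposition.} Assume without loss of generality that $N=2^L$, extending by zeros if necessary; this changes $N$ only by a factor $2$, which is absorbed into $B$ at the cost of a factor $2^{q/2}$. Every partial sum $\sum_{j\le r}Z_j$ is a sum of at most one dyadic interval of each length $2^{L-k}$, $k=0,\dots,L$. Let $M_k:=\max\{|Z(I)|: I \text{ dyadic},\ |I|=N2^{-k}\}$. Then
\[
\max_r\Bigl|\sum_{j\le r}Z_j\Bigr|\le\sum_{k=0}^L M_k .
\]
Each $M_k$ is a maximum of $2^k$ chaos variables of norm at most $B2^{-kq/2}$.

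\emph{Weighted union bound.} Choose weights $w_k:=c\,2^{-k\varepsilon}$ with $\varepsilon:=(q-1)/4>0$ and $c$ chosen so that $\sum_k w_k\le1$. Then
\[
\mathbb P\Bigl(\max_r|\cdot|>xB\Bigr)\le\sum_k 2^k\max_{|I|=N2^{-k}}\mathbb P\bigl(|Z(I)|>w_kxB\bigr)\le\sum_k 2^kC_m\exp\Bigl\{-c_m\bigl(c\,x\,2^{k(q/2-\varepsilon)}\bigr)^{2/m}\Bigr\}.
\]
Since $q/2-\varepsilon>1/2$, the exponent grows like $x^{2/m}2^{k\kappa}$ with $\kappa:=(2/m)(q/2-\varepsilon)>0$.

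\emph{Summation.} This is the main (though routine) obstacle: summing over $k$ while keeping the bound of the form $C_1\exp\{-C_2x^{2/m}\}$ uniformly in $N$. For $x\ge x_0$ with $x_0$ large, use $x^{2/m}2^{k\kappa}\ge x^{2/m}(1+k\kappa\log2)$, so that
\[
\sum_k 2^k e^{-c'x^{2/m}2^{k\kappa}}\le e^{-c'x^{2/m}}\sum_k 2^k e^{-c'x_0^{2/m}k\kappa\log2}.
\]
The last series converges once $c'x_0^{2/m}\kappa>1$. For $x<x_0$ the probability is at most $1$, which is at most $C_1e^{-C_2x^{2/m}}$ after enlarging $C_1$. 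All constants depend only on $m$ and $q$ (through $\varepsilon$, $\kappa$, $c_m$, $C_m$), which gives the claim. The hypothesis $q>1$ is used only to make $q/2-\varepsilon$ exceed the entropy cost $2^k$ in the union bound; in fact any $q>0$ would suffice with this weighting.
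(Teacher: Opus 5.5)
Your proof is correct. It shares the paper's skeleton: a dyadic decomposition of the partial sums combined with fixed-chaos hypercontractivity. Where it differs is in how the chaining is turned into a tail bound.

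\textbf{The paper's route.} It works with moments. Minkowski's inequality over the levels, together with the crude bound $\|\max_{I\in\mathcal D_\ell}|Z(I)|\|_p\le(C2^\ell)^{1/p}\sup_I\|Z(I)\|_p$, gives $\|\max_r|T_r|\|_p\le C_{m,q}Bp^{m/2}$ uniformly in $p\ge2$ and $N$. The tail estimate then follows from Markov's inequality with $p\asymp x^{2/m}$. Uniformity over all $p\ge2$ needs $\sum_\ell 2^{\ell/p-\ell q/2}<\infty$ already at $p=2$, and this is exactly where $q>1$ is used.

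\textbf{Your route.} You work with tails. You split the threshold $xB$ across the levels using summable weights $w_k=c\,2^{-k\varepsilon}$. You then apply the $\psi_{2/m}$ tail bound to each of the $2^k$ dyadic intervals at level $k$. The resulting doubly exponential decay $\exp\{-c'x^{2/m}2^{k\kappa}\}$ absorbs the counting factor $2^k$ for any $\kappa>0$.

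\textbf{What each buys.} The paper's argument is more compact and gives a stronger intermediate statement: a uniform $L^p$ bound (equivalently, a $\psi_{2/m}$-Orlicz bound) on the maximum. Yours works directly with tail probabilities and shows that the hypothesis can be relaxed to $q>0$. The moment argument can also be pushed to $q>0$, by restricting to $p\ge p_0>2/q$; this suffices because the optimizing $p\asymp x^{2/m}$ is large when $x$ is large.

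\textbf{Corrections to your closing remark.} In your own argument, $q>1$ is used only to make $\varepsilon=(q-1)/4$ positive, so that the weights are summable. It is not needed to make $q/2-\varepsilon$ exceed $1/2$: your summation step uses only $\kappa>0$. Also, for $q\in(0,1]$ you must take, say, $\varepsilon=q/4$ instead of $(q-1)/4$. The weighting as literally stated would not then be summable.
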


\begin{proof}
Set
\(
    T_r:=\sum_{j=1}^r Z_j,
    \quad 0\le r\le N,
\)
with \(T_0=0\). For any \(0\le s<r\le N\), the increment \(T_r-T_s\) is an interval sum in the
\(m\)-th Wiener chaos. By the variance assumption,
\[
    \|T_r-T_s\|_2
    \le
    B\left(\frac{r-s}{N}\right)^{q/2}.
\]
Hypercontractivity (see \cite{Janson1997}) in a fixed Wiener chaos gives, for every \(p\ge2\),
\[
    \|T_r-T_s\|_p
    \le
    C_m p^{m/2}B
    \left(\frac{r-s}{N}\right)^{q/2}.
\]

We use a dyadic chaining argument. Let \(L_N:=\lceil \log_2 N\rceil\). For each
\(0\le \ell\le L_N\), let \(\mathcal P_\ell\) be the dyadic partition of \(\{0,\ldots,N\}\)
into intervals of length at most \(N2^{-\ell}\). Every \(r\in\{0,\ldots,N\}\) can be connected
to \(0\) through dyadic increments, and therefore
\[
    \max_{0\le r\le N}|T_r|
    \le
    \sum_{\ell=0}^{L_N}
    \max_{I\in\mathcal D_\ell}|Z(I)|,
\]
where \(\mathcal D_\ell\) is a collection of at most \(C2^\ell\) dyadic intervals of length at
most \(N2^{-\ell}\). Hence, for \(p\ge2\),
$$
    \left\|\max_{0\le r\le N}|T_r|\right\|_p
    \le
    \sum_{\ell=0}^{L_N}
    \left\|\max_{I\in\mathcal D_\ell}|Z(I)|\right\|_p \\
    \le
    \sum_{\ell=0}^{L_N}
    (C2^\ell)^{1/p}\sup_{I\in\mathcal D_\ell}\|Z(I)\|_p \\
    \le
    C_{m,q}B p^{m/2}
    \sum_{\ell=0}^{L_N}2^{\ell/p}2^{-\ell q/2}.
$$
Since \(q>1\) and \(p\ge2\), the last sum is bounded uniformly in \(p\) and \(N\). Thus
\[
    \left\|\max_{0\le r\le N}|T_r|\right\|_p
    \le
    C_{m,q}B p^{m/2},
    \qquad p\ge2.
\]

By Markov's inequality,
\[
    \mathbb P\left(
        \max_{0\le r\le N}|T_r|>xB
    \right)
    \le
    \left(\frac{C_{m,q}p^{m/2}}{x}\right)^p .
\]
If \(x\) is large, choose \(p=\lfloor c x^{2/m}\rfloor\) with \(c>0\) sufficiently small. This gives
\[
    \mathbb P\left(
        \max_{0\le r\le N}|T_r|>xB
    \right)
    \le
    C_1\exp\{-C_2x^{2/m}\}.
\]
For bounded \(x\), the estimate follows by increasing \(C_1\). This proves the lemma.
\end{proof}

\begin{lemma}[Gaussian comparison for shifted balls]
\label{lem:gaussian-shifted-ball-comparison}
Let \((X,Y)\) be a centered Gaussian vector in \(\mathbb R^N\times\mathbb R^N\) such that
\(
    \operatorname{Cov}(X)=I_N,
    \qquad
    \operatorname{Cov}(Y)=I_N.
\)
Let
\(
    R:=\operatorname{Cov}(X,Y).
\)
Assume that
\(
    \|R\|_{\mathrm{op}}\le \frac12 .
\)
Then for every \(\varepsilon>0\) there exists a constant
\(C=C(N,\varepsilon)<\infty\) such that, for all \(u,v\in\mathbb R^N\),

\[
\left|
    \mathbb P\bigl(X\in B(u,\varepsilon),\,Y\in B(v,\varepsilon)\bigr)
    -
    \mathbb P\bigl(X\in B(u,\varepsilon)\bigr)
    \mathbb P\bigl(Y\in B(v,\varepsilon)\bigr)
\right|  
\]   
\[                                 
\le
C\,\|R\|_{\mathrm{op}}
\bigl(1+|u|^2+|v|^2\bigr)
\exp\left\{
    C\|R\|_{\mathrm{op}}\bigl(1+|u|^2+|v|^2\bigr)
\right\}                                      
\times
\mathbb P\bigl(X\in B(u,\varepsilon)\bigr)
\mathbb P\bigl(Y\in B(v,\varepsilon)\bigr).
\]
\end{lemma}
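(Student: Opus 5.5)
The plan is to compare the joint density of \((X,Y)\) with the product of the marginal standard Gaussian densities directly on the product of balls. Write \(\Sigma=\begin{pmatrix} I_N & R\\ R^\top & I_N\end{pmatrix}\). Since \(\|R\|_{\mathrm{op}}\le\frac12\), \(\Sigma\) is positive definite with eigenvalues in \([\frac12,\frac32]\). Its inverse has the block form
\[
\Sigma^{-1}=\begin{pmatrix}(I-RR^\top)^{-1} & -(I-RR^\top)^{-1}R\\ -R^\top(I-RR^\top)^{-1} & (I-R^\top R)^{-1}\end{pmatrix},
\]
so that \(\Sigma^{-1}-I_{2N}=:\Delta\) satisfies \(\|\Delta\|_{\mathrm{op}}\le C\|R\|_{\mathrm{op}}\), uniformly under the constraint \(\|R\|\le\frac12\). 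Also \(|\log\det\Sigma|\le C\|R\|_{\mathrm{op}}^2\le C\|R\|_{\mathrm{op}}\). Hence the density ratio is
\[
\frac{p_{X,Y}(x,y)}{\phi_N(x)\phi_N(y)}=(\det\Sigma)^{-1/2}\exp\Bigl\{-\tfrac12\langle (x,y),\Delta(x,y)\rangle\Bigr\}=:e^{g(x,y)},
\]
with \(|g(x,y)|\le C\|R\|_{\mathrm{op}}(1+|x|^2+|y|^2)\).

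On \(B(u,\varepsilon)\times B(v,\varepsilon)\) we have \(|x|^2+|y|^2\le 2(|u|^2+|v|^2)+4\varepsilon^2\). Therefore, on the product of balls, \(|g|\le C'\|R\|_{\mathrm{op}}(1+|u|^2+|v|^2)=:\kappa\), with \(C'\) depending on \(\varepsilon\) and \(N\). The elementary bound \(|e^{g}-1|\le|g|e^{|g|}\le\kappa e^{\kappa}\) then gives
\[
\Bigl|\mathbb P(X\in B(u,\varepsilon),Y\in B(v,\varepsilon))-\mathbb P(X\in B(u,\varepsilon))\mathbb P(Y\in B(v,\varepsilon))\Bigr|
=\Bigl|\int_{B(u,\varepsilon)\times B(v,\varepsilon)}(e^{g}-1)\phi_N(x)\phi_N(y)\,dx\,dy\Bigr|
\le\kappa e^{\kappa}\,\mathbb P(X\in B(u,\varepsilon))\mathbb P(Y\in B(v,\varepsilon)).
\]
The last step uses that the marginals are exactly \(N(0,I_N)\).

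The only point requiring care is obtaining the operator-norm bound on \(\Delta\) with a constant independent of \(R\). This follows from the Neumann series: \(\|(I-RR^\top)^{-1}\|\le \frac43\) and \(\|(I-RR^\top)^{-1}-I\|\le\frac43\|R\|^2\). A similar bound holds for the off-diagonal blocks, which are at most \(\frac43\|R\|\). No other obstacle is expected; the constant \(C\) absorbs \(\varepsilon\) and \(N\), and the determinant factor is handled inside \(g\).
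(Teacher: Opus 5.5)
Your proposal is correct and is essentially the paper's argument. You write the joint density as $\varphi_N(x)\varphi_N(y)$ times a likelihood ratio, and you bound $\Sigma^{-1}-I_{2N}$ and $\log\det\Sigma=\log\det(I_N-R^\top R)$ by the block-inverse formula and a Neumann series, uniformly for $\|R\|_{\mathrm{op}}\le\frac12$. You then integrate $|e^{g}-1|\le|g|e^{|g|}$ over the product of balls, using $1+|x|^2+|y|^2\le C_\varepsilon(1+|u|^2+|v|^2)$ there. The only cosmetic difference is that you bound the exponent on the balls before applying the exponential inequality, whereas the paper bounds $|\Lambda_R-1|$ pointwise first.
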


\begin{proof}
Let $\varphi_N$ denote the standard Gaussian density on $\mathbb R^N$,
and let $p_R$ be the joint density of $(X,Y)$; write
$p_R(x,y)=\varphi_N(x)\varphi_N(y)\Lambda_R(x,y)$. The joint covariance
matrix is $\Sigma_R=\begin{pmatrix} I_N & R\\ R^*& I_N\end{pmatrix}$,
and for $\|R\|_{\mathrm{op}}\le\frac12$ the block-inverse formula and
the Neumann series give, in operator norm,
\[
    \Sigma_R^{-1}
    =\begin{pmatrix} I_N & -R\\ -R^* & I_N\end{pmatrix}
     +O(\|R\|_{\mathrm{op}}^2),
    \qquad
    \log\det\Sigma_R=\log\det(I_N-R^*R)=O(\|R\|_{\mathrm{op}}^2).
\]
Substituting into
$\log\Lambda_R(x,y)=-\frac12\log\det\Sigma_R
-\frac12\bigl\langle(\Sigma_R^{-1}-I_{2N})(x,y),(x,y)\bigr\rangle$
yields $|\log\Lambda_R(x,y)|\le
C_N\|R\|_{\mathrm{op}}(1+|x|^2+|y|^2)$ uniformly for
$\|R\|_{\mathrm{op}}\le\frac12$, whence, by $|e^z-1|\le|z|e^{|z|}$,
\[
    |\Lambda_R(x,y)-1|
    \le C_N\|R\|_{\mathrm{op}}(1+|x|^2+|y|^2)
    \exp\bigl\{C_N\|R\|_{\mathrm{op}}(1+|x|^2+|y|^2)\bigr\}.
\]
Integrating $\varphi_N(x)\varphi_N(y)\bigl(\Lambda_R(x,y)-1\bigr)$ over
$B(u,\varepsilon)\times B(v,\varepsilon)$ and using
$1+|x|^2+|y|^2\le C_\varepsilon(1+|u|^2+|v|^2)$ on this set proves the
claim.
\end{proof}

\begin{lemma}[Finite-dimensional sparse recurrence]
\label{lem:finite-dimensional-sparse-recurrence}
Let \(Z_n=(Z_{n,1},\ldots,Z_{n,N})\), \(n\ge3\), be centered Gaussian vectors in
\(\mathbb R^N\) such that
\(
    \operatorname{Cov}(Z_n)=I_N
\)
for every \(n\). Assume that there exist constants \(C<\infty\) and \(\theta>0\) such that, for
all \(3\le n\le m\),
\[
    \max_{1\le i,j\le N}
    \left|
        \operatorname{Cov}(Z_{n,i},Z_{m,j})
    \right|
    \le
    C\left(\frac{n}{m}\right)^\theta .
\]
Then for every \(a\in\mathbb R^N\) with \(|a|<1\) and every \(\varepsilon>0\), there exists
\(\gamma>1\) and a sparse sequence
\(
    n_k:=\lfloor \exp(k^\gamma)\rfloor
\)
such that, with
\(
    r_k:=(2\log\log n_k)^{1/2},
\)
one has
\[
    \mathbb P\left(
        |Z_{n_k}-r_k a|<\varepsilon
        \ \text{\rm for infinitely many } k
    \right)=1.
\]
\end{lemma}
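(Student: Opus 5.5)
The plan is a Kochen--Stone (generalized second Borel--Cantelli) argument applied along the sparse sequence $n_k=\lfloor\exp(k^\gamma)\rfloor$. Consider the events
\(
A_k:=\{|Z_{n_k}-r_ka|<\varepsilon\}.
\)
Two things need to be shown. First, $\sum_k\mathbb P(A_k)=\infty$. Second,
\[
\liminf_{K\to\infty}\frac{\sum_{j,k\le K}\mathbb P(A_j\cap A_k)}{\bigl(\sum_{k\le K}\mathbb P(A_k)\bigr)^2}\le1 .
\]
Kochen--Stone then gives $\mathbb P(A_k\ \text{i.o.})\ge1$, so this probability is one.

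\emph{Step 1: divergence.} Since $Z_{n_k}\sim N(0,I_N)$, a direct computation of the Gaussian density on the small ball gives
\(
\mathbb P(A_k)\asymp_{N,\varepsilon,a} e^{-r_k^2|a|^2/2}\,e^{O(r_k)}.
\)
The factor $e^{O(r_k)}$ comes from the linear term $\langle x,r_ka\rangle$ over the ball, and I will bound it by the cruder $(\log n_k)^{o(1)}$. Because $r_k^2=2\log\log n_k=2\gamma\log k+o(1)$, this yields $\mathbb P(A_k)\ge k^{-\gamma|a|^2-o(1)}$. Now choose $\gamma>1$ with $\gamma|a|^2<1$; this is possible because $|a|<1$, and it is exactly the choice made in Proposition~\ref{cor:dilation-coordinate-recurrence}. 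With this choice the series diverges polynomially. It is worth recording the sharper two-sided statement $\mathbb P(A_k)=k^{-\gamma|a|^2+o(1)}$, since it is needed in Step 2.

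\emph{Step 2: asymptotic independence.} Fix $j<k$ and write $R_{jk}$ for the cross-covariance matrix of $Z_{n_j}$ and $Z_{n_k}$. The hypothesis gives
\[
\|R_{jk}\|_{\mathrm{op}}\le CN(n_j/n_k)^\theta=CN\exp\{-\theta(k^\gamma-j^\gamma)\}\le CN\exp\{-\theta\gamma j^{\gamma-1}(k-j)\}.
\]
Apply Lemma~\ref{lem:gaussian-shifted-ball-comparison} with $u=r_ja$ and $v=r_ka$. This bounds $|\mathbb P(A_j\cap A_k)-\mathbb P(A_j)\mathbb P(A_k)|$ by
\[
C\|R_{jk}\|(1+r_j^2+r_k^2)e^{C\|R_{jk}\|(1+r_j^2+r_k^2)}\,\mathbb P(A_j)\mathbb P(A_k),
\]
where $r_k^2\asymp\log k$. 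Split the pairs according to the size of the gap $k-j$.

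\emph{Far pairs.} Take $k-j\ge M\log k/j^{\gamma-1}$, or simply $k-j\ge(\log k)^2$ when $j$ is large. Then $\|R_{jk}\|\log k\to0$, so these pairs contribute $(1+o(1))\bigl(\sum\mathbb P(A_k)\bigr)^2$.

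\emph{Near pairs.} These are the pairs with $k-j<(\log k)^2$ and $\|R_{jk}\|\le1/2$, together with the finitely many pairs where the lemma does not apply. For them I use only the trivial bound $\mathbb P(A_j\cap A_k)\le\mathbb P(A_k)$. Their total is at most $\sum_k(\log k)^2\mathbb P(A_k)$. Since $\mathbb P(A_k)=k^{-\gamma|a|^2+o(1)}$ with exponent below $1$, this sum is $K^{1-\gamma|a|^2+o(1)}$. That is negligible compared with $\bigl(\sum_{k\le K}\mathbb P(A_k)\bigr)^2=K^{2(1-\gamma|a|^2)+o(1)}$, provided the $o(1)$ terms are controlled.

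One technical point in the near range: for $j$ small the operator-norm condition $\|R\|\le1/2$ can fail. Those pairs are absorbed into the trivial bound as well, because $j^{\gamma-1}(k-j)\to\infty$ except on a set of pairs of negligible weight.

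\emph{Main obstacle.} The hardest step is the bookkeeping in Step 2. The $o(1)$ corrections in the exponent, coming from $e^{O(r_k)}$, must not destroy the comparison between the near-diagonal sum and the squared sum. For this I will use the sharper Gaussian small-ball estimate
\[
\mathbb P(A_k)=c_{N,\varepsilon}(a,r_k)\,e^{-r_k^2|a|^2/2},
\]
where $c$ is bounded above and below by $e^{\pm\varepsilon r_k|a|}$. Note that $e^{\varepsilon r_k}=k^{o(1)}$ only sub-polynomially, since $r_k\asymp\sqrt{\log k}$. The polynomial gap $1-\gamma|a|^2>0$ then absorbs all these factors.
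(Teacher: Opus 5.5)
Your proof is correct, but you organize the second-moment estimate differently from the paper. The paper does not split the pairs at all. It uses the convexity inequality $k^\gamma-j^\gamma\ge k^{\gamma-1}(k-j)$ for $j<k$ (since $1-x^\gamma\ge 1-x$ on $[0,1]$). This gives $\|R_{j,k}\|_{\mathrm{op}}\le C\exp\{-ck^{\gamma-1}\}$ for \emph{every} $j<k$. Hence Lemma~\ref{lem:gaussian-shifted-ball-comparison} applies to all pairs once $k$ is large, and the relative error satisfies $b_k=(1+\log k)\sum_{j<k}e^{-c(k^\gamma-j^\gamma)}\le C(1+\log k)e^{-c'k^{\gamma-1}}\to0$. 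A weighted Ces\`aro step then gives a total covariance error $\sum_k p_kb_k=o(S_M)$. That route uses nothing about $p_k$ beyond $\sum_k p_k=\infty$.

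Your split is not forced even by the weaker mean-value bound you wrote, since $j^{\gamma-1}(k-j)\ge\min\{(k/2)^{\gamma-1},k/2\}$ for all $1\le j<k$. Instead you give up the decorrelation on the window $k-j<(\log k)^2$ and use the trivial bound $\mathbb P(A_j\cap A_k)\le\mathbb P(A_k)$ there. That is why you need the two-sided small-ball asymptotics $p_k=k^{-\gamma|a|^2+o(1)}$, with corrections $e^{\pm\varepsilon r_k|a|}=k^{o(1)}$. You also need the strict gap $1-\gamma|a|^2>0$ a second time, to get $\sum_{k\le K}(\log k)^2p_k=o(S_K^2)$. Your bookkeeping there is right; the diagonal contribution $\sum_k p_k=S_K=o(S_K^2)$ should just be listed explicitly.

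What your version buys is robustness. It never uses asymptotic independence of adjacent events, so it would go through even for a geometric sequence $n_k=\lfloor e^k\rfloor$, using a window of width $C\log\log k$. There the paper's global comparison fails, because $\|R_{k-1,k}\|_{\mathrm{op}}$ does not vanish. What the paper's version buys is shorter bookkeeping, a stronger error bound ($o(S_M)$ rather than $o(S_M^2)$), and no dependence on the precise size of $p_k$.
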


\begin{proof}
Fix \(a\in\mathbb R^N\) with \(|a|<1\), and choose \(\gamma>1\) so close to \(1\) that
\(
    \gamma |a|^2<1.
\)
Let
\[
    n_k:=\lfloor \exp(k^\gamma)\rfloor,
    \qquad
    r_k:=(2\log\log n_k)^{1/2},
\]
and define the events
\[
    E_k:=\{|Z_{n_k}-r_k a|<\varepsilon\}.
\]

We first show that
\(
    \sum_{k=1}^\infty \mathbb P(E_k)=\infty.
\)
Since \(Z_{n_k}\sim N(0,I_N)\), we have
\[
    \mathbb P(E_k)
    =
    \int_{B(r_ka,\varepsilon)}
        (2\pi)^{-N/2}\exp\left\{-\frac{|x|^2}{2}\right\}\,dx .
\]
Choose \(\eta>0\) so small that
\(
    \gamma(|a|+\eta)^2<1.
\)
For all sufficiently large \(k\), every \(x\in B(r_ka,\varepsilon)\) satisfies
\(
    |x|\le r_k(|a|+\eta),
\)
because \(\varepsilon/r_k\to0\). Hence
\[
    \mathbb P(E_k)
    \ge
    c_\varepsilon
    \exp\left\{
        -\frac12 r_k^2(|a|+\eta)^2
    \right\}.
\]
Since
\(
    r_k^2=2\log\log n_k
    \sim 2\gamma\log k,
\)
we obtain, for all sufficiently large \(k\),
\(
    \mathbb P(E_k)
    \ge
    c k^{-\gamma(|a|+\eta)^2}.
\)
By the choice of \(\eta\),
\(
    \gamma(|a|+\eta)^2<1,
\)
and therefore
\(
    \sum_{k=1}^\infty \mathbb P(E_k)=\infty.
\)

Put
\(
    p_k:=\mathbb P(E_k),
    \qquad
    S_M:=\sum_{k=1}^M p_k .
\)
Then \(S_M\to\infty\).

We next estimate the correlations between the events \(E_j\) and \(E_k\). For \(j<k\), set
\[
    R_{j,k}:=\operatorname{Cov}(Z_{n_j},Z_{n_k}).
\]
By the covariance assumption,
\[
    \|R_{j,k}\|_{\mathrm{op}}
    \le
    N
    \max_{1\le p,q\le N}
    |\operatorname{Cov}(Z_{n_j,p},Z_{n_k,q})|
    \le
    C_N\left(\frac{n_j}{n_k}\right)^\theta ,
\]
where \(C_N<\infty\) depends only on the dimension \(N\) and on the constant in the
covariance assumption.

Since
\(
    n_k=\lfloor \exp(k^\gamma)\rfloor,
\)
there are constants \(c,C>0\) such that
\(
    \|R_{j,k}\|_{\mathrm{op}}
    \le
    C\exp\{-c(k^\gamma-j^\gamma)\},
    \qquad j<k .
\)

Hence there exists \(k_0\) such that
\(
    \|R_{j,k}\|_{\mathrm{op}}\le\frac12
    \qquad\text{for all } k>j\ge k_0 .
\)
The finitely many pairs with \(j<k<k_0\) contribute only \(O(1)\) to the double sums below and
therefore do not affect the limiting estimates. Thus, after discarding finitely many initial
indices, we may apply Lemma~\ref{lem:gaussian-shifted-ball-comparison} to all pairs \(j<k\).

Applying Lemma~\ref{lem:gaussian-shifted-ball-comparison} with
\(
    X=Z_{n_j},
    \qquad
    Y=Z_{n_k},
    \qquad
    u=r_j a,
    \qquad
    v=r_k a,
\)
we get, for all sufficiently large \(j<k\),
\[
\left|
    \mathbb P(E_j\cap E_k)-p_jp_k
\right|                                      
\le
C\|R_{j,k}\|_{\mathrm{op}}
\bigl(1+r_j^2+r_k^2\bigr)
\exp\left\{
    C\|R_{j,k}\|_{\mathrm{op}}
    \bigl(1+r_j^2+r_k^2\bigr)
\right\}
p_jp_k .
\]
Since
\(
    r_k^2\asymp \log k,
\)
and
\(
    \|R_{j,k}\|_{\mathrm{op}}
    \le
    C\exp\{-c(k^\gamma-j^\gamma)\},
\)
the exponential factor is uniformly bounded. Thus
\[
    \left|
        \mathbb P(E_j\cap E_k)-p_jp_k
    \right|
    \le
    C
    e^{-c(k^\gamma-j^\gamma)}
    (1+\log k)
    p_jp_k .
\]

We now show that the total covariance error is negligible. Since \(\gamma>1\), there exists
\(c_\gamma>0\) such that, for \(j<k\),
\(
    k^\gamma-j^\gamma
    \ge
    c_\gamma k^{\gamma-1}(k-j).
\)
Therefore
\[
    b_k
   :=
    (1+\log k)
    \sum_{j<k}
    e^{-c(k^\gamma-j^\gamma)}
    \le
    (1+\log k)
    \sum_{\ell=1}^{k-1}
    e^{-c c_\gamma k^{\gamma-1}\ell}
    \le
    C(1+\log k)e^{-c'k^{\gamma-1}} .
\]
In particular,
\(
    b_k\to0.
\)
Hence

\[
\sum_{1\le j<k\le M}
    \left|
        \mathbb P(E_j\cap E_k)-p_jp_k
    \right|                                      
\le
C\sum_{k=1}^M p_k
    (1+\log k)
    \sum_{j<k}
    e^{-c(k^\gamma-j^\gamma)}p_j .
\]
Since \(0\le p_j\le1\),
\[
    \sum_{j<k}
    e^{-c(k^\gamma-j^\gamma)}p_j
    \le
    \sum_{j<k}
    e^{-c(k^\gamma-j^\gamma)}.
\]
Therefore,
\[
\begin{aligned}
&\sum_{1\le j<k\le M}
    \left|
        \mathbb P(E_j\cap E_k)-p_jp_k
    \right|
    \le
    C\sum_{k=1}^M p_k b_k .
\end{aligned}
\]

Since \(b_k\to0\), \(p_k\ge0\), and
\(S_M=\sum_{k=1}^M p_k\to\infty\), the weighted Ces\`aro
lemma\footnote{If \(b_k\to0\), \(p_k\ge0\) and
\(S_M=\sum_{k\le M}p_k\to\infty\), then
\(\sum_{k\le M}p_kb_k=o(S_M)\): given \(\varepsilon>0\), choose
\(k_0\) with \(\sup_{k>k_0}|b_k|\le\varepsilon\); then
\(\bigl|\sum_{k\le M}p_kb_k\bigr|
\le\sum_{k\le k_0}p_k|b_k|+\varepsilon S_M=O(1)+\varepsilon S_M\).}
gives
\(
    \sum_{k=1}^M p_k b_k=o(S_M).
\)

Consequently,
\[
    \sum_{1\le j<k\le M}
    \left|
        \mathbb P(E_j\cap E_k)-p_jp_k
    \right|
    =
    o(S_M).
\]
In particular,
\[
    \sum_{1\le j<k\le M}
    \left|
        \mathbb P(E_j\cap E_k)-p_jp_k
    \right|
    =
    o(S_M^2),
\]
because \(S_M\to\infty\).

It follows that
\[
    \sum_{j,k=1}^M \mathbb P(E_j\cap E_k)
    =
    \sum_{k=1}^M p_k
    +
    2\sum_{1\le j<k\le M}\mathbb P(E_j\cap E_k)        
   =
    S_M
    +
    2\sum_{1\le j<k\le M}p_jp_k
    +
    o(S_M^2)  
 \]
 \[                                            
    \le
    S_M^2+S_M+o(S_M^2).
\]
Since \(S_M\to\infty\), this gives
\[
    \sum_{j,k=1}^M \mathbb P(E_j\cap E_k)
    \le
    (1+o(1))S_M^2 .
\]

By the Kochen--Stone lemma \citep{KochenStone1964},
\[
    \mathbb P(E_k\ \text{\rm i.o.})
    \ge
    \limsup_{M\to\infty}
    \frac{S_M^2}
    {\sum_{j,k=1}^M\mathbb P(E_j\cap E_k)}
    =1.
\]
Hence
\(
    \mathbb P(E_k\ \text{\rm i.o.})=1.
\)
Equivalently,
\[
    \mathbb P\left(
        |Z_{n_k}-r_k a|<\varepsilon
        \ \text{\rm for infinitely many } k
    \right)=1.
\]

This proves the lemma.
\end{proof}

\begin{lemma}[Full-process leading-chaos increment bound]
\label{lem:full-leading-chaos-increment}
Assume \((\ref{A1})\)--\((\ref{A4})\), \(\alpha m<1\), and \(2d+\alpha m<1\). Let
\(H=1-d-\alpha m/2\). For every \(T<\infty\) there exist constants
\(C_T<\infty\) and \(\beta>1\) such that the following estimates hold.

First, uniformly in \(n\ge3\) and in every interval
\(I\subset\{1,\ldots,\lfloor Tn\rfloor+2\}\),
\[
    \operatorname{Var}\left(
        \frac{J_m}{m!}\sum_{u\in I}a_uH_m(X_u)
    \right)
    \le
    C_T A_n^2\left(\frac{|I|}{n}\right)^\beta .
\]
Second, uniformly in \(0\le s<t\le T\),
\(
    \operatorname{Var}\bigl(\mathcal S_{n,m}(t)-\mathcal S_{n,m}(s)\bigr)
    \le
    C_T A_n^2 |t-s|^\beta .
\)
Equivalently,
\(
    \left\|
        \widetilde F^{\mathrm{full}}_{n,t}
        -
        \widetilde F^{\mathrm{full}}_{n,s}
    \right\|_{L^2(\mathbb R^m)}^2
    \le C_T |t-s|^\beta .
\)
\end{lemma}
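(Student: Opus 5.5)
The plan is to reduce everything to a covariance-sum estimate, as in the proof of Lemma~\ref{lem:leading-chaos-local-variance}. The difference is that the interval \(I\) now ranges over \(\{1,\ldots,\lfloor Tn\rfloor+2\}\) rather than a single dyadic block. In particular it may reach down to small indices, where the weights \(a_u=u^{-d}\ell_a(u)\) are unbounded when \(d>0\) and small when \(d<0\). The second and third estimates will then follow from the first by elementary bookkeeping.

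\emph{First estimate.} Write the variance as
\[
\frac{J_m^2}{m!}\sum_{u,v\in I}a_ua_v\rho(u-v)^m
\]
and bound it by
\[
C\sum_{u,v\in I}|a_u||a_v|(1+|u-v|)^{-\alpha m+\eta}.
\]
For the weights, I would use the Potter bound \(|a_u|\le C_\eta\,n^{-d}\ell_a(n)\,(u/n)^{-d-\eta}\), valid uniformly for \(1\le u\le Tn+2\). For the covariance, I would use \(|\rho(h)|^m\le C_\eta L_0(n)^m h^{-\alpha m}(h/n)^{-\eta}\) for \(1\le h\le Tn\), which is again Potter's bound. After dividing by \(A_n^2\asymp n^{2-2d-\alpha m}\ell_a(n)^2L_0(n)^m\) (Proposition~\ref{prop:exact-variance}), the normalized variance is dominated by a Riemann sum of
\[
g(x,y)=x^{-a}y^{-a}|x-y|^{-b},\qquad a=(d+\eta)_+,\ b=\alpha m+\eta,
\]
over the cells of \(J\times J\), where \(J=I/n\subset[0,T+1]\) has length \(\ell=|I|/n\). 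The diagonal terms are controlled separately, exactly as in Lemma~\ref{lem:two-dimensional-rv-riemann}. The cell comparison used there gives
\[
\mathrm{Var}\le C_TA_n^2\int_{J'}\int_{J'}x^{-a}y^{-a}|x-y|^{-b}\,dx\,dy,
\]
with \(J'\) the interval \(J\) enlarged by \(1/n\). We have \(|J'|\le2\ell\), since \(\ell\ge1/n\).

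It then remains to bound \(\int_{J'}\int_{J'}g\) by \(C\ell^{\beta}\). By scaling, for an interval \([s,s+\ell]\) the integral is largest when \(s=0\), because \(x^{-a}\) is decreasing. For \(s=0\) the integral equals \(\ell^{2-2a-b}I_{b,a}\), and this is where the hypothesis \(2d+\alpha m<1\) enters. So I would take \(\beta:=2-2(d+\eta)_+-\alpha m-\eta\). For \(d\le0\) one has \(a=\eta\), and \(\beta\) is close to \(2-\alpha m>1\). For \(d>0\) we get \(\beta=2-2d-\alpha m-3\eta>1\) for small \(\eta\), using \(2d+\alpha m<1\).

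The step I expect to be the main obstacle is justifying that "start at zero is worst" with the Potter constants kept uniform in \(n\), together with the case \(d<0\). In that case \(a_u\) grows like \(u^{-d}\), so intervals near \(Tn\) carry weights of order \((Tn)^{-d}\). This is harmless: the factor \(x^{-a}\) with \(a\) replaced by \(0\), or by the negative exponent \(d+\eta\), is bounded by \(C_T\) on \([0,T+1]\). So I would simply bound \(x^{-d-\eta}\le C_T\) there and obtain \(\ell^{2-b}\). The uniformity would be handled by applying Potter only above a threshold and absorbing the bounded indices into \(C_T\), per the paper's convention.

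\emph{Second estimate.} For \(s<t\), the difference \(\mathcal S_{n,m}(t)-\mathcal S_{n,m}(s)\) is a full block sum over \(\{\lfloor ns\rfloor+2,\ldots,\lfloor nt\rfloor\}\), plus at most two fractional boundary summands. If \(|t-s|\ge1/n\), the first estimate applies to an interval of length at most \(n|t-s|+2\le3n|t-s|\). If \(|t-s|<1/n\), the difference is \(n|t-s|\) times a single summand \(a_uJ_mH_m(X_u)/m!\), or involves two adjacent ones. Its variance is then at most \(Cn^2|t-s|^2\max_{u\le Tn+2}a_u^2\). By the computation at the end of the proof of Proposition~\ref{prop:full-kernel-convergence}, this is at most \(C_TA_n^2\,n^2|t-s|^2\,n^{-\kappa}\) with \(\kappa=2-2\max(d,0)-2d^{\ast}\)-type exponent equal to \(2H-2\max\{-d,0\}-2\eta>0\). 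Hence it is bounded by \(C_TA_n^2(n|t-s|)^{2}n^{-\beta}\le C_TA_n^2|t-s|^\beta\), since \(n|t-s|<1\) and \(\beta\le2\). A cleaner route is to bound a single summand via the first estimate with \(|I|=1\), which gives \(C_TA_n^2n^{-\beta}\); multiplying by \((n|t-s|)^2\le(n|t-s|)^\beta\) yields the claim. The kernel form of the second estimate is immediate from the isometry \(\mathbb E I_m(F;W_n)^2=m!\|F\|_2^2\) and \(I_m(\widetilde F^{\mathrm{full}}_{n,t};W_n)=\mathcal S_{n,m}(t)/A_n\).
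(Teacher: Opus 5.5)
Your proof is correct. The second estimate and the kernel form follow the paper's proof. The first, core estimate is obtained by a genuinely different route.

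\emph{The paper's route.} It splits $I$ into dyadic annuli $I\cap(2^j,2^{j+1}]$ and applies the block bound of Lemma~\ref{lem:leading-chaos-local-variance} on each annulus, where the weights are essentially constant. It converts $A_{2^j}$ into $A_n(2^j/n)^{H-\eta}$ by Potter's bound. It then sums the $L^2$-norms by the triangle inequality, splitting the geometric sum at $2^j\approx|I|$ and treating a finite initial part separately.

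\emph{Your route.} You dominate the whole double covariance sum at once by a Riemann sum of $x^{-a}y^{-a}|x-y|^{-b}$ over $J'\times J'$. You then shift the square to the origin, using that $x^{-a}$ is nonincreasing, and use homogeneity. This is the discrete analogue of the argument the paper gives for the limit kernels in Lemma~\ref{lem:limit-kernel-L2}(d). It makes clear why the prelimit exponent is the same $\min\{2H,2-\alpha m\}$ (up to $\eta$) as for $Q_t$.

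\emph{What each buys.} Your argument is self-contained. It treats the singular weights near the origin ($d>0$) and the growing weights ($d<0$) in one step. The price is a Potter domination that is uniform over all of $\{1,\ldots,\lfloor Tn\rfloor+2\}$ and all lags up to $Tn$, with $T$-dependent constants. The paper's argument is modular: it reuses an already proved block estimate and only needs Potter's bound for $A_{2^j}/A_n$. The price there is the dyadic bookkeeping.

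Three small points should be tidied.
\begin{enumerate}
\item For $d\le0$ you have $a=(d+\eta)_+\le\eta$, not $a=\eta$. This weaker statement is all you need.
\item Lemma~\ref{lem:two-dimensional-rv-riemann} excludes the diagonal, so it is not the right reference. The diagonal is, however, absorbed by the same cell comparison: on a diagonal cell $|x-y|\le1/n$, and $n^{\alpha m}L_0(n)^{-m}\le Cn^{b}$.
\item Your first route for $|t-s|<1/n$ needs $\kappa\ge\beta$. You do not check this, and with a common $\eta$ it fails, for example for $d<0$. Keep the $|I|=1$ route you give afterwards, which is also the paper's.
\end{enumerate}
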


\begin{proof}
Choose \(\beta>1\) so small that
\(
    \beta<\min\{2H,\ 2-\alpha m\}.
\)
This is possible because \(2d+\alpha m<1\) gives \(2H=2-2d-\alpha m>1\), and
\(\alpha m<1\) gives \(2-\alpha m>1\).

It is enough to prove the estimate for the non-interpolated sums. The interpolation error is
handled at the end. Let \(I\subset\{1,\ldots,\lfloor Tn\rfloor+2\}\) be an interval of length
\(L\ge1\). We prove
\[
    \operatorname{Var}\left(
        \frac{J_m}{m!}\sum_{u\in I}a_uH_m(X_u)
    \right)
    \le
    C_T A_n^2\left(\frac{L}{n}\right)^\beta .
\]
Decompose \(I\) into dyadic annuli \(I_j:=I\cap(2^j,2^{j+1}]\), plus a finite initial part.
For the finite initial part the bound is immediate after increasing \(C_T\), because
\(A_n^2(L/n)^\beta\) is bounded below on \(L\ge1\) by a multiple of
\(n^{2H-\beta}\) up to slowly varying factors, and \(2H>\beta\).

For \(I_j\neq\varnothing\), Lemma~\ref{lem:leading-chaos-local-variance}, regular variation
of \(A_n\), and Potter's bound give, for every small \(\eta>0\),
\[
    \left\|
        \frac{J_m}{m!}\sum_{u\in I_j}a_uH_m(X_u)
    \right\|_2
    \le
    C_T A_n
    \left(\frac{2^j}{n}\right)^{H-\eta}
    \left(\frac{|I_j|}{2^j}\right)^{q/2},
\]
where \(q:=2-\alpha m-\eta\), and \(\eta\) is chosen so small that
\(\beta<\min\{2H-2\eta,q\}\). Since \(|I_j|\le L\), the last display is bounded by
\[
    C_T A_n
    \left(\frac{L}{n}\right)^{\beta/2}
    \left(\frac{2^j}{n}\right)^{H-\eta-\beta/2}
    \left(\frac{L}{2^j}\right)^{q/2-\beta/2}.
\]
If \(2^j\ge L\), both extra factors are bounded by a summable geometric sequence in \(j\).
If \(2^j<L\), we use the simpler bound
\[
    \left\|
        \frac{J_m}{m!}\sum_{u\in I_j}a_uH_m(X_u)
    \right\|_2
    \le C_T A_n\left(\frac{2^j}{n}\right)^{H-\eta},
\]
and summing over \(2^j<L\) gives \(C_T A_n(L/n)^{H-\eta}\), which is bounded by
\(C_TA_n(L/n)^{\beta/2}\), because \(\beta<2H-2\eta\).

By the triangle inequality in \(L^2\), summing all dyadic annuli gives
\[
    \left\|
        \frac{J_m}{m!}\sum_{u\in I}a_uH_m(X_u)
    \right\|_2
    \le
    C_T A_n\left(\frac{L}{n}\right)^{\beta/2}.
\]
Squaring proves the interval estimate.

Now take \(I=(\lfloor ns\rfloor,\lfloor nt\rfloor]\). Then
\(L\le n|t-s|+2\). If \(n|t-s|\ge1\), the preceding estimate gives
\[
    \operatorname{Var}(S_{\lfloor nt\rfloor,m}-S_{\lfloor ns\rfloor,m})
    \le C_TA_n^2|t-s|^\beta .
\]
If \(n|t-s|<1\), the interpolated increment is a linear combination
of at most two one-step terms, with coefficients bounded by
\(n|t-s|\). Applying the preceding interval estimate with \(L=1\)
and using the triangle inequality, we obtain
\[
    \|\mathcal S_{n,m}(t)-\mathcal S_{n,m}(s)\|_2
    \le
    C_T n|t-s|\,A_n n^{-\beta/2}
    \le
    C_T A_n|t-s|^{\beta/2}.
\]
The last inequality follows from
\(\beta<2-\alpha m<2\): if \(x:=n|t-s|<1\), then
\(x\le x^{\beta/2}\). This proves the interpolated estimate and the
lemma.
\end{proof}

\begin{lemma}[Uniform small-kernel fixed-chaos bound]
\label{lem:uniform-small-kernel-chaos}
Let \(K_{n,t}\in\mathfrak H_{\mathbb R}^{\odot m}\), \(0\le t\le T\), be deterministic
kernels. Assume that, for some \(C_T<\infty\), \(\beta>0\), and \(\eta>0\),
\[
    \sup_{0\le t\le T}\|K_{n,t}\|_2\le\eta,
    \qquad
    \|K_{n,t}-K_{n,s}\|_2^2\le C_T|t-s|^\beta
    \qquad 0\le s,t\le T .
\]
Let \(L_n:=2\log\log n\), and let \(n_k\) be an increasing sequence such that
\(L_{n_k}\ge c_0\log k\) for all sufficiently large \(k\), with some \(c_0>0\). Then for every
\(\delta>0\) there exists \(\eta_0=\eta_0(\delta,T,m,\beta,C_T,c_0)>0\) such that, if
\(\eta\le\eta_0\), then
\[
    \sum_{k=1}^\infty
    \mathbb P\left(
        \sup_{0\le t\le T}|I_m(K_{n_k,t};W_{n_k})|>
        \delta L_{n_k}^{m/2}
    \right)<\infty .
\]
Consequently,
\[
    \limsup_{k\to\infty}
    \sup_{0\le t\le T}
    \frac{|I_m(K_{n_k,t};W_{n_k})|}{L_{n_k}^{m/2}}
    \le\delta
    \qquad\text{\rm a.s.}
\]
\end{lemma}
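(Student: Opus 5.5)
We will prove the summability statement; the almost-sure consequence then follows from Borel--Cantelli. The plan is a time-chaining argument in the fixed $m$-th Wiener chaos, combining hypercontractivity with the Hölder $L^2$-increment bound, mirroring the proof of Lemma~\ref{lem:fixed-chaos-maximal-tail} but with the continuous parameter $t$ in place of the discrete index. Fix $k$ and write $Y_t:=I_m(K_{n_k,t};W_{n_k})$. Since $W_{n_k}$ is isonormal over $\mathfrak H_{\mathbb R}$, we have $\|Y_t-Y_s\|_2^2=m!\|K_{n_k,t}-K_{n_k,s}\|_2^2\le m!C_T|t-s|^\beta$ and $\|Y_t\|_2\le\sqrt{m!}\,\eta$. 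Hypercontractivity \cite{Janson1997} upgrades both bounds to $L^p$: $\|Y_t-Y_s\|_p\le C_m p^{m/2}|t-s|^{\beta/2}$ and $\|Y_t\|_p\le C_mp^{m/2}\eta$ for $p\ge2$.

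The key point is that the bounds must involve $\eta$ rather than $C_T$. We introduce dyadic grids $D_j=\{iT2^{-j}\}$ on $[0,T]$ and bound the increments over level-$j$ neighbours in two ways: by the Hölder bound $C(2^{-j})^{\beta/2}$, and, by the triangle inequality, by $2\sqrt{m!}\,\eta$. We therefore use $\min\{2\eta,\,C2^{-j\beta/2}\}$. A continuous version of $Y$ exists, by Kolmogorov's criterion applied to the $L^p$ increment bounds with $p$ large. Chaining then gives
\[
\Bigl\|\sup_{t\le T}|Y_t|\Bigr\|_p\le \|Y_0\|_p+\sum_{j\ge1}(2^{j}T)^{1/p}\,C_mp^{m/2}\min\{\eta,2^{-j\beta/2}\}.
\]
The factor $2^{j/p}$ is harmless as long as $p\ge p_0>2/\beta$. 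With this constraint the sum is at most $C\,p^{m/2}\,\Phi_p(\eta)$, where $\Phi_p(\eta)$ is of order $\eta\cdot j_\eta 2^{j_\eta/p}+2^{-j_\eta(\beta/2-1/p)}$, and $j_\eta$ is chosen so that $2^{-j_\eta\beta/2}\approx\eta$. Since $p\ge p_0$, we get $\Phi_p(\eta)\le C\eta^{\kappa}\log(1/\eta)$ for some $\kappa=\kappa(\beta,p_0)>0$. The result is a moment bound of the form $\|\sup_t|Y_t|\|_p\le C p^{m/2}\epsilon(\eta)$ for $p\ge p_0$, with $\epsilon(\eta)\to0$ as $\eta\to0$ and constants depending only on $T,m,\beta,C_T$.

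From this moment bound, Markov's inequality with $p\asymp(x/\epsilon(\eta))^{2/m}$ yields $\mathbb P(\sup_t|Y_t|>x)\le C_1\exp\{-C_2(x/\epsilon(\eta))^{2/m}\}$. Taking $x=\delta L_{n_k}^{m/2}$ gives the bound $C_1\exp\{-C_2\delta^{2/m}\epsilon(\eta)^{-2/m}L_{n_k}\}\le C_1k^{-C_2c_0\delta^{2/m}\epsilon(\eta)^{-2/m}}$. It remains to choose $\eta_0$ so small that $C_2c_0\delta^{2/m}\epsilon(\eta_0)^{-2/m}>1$; the resulting series is then summable. The choice $p\asymp L_{n_k}$ is large, so the constraint $p\ge p_0$ is automatically satisfied for large $k$, and the finitely many remaining $k$ do not affect summability.

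The main obstacle is the chaining step. Done naively, it produces a constant depending on $C_T$ and not on $\eta$, which would make the supremum bound useless for small kernels. The interpolation between the uniform smallness $\eta$ at coarse scales and the Hölder control at fine scales therefore has to be set up carefully, with the $p$-dependence kept at exactly $p^{m/2}$ so that the tail keeps the $\exp\{-cx^{2/m}\}$ form needed against $L_{n_k}\ge c_0\log k$.
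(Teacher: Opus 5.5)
Your argument is correct. It shares the paper's skeleton: a Kolmogorov--Chentsov continuous modification, dyadic chaining in $t$ with level-$r$ increments controlled by $\min\{2\eta,C2^{-\beta r/2}\}$, hypercontractivity, and $L_{n_k}\ge c_0\log k$ for summability. The difference is where the chaining is carried out.

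The paper chains at the level of tail probabilities. It assigns thresholds $\delta_r=c_\ast\delta(r+1)^{-2}$ with $\sum_r\delta_r\le\delta/2$, and bounds each of the $\lesssim2^r$ level-$r$ increments by the fixed-chaos tail estimate. It then shows directly that the double series over $k$ and $r$ converges, splitting at the level $r_\eta$ where $C2^{-\beta r_\eta/2}\approx2\eta$. The entropy factor $2^r$ is simply absorbed into a Borel--Cantelli sum that only has to be finite.

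You instead chain in $L^p$, using Minkowski plus the $2^{j/p}$ union bound exactly as in Lemma~\ref{lem:fixed-chaos-maximal-tail}. This gives $\|\sup_t|Y_t|\|_p\le Cp^{m/2}\epsilon(\eta)$ uniformly in $p\ge p_0>2/\beta$, and you convert to a tail only once, via Markov. Your route buys a stronger intermediate statement: a maximal inequality $\mathbb P(\sup_t|Y_t|>x)\le C_1\exp\{-C_2(x/\epsilon(\eta))^{2/m}\}$, valid for every $n$ and every $x$ and independent of the sequence $(n_k)$. It comes with an explicit $\epsilon(\eta)\asymp\eta^{\kappa}\log(1/\eta)$, and hence an explicit $\eta_0$. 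The price is the restriction $p\ge p_0$, needed to tame $2^{j/p}$, and the resulting loss of a power of $\eta$, namely $\kappa=1-2/(\beta p_0)<1$. This loss is harmless, since only $\epsilon(\eta)\to0$ matters.

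Two cosmetic points. First, the Hölder bound at level $j$ is $\sqrt{m!\,C_T}\,T^{\beta/2}2^{-j\beta/2}$, so $j_\eta$ depends on $T$ and $C_T$. Second, you need not check monotonicity of $\epsilon(\cdot)$: the hypothesis with $\eta\le\eta_0$ implies it with $\eta_0$, so you can apply your bound at $\eta_0$ directly.
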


\begin{proof}
Choose \(p\ge2\) so large that \(p\beta>2\). By fixed-chaos
hypercontractivity,
\[
    \mathbb E\left|
        I_m(K_{n,t}-K_{n,s};W_n)
    \right|^p
    \le
    C_{m,p}\|K_{n,t}-K_{n,s}\|_2^{p}
    \le
    C_{m,p}C_T^{p/2}|t-s|^{p\beta/2}.
\]
Hence the Kolmogorov--Chentsov theorem gives, for every \(n\), a
continuous modification of
\(t\mapsto I_m(K_{n,t};W_n)\). We work with these modifications below.
Let \((\delta_r)_{r\ge0}\) be a positive summable sequence with
\(\sum_{r\ge0}\delta_r\le\delta/2\); for instance take
\(\delta_r=c_\ast\delta(r+1)^{-2}\). Let \(\mathcal P_r\) be a dyadic grid of \([0,T]\)
with mesh at most \(T2^{-r}\), and choose the grids nested. Every \(t\in[0,T]\) can be
represented by a chain \(t_0,t_1,\ldots\), with \(t_r\in\mathcal P_r\),
\(|t_r-t_{r-1}|\le C_T'2^{-r}\), and \(t_r\to t\).

We use the fixed-chaos tail estimate in the form
\[
    \mathbb P(|I_m(K)|>x)
    \le
    C_m\exp\left\{
        -c_m\left(\frac{x}{\|K\|_2}\right)^{2/m}
    \right\}.
\]
For the coarse grid \(\mathcal P_0\), \(\|K_{n_k,u}\|_2\le\eta\), and hence
\[
    \sum_k\sum_{u\in\mathcal P_0}
    \mathbb P\left(
        |I_m(K_{n_k,u};W_{n_k})|>\frac{\delta}{2}L_{n_k}^{m/2}
    \right)
    \le
    C\sum_k
    \exp\left\{
        -c\,\delta^{2/m}L_{n_k}/\eta^{2/m}
    \right\}.
\]
Since \(L_{n_k}\ge c_0\log k\), this series is finite if \(\eta\) is small enough.

For the chaining increments, the kernel increment between neighboring grid points at level
\(r\) has \(L^2\)-norm at most
\[
    \rho_r:=\min\{2\eta,\ C2^{-\beta r/2}\}.
\]
There are at most \(C2^r\) such increments. Therefore
\[
\begin{aligned}
&\sum_{k=1}^\infty\sum_{r=1}^\infty
2^r
\mathbb P\left(
    |I_m(K_{n_k,t_r}-K_{n_k,t_{r-1}};W_{n_k})|
    >
    \delta_r L_{n_k}^{m/2}
\right)                                                     \\
&\qquad\le
C\sum_{k=1}^\infty\sum_{r=1}^\infty
2^r
\exp\left\{
    -c\,\delta_r^{2/m}L_{n_k}/\rho_r^{2/m}
\right\}.
\end{aligned}
\]
We prove that the last double series is finite when \(\eta\) is sufficiently small. Split the
levels into \(r\le r_\eta\) and \(r>r_\eta\), where \(r_\eta\) is chosen so that
\(C2^{-\beta r_\eta/2}\le2\eta<C2^{-\beta(r_\eta-1)/2}\). For \(r\le r_\eta\),
\(\rho_r\le2\eta\); choosing \(\eta\) small gives
\(c\,\delta_r^{2/m}c_0/\rho_r^{2/m}>3\) for every \(0\le r\le r_\eta\).
This is possible because \(r_\eta=O(\log(1/\eta))\), while
\(\eta^{-1}r_\eta^{-2}\to\infty\). Hence the sum over \(k\) is bounded by
\(\sum_k k^{-3}\), and the additional finite sum over \(r\le r_\eta\) is harmless.

For \(r>r_\eta\), \(\rho_r\le C2^{-\beta r/2}\). Thus
\[
    2^r
    \exp\left\{
        -c\,\delta_r^{2/m}L_{n_k}2^{\beta r/m}
    \right\}
\]
is summable over \(r\) and \(k\), because \(\delta_r\) decays only polynomially while
\(2^{\beta r/m}\) grows exponentially. Hence the double series is finite.

By Borel--Cantelli, the coarse-grid terms and all chaining increments are eventually bounded
by the assigned thresholds. Summing the chain, letting \(r\to\infty\), and using the path
continuity of the chosen modifications, we get
\[
    \sup_{0\le t\le T}|I_m(K_{n_k,t};W_{n_k})|
    \le
    \delta L_{n_k}^{m/2}
\]
eventually almost surely. This proves the lemma.
\end{proof}

\begin{lemma}[Low-frequency asymptotics of the rescaled spectral factor]\label{lem:spectral-factor-rescaled}
Assume
\[
f(\lambda)\sim c_f |\lambda|^{\alpha-1}L_0(1/|\lambda|),
\qquad
\lambda\to0,
\]
where \(c_f>0\), \(0<\alpha<1\), and \(L_0\) is slowly varying at infinity. Let
\(
q(\lambda):=f(\lambda)^{1/2}.
\)
For \(n\ge1\), define
\[
\psi_n(x)
:=
n^{-(1-\alpha)/2}L_0(n)^{-1/2}\,q(x/n)\,\mathbf 1_{\{|x|\le \pi n\}}.
\]
Then for every \(R>0\) and every \(\eta\in(0,\alpha)\) the following hold:

\smallskip

\noindent
\textup{(i)} For every \(\varepsilon\in(0,R)\),
\[
\sup_{\varepsilon\le |x|\le R}
\left|
\psi_n(x)-c_f^{1/2}|x|^{(\alpha-1)/2}
\right|
\longrightarrow0.
\]

\noindent
\textup{(ii)} There exists \(C_{R,\eta}<\infty\) such that for all sufficiently large \(n\),
\[
|\psi_n(x)|
\le
C_{R,\eta}|x|^{(\alpha-1-\eta)/2},
\qquad
0<|x|\le R.
\]
\end{lemma}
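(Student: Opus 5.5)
The plan is to write \(x=\lambda n\) and \(\lambda=x/n\), so that for \(|x|\le R\) and \(n>R/\pi\) the indicator equals one and
\[
\psi_n(x)^2=\frac{f(x/n)}{n^{1-\alpha}L_0(n)}
=\frac{f(x/n)}{c_f|x/n|^{\alpha-1}L_0(n/|x|)}\cdot c_f|x|^{\alpha-1}\cdot\frac{L_0(n/|x|)}{L_0(n)} .
\]
Both parts are then read off from this factorization. The first factor is controlled by the low-frequency asymptotics of \(f\), since \(|x/n|\le R/n\to0\) uniformly. The third factor is a ratio of slowly varying functions.

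For \textup{(i)}, on \(\varepsilon\le|x|\le R\) we have \(x/n\to0\) uniformly. So the first factor tends to \(1\) uniformly by the assumed equivalence \(f(\lambda)\sim c_f|\lambda|^{\alpha-1}L_0(1/|\lambda|)\). The third factor \(L_0(n/|x|)/L_0(n)\) tends to \(1\) uniformly for \(1/|x|\in[1/R,1/\varepsilon]\), by the uniform convergence theorem for slowly varying functions \cite{BGT1987}, which uses measurability of \(L_0\) from (A1). Hence \(\psi_n(x)^2\to c_f|x|^{\alpha-1}\) uniformly on the annulus. Because the limit there is bounded away from \(0\) and \(\infty\), taking square roots preserves uniform convergence, using \(|\sqrt a-\sqrt b|\le|a-b|/\sqrt b\).

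For \textup{(ii)}, the first factor is bounded by \(2\) once \(n\) is large, uniformly in \(0<|x|\le R\). For the third factor I would apply Potter's bound with exponent \(\eta\): for \(n\ge X_\eta R\) and \(n/|x|\ge n/R\ge X_\eta\),
\[
L_0(n/|x|)/L_0(n)\le 2\max\{|x|^{-\eta},|x|^{\eta}\}\le 2\max(1,R^{2\eta})\,|x|^{-\eta},
\]
using \(|x|^{\eta}\le R^{2\eta}|x|^{-\eta}\) when \(1\le|x|\le R\). This gives \(\psi_n(x)^2\le C_{R,\eta}|x|^{\alpha-1-\eta}\), and taking square roots yields the claim.

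The one subtle point is that Potter's bound needs both arguments beyond the threshold \(X_\eta\). This holds here since \(n/|x|\ge n/R\), so no Potter-threshold remainder arises. I expect no real obstacle beyond the uniformity in \(x\) near \(0\), and that is exactly what this handles. The condition \(\eta<\alpha\) is not needed for the bound itself; it only ensures that the majorant \(|x|^{(\alpha-1-\eta)/2}\) is square-integrable near \(0\), as required later for the \(L^2\) convergence of the rescaled spectral factor on compact boxes.
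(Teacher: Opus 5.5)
Your proposal is correct and follows the paper's proof. Both factor \(\psi_n\) into the spectral-asymptotic factor, the power \(c_f^{1/2}|x|^{(\alpha-1)/2}\), and the ratio \(L_0(n/|x|)/L_0(n)\), then use the uniform convergence theorem for (i) and Potter's bound for (ii); your two additions make the argument slightly more complete than the written proof, namely that the first factor tends to \(1\) uniformly because \(|x/n|\le R/n\) (the paper only states \(r_n(x)\to0\) pointwise), and the explicit check that \(n/|x|\ge n/R\) clears the Potter threshold.
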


\begin{proof}
For \(x\neq0\),
\[
q(x/n)
=
c_f^{1/2}|x/n|^{(\alpha-1)/2}L_0(n/|x|)^{1/2}(1+r_n(x)),
\]
where \(r_n(x)\to0\) for each fixed \(x\neq0\). Hence
\[
\psi_n(x)
=
c_f^{1/2}|x|^{(\alpha-1)/2}
\left(\frac{L_0(n/|x|)}{L_0(n)}\right)^{1/2}
(1+r_n(x)).
\]
If \(\varepsilon\le |x|\le R\), then \(1/|x|\in[1/R,1/\varepsilon]\), and the uniform convergence theorem for slowly varying functions gives
\[
\sup_{\varepsilon\le |x|\le R}
\left|
\frac{L_0(n/|x|)}{L_0(n)}-1
\right|
\to0.
\]
This proves \textup{(i)}.

For \textup{(ii)}, Potter's bound yields
\[
\frac{L_0(n/|x|)}{L_0(n)}
\le
C_{R,\eta}|x|^{-\eta},
\qquad
0<|x|\le R,
\]
for all sufficiently large \(n\). Substituting into the previous display gives the result.
\end{proof}

\begin{corollary}[Product asymptotics on compact boxes]\label{cor:product-compact-box}
For \(x=(x_1,\dots,x_m)\in\mathbb R^m\), define
\[
\Psi_n(x):=\prod_{j=1}^m \psi_n(x_j),
\qquad
\Psi(x):=c_f^{m/2}\prod_{j=1}^m |x_j|^{(\alpha-1)/2}.
\]
Let
\(
D_R:=[-R,R]^m,
\quad
K_{\varepsilon,R}:=\{x\in D_R:\min_{1\le j\le m}|x_j|\ge\varepsilon\}.
\)
Then for every \(R>0\), every \(\varepsilon\in(0,R)\), and every \(\eta\in(0,\alpha)\),

\begin{itemize}
\item[(a)]
\(
\sup_{x\in K_{\varepsilon,R}}
|\Psi_n(x)-\Psi(x)|
\longrightarrow0;
\)

\item[(b)] for all sufficiently large \(n\),
\(
|\Psi_n(x)|^2
\le
C_{R,\eta}\prod_{j=1}^m |x_j|^{\alpha-1-\eta},
\qquad
x\in D_R;
\)

\item[(c)]
\(
\|\Psi_n-\Psi\|_{L^2(D_R)}
\longrightarrow0.
\)
\end{itemize}
\end{corollary}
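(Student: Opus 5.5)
The three claims follow by applying Lemma~\ref{lem:spectral-factor-rescaled} coordinatewise and then dominated convergence.

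\emph{Part (a).} On \(K_{\varepsilon,R}\) every coordinate satisfies \(\varepsilon\le|x_j|\le R\). Part (i) of Lemma~\ref{lem:spectral-factor-rescaled} gives uniform convergence \(\psi_n(x_j)\to c_f^{1/2}|x_j|^{(\alpha-1)/2}\) on this range. Both the limit and (for large \(n\)) the prelimit factors are uniformly bounded there, the limit by \(c_f^{1/2}\varepsilon^{(\alpha-1)/2}\). We then telescope the product:
\[
\prod_j\psi_n(x_j)-\prod_j\psi(x_j)=\sum_{k=1}^m\Bigl(\prod_{j<k}\psi_n(x_j)\Bigr)\bigl(\psi_n(x_k)-\psi(x_k)\bigr)\Bigl(\prod_{j>k}\psi(x_j)\Bigr).
\]
Each summand is a bounded factor times a uniformly vanishing one, so the supremum over \(K_{\varepsilon,R}\) tends to zero.

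\emph{Part (b).} We square part (ii) of Lemma~\ref{lem:spectral-factor-rescaled} in each coordinate and multiply, enlarging the constant to \(C_{R,\eta}^m\). Points with some \(x_j=0\) form a null set and may be ignored, or the bound may be read as \(+\infty\) there.

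\emph{Part (c).} Fix \(\eta\in(0,\alpha)\). Then \(\alpha-1-\eta>-1\), so \(g(x):=\prod_j|x_j|^{\alpha-1-\eta}\) is integrable on \(D_R\). By (b), and since \(|\Psi|^2\le c_f^m\prod_j|x_j|^{\alpha-1}\le C_R\,g\) on \(D_R\), we get
\[
|\Psi_n-\Psi|^2\le 2|\Psi_n|^2+2|\Psi|^2\le C\,g
\]
for all large \(n\). For almost every \(x\in D_R\) all coordinates are nonzero, so \(x\in K_{\varepsilon,R}\) for some \(\varepsilon>0\), and (a) gives pointwise convergence \(\Psi_n(x)\to\Psi(x)\). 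Dominated convergence then yields \(\|\Psi_n-\Psi\|_{L^2(D_R)}\to0\).

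The only point needing care is the integrable majorant near the coordinate hyperplanes \(\{x_j=0\}\). This is exactly what Potter's bound in part (ii), with \(\eta<\alpha\), supplies.
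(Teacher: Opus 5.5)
Your proof is correct and follows the same route as the paper. Part (a) uses Lemma~\ref{lem:spectral-factor-rescaled}(i) coordinatewise, and your telescoping identity makes explicit a step the paper leaves implicit. Part (b) multiplies the squared one-dimensional bounds of Lemma~\ref{lem:spectral-factor-rescaled}(ii). Part (c) is dominated convergence with the integrable majorant $\prod_j|x_j|^{\alpha-1-\eta}$ on $D_R$. One cosmetic slip: squaring and multiplying gives the constant $C_{R,\eta}^{2m}$, not $C_{R,\eta}^{m}$, which is immaterial after renaming.
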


\begin{proof}
Part (a) follows from Lemma~\ref{lem:spectral-factor-rescaled}. Part (b) follows by multiplying the one-dimensional bound from Lemma~\ref{lem:spectral-factor-rescaled}\textup{(ii)}. Since \(\alpha-\eta>0\), the product on the right-hand side is integrable on \(D_R\). Hence dominated convergence yields (c).
\end{proof}

\begin{lemma}[Weighted Riemann sums starting at the origin]
\label{lem:weighted-riemann-sum-full}
Assume \((\ref{A4})\) and \(2d+\alpha m<1\). In particular \(d<1\). Let
\(T<\infty\) and \(B<\infty\). Then
\[
    \sup_{0\le t\le T}\sup_{|\theta|\le B}
    \left|
        \frac{1}{n^{1-d}\ell_a(n)}
        \sum_{u=1}^{\lfloor nt\rfloor}
        a_u e^{iu\theta/n}
        -
        \int_0^t y^{-d}e^{iy\theta}\,dy
    \right|
    \longrightarrow0 .
\]
\end{lemma}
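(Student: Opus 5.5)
We will compare the discrete sum with the integral cell by cell, after splitting $[0,t]$ into a small initial piece $[0,\varepsilon]$ and a bulk piece $[\varepsilon,t]$. Put $\Phi_n(t,\theta):=(n^{1-d}\ell_a(n))^{-1}\sum_{u\le\lfloor nt\rfloor}a_ue^{iu\theta/n}$ and $\Phi(t,\theta):=\int_0^ty^{-d}e^{iy\theta}\,dy$. Since $d<1$, $\Phi$ is jointly continuous and $|\Phi(t,\theta)-\Phi(s,\theta)|\le\int_s^t y^{-d}dy$. This gives uniform equicontinuity in $t$, uniformly in $|\theta|\le B$.

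\emph{Step 1: the initial segment.} Fix $\varepsilon\in(0,1)$. By Potter's bound, for small $\eta>0$ with $d+\eta<1$, we have $|a_u|/\ell_a(n)\le C_\eta n^{-d}(u/n)^{-d-\eta}$ for $1\le u\le \varepsilon n$, after absorbing bounded $u$ into the constant by the Potter convention. Hence
\[
\sup_{|\theta|\le B}\bigl|\Phi_n(\varepsilon,\theta)\bigr|\le \frac{C_\eta}{n}\sum_{u\le\varepsilon n}(u/n)^{-d-\eta}\le C_\eta\,\varepsilon^{1-d-\eta},
\]
and the same bound holds for every $t\le\varepsilon$. Also $|\Phi(t,\theta)|\le\varepsilon^{1-d}/(1-d)$ for $t\le\varepsilon$. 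Both contributions are therefore small uniformly once $\varepsilon$ is small.

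\emph{Step 2: the bulk.} For $\varepsilon n<u\le Tn$, the uniform convergence theorem for slowly varying functions gives $\sup_{\varepsilon n\le u\le Tn}|\ell_a(u)/\ell_a(n)-1|\to0$. Thus
\[
\frac{a_u}{n^{-d}\ell_a(n)}=(u/n)^{-d}\bigl(1+o(1)\bigr)
\]
uniformly in that range. Consequently $\Phi_n(t,\theta)-\Phi_n(\varepsilon,\theta)=n^{-1}\sum_{\varepsilon n<u\le nt}(u/n)^{-d}e^{iu\theta/n}+o(1)$, uniformly in $t\le T$ and $|\theta|\le B$, because $n^{-1}\sum_{u\le Tn}(u/n)^{-d}\mathbf 1_{u>\varepsilon n}$ is bounded.

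The function $g_\theta(y):=y^{-d}e^{iy\theta}$ is Lipschitz on $[\varepsilon,T]$ with a constant depending only on $\varepsilon,T,B,d$. Therefore the Riemann-sum error over the cells $((u-1)/n,u/n]$ is $O_{\varepsilon,T,B}(1/n)$. The boundary cells at $\varepsilon$ and at $t$ contribute at most $O(\varepsilon^{-|d|}(1+T^{|d|})/n)$. This gives
\[
\sup_{\varepsilon\le t\le T}\sup_{|\theta|\le B}\Bigl|\Phi_n(t,\theta)-\Phi_n(\varepsilon,\theta)-\int_\varepsilon^t g_\theta(y)\,dy\Bigr|\to0 .
\]

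\emph{Step 3: combining.} Adding Steps 1 and 2 yields $\limsup_n\sup_{t\le T,|\theta|\le B}|\Phi_n-\Phi|\le C(\varepsilon^{1-d-\eta}+\varepsilon^{1-d})$, and we then let $\varepsilon\downarrow0$.

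The only delicate point is Step 1 when $d>0$, because the weights blow up near the origin. There the Potter bound has to be applied with the exponent $-d-\eta$ and $1-d-\eta>0$, which is guaranteed by $2d+\alpha m<1$ (hence $d<1/2$). When $d\le0$, Step 1 is trivial since $|a_u|\le C n^{-d}\ell_a(n)\cdot(u/n)^{-d-\eta}$ with a bounded factor. Assumption (\ref{A5}) is not needed for this statement; it would only be needed for uniformity in unbounded $\theta$, which is handled elsewhere.
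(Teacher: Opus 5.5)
Your proposal is correct and follows the paper's proof essentially step for step. Both arguments split the sum at $\varepsilon n$, use Potter bounds to make the initial block of order $\varepsilon^{1-d-\eta}$, and treat the bulk on $[\varepsilon,T]$ by the uniform convergence theorem plus a Riemann-sum comparison before letting $\varepsilon\downarrow0$. Your explicit Lipschitz and boundary-cell estimates simply make quantitative the step the paper calls a ``standard Riemann sum''.
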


\begin{proof}
Fix \(\varepsilon\in(0,1)\). We first control the initial part
\(0\le u\le \varepsilon n\). Since \(d<1\), Karamata's theorem and Potter's
bounds imply, uniformly in \(0\le t\le\varepsilon\),
\(
    \frac{1}{n^{1-d}\ell_a(n)}
    \sum_{u=1}^{\lfloor nt\rfloor} a_u
    \le
    C\varepsilon^{1-d-\eta}
\)
for every sufficiently small \(\eta>0\). Also
\(
    \sup_{0\le t\le\varepsilon}\sup_{|\theta|\le B}
    \left|
        \int_0^t y^{-d}e^{iy\theta}\,dy
    \right|
    \le
    C\varepsilon^{1-d}.
\)
Thus the contribution of the interval \([0,\varepsilon]\) is uniformly small.

It remains to consider \(t\in[\varepsilon,T]\). On the interval
\([\varepsilon,T]\), the uniform convergence theorem for slowly varying
functions gives
\(
    \sup_{\varepsilon\le y\le T}
    \left|
        \frac{\ell_a(ny)}{\ell_a(n)}-1
    \right|
    \longrightarrow0.
\) \\
Therefore
\(
    \frac{1}{n^{1-d}\ell_a(n)}
    \sum_{u=\lfloor\varepsilon n\rfloor+1}^{\lfloor nt\rfloor}
    a_u e^{iu\theta/n}
\)
is a standard Riemann sum for
\(
    \int_\varepsilon^t y^{-d}e^{iy\theta}\,dy,
\)
uniformly in \(t\in[\varepsilon,T]\) and \(|\theta|\le B\), because the function
\((y,\theta)\mapsto y^{-d}e^{iy\theta}\) is continuous on
\([\varepsilon,T]\times[-B,B]\).

Combining the two estimates and then letting \(\varepsilon\downarrow0\) proves
the claim.
\end{proof}

\begin{corollary}[Compact-frequency uniformity for full weighted sums]
\label{cor:compact-frequency-uniformity-full}
For every \(T<\infty\) and \(R<\infty\),
\[
    \sup_{0\le t\le T}\sup_{x\in[-R,R]^m}
    \left|
        \frac{1}{n^{1-d}\ell_a(n)}
        \sum_{u=1}^{\lfloor nt\rfloor}
        a_u e^{iu(x_1+\cdots+x_m)/n}
        -
        \int_0^t y^{-d}e^{iy(x_1+\cdots+x_m)}\,dy
    \right|
    \longrightarrow0 .
\]
\end{corollary}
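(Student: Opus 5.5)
The corollary follows from Lemma~\ref{lem:weighted-riemann-sum-full} once the frequency variable is collapsed. Put \(\theta:=x_1+\cdots+x_m\). For \(x\in[-R,R]^m\) we have \(|\theta|\le mR\). Both the prelimit sum and the limiting integral depend on \(x\) only through \(\theta\). Hence the supremum over \(x\in[-R,R]^m\) is bounded by the supremum over \(|\theta|\le B\) with \(B:=mR\):
\[
\sup_{0\le t\le T}\sup_{x\in[-R,R]^m}\bigl|\cdots\bigr|
\le
\sup_{0\le t\le T}\sup_{|\theta|\le mR}
\left|
\frac{1}{n^{1-d}\ell_a(n)}\sum_{u=1}^{\lfloor nt\rfloor}a_u e^{iu\theta/n}
-\int_0^t y^{-d}e^{iy\theta}\,dy
\right|.
\]
Lemma~\ref{lem:weighted-riemann-sum-full}, applied with \(B=mR\), shows that the right-hand side tends to zero. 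That lemma uses only \((\ref{A4})\) and \(2d+\alpha m<1\), both of which are in force.

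The reduction itself is trivial, so any real difficulty sits inside Lemma~\ref{lem:weighted-riemann-sum-full}, which may be assumed. If I were to check that step, the delicate point is joint uniformity in \((t,\theta)\) near \(y=0\) when \(d>0\), where \(y^{-d}\) is singular. That lemma handles it by splitting at \(\varepsilon n\): Karamata bounds control the initial segment, and equicontinuity of \((y,\theta)\mapsto y^{-d}e^{iy\theta}\) on \([\varepsilon,T]\times[-B,B]\) controls the rest. Because the bounds there depend on \(\theta\) only through \(B\), nothing further is needed here.
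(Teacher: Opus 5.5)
Your proposal is correct and is exactly the paper's argument: set \(\theta=x_1+\cdots+x_m\), note that \(|\theta|\le mR\) on \([-R,R]^m\), and apply Lemma~\ref{lem:weighted-riemann-sum-full} with \(B=mR\). Your description of where the real work lies, namely the split at \(\varepsilon n\) inside that lemma, is accurate, though the corollary itself does not need it.
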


\begin{proof}
Apply Lemma~\ref{lem:weighted-riemann-sum-full} with
\(\theta=x_1+\cdots+x_m\). If \(x\in[-R,R]^m\), then
\(|\theta|\le mR\).
\end{proof}

\begin{lemma}[The limiting kernels belong to \(L^2\) and have vanishing tails]
\label{lem:limit-kernel-L2}
Let \(T<\infty\), and define
\[
Q_t(x)
=
C_{m,\alpha,d}
\left(
    \int_0^t y^{-d}e^{iy(x_1+\cdots+x_m)}\,dy
\right)
\prod_{j=1}^m |x_j|^{(\alpha-1)/2},
\qquad 0\le t\le T.
\]
Assume
\(
    0<\alpha<1,\qquad \alpha m<1,\qquad 2d+\alpha m<2 .
\)
Then:

\begin{itemize}
\item[(a)] \(Q_t\in L^2(\mathbb R^m)\) for every \(t\in[0,T]\);
\item[(b)] the map \(t\mapsto Q_t\) is continuous from \([0,T]\) into
\(L^2(\mathbb R^m)\);
\item[(c)]
\(
    \lim_{R\to\infty}
    \sup_{0\le t\le T}
    \int_{([-R,R]^m)^c}|Q_t(x)|^2\,dx
    =0 .
\)
Moreover, for \(0\le s\le t\le T\),
\[
    \|Q_t-Q_s\|_{L^2(\mathbb R^m)}^2
    =
    \widetilde C_{m,\alpha,d}
    \int_s^t\int_s^t
    y^{-d}z^{-d}|y-z|^{-\alpha m}\,dy\,dz ,
\]
where \(0<\widetilde C_{m,\alpha,d}<\infty\).

Here
\(
\widetilde C_{m,\alpha,d}
:=
C_{m,\alpha,d}^2 c_\alpha^m,
\quad
c_\alpha=2\Gamma(\alpha)\cos\left(\frac{\pi\alpha}{2}\right).
\)
In particular, \(\widetilde C_{m,\alpha,d}>0\), although
\(C_{m,\alpha,d}\) itself is signed when \(J_m<0\).

\item[(d)] There exist $C_T<\infty$ and
$\beta:=\min\{2H,\,2-\alpha m\}>0$ such that
\[
    \|Q_t-Q_s\|_{L^2(\mathbb R^m)}^2 \le C_T\,|t-s|^{\beta},
    \qquad 0\le s\le t\le T .
\]
Under the stronger condition $2d+\alpha m<1$, one has $\beta>1$.
\end{itemize}

With the normalization chosen above,
\(
\widetilde C_{m,\alpha,d}
=
\frac{1}{m! I_{m,\alpha,d}},
\)
where
\[
I_{m,\alpha,d}
=
\int_0^1\int_0^1
y^{-d}z^{-d}|y-z|^{-\alpha m}\,dy\,dz .
\]

\end{lemma}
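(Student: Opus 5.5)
The plan is to compute $\|Q_t-Q_s\|_2^2$ exactly, as a time-domain double integral, and to read off (a)--(d) from that identity.

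\emph{Step 1 (the identity).} Fix $0\le s\le t\le T$ and $\varepsilon>0$, and insert the regularizing factor $\prod_j e^{-\varepsilon|x_j|}$. Then
\[
|Q_t-Q_s|^2 e^{-\varepsilon\sum_j|x_j|}
=C_{m,\alpha,d}^2\int_s^t\!\!\int_s^t y^{-d}z^{-d}\prod_{j=1}^m|x_j|^{\alpha-1}e^{i(y-z)x_j-\varepsilon|x_j|}\,dy\,dz .
\]
This is absolutely integrable in $(x,y,z)$: the $x_j$-integrals converge since $\alpha>0$, and the $y,z$-integrals converge since $d<1$. So Fubini applies and the $x$-integral factorizes into $m$ one-dimensional integrals $g_\varepsilon(y-z)$, where
\[
g_\varepsilon(u)=\int_{\mathbb R}|x|^{\alpha-1}e^{iux-\varepsilon|x|}\,dx=2\Gamma(\alpha)\,\mathrm{Re}\,(\varepsilon-iu)^{-\alpha}.
\]
Two facts about $g_\varepsilon$ are needed:
\begin{itemize}
\item $g_\varepsilon(u)\to c_\alpha|u|^{-\alpha}$ for $u\ne0$;
\item $|g_\varepsilon(u)|\le 2\Gamma(\alpha)|u|^{-\alpha}$, because $|\varepsilon-iu|\ge|u|$.
\end{itemize}
The dominating function $y^{-d}z^{-d}|y-z|^{-\alpha m}$ is integrable on $[s,t]^2$, by $\alpha m<1$ and $d<1$ (the case $d<0$ is trivial). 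On the left side, monotone convergence in $\varepsilon\downarrow0$ yields $\int|Q_t-Q_s|^2$. Together this gives the identity in (c), with $\widetilde C_{m,\alpha,d}=C_{m,\alpha,d}^2c_\alpha^m$. Here $c_\alpha>0$ since $0<\alpha<1$.

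\emph{Step 2 (parts (a), (b), (d) and the normalization).} Part (a) is the case $s=0$ of the identity: the double integral over $[0,t]^2$ equals $t^{2-2d-\alpha m}I_{m,\alpha,d}$ by the scaling $y\mapsto ty$. It is finite since $2d+\alpha m<2$; this is where that hypothesis enters.

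For (d), split $[s,t]$ relative to $y^{-d}$. If $d\le0$, then $y^{-d}\le T^{-d}$, and the integral is at most $C_T|t-s|^{2-\alpha m}$. If $d>0$, there are two cases:
\begin{itemize}
\item If $s\le t-s$, bound by the $[0,t]^2$ integral $\asymp t^{2H}\le (2|t-s|)^{2H}$.
\item If $s>t-s$, then $y^{-d}\le s^{-d}\le |t-s|^{-d}$, giving $\le |t-s|^{2-2d-\alpha m}$ after integration.
\end{itemize}
In both cases the bound is $\le C_T|t-s|^{\min\{2H,2-\alpha m\}}$ once $|t-s|\le T$. Part (b) follows from (d). If $2d+\alpha m<1$, then $2H>1$ and $2-\alpha m>1$, so $\beta>1$.

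The normalization follows by substituting $C_{m,\alpha,d}^2=(m!\,c_\alpha^mI_{m,\alpha,d})^{-1}$ from the definition, which gives $\widetilde C=1/(m!\,I_{m,\alpha,d})$.

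\emph{Step 3 (uniform tails, the main obstacle).} Since $\{Q_t:t\in[0,T]\}$ is the continuous image of a compact interval under (b), it is compact in $L^2$. A compact family in $L^2$ is uniformly tight: choose a finite $\delta$-net $Q_{t_1},\dots,Q_{t_k}$, pick $R$ with each $\|Q_{t_i}\mathbf 1_{([-R,R]^m)^c}\|_2<\delta$, and use the triangle inequality. This gives (c) without any direct estimate of the oscillatory kernel at infinity.

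The only delicate point in the whole proof is the justification of Step 1. The pointwise formula $\int|x|^{\alpha-1}e^{iux}dx=c_\alpha|u|^{-\alpha}$ is only conditionally convergent, so the $\varepsilon$-regularization is essential. Care is needed that monotone convergence is applied on the $x$-side, where the integrand is nonnegative, and dominated convergence on the $(y,z)$-side, using the uniform bound on $g_\varepsilon$.
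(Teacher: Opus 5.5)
Your proposal is correct and follows essentially the same route as the paper: the $e^{-\varepsilon|x_j|}$-regularization, Fubini, the Riesz-kernel limit giving the exact identity, compactness of $\{Q_t\}$ for the uniform tails, and monotonicity of $y^{-d}$ for (d). The paper handles $d\ge0$ in (d) by translating $[s,t]^2$ to $[0,t-s]^2$ rather than by your case split. Your explicit formula $g_\varepsilon(u)=2\Gamma(\alpha)\operatorname{Re}(\varepsilon-iu)^{-\alpha}$, its bound $2\Gamma(\alpha)|u|^{-\alpha}$, and the monotone-convergence step on the $x$-side make rigorous what the paper only calls standard. One small correction: for $s=0$ the dominating function $y^{-d}z^{-d}|y-z|^{-\alpha m}$ is integrable near the corner $(0,0)$ only because $2d+\alpha m<2$, not merely because $d<1$ and $\alpha m<1$, so that hypothesis is already needed in Step 1.
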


\begin{proof}
For \(0\le s\le t\le T\), set
\(
    \Delta_{s,t}(x)
    :=
    Q_t(x)-Q_s(x).
\)
We prove the identity for \(\|\Delta_{s,t}\|_2^2\). Let
\(\chi_\varepsilon(x):=e^{-\varepsilon |x|}\), and define the regularized kernel
\[
    \Delta^{(\varepsilon)}_{s,t}(x)
    :=
    C_{m,\alpha,d}
    \left(
        \int_s^t y^{-d}e^{iy(x_1+\cdots+x_m)}\,dy
    \right)
    \prod_{j=1}^m |x_j|^{(\alpha-1)/2}\chi_\varepsilon(x_j).
\]
For fixed \(\varepsilon>0\), all integrations below are absolutely justified by
Fubini's theorem. Hence
\[
\begin{aligned}
    \|\Delta^{(\varepsilon)}_{s,t}\|_2^2
    &=
    C_{m,\alpha,d}^2
    \int_s^t\int_s^t
    y^{-d}z^{-d}
    \prod_{j=1}^m
    \left[
        \int_{\mathbb R}
        e^{i(y-z)x}
        |x|^{\alpha-1}e^{-2\varepsilon |x|}\,dx
    \right]
    dy\,dz .
\end{aligned}
\]
The one-dimensional Fourier transform of the Riesz kernel
(\cite{SamkoKilbasMarichev}) gives, in the sense of tempered distributions,
\[
    \int_{\mathbb R} e^{iux}|x|^{\alpha-1}\,dx
    =
    c_\alpha |u|^{-\alpha},
    \qquad
    c_\alpha=2\Gamma(\alpha)\cos\left(\frac{\pi\alpha}{2}\right)>0 .
\]
The regularized transforms converge locally uniformly away from \(u=0\) and in
the distributional sense to \(c_\alpha |u|^{-\alpha}\). Since
\(\alpha m<1\), the function \(|y-z|^{-\alpha m}\) is integrable on
\([s,t]^2\). Therefore, by standard dominated convergence for the regularized
Riesz kernels,
\[
    \lim_{\varepsilon\downarrow0}
    \|\Delta^{(\varepsilon)}_{s,t}\|_2^2
    =
    C_{m,\alpha,d}^2 c_\alpha^m
    \int_s^t\int_s^t
    y^{-d}z^{-d}|y-z|^{-\alpha m}\,dy\,dz .
\]
Thus the asserted identity holds with
\(
    \widetilde C_{m,\alpha,d}
    :=
    C_{m,\alpha,d}^2c_\alpha^m .
\)

Taking \(s=0\), we get
\[
    \|Q_t\|_2^2
    =
    \widetilde C_{m,\alpha,d}
    \int_0^t\int_0^t
    y^{-d}z^{-d}|y-z|^{-\alpha m}\,dy\,dz .
\]
The last integral is finite because the diagonal singularity is integrable
under \(\alpha m<1\), while the singularity at the origin is integrable under
\(2d+\alpha m<2\). This proves part \textup{(a)}.

The same identity gives
\[
    \|Q_t-Q_s\|_2^2
    =
    \widetilde C_{m,\alpha,d}
    \int_s^t\int_s^t
    y^{-d}z^{-d}|y-z|^{-\alpha m}\,dy\,dz
    \longrightarrow0
    \qquad (t\to s),
\]
including the case \(s=0\). This proves part \textup{(b)}.

Finally, the set
\(
    \mathcal Q_T:=\{Q_t:0\le t\le T\}
\)
is compact in \(L^2(\mathbb R^m)\), because it is the continuous image of the
compact interval \([0,T]\). For every fixed \(Q_t\), the \(L^2\)-tail over
\(([-R,R]^m)^c\) tends to zero as \(R\to\infty\). Compactness upgrades this
pointwise convergence to uniform convergence in \(t\), proving part
\textup{(c)}.

It remains to prove \textup{(d)}. If $d\ge0$, the map $y\mapsto y^{-d}$
is nonincreasing, so translating the square $[s,t]^2$ to $[0,t-s]^2$
increases the integrand pointwise while leaving $|y-z|^{-\alpha m}$
unchanged; hence, by homogeneity,
\[
    \int_s^t\!\!\int_s^t y^{-d}z^{-d}|y-z|^{-\alpha m}\,dy\,dz
    \le
    \int_0^{t-s}\!\!\int_0^{t-s} y^{-d}z^{-d}|y-z|^{-\alpha m}\,dy\,dz
    =
    I_{m,\alpha,d}\,(t-s)^{2H}.
\]
If $d<0$, then $y^{-d}z^{-d}\le T^{-2d}$ on $[0,T]^2$ and
$\int_s^t\int_s^t|y-z|^{-\alpha m}\,dy\,dz\le C(t-s)^{2-\alpha m}$.
In either case, after enlarging $C_T$ if necessary, the preceding
estimate holds with
\(
    \beta:=\min\{2H,\,2-\alpha m\}>0.
\)
Under the stronger condition $2d+\alpha m<1$, one moreover has
$2H>1$, and hence $\beta>1$.\end{proof}

\begin{lemma}[Compactness of the limiting cluster set]
\label{lem:KT-compact}
The set \(\mathcal K_T\) is compact in \(C[0,T]\). 
\end{lemma}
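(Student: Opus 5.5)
We will realize \(\mathcal K_T\) as the continuous image of a compact set. Let \(B\) denote the closed unit ball of \(\mathfrak H_{\mathbb R}\) endowed with the \emph{weak} topology. Since \(\mathfrak H_{\mathbb R}\) is a separable real Hilbert space, \(B\) is weakly compact and the weak topology on it is metrizable. The plan is to show that the map
\[
\Phi:B\to C[0,T],\qquad \Phi(\xi)(t):=\langle Q_t,\xi^{\otimes m}\rangle_{\mathfrak H_{\mathbb R}^{\otimes m}},
\]
is well defined and continuous from the weak topology on \(B\) to the uniform topology on \(C[0,T]\). Then \(\mathcal K_T=\Phi(B)\) is compact.

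\textbf{Well-definedness.} For fixed \(\xi\in B\), the function \(\Phi(\xi)\) is continuous in \(t\). Indeed,
\[
|\Phi(\xi)(t)-\Phi(\xi)(s)|\le\|Q_t-Q_s\|_2\,\|\xi\|^m\le\|Q_t-Q_s\|_2,
\]
and the right-hand side tends to zero by Lemma~\ref{lem:limit-kernel-L2}(b). Moreover, Lemma~\ref{lem:limit-kernel-L2}(d) shows that this modulus is uniform over \(\xi\in B\), namely \(\le C_T^{1/2}|t-s|^{\beta/2}\). In addition, \(\sup_t\|Q_t\|_2<\infty\). Hence \(\mathcal K_T\) is uniformly bounded and equicontinuous. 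Arzelà--Ascoli then gives relative compactness directly, so the only remaining issue is closedness.

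\textbf{Continuity and closedness.} Let \(\xi_j\rightharpoonup\xi\) weakly in \(B\). For a finite-rank kernel \(K=\sum_{i}c_i\,g_{i1}\otimes\cdots\otimes g_{im}\) we have \(\langle K,\xi_j^{\otimes m}\rangle=\sum_i c_i\prod_l\langle g_{il},\xi_j\rangle\to\langle K,\xi^{\otimes m}\rangle\), because each factor converges. For general \(Q_t\) we use the approximation argument already employed in Proposition~\ref{prop:functional-upper-inclusion}. The family \(\{Q_t:0\le t\le T\}\) is compact in \(L^2\) by Lemma~\ref{lem:limit-kernel-L2}(b), so there is a finite-dimensional \(E\) such that \(\sup_t\|Q_t-P_E^{\odot m}Q_t\|_2<\varepsilon\). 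On \(B\) the error terms contribute at most \(2\varepsilon\). For the projected kernels, \(\langle P_E^{\odot m}Q_t,\xi_j^{\otimes m}\rangle=\langle P_E^{\odot m}Q_t,(P_E\xi_j)^{\otimes m}\rangle\). Here \(P_E\xi_j\to P_E\xi\) in norm, since \(E\) is finite-dimensional. Together with \(\sup_t\|Q_t\|_2<\infty\) and the telescoping bound \(\|a^{\otimes m}-b^{\otimes m}\|\le m\|a-b\|\) on the unit ball, this gives convergence uniformly in \(t\). Letting \(\varepsilon\downarrow0\) yields \(\|\Phi(\xi_j)-\Phi(\xi)\|_\infty\to0\).

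\textbf{Conclusion and main obstacle.} Suppose \(\Phi(\xi_j)\to g\) in \(C[0,T]\). Extract a weakly convergent subsequence \(\xi_{j'}\rightharpoonup\xi\in B\); the weak limit stays in \(B\) because the norm is weakly lower semicontinuous. The continuity just established gives \(g=\Phi(\xi)\in\mathcal K_T\), so \(\mathcal K_T\) is closed and hence compact. The one step that needs care is this weak-to-uniform continuity: norm continuity alone would not suffice, because \(B\) is not norm compact. Uniformity in \(t\) is exactly where the \(L^2\)-compactness of \(\{Q_t\}\) is used, through the finite-rank reduction.
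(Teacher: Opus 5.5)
Your proof is correct and takes essentially the same route as the paper: weak compactness of the unit ball, plus weak-to-uniform continuity of $\Phi$ obtained from a uniform finite-rank approximation $P_E^{\odot m}Q_t$ of the $L^2$-compact family $\{Q_t:0\le t\le T\}$ (your Arzel\`a--Ascoli step is redundant but harmless). The only difference is in the details: the paper proves that such an $E$ exists by an explicit $\varepsilon/3$-net argument, whereas you only assert it by pointing to Proposition~\ref{prop:functional-upper-inclusion}, which itself merely asserts it and whose proof relies on this very lemma; on the other hand, you spell out the finite-rank step (via $P_E\xi_j\to P_E\xi$ in norm and the telescoping bound) more explicitly than the paper does.
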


\begin{proof}
The closed unit ball of
\(\mathfrak H_{\mathbb R}\) is weakly compact. \\ Let
\(
    \Phi(\xi)(t):=\langle Q_t,\xi^{\otimes m}\rangle_{\mathfrak H_{\mathbb R}^{\otimes m}},
    \quad 0\le t\le T .
\)
We claim that \(\Phi\) is weak-to-uniform continuous on the unit ball. Let
\(\xi_n\rightharpoonup\xi\) weakly in \(\mathfrak H_{\mathbb R}\), with
\(\sup_n\|\xi_n\|\le1\). 
The \(L^2\)-continuity of \(t\mapsto Q_t\) follows from the identity
\(
\|Q_t-Q_s\|_2^2
=
C\int_s^t\int_s^t y^{-d}z^{-d}|y-z|^{-\alpha m}\,dy\,dz,
\)
and the integrability of the kernel on \([0,T]^2\) (the same integral as in
Lemma~\ref{lem:two-dimensional-rv-riemann}, with $\beta=\alpha m$).
The \(L^2\)-continuity of \(t\mapsto Q_t\) implies that
\(\mathcal Q_T:=\{Q_t:0\le t\le T\}\) is compact in
\(\mathfrak H_{\mathbb R}^{\odot m}\). We claim that, for every
\(\varepsilon>0\), there is a finite-dimensional subspace
\(E\subset\mathfrak H_{\mathbb R}\) such that
\(
    \sup_{0\le t\le T}
    \|Q_t-P_E^{\odot m}Q_t\|_2<\varepsilon .
\)
Indeed, choose a finite \(\varepsilon/3\)-net
\(Q_{t_1},\ldots,Q_{t_N}\) for \(\mathcal Q_T\). Since
\(
    \overline{\bigcup_{\dim E<\infty}E^{\odot m}}
    =
    \mathfrak H_{\mathbb R}^{\odot m},
\)
for each \(i\) there is a finite-dimensional subspace
\(E_i\subset\mathfrak H_{\mathbb R}\) such that
\(
    \|Q_{t_i}-P_{E_i}^{\odot m}Q_{t_i}\|_2<\varepsilon/3.
\)
Let \(E:=E_1+\cdots+E_N\). Since \(E_i^{\odot m}\subset E^{\odot m}\) and
\(P_E^{\odot m}\) is an orthogonal projection, we have
\(
    \|Q_{t_i}-P_E^{\odot m}Q_{t_i}\|_2<\varepsilon/3,
    \qquad i=1,\ldots,N.
\)
Therefore, for every \(t\in[0,T]\), choosing \(i\) with
\(\|Q_t-Q_{t_i}\|_2<\varepsilon/3\), we obtain
\[
    \|Q_t-P_E^{\odot m}Q_t\|_2
    \le
    \|Q_t-Q_{t_i}\|_2
    +
    \|Q_{t_i}-P_E^{\odot m}Q_{t_i}\|_2
    +
    \|P_E^{\odot m}(Q_{t_i}-Q_t)\|_2
    <\varepsilon .
\]
For the finite-rank family \(P_E^{\odot m}Q_t\), weak convergence of \(\xi_n\) implies
uniform convergence of
\(t\mapsto\langle P_E^{\odot m}Q_t,\xi_n^{\otimes m}\rangle\) to
\(t\mapsto\langle P_E^{\odot m}Q_t,\xi^{\otimes m}\rangle\). The remaining error is bounded
by \(C_m\varepsilon\), uniformly in \(t\). Hence \(\Phi(\xi_n)\to\Phi(\xi)\) in \(C[0,T]\).
Thus \(\Phi\) maps the weakly compact unit ball continuously into \(C[0,T]\), and
\(\mathcal K_T=\Phi(\{\|\xi\|\le1\})\) is compact.
\end{proof}

\begin{lemma}[An integrable low-frequency majorant]\label{lem:integrable-majorant}
Assume \(m\ge2\), and let \(\eta\in(0,\alpha)\) be such that
\(m(\alpha+\eta)<1\). Define
\[
\theta(x):=x_1+\cdots+x_m,
\qquad
G_\eta(x)
:=
\frac{1}{1+|\theta(x)|^2}
\prod_{j=1}^m
\left(
|x_j|^{\alpha-1-\eta}
+
|x_j|^{\alpha-1+\eta}
\right).
\]
Then \(G_\eta\in L^1(\mathbb R^m)\).
\end{lemma}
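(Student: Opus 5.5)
We must show that $G_\eta(x)=(1+|\theta|^2)^{-1}\prod_j(|x_j|^{a}+|x_j|^{b})$ is integrable on $\mathbb R^m$, where $a=\alpha-1-\eta\in(-1,0)$ and $b=\alpha-1+\eta$. Since $\eta<\alpha$, we have $a>-1$; since $m(\alpha+\eta)<1$, we get $b<0$ and $\sum$ of the $b$'s satisfies $m(\alpha+\eta)<1$ in a sense made precise below.

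\textbf{Step 1 (expansion).} Expand the product into $2^m$ terms. It suffices to show, for each choice of exponents $\gamma_j\in\{a,b\}$, that
\[
J(\gamma):=\int_{\mathbb R^m}\frac{\prod_j|x_j|^{\gamma_j}}{1+|\theta(x)|^2}\,dx<\infty .
\]
Every exponent satisfies $-1<\gamma_j<0$, because $a>-1$ and $b\le \alpha-1+\eta<0$. Moreover,
\[
\sum_j(1+\gamma_j)\le m(\alpha+\eta)<1 .
\]

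\textbf{Step 2 (reduction to one variable).} Change variables to $u=\theta(x)$, writing $x_m=u-x_1-\cdots-x_{m-1}$. This gives
\[
J=\int_{\mathbb R}\frac{g(u)}{1+u^2}\,du,
\qquad
g(u):=\bigl(|\cdot|^{\gamma_1}*\cdots*|\cdot|^{\gamma_m}\bigr)(u).
\]
We then compute the convolution of the Riesz-type powers. The one-dimensional identity, valid for $\mu,\nu\in(-1,0)$ with $\mu+\nu<-1$,
\[
|\cdot|^{\mu}*|\cdot|^{\nu}(u)=c_{\mu,\nu}\,|u|^{\mu+\nu+1}
\]
follows by scaling, since the integral over $\mathbb R$ converges at infinity exactly when $\mu+\nu<-1$.

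\textbf{Step 3 (iteration).} Apply this identity inductively. After $k$ factors, the exponent is
\[
\sigma_k:=\sum_{j\le k}\gamma_j+(k-1)=\sum_{j\le k}(1+\gamma_j)-1 .
\]
From Step 1, $\sigma_k\in(-1,0)$ for every $k$, because $0<\sum_{j\le k}(1+\gamma_j)<1$. For the next convolution we need $\sigma_k+\gamma_{k+1}<-1$. This is equivalent to $\sum_{j\le k+1}(1+\gamma_j)<1$, which is exactly the hypothesis $m(\alpha+\eta)<1$. Hence $g(u)=C\,|u|^{\sigma_m}$ with $\sigma_m\in(-1,0)$ and $C<\infty$, and all constants are finite. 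Tonelli's theorem justifies every interchange, since all integrands are nonnegative.

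\textbf{Step 4 (conclusion).} With this form of $g$,
\[
J=C\int_{\mathbb R}\frac{|u|^{\sigma_m}}{1+u^2}\,du .
\]
This is finite: $\sigma_m>-1$ gives integrability near $0$, and $\sigma_m-2<-1$ gives integrability at infinity. Summing the $2^m$ finite terms proves $G_\eta\in L^1(\mathbb R^m)$.

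The only delicate point is checking the exponent condition $\sigma_k+\gamma_{k+1}<-1$ at every stage of the iteration, which is where $m(\alpha+\eta)<1$ is used.
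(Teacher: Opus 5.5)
Your proof is correct and follows essentially the same route as the paper: expand into $2^m$ terms, substitute $u=\theta(x)$ so that the inner integral becomes the $m$-fold convolution of the powers $|x_j|^{\gamma_j}$, and reduce to $\int_{\mathbb R}|u|^{B-1}(1+u^2)^{-1}\,du$ with $B=\sum_j(1+\gamma_j)\in(0,1)$. The paper simply cites the one-dimensional Riesz-kernel convolution formula for this step, whereas you iterate it and check explicitly that $\sum_{j\le k+1}(1+\gamma_j)<1$ guarantees convergence at every stage.
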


\begin{proof}
Expand the product into \(2^m\) terms. For each such term, write
\(\beta_j=\alpha+\sigma_j\eta\), where
\(\sigma_j\in\{-1,1\}\), and put
\(B:=\beta_1+\cdots+\beta_m\). Then
\(0<\beta_j<1\) and
\(
0<B\le m(\alpha+\eta)<1.
\)
By the change of variables \(s=x_1+\cdots+x_m\) and the
one-dimensional Riesz-kernel convolution formula
\cite{SamkoKilbasMarichev},
\[
\int_{\mathbb R^m}
\frac{\prod_{j=1}^m |x_j|^{\beta_j-1}}
     {1+|\theta(x)|^2}\,dx
=
c_{\beta_1,\ldots,\beta_m}
\int_{\mathbb R}
\frac{|s|^{B-1}}{1+|s|^2}\,ds
<\infty .
\]
Indeed, \(B>0\) gives integrability at zero, while \(B<1\) gives
integrability at infinity. Hence all \(2^m\) terms are integrable.
\end{proof}

\begin{lemma}[Uniform Abel bound for weighted exponential sums]
\label{lem:uniform-abel-weighted-sum}
Assume \((\ref{A4})\)--\((\ref{A5})\), and let \(d<1\). Fix \(0<\varepsilon<T<\infty\).
For
\[
    R_n^{(\varepsilon)}(t,\theta)
    :=
    \frac{1}{n^{1-d}\ell_a(n)}
    \sum_{u=\lfloor\varepsilon n\rfloor+1}^{\lfloor nt\rfloor}
    a_u e^{iu\theta/n},
    \qquad \varepsilon\le t\le T,
\]
the following holds: for every \(c_0\in(0,2\pi)\), there is \(C_{\varepsilon,T,c_0}<\infty\)
such that, for all large \(n\),
\[
    \sup_{\varepsilon\le t\le T}
    |R_n^{(\varepsilon)}(t,\theta)|
    \le
    C_{\varepsilon,T,c_0}\min\{1,|\theta|^{-1}\},
    \qquad |\theta|\le c_0n .
\]
\end{lemma}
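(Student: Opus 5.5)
The plan is Abel summation against the geometric sum $D_k(\theta):=\sum_{u=1}^{k}e^{iu\theta/n}$. Put $M:=\lfloor\varepsilon n\rfloor$ and $N:=\lfloor nt\rfloor$, and write
\[
\sum_{u=M+1}^{N}a_ue^{iu\theta/n}
=a_ND_N-a_{M+1}D_M-\sum_{u=M+1}^{N-1}(a_{u+1}-a_u)D_u .
\]
For $|\theta|\le c_0n$ we have $|\theta/n|\le c_0<2\pi$. Since $|e^{i\lambda}-1|\ge c_{c_0}|\lambda|$ on $|\lambda|\le c_0$, this gives
\[
|D_k|\le \frac{2}{|e^{i\theta/n}-1|}\le C_{c_0}\frac{n}{|\theta|}.
\]
This is the only place the restriction $c_0<2\pi$ enters: it keeps $\theta/n$ away from the nonzero multiples of $2\pi$, where the geometric sums would resonate.

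Next we control the weights on the range $\varepsilon n\le u\le Tn+1$, uniformly in $n$. First, the uniform convergence theorem for slowly varying functions gives $|a_u|\le C_{\varepsilon,T}n^{-d}\ell_a(n)$. Second, we need the total variation $\sum_{u=M+1}^{N-1}|a_{u+1}-a_u|\le C_{\varepsilon,T}n^{-d}\ell_a(n)$, and this is where (A5) is used. By the mean value theorem, $a_{u+1}-a_u=g'(\xi_u)$ with $g(x)=x^{-d}\ell_a(x)$. Here
\[
g'(x)=x^{-d-1}\ell_a(x)\Bigl(-d+\frac{x\ell_a'(x)}{\ell_a(x)}\Bigr),
\]
so $|g'(x)|\le C x^{-d-1}\ell_a(x)$ for all large $x$. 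Summing over $u\in(\varepsilon n,Tn]$ gives a bound of order $n\cdot n^{-d-1}\ell_a(n)$, again by uniform convergence on $[\varepsilon,T]$. I expect this to be the main obstacle, and it is exactly why (A5) is imposed: without it, $\ell_a$ could oscillate, and its total variation on $[\varepsilon n,Tn]$ need not be $O(\ell_a(n))$. Care is needed only in making the "eventually" threshold uniform; we take $n$ large enough that $\varepsilon n$ exceeds the threshold beyond which $\ell_a$ is $C^1$ and the ratio $x\ell_a'(x)/\ell_a(x)$ is bounded.

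Combining the three pieces, the boundary terms and the variation sum together are bounded by $C_{\varepsilon,T}n^{-d}\ell_a(n)\cdot C_{c_0}n/|\theta|$. Dividing by $n^{1-d}\ell_a(n)$ gives $|R_n^{(\varepsilon)}(t,\theta)|\le C|\theta|^{-1}$, uniformly in $t\in[\varepsilon,T]$.

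For the alternative bound $1$ we use the trivial estimate
\[
|R_n^{(\varepsilon)}|\le \frac{(T-\varepsilon)n+1}{n^{1-d}\ell_a(n)}\max_{\varepsilon n<u\le Tn+1}|a_u|\le C_{\varepsilon,T}.
\]
This is the useful bound when $|\theta|\le1$. Taking the minimum of the two bounds yields the claim.
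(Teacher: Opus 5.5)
Your proof is correct and follows essentially the same route as the paper: an (A5)-based mean-value bound on the total variation of $a_u$ over $[\varepsilon n,Tn]$, the trivial bound when $|\theta|\le1$, and Abel summation against geometric sums bounded by $C_{c_0}n/|\theta|$. The only cosmetic difference is where the partial sums start. You begin the geometric sums at $u=1$, which produces the extra boundary term $a_{M+1}D_M$. The paper starts its partial sums $\Sigma_u$ at $M+1$, so that term does not appear.
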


\begin{proof}
Write \(a(x):=x^{-d}\ell_a(x)\). By \((\ref{A5})\),
\[
    a'(x)=x^{-d-1}\ell_a(x)
    \left(-d+\frac{x\ell_a'(x)}{\ell_a(x)}\right),
\]
so, uniformly for \(x\in[\varepsilon n,Tn]\), one has
\[
    |a'(x)|\le C_{\varepsilon,T}n^{-d-1}\ell_a(n),
    \qquad
    |a(x)|\le C_{\varepsilon,T}n^{-d}\ell_a(n).
\]
Hence, by the mean-value theorem,
\[
    \sum_{u=\lfloor\varepsilon n\rfloor}^{\lfloor Tn\rfloor}|a_{u+1}-a_u|
    \le C_{\varepsilon,T}n^{-d}\ell_a(n).
\]

If \(|\theta|\le1\), then
\[
    \sup_{\varepsilon\le t\le T}
    \left|
    \sum_{u=\lfloor\varepsilon n\rfloor+1}^{\lfloor nt\rfloor}
    a_u e^{iu\theta/n}
    \right|
    \le
    C_{\varepsilon,T}n^{1-d}\ell_a(n),
\]
because \(d<1\). This gives the bound with the factor \(1\).

Assume now \(1<|\theta|\le c_0n\). Put \(M:=\lfloor\varepsilon n\rfloor\),
\(N:=\lfloor nt\rfloor\), and \(\Sigma_u(\theta):=\sum_{r=M+1}^u e^{ir\theta/n}\). Since
\(c_0<2\pi\), the geometric-series formula gives
\[
    |\Sigma_u(\theta)|\le C_{c_0}n|\theta|^{-1},
    \qquad M<u\le\lfloor Tn\rfloor .
\]
By Abel summation,
\[
    \sum_{u=M+1}^{N}a_u e^{iu\theta/n}
    =
    a_N\Sigma_N(\theta)
    -
    \sum_{u=M+1}^{N-1}(a_{u+1}-a_u)\Sigma_u(\theta).
\]
Using the bounds above,
\[
    \left|
    \sum_{u=M+1}^{N}a_u e^{iu\theta/n}
    \right|
    \le
    C_{\varepsilon,T,c_0}n^{1-d}\ell_a(n)|\theta|^{-1}.
\]
Dividing by \(n^{1-d}\ell_a(n)\) proves the lemma.
\end{proof}

\begin{proposition}[Uniform full prelimit tail estimate]
\label{prop:full-prelimit-tail}
Assume \((\ref{A1})\)--\((\ref{A6})\), \(m\ge2\), \(\alpha m<1\), and \(2d+\alpha m<1\).
Let \(T<\infty\). Then
\[
    \lim_{R\to\infty}\limsup_{n\to\infty}
    \sup_{0\le t\le T}
    \int_{([-R,R]^m)^c}
    |\widetilde F^{\mathrm{full}}_{n,t}(x)|^2\,dx
    =0 .
\]
\end{proposition}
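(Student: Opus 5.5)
The plan is to decompose $\widetilde F^{\mathrm{full}}_{n,t}$ into three pieces and treat each separately: an initial-time piece, a high-frequency piece and a low-frequency piece. The first two are shown to be small in $L^2(\mathbb R^m)$ uniformly in $t$, with no reference to $R$. The last is dominated by an integrable majorant independent of $n$ and $t$, whose tail over $([-R,R]^m)^c$ vanishes as $R\to\infty$.

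\emph{Step 1: small times and initial segments.} Fix $\varepsilon\in(0,1)$. For $0\le t\le\varepsilon$, Lemma~\ref{lem:full-leading-chaos-increment} (with $s=0$) gives $\|\widetilde F^{\mathrm{full}}_{n,t}\|_2^2\le C_T\varepsilon^\beta$. This bounds the whole $L^2$ norm, and hence the tail. For $t\in[\varepsilon,T]$, write the sum $\sum_{u\le\lfloor nt\rfloor}$ as $\sum_{u\le\lfloor\varepsilon n\rfloor}+\sum_{\lfloor\varepsilon n\rfloor<u\le\lfloor nt\rfloor}$. The first kernel is again controlled in full $L^2$ norm by $C_T\varepsilon^{\beta/2}$. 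The interpolation correction is one summand and was shown to be $o(1)$ uniformly in $t$ in the proof of Proposition~\ref{prop:full-kernel-convergence}. It therefore remains to treat
\[
\gamma_n R_n^{(\varepsilon)}(t,\theta(x))\Psi_n(x),\qquad \varepsilon\le t\le T,
\]
and at the end let $\varepsilon\downarrow0$.

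\emph{Step 2: high frequencies.} Fix $\delta>0$ with $m\delta<2\pi$, and let $\Omega_\delta:=\{x:\ |x_j|>\delta n \text{ for some } j\}$. Undo the dilation and bound the integral over $\Omega_\delta$ by the sum over $j$ of the integrals over $\{|\lambda_j|>\delta\}$, with the other coordinates unrestricted. This is legitimate since the integrand is nonnegative. Each term equals
\[
\frac{C}{A_n^2}\sum_{u,v}a_ua_v\,\rho_\delta(u-v)\,\rho(u-v)^{m-1},
\qquad \rho_\delta(h):=\int_{\delta<|\lambda|\le\pi}e^{ih\lambda}f(\lambda)\,d\lambda ,
\]
with $u,v$ ranging over $(\lfloor\varepsilon n\rfloor,\lfloor nt\rfloor]$. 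By (\ref{A6}), one integration by parts (the boundary terms being bounded) gives $|\rho_\delta(h)|\le C_\delta(1+|h|)^{-1}$. Combining this with Potter bounds for $\rho$ and $a_u$, the double sum is $O(n^{2-2d-\alpha(m-1)-1+\eta}\ell_a(n)^2)$, uniformly in $t\le T$. Since $\alpha<1$, this is $o(A_n^2)$. So the high-frequency contribution tends to $0$ for every fixed $\delta$, uniformly in $t$ and irrespective of $R$.

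\emph{Step 3: low frequencies (the main obstacle).} On $\Omega_\delta^c$ we have $|\theta(x)|\le m\delta n\le c_0n$ with $c_0<2\pi$. Lemma~\ref{lem:uniform-abel-weighted-sum} then gives $\sup_{\varepsilon\le t\le T}|R_n^{(\varepsilon)}(t,\theta)|^2\le C(1+|\theta|^2)^{-1}$. The delicate point is a bound on $\Psi_n$ that holds uniformly over the whole range $|x_j|\le\delta n$, not just on compacts as in Lemma~\ref{lem:spectral-factor-rescaled}(ii). I would write $q(x_j/n)^2\le C f$-asymptotics on $|\lambda|\le\delta$, shrinking $\delta$ if necessary so that (\ref{A2}) gives $f(\lambda)\le 2c_f|\lambda|^{\alpha-1}L_0(1/|\lambda|)$ there. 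Potter's bound applied to $L_0(n/|x_j|)/L_0(n)$ in both regimes, $|x_j|\le1$ and $1\le|x_j|\le\delta n$, then yields
\[
|\psi_n(x_j)|^2\le C\bigl(|x_j|^{\alpha-1-\eta}+|x_j|^{\alpha-1+\eta}\bigr).
\]
Hence the integrand on $\Omega_\delta^c$ is dominated, uniformly in $n$ large and $t\in[\varepsilon,T]$, by $C\,G_\eta(x)$ with $m(\alpha+\eta)<1$. This majorant is in $L^1$ by Lemma~\ref{lem:integrable-majorant}, so $\sup_{n,t}\int_{([-R,R]^m)^c\cap\Omega_\delta^c}\le C\int_{([-R,R]^m)^c}G_\eta\to0$ as $R\to\infty$.

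\emph{Conclusion.} Combining the three steps gives
\[
\lim_{R\to\infty}\limsup_{n\to\infty}\sup_{0\le t\le T}\int_{([-R,R]^m)^c}|\widetilde F^{\mathrm{full}}_{n,t}|^2\le C\varepsilon^{\beta}.
\]
Letting $\varepsilon\downarrow0$ proves the proposition.
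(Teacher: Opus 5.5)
Your proposal is correct and is essentially the paper's proof: the same three-way split into an initial-time block (you bound it via Lemma~\ref{lem:full-leading-chaos-increment}, the paper via $A_{\lfloor\varepsilon n\rfloor}^2/A_n^2$), a high-frequency part handled by the union over coordinates and the $W^{1,1}$ integration by parts, and a low-frequency part dominated through Lemma~\ref{lem:uniform-abel-weighted-sum} and Potter bounds by the majorant $G_\eta$ of Lemma~\ref{lem:integrable-majorant}. There is one arithmetic slip in Step~2: you evaluate $\sum_{u,v\le n}(1+|u-v|)^{-\gamma}$ as $n^{2-\gamma}$, which is valid only for $\gamma<1$, whereas here $\gamma=1+\alpha(m-1)-\eta>1$, so the lag kernel is summable and the double sum is $O(n^{1-2d}\ell_a(n)^2)$; this is still $o(A_n^2)$, but because $\alpha m<1$ (as in the paper's Young-inequality step), not merely because $\alpha<1$.
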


\begin{proof}
Put \(D_R:=[-R,R]^m\), choose \(\delta\in(0,\pi/m)\), and set
\(B_{\delta,n}:=[-\delta n,\delta n]^m\). We split the tail into the low- and high-frequency parts,
\[
    \int_{D_R^c}|\widetilde F^{\mathrm{full}}_{n,t}|^2
    =
    \int_{D_R^c\cap B_{\delta,n}}|\widetilde F^{\mathrm{full}}_{n,t}|^2
    +
    \int_{D_R^c\cap B_{\delta,n}^c}|\widetilde F^{\mathrm{full}}_{n,t}|^2 .
\]

We first handle the low-frequency part. Fix \(\varepsilon\in(0,T)\). Decompose
\(\widetilde F^{\mathrm{full}}_{n,t}=E_{n,t}^{(\varepsilon)}+L_{n,t}^{(\varepsilon)}\), where
\(E_{n,t}^{(\varepsilon)}\) contains the summands \(1\le u\le\lfloor n(t\wedge\varepsilon)\rfloor\), and
\(L_{n,t}^{(\varepsilon)}\) contains the summands \(\lfloor \varepsilon n\rfloor<u\le\lfloor nt\rfloor\).

For the initial part, by the variance estimate in Proposition~\ref{prop:exact-variance} and regular variation of \(A_n\),
\[
    \sup_{0\le t\le T}\|E_{n,t}^{(\varepsilon)}\|_{L^2(\mathbb R^m)}^2
    \le
    C\frac{A_{\lfloor\varepsilon n\rfloor}^2}{A_n^2}
    \le
    C_\eta \varepsilon^{2H-\eta}
\]
for every small \(\eta>0\), uniformly in all large \(n\). Thus the initial part can be made arbitrarily small by letting \(\varepsilon\downarrow0\).

It remains to estimate \(L_{n,t}^{(\varepsilon)}\). For \(t\le\varepsilon\) this term is zero. For \(t>\varepsilon\), write
\[
    L_{n,t}^{(\varepsilon)}(x)
    =
    \gamma_n R^{(\varepsilon)}_n(t,\theta(x))\Psi_n(x),
    \qquad
    \theta(x):=x_1+\cdots+x_m,
\]
where
\[
    R^{(\varepsilon)}_n(t,\theta)
    :=
    \frac{1}{n^{1-d}\ell_a(n)}
    \sum_{u=\lfloor\varepsilon n\rfloor+1}^{\lfloor nt\rfloor}
    a_u e^{iu\theta/n}.
\]
By Lemma~\ref{lem:uniform-abel-weighted-sum}, for every \(c_0\in(0,2\pi)\),
\[
    \sup_{\varepsilon\le t\le T}|R^{(\varepsilon)}_n(t,\theta)|
    \le C_{\varepsilon,T,c_0}\min\{1,|\theta|^{-1}\},
    \qquad |\theta|\le c_0n .
\]

Choose \(\eta\in(0,\alpha)\) so small that
\(m(\alpha+\eta)<1\). Decreasing \(\delta>0\) if necessary, assume
that \(m\delta<c_0\), that \(1/\delta\) lies beyond the threshold in
Potter's bound, and, by \((\ref{A2})\), that
\[
f(\lambda)
\le
C|\lambda|^{\alpha-1}L_0(1/|\lambda|),
\qquad
0<|\lambda|\le\delta .
\]
Then Potter's bound gives, uniformly for
\(0<|x|\le\delta n\) and all sufficiently large \(n\),
\[
|\psi_n(x)|^2
\le
C|x|^{\alpha-1}
\frac{L_0(n/|x|)}{L_0(n)}
\le
C_\eta
\left(
|x|^{\alpha-1-\eta}
+
|x|^{\alpha-1+\eta}
\right).
\]
Consequently,
\[
|\Psi_n(x)|^2
\le
C_\eta
\prod_{j=1}^m
\left(
|x_j|^{\alpha-1-\eta}
+
|x_j|^{\alpha-1+\eta}
\right),
\qquad
x\in B_{\delta,n},
\]
almost everywhere. Since \(\gamma_n\) is bounded and
Lemma~\ref{lem:uniform-abel-weighted-sum} applies on
\(B_{\delta,n}\), for all sufficiently large \(n\),
\[
|L_{n,t}^{(\varepsilon)}(x)|^2
\le
C_{\varepsilon,T,\eta}
\frac{1}{1+|\theta(x)|^2}
\prod_{j=1}^m
\left(
|x_j|^{\alpha-1-\eta}
+
|x_j|^{\alpha-1+\eta}
\right),
\qquad
x\in B_{\delta,n},\quad 0\le t\le T .
\]
By Lemma~\ref{lem:integrable-majorant}, the right-hand side is
integrable on \(\mathbb R^m\). 
Hence
\[
    \lim_{R\to\infty}\limsup_{n\to\infty}
    \sup_{0\le t\le T}
    \int_{D_R^c\cap B_{\delta,n}}
    |L_{n,t}^{(\varepsilon)}(x)|^2\,dx
    =0 .
\]
Combining this with the initial-block estimate gives
\[
    \lim_{R\to\infty}\limsup_{n\to\infty}
    \sup_{0\le t\le T}
    \int_{D_R^c\cap B_{\delta,n}}
    |\widetilde F^{\mathrm{full}}_{n,t}(x)|^2\,dx
    \le
    C_\eta\varepsilon^{2H-\eta}.
\]
Letting \(\varepsilon\downarrow0\) proves the low-frequency tail estimate.

We now handle the high-frequency part. Since \(D_R^c\cap B_{\delta,n}^c\subset B_{\delta,n}^c\), the change of variables \(x=n\lambda\) gives
\[
    \int_{D_R^c\cap B_{\delta,n}^c}
    |\widetilde F^{\mathrm{full}}_{n,t}(x)|^2\,dx
    \le
    \frac{1}{A_n^2}
    \int_{[-\pi,\pi]^m\setminus[-\delta,\delta]^m}
    |F^{\mathrm{lin}}_{n,t}(\lambda)|^2\,d\lambda .
\]
The linear interpolation changes this expression only by a term bounded by
\(C A_n^{-2}\sup_{u\le Tn}a_u^2\), which tends to zero. It is therefore enough to consider \(F_{\lfloor nt\rfloor}\).

Using \(1_{[-\pi,\pi]^m\setminus[-\delta,\delta]^m}\le\sum_{j=1}^m1_{\{|\lambda_j|>\delta\}}\), fix \(j\). For \(N:=\lfloor nt\rfloor\),
\[
    F_N(\lambda)
    =
    \frac{J_m}{m!}
    \sum_{u=1}^{N}a_u e^{iu(\lambda_1+\cdots+\lambda_m)}
    \prod_{\ell=1}^m q(\lambda_\ell).
\]

By Fubini's theorem and by integrating first over the remaining \(m-1\) spectral variables,
\[
    \frac{1}{A_n^2}
    \int_{\{|\lambda_j|>\delta\}}|F_N(\lambda)|^2\,d\lambda
    \le
    C_m A_n^{-2}
    \sum_{u=1}^{N}\sum_{v=1}^{N}|a_u a_v|\,b_\delta(u-v),
\]
where \(b_\delta(h):=|\rho(h)|^{m-1}|r_\delta(h)|\) and
\(r_\delta(h):=\int_{|\lambda|>\delta}e^{ih\lambda}f(\lambda)\,d\lambda\).
Since \(f\in W^{1,1}([-\pi,-\delta]\cup[\delta,\pi])\), integration by parts gives
\(|r_\delta(h)|\le C_\delta(1+|h|)^{-1}\). By Potter's bound,
\(|\rho(h)|^{m-1}\le C_\eta(1+|h|)^{-\alpha(m-1)+\eta}\) for every small \(\eta>0\).
Choosing \(\eta<\alpha(m-1)\), we get
\(b_\delta(h)\le C(1+|h|)^{-1-\alpha(m-1)+\eta}\), hence \(b_\delta\in\ell^1(\mathbb Z)\).

Let \(c_u^{(n,t)}:=|a_u|1_{\{1\le u\le\lfloor nt\rfloor\}}\). By Young's inequality on \(\ell^2\),
\[
    \sum_{u=1}^{N}\sum_{v=1}^{N}|a_u a_v|\,b_\delta(u-v)
    \le
    \|b_\delta\|_{\ell^1}\sum_{u=1}^{\lfloor Tn\rfloor}a_u^2 .
\]
Since \(2d<1\), Karamata's theorem yields
\(\sum_{u=1}^{\lfloor Tn\rfloor}a_u^2\le C_T n^{1-2d}\ell_a(n)^2\). Hence
\[
    A_n^{-2}\sum_{u=1}^{\lfloor Tn\rfloor}a_u^2
    \le
    C_T n^{-1+\alpha m}L_0(n)^{-m}\longrightarrow0,
\]
because \(\alpha m<1\). Therefore
\[
    \lim_{n\to\infty}
    \sup_{0\le t\le T}
    \int_{D_R^c\cap B_{\delta,n}^c}
    |\widetilde F^{\mathrm{full}}_{n,t}(x)|^2\,dx
    =0
\]
for every fixed \(R>0\). Combining the low- and high-frequency estimates proves the proposition.
\end{proof}




\begin{thebibliography}{99}

\bibitem[Adler and Taylor(2007)]{AdlerTaylor2007}
Adler, R. J. and Taylor, J. E. (2007).
\emph{Random Fields and Geometry}.
Springer, New York.

\bibitem[Arcones(1999)]{Arcones1999}
Arcones, M. A. (1999).
The law of the iterated logarithm over a stationary Gaussian sequence of random vectors.
\emph{Journal of Theoretical Probability}, 12, 615--641.

\bibitem[Azmoodeh, Peccati and Poly(2016)]{AzmoodehPeccatiPoly2016}
Azmoodeh, E., Peccati, G. and Poly, G. (2016).
The law of iterated logarithm for subordinated Gaussian sequences:
uniform Wasserstein bounds.
\emph{ALEA Lat. Am. J. Probab. Math. Stat.} \textbf{13}, 659--686.

\bibitem[Beran et al.(2013)]{BFGK2013}
Beran, J., Feng, Y., Ghosh, S. and Kulik, R. (2013).
\emph{Long-Memory Processes: Probabilistic Properties and Statistical
Methods}.
Springer, Heidelberg.

\bibitem[Bingham, Goldie and Teugels(1987)]{BGT1987}
Bingham, N. H., Goldie, C. M. and Teugels, J. L. (1987).
\emph{Regular Variation}. Encyclopedia of Mathematics and its Applications 27.
Cambridge University Press, Cambridge.

\bibitem[Dehling and Taqqu(1988)]{DehlingTaqqu1988}
Dehling, H. and Taqqu, M. S. (1988).
The functional law of the iterated logarithm for the
empirical process of some long-range dependent sequences.
\emph{Statistics \& Probability Letters} \textbf{7}, 81--85.

\bibitem[Janson(1997)]{Janson1997}
Janson, S. (1997).
\emph{Gaussian Hilbert Spaces}.
Cambridge Tracts in Mathematics 129.
Cambridge University Press, Cambridge.

\bibitem[Kochen and Stone(1964)]{KochenStone1964}
Kochen, S. and Stone, C. (1964).
A note on the Borel--Cantelli lemma.
\emph{Illinois Journal of Mathematics}, 8, 248--251.

\bibitem[Lai and Stout(1978)]{LaiStout1978}
Lai, T. L. and Stout, W. F. (1978).
The law of the iterated logarithm and upper-lower class tests for partial sums of stationary
Gaussian sequences.
\emph{The Annals of Probability}, 6, 731--750.

\bibitem[Li and Tomkins(1996)]{LiTomkins1996}
Li, D. and Tomkins, R. J. (1996).
Laws of the iterated logarithm for weighted sums of independent random variables.
\emph{Statistics \& Probability Letters} \textbf{27}, 247--254.

\bibitem[Moldavskaya(2009)]{Moldavskaya2009Lviv}
Moldavskaya, E. (2009).
Approximate representation for the estimators of parameters in the regression
models with LRD and restrictions.
In \emph{Stochastic Analysis and Random Dynamical Systems. International
Conference. Abstracts}, Lviv, Ukraine, June 14--20, 2009, p.~171.
Institute of Mathematics, National Academy of Sciences of Ukraine, Kyiv.

\bibitem[Mori and Oodaira(1986)]{MoriOodaira1986}
Mori, T. and Oodaira, H. (1986).
The law of the iterated logarithm for self-similar processes represented by multiple Wiener
integrals.
\emph{Probability Theory and Related Fields}, 71, 367--391.

\bibitem[Mori and Oodaira(1987)]{MoriOodaira1987}
Mori, T. and Oodaira, H. (1987).
The functional iterated logarithm law for stochastic processes represented by multiple Wiener
integrals.
\emph{Probability Theory and Related Fields}, 76, 299--310.

\bibitem[Samko, Kilbas and Marichev(1993)]{SamkoKilbasMarichev}
Samko, S. G., Kilbas, A. A. and Marichev, O. I. (1993).
\emph{Fractional Integrals and Derivatives: Theory and Applications}.
Gordon and Breach Science Publishers, Yverdon.

\bibitem[Taqqu(1977)]{Taqqu1977}
Taqqu, M. S. (1977).
Law of the iterated logarithm for sums of non-linear functions of Gaussian variables that
exhibit a long range dependence.
\emph{Zeitschrift f\"ur Wahrscheinlichkeitstheorie und Verwandte Gebiete}, 40, 203--238.

\end{thebibliography}
\end{document}